\theoremstyle{plain}
\newtheorem{theorem}{Theorem}[section]
\newtheorem{lemma}[theorem]{Lemma}
\newtheorem{corollary}[theorem]{Corollary}
\newtheorem{proposition}[theorem]{Proposition}
\theoremstyle{definition}
\newtheorem{definition}[theorem]{Definition}
\newtheorem{notation}[theorem]{Notation}
\theoremstyle{remark}
\numberwithin{figure}{section}
\newcommand{\Int}{\mathrm{int}}
\begin{document}

\title{An algorithm to determine the Heegaard genus of a 3-manifold}

\author{Tao Li}
\thanks{Partially supported by an NSF grant} 

\address{Department of Mathematics \\
 Boston College \\
 Chestnut Hill, MA 02467}
\email{taoli@bc.edu}

\begin{abstract}
We give an algorithmic proof of the theorem that a closed orientable irreducible and atoroidal 3-manifold has only finitely many Heegaard splittings in each genus, up to isotopy.  The proof gives an algorithm to determine the Heegaard genus of an atoroidal 3-manifold.
\end{abstract}

\maketitle

\begin{psfrags}

\setcounter{tocdepth}{1}
\tableofcontents

\section{Introduction}\label{Sintro}
A Heegaard splitting of a closed and orientable 3-manifold $M$ is a decomposition $M=H_1\cup_S H_2$ of $M$ into a pair of handlebodies $H_1$ and $H_2$ along a closed surface $S$.  Every  3-manifold has a Heegaard splitting.  The minimal genus of the Heegaard surface $S$ among all Heegaard splittings is called the Heegaard genus of $M$.  In \cite{L4}, the author proved the so-called generalized Waldhausen conjecture.

\begin{theorem}[\cite{L4}]\label{TLi}
A closed orientable irreducible and atoroidal 3-manifold has only finitely many Heegaard splittings in each genus, up to isotopy.
\end{theorem}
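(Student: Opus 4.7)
The plan is to argue by contradiction: suppose the closed orientable irreducible atoroidal $3$-manifold $M$ admits infinitely many pairwise non-isotopic Heegaard splittings of some fixed genus $g$, and derive an essential surface forbidden by the hypotheses. First I would reduce to the strongly irreducible case. Given a weakly reducible splitting, the Casson--Gordon untelescoping produces a generalized Heegaard splitting whose thick surfaces are strongly irreducible and whose thin surfaces are incompressible. Because $M$ is irreducible and atoroidal, Haken--Kneser finiteness together with Hatcher's bound on incompressible surfaces shows that the thin levels come from a finite collection up to isotopy. Hence infinitely many splittings of genus $g$ force infinitely many strongly irreducible splittings of bounded genus in a piece of $M$, and it suffices to handle this case.

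Fix a triangulation $T$ of $M$. By Rubinstein--Stocking, every strongly irreducible Heegaard surface can be isotoped into almost normal form with respect to $T$, and by a standard extension of the Floyd--Oertel theorem to almost normal surfaces, there is a finite collection of branched surfaces $B_1,\dots,B_k$ in $M$ such that every almost normal surface is fully carried by some $B_i$. So infinitely many of the assumed splitting surfaces $\{S_j\}$ would be carried by a single branched surface $B$. Each $S_j$ corresponds to an integer weight vector on $B$ satisfying the branch equations, and the set of such weight vectors is a finitely generated semigroup inside a rational polyhedral cone. Since the surfaces have bounded genus $g$ but represent infinitely many isotopy classes, the weight vectors must be unbounded, so after normalization they accumulate on a projective class of real weights, yielding a measured lamination $\mathcal{L}$ fully carried by a sub-branched surface $B' \subseteq B$.

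The decisive and hardest step is to prove that $\mathcal{L}$, or a piece of it, is \emph{essential}. One must show that compressing disks for the nearby Heegaard surfaces $S_j$ on either side can be analyzed uniformly, and that in the limit the two systems of disks cannot simultaneously compress $\mathcal{L}$ without forcing the $S_j$ to coincide up to isotopy for large $j$. This is where the branched-surface machinery developed in Li's earlier papers on laminations and Heegaard splittings enters: one exhibits full sub-laminations with no disk of contact, no sphere leaf, no torus leaf bounding a solid torus, and no monogon, so that the sub-lamination extends to an essential lamination in $M$. I expect the main obstacle to be the combinatorial bookkeeping required to rule out these degenerate pieces while maintaining the bound on the genus of the carried surfaces; ruling out a torus leaf is the most delicate, because a torus that bounds a solid torus in $M$ could otherwise appear as a limit artifact without violating irreducibility.

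Once an essential lamination, or at least an incompressible torus or sphere, has been extracted from $\mathcal{L}$, the contradiction is immediate: an essential lamination in an atoroidal irreducible $3$-manifold either contains a generalized essential torus/sphere or, via Gabai--Oertel, forces $M$ to contain an incompressible torus, each of which contradicts our hypotheses on $M$. This completes the proof outline, with the branched-surface limiting argument and the essentiality of the limit lamination being the substantive technical content.
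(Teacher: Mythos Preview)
Your outline follows the strategy of the original proof in \cite{L4} (compactness in the projective measured lamination space of a branched surface) rather than the algorithmic proof given in this paper, which replaces that limiting argument by an explicit decomposition $S=F+\sum n_iT_i$ into a bounded piece and normal torus summands, organizes the $T_i$ into ``regular sets'' engulfed by solid tori (Sections~\ref{Sinter}--\ref{Sengulf}), and then invokes Scharlemann's Theorem~\ref{Tsch} to pin down $S$ inside each solid torus.

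There is, however, a genuine gap in your final paragraph. The assertion that an essential lamination in an irreducible atoroidal $3$-manifold forces an incompressible torus (``via Gabai--Oertel'') is false: hyperbolic $3$-manifolds routinely carry essential laminations---taut foliations, stable laminations of pseudo-Anosov flows, and so on---without containing any incompressible torus. Gabai--Oertel gives no such implication. What you actually need, and what you have not used, is that Euler characteristic is linear on the solution cone; since your surfaces have fixed genus while their weights diverge, the limit lamination $\mathcal{L}$ has $\chi=0$ and its leaves are tori, annuli, or planes. In an atoroidal manifold every normal torus bounds a solid torus (Lemma~\ref{Ltorus}), so $\mathcal{L}$ is \emph{not} essential in the Gabai--Oertel sense; it is trapped in solid tori. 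The contradiction therefore cannot come from essentiality of $\mathcal{L}$. One must instead argue that once the coefficients $n_i$ are large, adding further copies of $T_i$ amounts to a Dehn twist along a torus bounding a solid torus---an ambient isotopy of $M$---so the resulting Heegaard surfaces are pairwise isotopic. That is precisely the work carried out in Sections~\ref{Sinter}--\ref{Sproof} here (and, non-algorithmically, in \cite{L4}); it is the heart of the proof and is not bypassed by any lamination-essentiality statement.
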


An interesting feature in 3-manifold topology is that most decision problems are solvable. For example the word problem is solvable for 3-manifold groups but unsolvable for higher dimensional manifolds.  One goal in 3-manifold topology is to find algorithms to determine all the geometric and algebraic properties of any given 3-manifold.  However, the proof of Theorem~\ref{TLi} in \cite{L4} is not algorithmic because of a compactness argument on the projective measured lamination space of a branched surface.  In this paper, we give an algorithmic proof of Theorem~\ref{TLi}.   This proof in fact gives us a slightly stronger theorem.

\begin{theorem}\label{Tmain}
Given a closed, orientable, irreducible and atoroidal 3-manifold $M$, there is an algorithm to produce a finite list of all possible Heegaard splittings of $M$ in each genus, up to isotopy. 
\end{theorem}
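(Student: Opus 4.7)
The plan is to combine normal surface theory with the branched surface machinery from the proof of Theorem~\ref{TLi}, replacing the non-effective compactness step of \cite{L4} by an explicit enumeration. Fix a triangulation $\mathcal{T}$ of $M$ and the target genus $g$. By the work of Rubinstein and Stocking, any strongly irreducible genus-$g$ Heegaard surface can be isotoped to be almost normal with respect to $\mathcal{T}$; weakly reducible splittings can be handled inductively via the Scharlemann--Thompson untelescoping as amalgamations of splittings of smaller genus along incompressible surfaces, which themselves can be enumerated algorithmically by Haken's normal surface theory. It therefore suffices to enumerate almost normal Heegaard surfaces of fixed genus $g$.

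First, I would construct algorithmically a finite family $\{B_1,\dots,B_n\}$ of branched surfaces in $M$ such that every almost normal genus-$g$ Heegaard surface is fully carried by some $B_i$. This is the effective analogue of the construction in \cite{L4}: starting from the finitely many combinatorial types of normal discs together with one exceptional piece, each $B_i$ is produced as an explicit subcomplex of a subdivision of $\mathcal{T}$. The system of branch equations and the rational polyhedral cone $C_i \subset \mathbb{R}_{\geq 0}^{N_i}$ of weight vectors of carried integer surfaces, together with the finite set of Haken fundamental solutions spanning $C_i$, are all computable from $\mathcal{T}$ in finite time.

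The heart of the matter is to replace the compactness argument on $\mathcal{PML}(B_i)$ by an effective bound on weight vectors. The constraint $\chi(S)=2-2g$ is linear in the weights and cuts $C_i$ in an affine section $C_i^g$; Theorem~\ref{TLi} asserts that the lattice points of $C_i^g$ representing Heegaard surfaces form a finite set, but does not say how large. I aim to extract a computable bound $N=N(B_i,g)$ on the $L^1$-norm of the weight vector of any genus-$g$ Heegaard surface carried by $B_i$, by revisiting the limit argument of \cite{L4} quantitatively. A hypothetical sequence of such Heegaard surfaces with weight vectors of unbounded norm produces, after projective normalization, a measured lamination $\lambda$ carried by $B_i$; the analysis in \cite{L4} shows that $\lambda$ leads either to an essential torus in $M$ (forbidden by atoroidality) or to an essential surface inducing a Heegaard-compression structure that can be ruled out. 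To make the dichotomy effective I would convert each of its steps into an algorithm, using the torus and incompressible surface recognition algorithms of Haken, Jaco--Tollefson, and Matveev. Extracting an explicit, computable $N$ from the resulting quantitative version of the compactness argument is the step I expect to be the main obstacle.

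Finally, for each of the finitely many lattice points of weight at most $N$, I would reconstruct the corresponding embedded surface, compute its complement from the normal coordinates, and apply the handlebody recognition algorithm to decide whether both components are handlebodies of genus $g$. To output only one representative per isotopy class I would invoke the algorithm for deciding isotopy of embedded surfaces in irreducible atoroidal $3$-manifolds, available through Johannson's classification in the Haken setting and the geometrization theorem in general. Iterating $g=0,1,2,\dots$ until a splitting is found then produces the Heegaard genus, completing the algorithm promised in Theorem~\ref{Tmain}.
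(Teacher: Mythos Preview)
Your proposal correctly identifies the overall architecture --- branched surfaces carrying almost normal surfaces, untelescoping for the weakly reducible case --- but the step you flag as ``the main obstacle'' is precisely the entire content of the theorem, and your proposal does not actually address it. You propose to make the compactness argument of \cite{L4} quantitative so as to extract a computable bound $N$ on the weight of any genus-$g$ Heegaard surface carried by $B_i$. But no such uniform bound exists: a single Heegaard surface can have infinitely many almost normal representatives of unbounded weight, differing by Dehn twists along normal tori that bound solid tori. The projective limit appearing in \cite{L4} is a measured lamination supported on such tori, and the atoroidal hypothesis does not exclude these --- it only forces the tori to be compressible. So the dichotomy you sketch (``essential torus or essential surface with compression structure'') does not convert into a weight bound; it leaves exactly the torus-summand case unresolved.

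The paper's approach never bounds the weights. Instead it writes $S = F + \sum c_i T_i$ with $T_i$ normal tori, and shows that when the $c_i$ are large the tori can be arranged into a \emph{regular set} (no triple points, intersection curves essential and non-meridional) which is engulfed by a solid torus $W$ (Lemma~\ref{Lsolid}). The intersection $S\cap\partial W$ is then forced to consist of essential non-meridional curves (Lemma~\ref{Lengulf}), and Scharlemann's Theorem~\ref{Tsch} puts $S\cap W$ into one of finitely many standard configurations regardless of how large the $c_i$ are. Making this work requires the elimination of $D^2\times I$ regions and the flare analysis of Sections~\ref{SD2}--\ref{Sinter}, and relies throughout on a $0$-efficient triangulation so that every normal torus bounds a solid torus (Lemma~\ref{Ltorus}) --- a hypothesis your setup omits.

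Finally, your last paragraph invokes an algorithm to decide isotopy of Heegaard surfaces in order to eliminate duplicates. No such algorithm is supplied here; the paper explicitly notes that the output list may contain repetitions, which is why Theorem~\ref{Tmain} yields Corollary~\ref{Cmain} (computing the Heegaard genus) but stops short of a classification up to isotopy.
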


Note that Theorem~\ref{TLi} is not true for toroidal manifolds as one may generate infinitely many non-isotopic Heegaard splittings by Dehn twists along an incompressible torus.   In most part of this paper, we will assume $M$ is non-Haken and in section~\ref{SHaken} we will show that the proof can be easily extended to atoroidal Haken manifolds.  Note that the Haken case was previously proved by Johannson \cite{Jo1, Jo2}, 

Although Theorem~\ref{Tmain} gives a complete list of Heegaard splittings of bounded genus, there may be repetition in the list, i.e., we do not have an algorithm to determine whether or not two Heegaard splittings are isotopic.  Nevertheless, Theorem~\ref{Tmain} immediately gives an algorithm to determine the Heegaard genus of a 3-manifold and this answers one of a few major decision problems left in 3-manifold topology.  

\begin{corollary}\label{Cmain}
There is an algorithm to determine the Heegaard genus of a closed orientable and atoroidal 3-manifold.
\end{corollary}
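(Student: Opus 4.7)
The plan is to reduce the corollary to Theorem~\ref{Tmain} by iterating over genera and using a computable a priori upper bound on the Heegaard genus to ensure termination.

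First, I would produce an explicit upper bound $g_0$ on the Heegaard genus of $M$ directly from a triangulation $\mathcal{T}$. Concretely, a regular neighborhood $N(\mathcal{T}^{(1)})$ of the $1$-skeleton is a handlebody whose genus can be read off from the number of vertices and edges, and its complement in $M$ is a regular neighborhood of the dual $1$-skeleton, which is also a handlebody. This yields a genuine Heegaard splitting of $M$ and hence an effective upper bound $g_0$ on the Heegaard genus in terms of the combinatorics of $\mathcal{T}$.

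Next, I would run the algorithm provided by Theorem~\ref{Tmain} successively for each $g = 0, 1, 2, \ldots, g_0$. For each such $g$, this algorithm outputs a finite list of all Heegaard splittings of $M$ of genus $g$, up to isotopy; in particular, it decides whether such a splitting exists (the list is empty if and only if $M$ has no genus-$g$ Heegaard splitting). The algorithm then outputs the smallest $g$ for which the list is non-empty, and returns it as the Heegaard genus of $M$. Termination is guaranteed because the bound $g_0$ is reached in finitely many steps, and correctness is automatic: by definition the Heegaard genus is the least $g$ for which $M$ admits a Heegaard splitting of that genus, so the first non-empty list encountered in the enumeration gives exactly this value.

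There is essentially no obstacle beyond invoking Theorem~\ref{Tmain}; the only point to check is that the enumeration from Theorem~\ref{Tmain} really does certify emptiness in each unsuccessful genus, which is part of what the theorem provides. The duplication of isotopic splittings in the output (which Theorem~\ref{Tmain} explicitly allows) is irrelevant here, since determining the Heegaard genus only requires knowing whether the list of genus-$g$ splittings is empty, not recognizing when two entries coincide. Thus the corollary follows immediately once the triangulation-based upper bound $g_0$ is in place.
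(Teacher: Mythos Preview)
Your proposal is correct and matches the paper's treatment, which regards the corollary as an immediate consequence of Theorem~\ref{Tmain}. One small point the paper addresses that you do not: Theorem~\ref{Tmain} assumes $M$ is irreducible, while Corollary~\ref{Cmain} does not. The paper handles this in one line by invoking the additivity of Heegaard genus under connected sum---one algorithmically finds the prime decomposition, applies Theorem~\ref{Tmain} to each (atoroidal, irreducible) summand, and adds the genera. Also, your a priori bound $g_0$ from the triangulation is fine but not strictly necessary for termination: since every closed $3$-manifold has \emph{some} Heegaard splitting, the enumeration over $g=0,1,2,\dots$ is guaranteed to hit a non-empty list eventually.
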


Note that if the 3-manifold is toroidal, a theorem of Johannson \cite{Jo1, Jo2} gives an algorithm to determine the Heegaard genus.  In Corollary~\ref{Cmain} we do not have to assume the manifold is irreducible, since the Heegaard genus is additive under connected sum.

By a theorem of Casson and Gordon \cite{CG}, in a non-Haken 3-manifold, an unstabilized Heegaard splitting is strongly irreducible.  So we can assume our Heegaard splittings are strongly irreducible.  By a theorem of Rubinstein \cite{R} and Stocking \cite{St}, every strongly irreducible Heegaard surface is isotopic to a normal or an almost normal surface. In this paper, we use branched surfaces to study almost normal Heegaard surfaces.  In section~\ref{Spre}, we briefly review and prove some properties of 0-efficient triangulations and branched surfaces, which are the basic tools for the proof of the main theorem.  From normal surface theory, we know that there is a finite set of fundamental solutions that generate all normal and almost normal surfaces. Using a 0-efficient triangulation, we may assume the surfaces in the fundamental solutions have non-positive Euler characteristic.  Since the genus of our Heegaard surface is bounded, one can express the strongly irreducible Heegaard surface as a sum $S=F+\sum n_i T_i$ where each $T_i$ is a normal torus in the fundamental set of solutions and there are only finitely many choices for $F$.  If the coefficients $n_i$ are all bounded, then there are only finitely many possible such Heegaard surfaces and the main theorem follows.  Our task is to study the situation that the coefficients $n_i$ are unbounded, and our main tool is branched surface.
 
In section~\ref{SD2}, we perform some isotopies and compressions on a Heegaard surface carried by a branched surface to eliminate certain ``bubbles" called $D^2\times I$ regions.  These isotopies are very simple and canonical, but it is surprisingly difficult to prove that the process ends in finitely many steps.  To prove this, we have to compress the Heegaard surface and recover the original Heegaard surface in the end.  A key part of the proof of the main theorem is an analysis of the intersection of normal tori carried by a branched surface.  In sections \ref{Ssum} and \ref{Sinter}, we study the intersection of normal tori carried by a branched surface. We show that either those coefficients $n_i$ above are bounded and hence the main theorem holds, or the normal tori have a nice intersection pattern.  In section~\ref{Sengulf}, we show that in the latter case, the normal tori with nice intersection pattern lie in some nice solid tori.  Then we apply a theorem of Scharlemann \cite{S} on the intersection of a solid torus and a strongly irreducible Heegaard surface to show that all but finitely many such Heegaard surfaces are isotopic.

In \cite{L5}, the author proved a much stronger theorem, which says that there are only finitely many irreducible Heegaard splittings in a non-Haken 3-manifold, without the genus constraint.  The proof in \cite{L5} also uses measured laminations, so one would hope the methods in this paper can be used to give an algorithmic proof of this theorem.

\begin{notation}
Throughout this paper, for any topological space $X$, we use $\Int(X)$, $|X|$ and $\overline{X}$ to denote the interior, number of components and closure of $X$ respectively.  Unless specified, we will assume each of our surfaces is embedded and assume the surfaces are in general position if they intersect.
\end{notation}

\section{Branched surfaces and normal surfaces}\label{Spre}

Similar to the proof in \cite{L4}, we make extensive use of branched surfaces and 0-efficient triangulations.  In this section, we first recall some properties of branched surfaces and 0-efficient triangulations.  We refer the reader to \cite[Section 2]{L4} for a slightly more detailed discussion.

A \textbf{branched surface} in $M$ is a union of finitely many compact smooth surfaces glued together to form a compact subspace (of $M$)  locally modeled on Figure~\ref{branch}(a).

\begin{figure}
\begin{center}
\psfrag{(a)}{(a)}
\psfrag{(b)}{(b)}
\psfrag{horizontal}{$\partial_hN(B)$}
\psfrag{v}{$\partial_vN(B)$}
\includegraphics[width=4.0in]{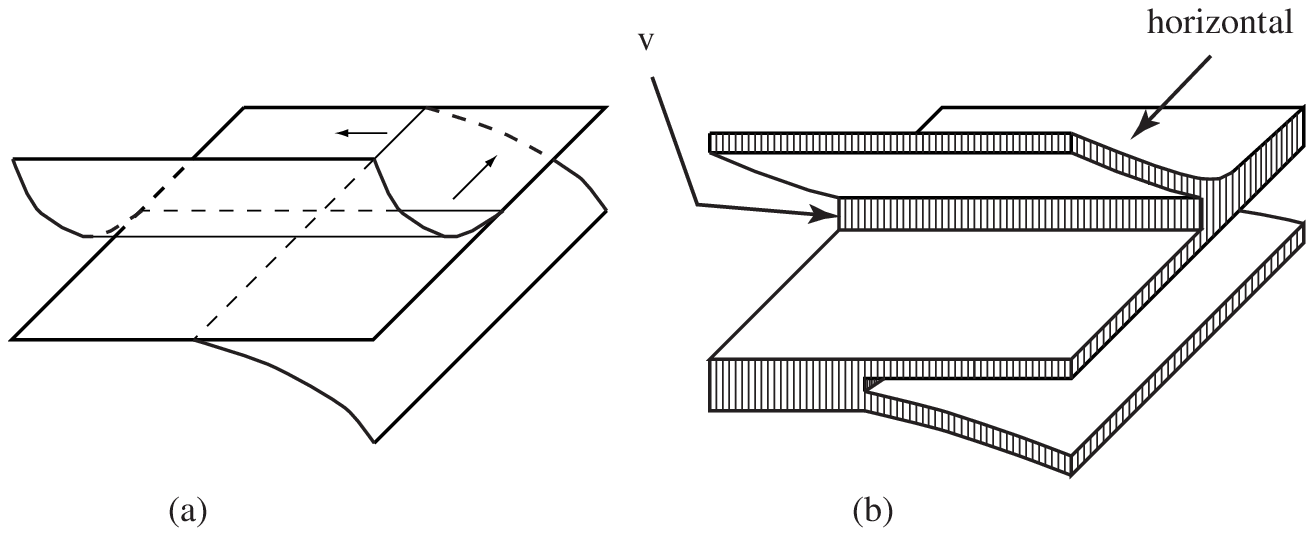}
\caption{}\label{branch}
\end{center}
\end{figure}

Given a branched surface $B$ embedded in a 3-manifold $M$, we denote by $N(B)$ a regular neighborhood of $B$, as shown in Figure~\ref{branch}(b).  One can regard $N(B)$ as an $I$-bundle over $B$, where $I$ denotes the interval $[0,1]$.  The boundary of $N(B)$ is divided into two parts: the horizontal boundary $\partial_hN(B)$ and the vertical boundary $\partial_vN(B)$, see Figure~\ref{branch}(b).  In particular, $\partial_vN(B)$ is a collection of annuli.  Throughout this paper, we denote by $\pi : N(B)\to B$ the projection that collapses every $I$-fiber to a point. We call an arc in $N(B)$ a \textbf{vertical arc} if it is a subarc of an $I$-fiber of $N(B)$.  We call an annulus $A\subset N(B)$ a \textbf{vertical annulus} in $N(B)$ if $A=S^1\times I$ with each $\{x\}\times I$ ($x\in S^1$) a subarc of an $I$-fiber of $N(B)$.  Although we use the phrase ``vertical arc" to denote a subarc of an $I$-fiber of $N(B)$, for any vertical annulus $A=S^1\times I$ in this paper, a vertical arc of $A$ means an arc $\{x\}\times I$ ($x\in S^1$) in $A$ rather than a subarc of $\{x\}\times I$. 
We say a surface $F$ is \textbf{carried} by $B$ (or carried by $N(B)$) if $F\subset N(B)$ and $F$ is transverse to the $I$-fibers of $N(B)$.  Note that in general $F$ may have boundary and $F$ may not be compact.  For example, $\partial_hN(B)$ is regarded as a surface carried by $N(B)$.  We say $F$ is \textbf{fully carried} by $B$ (or $N(B)$) if in addition $F$ intersects every $I$-fiber of $N(B)$.

The \textbf{branch locus} of $B$ is $L=\{b\in B:$ $b$ does not have a neighborhood in $B$ homeomorphic to $\mathbb{R}^2  \}$.  We call the closure (under path metric) of a component of $B-L$ a \textbf{branch sector}. We associate with every smooth arc in $L$ a vector (in $B$) pointing in the direction of the cusp, as shown in Figure~\ref{branch}(a).  We call it the \textbf{branch direction} of this arc, and each component of $\partial_vN(B)$ has an induced branch direction which is the normal direction for $\partial_vN(B)$ pointing into $N(B)$.  If a subset of $B$ itself is a branched surface (without boundary), then we call it a \textbf{sub-branched surface} of $B$.  For example, if there is a branch sector $S$ with branch direction of each arc in $\partial S$ pointing out of $S$, then $B-\Int(S)$ is a sub-branched surface of $B$, see \cite[Figure 2.1]{L1} for a picture.  

Let $F\subset N(B)$ be a surface carried by $N(B)$ (or $B$), and let $S$ be a branch sector of $B$.  We say that $F$ \textbf{passes through} the branch sector $S$ if $F\cap\pi^{-1}(\Int(S))\ne\emptyset$, where $\pi : N(B)\to B$ is the collapsing map. 
If $F$ is a closed surface and $F$ is carried but not fully carried by $N(B)$, then the union of all the $I$-fibers that intersect $F$ can be viewed as the fibered neighborhood of a sub-branched surface of $B$ that fully carries $F$.  In fact, this sub-branched surface can be obtained from $B$ by deleting all the branch sectors that $F$ does not pass through.

Let $B$ be a branched surface in $M$ and $F\subset N(B)$ a closed surface carried by $B$.  Let $L$ be the branch locus of $B$, and suppose $b_1,\dots, b_N$ are the components of $B-L$.  For each $b_i$, let $z_i$ be a point in $b_i$ and $x_i=|F\cap\pi^{-1}(z_i)|$.  Then one can describe $F$ using a non-negative integer point $(x_1,\dots,x_N)\in\mathbb{R}^N$, and  $(x_1,\dots,x_N)$ is a solution to the system of branch equations of $B$ (each branch equation is of the form $x_k=x_i+x_j$), see \cite{FO,O} for more details.  $F$ is fully carried by $B$ if and only if each $x_i$ is positive.   We use $\mathcal{S}(B)$ to denote the set of non-negative integer solutions to the system of branch equations of $B$.  This gives a one-to-one correspondence between closed surfaces carried by $B$ and points in $\mathcal{S}(B)$.  We call the sum of all the coordinates $\sum_{i=1}^Nx_i$ the \textbf{weight} of $F$ with respect to $B$.

Let $F_1$ and $F_2$ be embedded closed surfaces carried by $N(B)$ and suppose $F_1\cap F_2\ne\emptyset$.  In general, there are two directions to perform  cutting and pasting along an intersection curve of $F_1\cap F_2$, but only one of them results in a surface still transverse to the $I$-fibers of $N(B)$.  We call this cutting and pasting the \textbf{canonical cutting and pasting}.  This is similar to the Haken sum in normal surface theory.  We use $F_1+F_2$ to denote the surface after the canonical cutting and pasting.  This is a very natural operation, because for any $F_1=(x_1,\dots,x_N)$ and $F_2=(y_1,\dots,y_N)$ in $\mathcal{S}(B)$, $F_1+F_2=(x_1+y_1,\dots,x_N+y_N)$.  Moreover, this sum preserves the Euler characteristic, $\chi(F_1)+\chi(F_2)=\chi(F_1+F_2)$.  For a set of closed surfaces $\mathcal{F}=\{F_1,\dots,F_k\}$ carried by $B$, throughout this paper, we use $\mathcal{S}(\mathcal{F})=\mathcal{S}(F_1,\dots,F_k)$ to denote the set of (possibly disconnected) surfaces of the form $\sum_{i=1}^k n_iF_i$, where each $n_i$ is a non-negative integer.

\begin{definition}
An isotopy of $N(B)$ is called a $B$-\emph{isotopy} if it is invariant on each $I$-fiber of $N(B)$.  We say two surfaces carried by $N(B)$ are $B$-isotopic or $B$-parallel if they are isotopic via a $B$-isotopy of $N(B)$. 
\end{definition}

As we explained in section~\ref{Sintro}, since we are mainly dealing with non-Haken 3-manifolds, we may assume our Heegaard splittings are strongly irreducible.  By \cite{R, St}, we may assume our Heegaard surfaces are normal or almost normal surfaces with respect a triangulation of $M$.  We will refer reader to \cite[Section 2]{L4} and \cite{St} for the definition of almost normal surface.  

Given any normal or almost normal surface, as in \cite{FO}, by identifying all the normal disks of the same type, we obtain a branched surface fully carrying this surface.  Since there are only finitely many normal disk types and almost normal pieces, one can trivially construct a finite collection of branched surfaces such that each normal or almost normal surface is fully carried by a branched surface in this collection.

\begin{proposition}[Proposition 2.5 in \cite{L4}]\label{Pfinite}
There is a finite collection of branched surfaces in $M$ with the following properties.
\begin{enumerate}
\item each branched surface is obtained by gluing normal disks and at most one almost normal piece, similar to \cite{FO},
\item after isotopy, every strongly irreducible Heegaard surface is fully carried by a branched surface in this collection.
\end{enumerate}
\end{proposition}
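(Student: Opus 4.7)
The plan is to enumerate the combinatorial types of normal and almost normal surfaces and to build one branched surface per type via the standard identification procedure of Floyd-Oertel. Fix once and for all a triangulation $\mathcal{T}$ of $M$. In each tetrahedron there are only four normal triangle types, three normal quadrilateral types, and finitely many almost normal pieces (one octagonal type together with tubed pieces obtained by joining two normal disks in that tetrahedron by an unknotted tube). Therefore the set $\mathcal{L}$ of disk and almost-normal-piece types in all of $M$ is finite.

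To every normal or almost normal surface $S$ I would associate its combinatorial type $\Sigma(S)\subseteq\mathcal{L}$: the subset of types actually realized by $S$, with at most one almost normal piece. Since $\mathcal{L}$ is finite, so is the collection of admissible $\Sigma$'s. For each such $\Sigma$ I would apply the Floyd-Oertel construction: inside each tetrahedron, collapse all parallel copies of every disk type in $\Sigma$ to a single disk, then glue the resulting disks across the faces of $\mathcal{T}$ according to the boundary patterns dictated by the types. This yields a branched surface $B_\Sigma$ which by construction is assembled from normal disks and at most one almost normal piece, and which fully carries every (almost) normal surface whose combinatorial type is exactly $\Sigma$. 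Ranging over the finite family of $\Sigma$'s produces a finite collection of branched surfaces satisfying condition (1).

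For condition (2), I would invoke the theorem of Rubinstein and Stocking \cite{R,St} to isotope any strongly irreducible Heegaard surface $S \subset M$ into normal or almost normal position, and then observe that the isotoped surface is fully carried by $B_{\Sigma(S)}$, which lies in our finite list by construction.

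The main point that requires care — and the only place where I expect any subtlety — is that $B_\Sigma$ is genuinely well defined, depending only on $\Sigma$ and not on the particular surface used to produce it. This reduces to the observation that each normal or almost normal piece has a fixed boundary pattern on the faces of its tetrahedron, so the way identified pieces of specified types glue across a shared face is forced, as are the branch directions along the branch locus. Two (almost) normal surfaces of the same combinatorial type differ only in the multiplicities of their disks, and these multiplicities disappear under the identification. Once this well-definedness is verified, the proposition follows immediately from the finiteness of $\mathcal{L}$.
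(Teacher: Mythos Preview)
Your proposal is correct and follows essentially the same approach as the paper: the paper does not give a detailed proof (it cites \cite{L4} and marks the proposition with \qed), but the paragraph preceding the proposition sketches exactly your argument --- identify all normal disks of the same type as in \cite{FO}, observe there are only finitely many normal disk types and almost normal pieces, and invoke Rubinstein--Stocking for the strongly irreducible Heegaard surfaces. Your discussion of well-definedness of $B_\Sigma$ is a reasonable elaboration of what the paper leaves implicit.
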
\qed

By \cite{JR}, every irreducible and atoroidal 3-manifold $M$ admits a 0-efficient triangulation unless $M$ is $S^3$ or $RP^3$ or $L(3,1)$.  Since the Heegaard splittings of lens spaces are standard \cite{BO}, we may assume $M$ is not a lens space and admits a 0-efficient triangulation.  By \cite{JR} there is an algorithm to change any triangulation of $M$ into a 0-efficient one.  A 0-efficient triangulation for $M$ has only one vertex and the only normal $S^2$ is the vertex-linking sphere.  Thus, by taking a sub-branched surface if necessary, we may assume no branched surface in Proposition~\ref{Pfinite} carries any normal $S^2$.  

One of the most useful techniques in \cite{JR} is the so-called \emph{barrier surfaces} or \emph{barriers}, which is a surface or complex that is a barrier for the normalization process of a (non-normal) surface.  We refer reader to \cite[Section 3.2]{JR} and \cite[Section 5]{L4} for details.  Next, we give several useful facts well-known to people who are familiar with 0-efficient triangulations.  The proofs of these facts use the barrier technique.

In this section, we assume $M$ is a closed orientable atoroidal 3-manifold with a $0$-efficient triangulation.  We also assume $M$ is not a lens space.

\begin{lemma}\label{LnoS2}
Let $M$ be a 3-manifold with a 0-efficient triangulation as above. In particular $M$ is not a lens space.  Then $M$ does not contain any almost normal $S^2$ and $M$ does not contain any normal or almost normal projective plane.
\end{lemma}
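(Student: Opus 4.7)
The plan is to combine the barrier-surface technique of Jaco--Rubinstein with 0-efficiency (which says the only normal 2-sphere is the vertex-linking one), irreducibility of $M$, and the standing hypothesis that $M$ is not a lens space. I dispatch the normal $\mathbb{RP}^2$ case first and then reduce both almost normal cases to it or to a direct contradiction via 0-efficiency.

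Suppose first that $P \subset M$ is a normal projective plane. Its regular neighborhood $N(P)$ is a twisted $I$-bundle over $\mathbb{RP}^2$, so $\Sigma := \partial N(P)$ is an embedded 2-sphere. Taking $P$ as a barrier and normalizing $\Sigma$ in the complement of $P$ produces a (possibly empty) normal surface whose components are normal 2-spheres, hence each vertex-linking by 0-efficiency. Since $M$ is irreducible, $\Sigma$ must bound a 3-ball $B$ on the side away from $P$. Then $M = N(P) \cup B$ is a twisted $I$-bundle over $\mathbb{RP}^2$ capped off by a ball, so $M \cong \mathbb{RP}^3 = L(2,1)$, contradicting the hypothesis.

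Now suppose $F$ is an almost normal surface homeomorphic to $S^2$ or $\mathbb{RP}^2$; I resolve its almost normal piece in the standard way. If the piece is an octagon, shifting it in the two possible directions across its containing tetrahedron produces normal surfaces $F_+, F_-$ of the same topological type as $F$ but of strictly different weights. If $F \cong S^2$, then both $F_\pm$ are normal 2-spheres, each normally isotopic to the vertex-linking sphere by 0-efficiency; but normally isotopic normal surfaces share the same weight, contradicting $w(F_+) \neq w(F_-)$. If $F \cong \mathbb{RP}^2$, each $F_\pm$ is a normal projective plane, reducing to the case handled above. If the almost normal piece is a tube joining two normal disks, cutting the tube yields a normal surface $F'$ with $\chi(F') = \chi(F) + 2$. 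For $F \cong \mathbb{RP}^2$, the tube core is two-sided and hence separating in $F$, so $F'$ splits as $S^2 \sqcup \mathbb{RP}^2$ and the $\mathbb{RP}^2$ component reduces to the previous case. For $F \cong S^2$, one obtains two disjoint normal 2-spheres, each vertex-linking by 0-efficiency; analyzing the product region they cobound together with the almost normal tube inside it yields the contradiction via another application of the barrier technique.

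The main obstacle I expect is the tube case for an almost normal $S^2$: two parallel vertex-linking spheres are, on their own, compatible with 0-efficiency and irreducibility, so the contradiction must come from the geometric constraint imposed by the almost normal tube connecting them inside a single tetrahedron. Ruling this out should be the most delicate step, and will likely require a careful analysis of how the tube interacts with the 1-skeleton within the product region between the two spheres, possibly by producing a lower-weight normal sphere that is not vertex-linking and hence violates 0-efficiency.
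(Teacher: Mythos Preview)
Your proposal has genuine gaps in the almost normal $S^2$ case, and the paper's argument is both shorter and avoids the difficulty you flag.

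In your octagon case, the claim that ``shifting in the two possible directions produces \emph{normal} surfaces $F_+,F_-$ of the same topological type but of strictly different weights'' is not justified and, as stated, is not correct. The natural weight-reducing move across an edge of the exceptional tetrahedron is an isotopy of $F$ to a \emph{non-normal} position; one must then run the full normalization process. For a sphere that process is an isotopy, so the output on each side is either the vertex-linking sphere or has vanished into a tetrahedron. In the first case both outputs have the \emph{same} weight, so your contradiction $w(F_+)\neq w(F_-)$ disappears; in the second you have not argued anything. In your tube case for $S^2$ you explicitly leave the argument unfinished, and you correctly observe that two parallel vertex-linking spheres joined by a tube is not, on its face, a contradiction with $0$-efficiency.

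The paper sidesteps all of this by never analyzing the almost normal piece. Since $M$ is irreducible, the almost normal sphere $S$ bounds a $3$-ball $E$; now normalize $S$ in $M\setminus E$, using the almost normal $S$ itself as a barrier. Either $S$ is isotopic in $M\setminus\Int(E)$ to a normal sphere, which by $0$-efficiency is vertex-linking, forcing $M\setminus\Int(E)$ to be a ball and hence $M=S^3$; or $S$ vanishes during the normalization, so $S$ bounds a ball on the $M\setminus E$ side as well, again giving $M=S^3$. Either way one contradicts the hypothesis that $M$ is not a lens space. For the projective plane the paper's argument is even simpler and does not use normality at all: any embedded $\mathbb{RP}^2$ has a twisted $I$-bundle neighborhood whose boundary sphere bounds a ball by irreducibility, so $M\cong\mathbb{RP}^3$. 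This single sentence covers both the normal and almost normal $\mathbb{RP}^2$ cases at once, so your separate reductions for almost normal $\mathbb{RP}^2$ are unnecessary.
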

\begin{proof}
Suppose $M$ contains an almost normal 2-sphere $S$.  Since $M$ is irreducible, $S$ bounds a 3-ball $E$ in $M$.  We now try to normalize $S$ in $M-E$. 
By \cite[Theorem 3.2]{JR}, an almost normal surface is a barrier surface for $M-E$.  So either (1) $S$ is isotopic to a normal $S^2$ in $M-\Int(E)$ or (2) $S$ vanishes during the normalization process (i.e. $S$ can be isotoped into a tetrahedron).  In possibility (2), $S$ bounds a 3-ball outside $E$ and hence $M$ is $S^3$, a contradiction on our hypothesis on $M$.  In possibility (1), since the only normal $S^2$ is vertex-linking, $M-\Int(E)$ must also be a 3-ball and $M$ must be $S^3$, a contradiction again.

If $M$ contains a projective plane $P$, then a neighborhood of $P$ is a twisted $I$-bundle over $P$ whose boundary is a 2-sphere $S'$.  Since $M$ is irreducible, $S'$ bounds a 3-ball, which implies that $M$ is $RP^3$, a contradiction to our hypothesis on $M$.
\end{proof}

\begin{lemma}\label{LimmersedS2}
Let $M$ be a 3-manifold with a 0-efficient triangulation as above.  
Let $B$ be a branched surface in $M$ that does not carry the vertex-linking normal 2-sphere.  Then $B$ does not carry any immersed normal or almost normal 2-sphere, i.e., there is no immersed normal or almost normal 2-sphere in $N(B)$ transverse to the $I$-fibers of $N(B)$.
\end{lemma}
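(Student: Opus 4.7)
The plan is to suppose, for contradiction, that there is an immersion $f\colon S^2\to N(B)$ transverse to the $I$-fibers of $N(B)$, and then to extract from $f$ an embedded closed surface $F\subset N(B)$ carried by $B$ with $\chi(F)=2$. Let $b_1,\dots,b_N$ be the branch sectors of $B$, and let $x_i$ denote the number of preimages under $f$ of a point in the interior of $b_i$ (counted via $\pi\circ f$). Transversality of $f$ to the $I$-fibers makes this count well-defined, and forces $(x_1,\dots,x_N)$ to satisfy the branch equations of $B$. By the correspondence recalled in Section~\ref{Spre}, this weight vector is realized by an embedded closed surface $F\subset N(B)$ carried by $B$.

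The next step is to establish $\chi(F)=\chi(S^2)=2$. One route is to invoke the standard fact from normal surface theory that, for a branched surface built out of normal disks (and at most one almost normal piece), the Euler characteristic of any carried surface -- embedded or immersed -- is a fixed linear functional of the weight vector (this is equivalent to the angle-sum formula). Since $f(S^2)$ and $F$ share the same weight vector, they share the same Euler characteristic. An alternative route is to resolve the self-intersections of $f(S^2)$ by canonical cutting and pasting along each double curve; at every such curve, transversality to the $I$-fibers forces a unique choice of resolution, producing an embedded carried surface with the same weight vector, and cutting and pasting along circles preserves Euler characteristic.

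To conclude, I would analyze the components of $F$. By Lemma~\ref{LnoS2}, no component of $F$ can be a normal or almost normal $RP^2$, nor an almost normal $S^2$. By $0$-efficiency the only embedded normal $S^2$ in $M$ is the vertex-linking sphere, which by hypothesis is not carried by $B$ and so cannot appear as a component of $F$. Consequently every component of $F$ has $\chi\le 0$, forcing $\chi(F)\le 0$ and contradicting $\chi(F)=2$. The main obstacle is the identification $\chi(F)=\chi(S^2)$: in the cut-and-paste approach one must check that the canonical resolution is well-defined even when a self-intersection curve has nontrivial monodromy (i.e.\ its preimage in $S^2$ is a single circle double-covering the curve), while in the linear-functional approach one must verify that the Euler-characteristic formula, standard for embedded normal surfaces, extends to the immersed setting by a purely local computation.
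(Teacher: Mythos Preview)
Your proposal is correct and follows essentially the same approach as the paper: resolve the immersed $S^2$ by canonical cutting and pasting along double curves to obtain an embedded carried surface of Euler characteristic $2$, then invoke Lemma~\ref{LnoS2} and the hypothesis on $B$ to rule out any positive-$\chi$ component. The paper's proof is terser and does not pause over the monodromy issue you flag; your alternative linear-functional argument (that $\chi$ depends only on the weight vector) is a clean way to sidestep that concern entirely.
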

\begin{proof}
Suppose there is an immersed normal or almost normal $S^2$ carried by $N(B)$.  Then we perform a canonical cutting and pasting at each double curve of the immersed $S^2$ and eventually we obtain a collection of embedded surfaces carried by $B$.  The canonical cutting and pasting dose not change the Euler characteristic since the surface is immersed.  So the total Euler characteristic of the resulting embedded surface is 2.  This means that a component of the resulting surface must have positive Euler characteristic and hence there is a normal or an almost normal 2-sphere or projective plane carried by $B$.  By Lemma~\ref{LnoS2}, $M$ has no embedded almost normal $S^2$ and has no embedded normal or almost normal projective plane.  Since $B$ does not carry the normal $S^2$ in $M$, we have a contradiction.
\end{proof}

\begin{lemma}[Lemma 5.1 of \cite{L4}]\label{Ltorus}
Suppose $M$ is irreducible and atoroidal and $M$ is not a lens space.  Let $T$ be a normal torus with respect to a 0-efficient triangulation of $M$.  Then, we have the following.
\begin{enumerate}
\item $T$ bounds a solid torus in $M$.  
\item Let $N$ be the solid torus bounded by $T$. Then, $M-\Int(N)$ is irreducible and $T$ is incompressible in $M-\Int(N)$.
\end{enumerate}
\end{lemma}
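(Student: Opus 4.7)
The plan for part (1) is to use the compressibility of $T$ (forced by atoroidality of $M$) and reduce to analysis of the 2-sphere $\Sigma$ obtained by compressing $T$ along a compressing disk $D$. Since $M$ is irreducible, $\Sigma$ bounds a 3-ball $B$. The regular neighborhood $N(T\cup D)$ is a solid torus with an open ball removed, with boundary $T\sqcup\Sigma$. Either $B$ lies on the side of $\Sigma$ opposite $N(T\cup D)$, in which case $N(T\cup D)\cup_\Sigma B$ is a solid torus bounded by $T$ and we are done, or $N(T\cup D)\subset B$, so $T$ sits inside the 3-ball $B$. The hard part will be ruling out this second case using the 0-efficient hypothesis.

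Assuming $T\subset B$, let $U$ be the component of $B\setminus T$ disjoint from $\partial B$. Capping $\partial B$ off with a 3-ball embeds $T$ in an $S^3$, where it bounds a solid torus on at least one side; if $\overline{U}$ itself is a solid torus then $T$ bounds it in $M$ and we are done, so we may assume $\overline{U}$ is the exterior of a nontrivial knot in $S^3$. Let $V_2=U$ and let $V_1$ be the other side of $T$ in $M$, so that $\Sigma\subset V_1$. The normal surface $T$ is a barrier surface, so by the Jaco-Rubinstein barrier technique we may normalize $\Sigma$ within $V_1$ without ever crossing $T$. If this normalization ``vanishes'' $\Sigma$ into a single tetrahedron, then $\Sigma$ bounds a small ball inside that tetrahedron in addition to bounding $B\supset T$; since $T$ cannot lie inside a single tetrahedron, the two balls are on opposite sides of $\Sigma$ and $M=S^3$, contradicting the hypothesis. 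Otherwise the normalization yields a normal 2-sphere in $V_1$, which by Lemma~\ref{LnoS2} must be the vertex-linking sphere $S_v$, forcing $v\in V_1$. Because the normalization moves keep $\Sigma$ disjoint from $T$, the side of $\Sigma$ on which $T$ lies is preserved throughout; since $T\subset B$ initially, $T$ must end up inside the ball $B_v$ bounded by $S_v$. But a normal surface contained in $B_v$ can only be assembled from vertex-linking triangular normal disks, so it is a disjoint union of 2-spheres parallel to $S_v$, never a torus. This contradiction rules out the second case and establishes (1).

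For part (2), let $N$ be the solid torus from (1). Suppose for contradiction that $T$ is compressible in $M-\Int(N)$ via a disk $D'$, and let $\beta=\partial D'\subset T$. If $\beta$ is the meridian of $N$, then $D'\cup D_m$ (with $D_m$ the meridian disk of $N$) is a 2-sphere in $M$, and an analysis of the 3-ball it bounds forces $M$ to be a lens space. If $\beta$ is not the meridian of $N$, then compressing $M-\Int(N)$ along $D'$ reduces its boundary to a 2-sphere which, by irreducibility of $M$, caps off to a 3-ball, so $M-\Int(N)$ is itself a solid torus and $M=N\cup_T(M-\Int(N))$ is a genus-1 Heegaard splitting, again a lens space. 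Both conclusions contradict the hypothesis, so $T$ is incompressible in $M-\Int(N)$. Finally, for irreducibility of $M-\Int(N)$, take any 2-sphere $\Sigma'\subset M-\Int(N)$; by irreducibility of $M$ it bounds a 3-ball $B'\subset M$. Either $B'\cap N=\emptyset$ so $B'\subset M-\Int(N)$ and we are done, or $N\subset B'$, in which case $T=\partial N$ sits inside the 3-ball $B'$ — exactly the configuration ruled out in part (1).
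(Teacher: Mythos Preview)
The paper does not supply its own proof of this lemma---it is quoted from \cite{L4} with a bare \qed---so there is no proof here to compare against. Your argument for part~(1) is sound: normalizing $\Sigma$ on the $V_1$ side using $T$ as a barrier, and tracking that the ball side of $\Sigma_t$ stays the ball side throughout (because the isotopy extends to an ambient isotopy of $M$ carrying the ball along), correctly forces the normal torus $T$ into $B_v$, which is impossible. Your irreducibility argument at the end of part~(2) is also correct.

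Your incompressibility argument in part~(2), however, has a genuine error in each case. In Case~(a), the sphere $S'=D_m\cup D'$ does \emph{not} bound a $3$-ball: the core curve $\gamma$ of $N$ meets $D_m$ transversally once and is disjoint from $D'\subset M-\Int(N)$, so $\gamma\cdot[S']=1$, whence $[S']\neq 0$ in $H_2(M)$ and $S'$ is non-separating. The correct contradiction is that $M$ is reducible, not that $M$ is a lens space. In Case~(b), after compressing $T$ along $D'$ you obtain a sphere $\Sigma''$, and irreducibility tells you $\Sigma''$ bounds a ball in $M$---but you have not checked which side is the ball. The side containing $N$ is $Z=N\cup N(D')$, a solid torus with a $2$-handle attached along $\beta$; when $\beta$ meets the meridian of $N$ exactly once (so $\beta$ is a longitude for some framing) $Z$ is itself a $3$-ball, and then your ``compressed $M-\Int(N)$ is a ball'' conclusion fails. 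The fix is the one you already know: if $Z$ is the ball then $T\subset Z$ is a normal torus inside a $3$-ball, and the barrier argument from part~(1) rules this out.
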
\qed

\begin{corollary}\label{Cmeridion}
Let $T_1$ and $T_2$ be normal tori in $M$ and suppose each curve in $T_1\cap T_2$ is essential in both $T_1$ and $T_2$.  Then a curve in $T_1\cap T_2$ is a meridian of $T_1$ if and only if it is a meridian of $T_2$.
\end{corollary}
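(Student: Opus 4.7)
My plan is to suppose, toward a contradiction, that a curve $\gamma\subset T_1\cap T_2$ is a meridian of $T_1$ but not of $T_2$, and to produce a disk on the ``wrong'' side of $T_2$. Since the curves of $T_1\cap T_2$ are mutually disjoint essential simple closed curves on each torus, they are pairwise parallel on $T_1$ and pairwise parallel on $T_2$, so every component of $T_1\cap T_2$ shares $\gamma$'s meridian/non-meridian status on each side, and they cut each $T_i$ into annuli. Write $N_i$ for the solid torus bounded by $T_i$ and $M_i=M-\Int(N_i)$ for its complement, both provided by Lemma~\ref{Ltorus}.

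To keep the eventual intersection argument clean, I would first slide to a parallel meridian $\gamma'\subset T_1$ lying in the interior of one of the annuli of $T_1\setminus(T_1\cap T_2)$, so that $\gamma'\cap T_2=\emptyset$. Being parallel to $\gamma$ on $T_1$, $\gamma'$ is also a meridian of $N_1$ and bounds a disk $D\subset N_1$. I would put $D$ in general position with $T_2$ and minimize $|D\cap T_2|$; since $\partial D=\gamma'$ is disjoint from $T_2$, the intersection $D\cap T_2$ consists only of circles in $\Int D$, with no arcs.

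The main step is a standard innermost-circle argument on $D\cap T_2$. If $D\cap T_2\ne\emptyset$, let $c$ be an innermost circle bounding a subdisk $\delta\subset D$ whose interior is disjoint from $T_2$. If $c$ bounds a disk on $T_2$, use the irreducibility of $M$ to isotope $D$ and remove $c$, contradicting minimality. Otherwise $c$ is essential on $T_2$, and because $c$ lies in an annulus component of $T_2\cap N_1$ cobounded by parallel copies of $\gamma$, essentiality forces $c$ to be parallel to $\gamma$ on $T_2$. The disk $\delta$ lies on one side of $T_2$: if $\delta\subset M_2$ it is a compressing disk for $T_2$ in $M_2$, contradicting Lemma~\ref{Ltorus}; if $\delta\subset N_2$ then $c$ is a meridian of $T_2$, hence so is its parallel copy $\gamma$, contradicting the hypothesis.

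Finally, if $D\cap T_2=\emptyset$ then $D$ lies entirely in $N_2$ or in $M_2$, determined by which annulus of $T_1\setminus(T_1\cap T_2)$ contains $\gamma'$. Gluing $D$ along $\gamma'$ to the subannulus of $T_1$ running from $\gamma'$ out to $\gamma$ produces a properly embedded disk in $N_2$ or in $M_2$ bounded by $\gamma\subset T_2$, yielding the same two contradictions as in the previous paragraph. The reverse implication is symmetric in $T_1$ and $T_2$. The only delicate point I anticipate is the initial slide from $\gamma$ to $\gamma'$, made precisely to rule out arcs in $D\cap T_2$; without it one would have to replace the innermost-circle argument by an outermost-arc analysis whose resulting compressing disks are awkward to relate back to the original curve $\gamma$.
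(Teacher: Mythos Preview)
Your proof is correct and follows essentially the same route as the paper: produce an embedded disk bounded by (a curve parallel to) $\gamma$, run an innermost-circle argument against $T_2$, and invoke part~(2) of Lemma~\ref{Ltorus} to rule out a compressing disk on the outside of $N_2$. The only difference is cosmetic: the paper works directly with a disk $D$ bounded by $\gamma$ itself (so that once $\Int(D)\cap T_2=\emptyset$ the conclusion is immediate), whereas your preliminary slide to $\gamma'$ forces the extra annulus-gluing step in your Case~2; the slide is harmless but unnecessary, since with $\partial D=\gamma\subset T_2$ one may arrange a collar of $\gamma$ in $D$ to lie on one side of $T_2$, after which $\Int(D)\cap T_2$ already consists only of circles.
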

\begin{proof}
Let $\gamma$ be a curve of $T_1\cap T_2$.  If $\gamma$ is a meridian of $T_1$, then $\gamma$ bounds an embedded disk $D$ in $M$.  By our hypothesis, $\gamma$ is an essential curve in $T_2$.  Since $D$ is embedded in $M$ and $\gamma$ is an essential curve in $T_2$, if $\gamma$ is not a meridian of $T_2$, then every curve in $D\cap T_2$ must be trivial in $T_2$ and hence we can isotope $D$ (fixing $\gamma$) so that $\Int(D)\cap T_2=\emptyset$ after isotopy. 
By part (2) of Lemma~\ref{Ltorus}, $D$ must be in the solid torus bounded by $T_2$ and hence $\gamma$ must be a meridian of $T_2$, a contradiction.
\end{proof}

\begin{corollary}\label{Cklein}
Suppose $M$ is orientable, irreducible and atoroidal with a 0-efficient triangulation as above.  Suppose $M$ is not a Seifert fiber space.  Then $M$ does not contain any normal or almost normal Klein bottle.
\end{corollary}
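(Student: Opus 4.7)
Suppose for contradiction that $M$ contains a normal or almost normal Klein bottle $K$.  Since $M$ is orientable and $K$ is not, a regular neighborhood $Z=N(K)$ is a twisted $I$-bundle over $K$, bounded by a torus $T$ that double-covers $K$.  Because the inclusion $T\hookrightarrow Z$ corresponds to the index-$2$ subgroup $\pi_1(T)\hookrightarrow\pi_1(K)$, the torus $T$ is incompressible in $Z$.

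The plan is then to use atoroidality to force a compression on the outside.  Since $M$ is closed, orientable and atoroidal, the embedded torus $T$ must be compressible somewhere in $M$; as it is incompressible in $Z$, a compressing disk $D$ must lie in $X=\overline{M-Z}$.  Compressing $T$ along $D$ produces a $2$-sphere $S$ in $M$, which, by irreducibility, bounds a $3$-ball $B$.  Either $B$ contains $Z$, in which case the Klein bottle $K\subset Z\subset B\cong D^3$ would embed in $S^3$, contradicting the classical nonembedding of the Klein bottle in $\mathbb{R}^3$; or $B$ is disjoint from $\Int(Z)$, and then reassembling $X$ from $B$ together with a neighborhood of $D$ exhibits $X$ as a handlebody with torus boundary, i.e., a solid torus $V$.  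Thus $M=Z\cup_T V$.

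To close the argument, I would invoke the classical fact that the twisted $I$-bundle $Z$ over the Klein bottle carries two inequivalent Seifert fibrations---one over the M\"obius band with no singular fibers, and one over $D^2$ with two singular fibers of multiplicity $2$---whose regular fibers on $T$ realize two distinct slopes.  The meridian slope of $V$ can coincide with at most one of these two fiber slopes, so at least one Seifert fibration of $Z$ extends across $V$ to give a Seifert fibration of $M$, contradicting the hypothesis that $M$ is not a Seifert fiber space.

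The main technical obstacle is this final step: one must remember both Seifert structures on $Z$ and verify that their two fiber slopes on $T$ are genuinely distinct, so that no meridional filling can obstruct both simultaneously.  The earlier steps are a routine combination of atoroidality, irreducibility, and the nonembedding of the Klein bottle in $S^3$; in particular, the hypothesis that $K$ be normal or almost normal enters only to supply an embedded Klein bottle in $M$.
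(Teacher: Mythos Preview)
Your proof is correct, and it takes a genuinely different route from the paper's argument.

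The paper splits into the normal and almost normal cases.  If $K$ is normal, the boundary torus $T=\partial N(K)$ is itself a normal torus, and Lemma~\ref{Ltorus} (which rests on the $0$-efficient triangulation and barrier technique) immediately gives that $T$ bounds a solid torus.  If $K$ is almost normal, then $T$ carries \emph{two} almost normal pieces and is therefore neither normal nor almost normal; the paper then normalizes $T$ in $M-K$ using $K$ as a barrier, and again concludes that $T$ bounds a solid torus.  In both cases one lands on $M = (K\tilde\times I)\cup_T(\text{solid torus})$, which the paper simply declares Seifert fibered.

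By contrast, you bypass the normal surface machinery entirely: atoroidality forces $T$ to compress, incompressibility of $T$ in $Z$ pushes the compressing disk into $X$, and irreducibility together with the nonembedding of $K$ in $S^3$ identifies $X$ as a solid torus.  This is cleaner and, as you correctly observe, shows more: it proves $M$ contains no embedded Klein bottle whatsoever, with the normal/almost normal hypothesis playing no role.  Your final step, invoking the two Seifert fibrations on the twisted $I$-bundle over $K$ (over the M\"obius band and over $D^2(2,2)$) with distinct boundary slopes, also makes explicit what the paper leaves as a bare assertion.  The trade-off is that the paper's argument stays inside the normal surface framework being built up for the algorithm, whereas yours imports a separate (though standard) piece of Seifert fibered space theory.
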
 
\begin{proof}
Suppose $M$ contains a normal Klein bottle $P$.  Then the boundary of a small neighborhood of $P$ is a normal torus $T$.  By Lemma~\ref{Ltorus}, $T$ bounds a solid torus in $M$.  So $M$ is the union of a solid torus and a twisted $I$-bundle over a Klein bottle, which means that $M$ is a Seifert fiber space.

If $M$ contains an almost normal Klein bottle $P'$, then the boundary of a small neighborhood of $P'$ is a torus $T'$ containing two almost normal pieces.  Next we try to normalize $T'$ in $M-P$.  By \cite[Theorem 3.2]{JR}, $P'$ is a barrier for the normalization process.  Hence, either (1) $T'$ is isotopic to a normal torus in $M-P'$ or (2) some compression occurs in the normalization process, in which case $T'$ becomes a 2-sphere after the compression.  By Lemma~\ref{Ltorus} and since $M$ is irreducible, in both cases, $T'$ bounds a solid torus in $M$.  This means that $M$ the union of a solid torus and a twisted $I$-bundle over a Klein bottle and $M$ is a Seifert fiber space, a contradiction again.
\end{proof}

\begin{definition}\label{Dcomp}
Let $B$ be a branched surface and let $X$ be a component of $M-\Int(N(B))$.  We say $X$ is a \textbf{$D^2\times I$ component} of $M-\Int(N(B))$ if $X$ is a 3-ball with $\partial X$ consisting of a component of $\partial_vN(B)$ and two disk components of $\partial_hN(B)$.  Suppose $X$ is a  $D^2\times I$ component of $M-\Int(N(B))$ and let $\alpha$ be an arc properly embedded in $X$.  We say $\alpha$ is a \textbf{vertical arc} in $X$ if 
\begin{enumerate}
\item $\alpha$ is unknotted (i.e.~$\partial$-parallel) in the 3-ball $X$,
\item the two endpoints of $\alpha$ lie in different disk components of $\partial_hN(B)$.
\end{enumerate}
For a more general component $X$ of $M-\Int(N(B))$, let $A_1,\dots A_n$ be the components of $\partial_vN(B)$ that lie in $\partial X$ and let $c_i$ be a core curve of $\Int(A_i)$ for each $i$.  Suppose $B$ carries (but not necessarily fully carries) a closed surface $S$. 
We say $X$ is an \textbf{almost $D^2\times I$ component} (with respect to $S$) of $M-\Int(N(B))$  if
\begin{enumerate}
\item $c_2,\dots c_n$ bound disjoint disks $\Delta_2,\dots,\Delta_n$ such that each $\Delta_i$ is carried by $N(B)$ and $\Delta_i\cap S=\emptyset$ (such a $\Delta_i$ is usually called a disk of contact, see\cite{FO}), 
\item if we split $N(B)$ along $\Delta_2,\dots,\Delta_n$ and get a fibered neighborhood of a new branched surface $N(B')$, then $X$ becomes a $D^2\times I$ component of $M-\Int(N(B'))$ after the splitting.
\end{enumerate}
Suppose $X$ is an almost $D^2\times I$ component of $M-\Int(N(B))$ and let $\alpha$ be an arc properly embedded in $X$ with $\partial\alpha\subset\partial_hN(B)$.  We say $\alpha$ is a \textbf{vertical arc} in $X$ if after splitting $N(B)$ along $\Delta_2,\dots,\Delta_n$ above, $\alpha$ becomes a vertical arc of the resulting $D^2\times I$ component of $M-\Int(N(B'))$. 
\end{definition}

\begin{definition}\label{Dvert}
Suppose $B$ carries (but not necessarily fully carries) a separating closed surface $S$.  Let $\alpha$ be an arc properly embedded in a component of $\overline{M-S}$.  We say $\alpha$ is an \textbf{almost vertical arc} with respect to $B$ and $S$ if  
\begin{enumerate}
\item $\alpha\cap N(B)$ consists of vertical arcs in $N(B)$ (i.e. subarcs of $I$-fibers of $N(B)$) 
\item there are a collection of disjoint subarcs $\alpha_1,\dots,\alpha_n$ of $\alpha$ such that $\alpha-\bigcup_{i=1}^n\alpha_i\subset N(B)$ and each $\alpha_i$ is a vertical arc of either a $D^2\times I$ component or an almost $D^2\times I$ component (with respect to $S$) of $M-\Int(N(B_i))$ for some sub-branched surface $B_i$ of $B$ that carries $S$.
\end{enumerate}
The second requirement makes sense if we view a sub-branched surface $B_i$ of $B$ as a branched surface obtained by deleting certain branch sectors from $B$, and view $N(B_i)$ as the object obtained by deleting from $N(B)$ the part corresponding to the deleted branch sectors and then smoothing out the corners.  The reason for this technical requirement will become clear in the next section.  We define the \textbf{length} of $\alpha$ to be $n+1$, where $n$ is the number of subarcs $\alpha_i$'s in part (2) of the definition.  Note that we allow $n$ to be 0, and $n=0$ if and only if $\alpha$ is a vertical arc in $N(B)$. 
Moreover, let $B_s$ be the sub-branched surface of $B$ that fully carries $S$.  Recall that $\alpha$ is properly embedded in $\overline{M-S}$, so it follows from the definition that either $\alpha$ is a vertical arc in $N(B)$, or after we have isotoped $S$ so that part of $\partial_hN(B_s)$ lies in $S$, we can make $\alpha$ to be an arc properly embedded in a component of $M-\Int(N(B_s))$.  
\end{definition}

\begin{definition}\label{Dai}
Let $B$ be a branched surface and $S$ a closed separating surface carried by $B$.  We say that $S$ is an \textbf{almost Heegaard surface} with respect to $B$ if there are finitely many disjoint arcs $\alpha_1,\dots,\alpha_k$ such that 
\begin{enumerate}
\item each $\alpha_i$ is an almost vertical arc with respect to $S$ and $B$, 
\item after adding tubes to $S$ along the $\alpha_i$'s, we get a Heegaard surface of $M$.
\end{enumerate}
Adding a tube along $\alpha_i$ is the operation deleting from $S$ two small disks that contain $\partial\alpha_i$ and then connecting the resulting boundary circles using a small tube/annulus along $\alpha_i$. 
We call  these $\alpha_i$'s the set of (almost vertical) arcs \textbf{associated} to $S$ and we call the Heegaard surface obtained by adding such tubes to $S$ the Heegaard surface \textbf{derived} from $S$.  If the Heegaard surface derived from $S$ is strongly irreducible, then we say $S$ is an \textbf{almost strongly irreducible Heegaard surface} with respect to $B$.  Note that the definition of almost Heegaard surface is basically for a pair $(S,J)$, where $J$ is the set of arcs $\alpha_i$'s above, but to simplify notation, we use only $S$ to denote the pair $(S,J)$.  Moreover, to simplify notation, we will regard a Heegaard surface as an almost Heegaard surface (with the set of associated almost vertical arcs $J=\emptyset$).
\end{definition}

Note that an almost strongly irreducible Heegaard surface $S$ may not be connected.  If we assume $S$ is normal or almost normal and assume our branched surface $B$ does not carry any normal or almost normal 2-sphere as above, then no component of $S$ is a 2-sphere.

Scharlemann's no-nesting lemma \cite[Lemma 2.2]{S} says that if a simple closed curve in a strongly irreducible Heegaard surface bounds an embedded disk in $M$, then it must bound a disk properly embedded in one of the two handlebodies (or compression bodies) in the Heegaard splitting.  The following lemma is a mild extension of Scharlemann's lemma to an almost strongly irreducible Heegaard surface.

\begin{lemma}\label{Lnonesting}
Let $S\subset N(B)$ be a surface carried by $B$ and suppose $S$ is an almost strongly irreducible Heegaard surface with respect to $B$.  Let $\gamma$ be an essential simple closed curve in $S$ that bounds an embedded disk in $M$.  Then $\gamma$ bounds a compressing disk for $S$.
\end{lemma}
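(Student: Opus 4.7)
The plan is to reduce to Scharlemann's original no-nesting lemma applied to the strongly irreducible Heegaard surface $\hat{S}$ derived from $S$, and then convert the disk it produces into one bounded by $\gamma$ in a single component of $\overline{M-S}$ by an innermost-disk argument.

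First I would let $\alpha_1,\dots,\alpha_k$ be the almost vertical arcs associated to $S$, and let $\hat{S}$ be the Heegaard surface obtained by tubing $S$ along these arcs, choosing the tubes in small enough regular neighborhoods of the $\alpha_i$ that they are disjoint from $\gamma$. Write $M=H_1\cup_{\hat{S}}H_2$. Since $\hat{S}$ is strongly irreducible by hypothesis and $\gamma\subset\hat{S}$ bounds an embedded disk in $M$, Scharlemann's no-nesting lemma \cite[Lemma 2.2]{S} produces an embedded disk $\hat{D}$ properly embedded in one of $H_1,H_2$, say $H_1$, with $\partial\hat{D}=\gamma$.

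Next I would analyze the position of $S$ relative to $H_1$. The surface $S$ separates $M$ into components $V_1,V_2$, and tubing along each $\alpha_i$ transfers the solid tube $N(\alpha_i)$ from its own side to the opposite side of $\hat{S}$. A direct accounting then gives that $S\cap \operatorname{Int}(H_1)$ is the finite disjoint union of the small endpoint disks of those $\alpha_j$ whose interiors lie in the component of $M-S$ corresponding to $H_2$. In particular, every component of $S\cap \operatorname{Int}(H_1)$ is a disk in $S$.

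Now isotope $\hat{D}$ rel $\partial$ to be transverse to $S$ with $|\hat{D}\cap S|$ minimal. Any component of $\hat{D}\cap S$ other than $\gamma=\partial\hat{D}$ is a simple closed curve in $\operatorname{Int}(\hat{D})\cap S\subset \operatorname{Int}(H_1)\cap S$, hence sits inside one of the small endpoint disks; it therefore bounds a subdisk $D''$ on $S$. An innermost such circle $c$ on $\hat{D}$ bounds a subdisk $D'\subset\hat{D}$ with $\operatorname{Int}(D')\cap S=\emptyset$, and the 2-sphere $D'\cup D''$ bounds a 3-ball in $M$ by irreducibility. Isotoping $\hat{D}$ across this ball and perturbing slightly off $S$ strictly reduces $|\hat{D}\cap S|$, contradicting minimality. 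Hence $\hat{D}\cap S=\gamma$, so $\hat{D}$ lies in a single component of $\overline{M-S}$ with boundary the essential curve $\gamma$, giving the desired compressing disk for $S$. The one real subtlety to keep careful is the explicit identification of $S\cap \operatorname{Int}(H_i)$ as a disjoint union of disks coming from the tube endpoints, which is what licenses the innermost-disk reduction; the rest is a standard application of irreducibility.
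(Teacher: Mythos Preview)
Your proof is correct and follows essentially the same approach as the paper: apply Scharlemann's no-nesting lemma to the derived Heegaard surface $\hat S$, then convert the resulting disk into a compressing disk for $S$. The paper finishes by isotoping the disk off the meridional disks of the tubes and observing that $S$ is obtained from $\hat S$ by compressing along precisely those disks; your explicit identification of $S\cap\Int(H_1)$ as the tube end-disks followed by an innermost-circle reduction is just a slightly more hands-on version of the same step.
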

\begin{proof}
By Definition~\ref{Dai}, there is a collection of almost vertical arcs $J$ associated to $S$ such that if we add tubes along these arcs, we obtain a strongly irreducible Heegaard surface $S'$.  
We can choose these arcs $J$ so that their endpoints are not in $\gamma$ and hence we may view $\gamma\subset S'$.  By Scharlemann's no-nesting lemma \cite[Lemma 2.2]{S}, $\gamma$ bounds a compressing disk $D_\gamma$ for $S'$. 
Since $S'$ is the surface obtained by adding tubes along arcs in $J$ and since $\gamma$ is disjoint from $J$, we may assume the disk $D_\gamma$ is disjoint from these tubes (and after isotopy, disjoint from the meridional disks of these tubes). As we can obtain $S$ from $S'$ by compressing $S'$ along the meridional disks of these tubes, we may view $D_\gamma$ as a disk with $\partial D_\gamma=\gamma\subset S$ and $\Int(D_\gamma)\cap S=\emptyset$.  By our hypothesis that $\gamma$ is an essential curve in $S$, this means that $D_\gamma$ is a compressing disk for $S$.
\end{proof}

\begin{lemma}\label{L3-ball}
Suppose $S$ is either a strongly irreducible Heegaard surface or a surface obtained by compressing a strongly irreducible Heegaard surface.  In the later case we suppose $S$ does not contain any 2-sphere component.  Then no component of $S$ is contained in a 3-ball in $M$.
\end{lemma}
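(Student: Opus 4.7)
My plan is to argue by contradiction: suppose a component $S_0$ of $S$ lies in a $3$-ball $B^3\subset M$, and derive $M=S^3$, contradicting the standing hypothesis that $M$ is not a lens space and admits a $0$-efficient triangulation (so in particular $M\neq S^3$). The argument extends the classical fact that a Heegaard surface cannot be contained in a $3$-ball.

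First I would identify the ``inside handlebody'' $V$. Since $H_1(B^3)=0$, the surface $S_0$ separates $B^3$, and I take $V$ to be the component of $B^3\setminus S_0$ not meeting $\partial B^3$. By choosing $S_0$ innermost among components of $S$ lying in some $3$-ball (ordered by containment of their inside regions), I can arrange that $V$ contains no other component of $S$, so $V$ becomes a single connected component of $M\setminus S$ with $\partial\overline V=S_0$. Next I would verify that $\overline V$ is a handlebody. In Case 1 it is one of the two handlebodies of the Heegaard splitting. In Case 2, since $S$ arises from a strongly irreducible Heegaard surface $\Sigma$ by a sequence of compressions, every component of $M\setminus S$ is a compression body (starting from the two handlebodies of $M\setminus\Sigma$, each compression both cuts a handlebody on one side along a disk and attaches a $2$-handle to the compression body on the other side; both operations preserve the compression-body class). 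A compression body with connected boundary is a handlebody, and the no-$S^2$-component hypothesis guarantees $V$ has positive genus.

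The main step is then to apply irreducibility of the opposite compression body. I would let $W_0$ be the component of $M\setminus S$ adjacent to $S_0$ on the side opposite $V$; as a compression body it is irreducible. By a standard innermost-disk isotopy of $\partial B^3$, and if necessary by shrinking $B^3$ to a regular neighborhood of $\overline V$ completed to a ball by $g$ cancelling $1$-handles for $V$, I would arrange $\partial B^3\cap S=\emptyset$ and that the outer region $V':=B^3\setminus\overline V$ contains no component of $S\setminus S_0$. Then $\partial B^3\subset W_0$, so irreducibility of $W_0$ would force $\partial B^3$ to bound a $3$-ball in $W_0$. However, the inside region $V'$ has boundary $S_0\cup\partial B^3$, which is disconnected since $S_0$ has genus at least one, so $V'$ is not a $3$-ball; and the outside region $W_0\cap(M\setminus B^3)$ either equals $M\setminus\Int(B^3)$, which is a $3$-ball only when $M=S^3$, or else carries additional boundary from components of $S$ outside $B^3$ and so is again not a $3$-ball. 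In every case this contradicts the irreducibility of $W_0$.

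The main obstacle I anticipate is the ball-choice step: arranging both $\partial B^3\cap S=\emptyset$ and $V'\cap S=\emptyset$ simultaneously when components of $S\setminus S_0$ may nest around $V$ or cross $\partial B^3$. The remedy should exploit that $\pi_1(V)\to\pi_1(B^3)$ is trivial, so the handlebody $V$ admits $g$ cancelling arcs in $B^3$ whose regular neighborhood together with $\overline V$ forms a $3$-ball; these arcs can be adjusted by general position to avoid $S\setminus S_0$, or alternatively the innermost reduction can be iterated inside the outer region until both sides of $S_0$ in some $3$-ball are $S$-clean. Once this technical step is in place, the rest of the argument is a straightforward transplant of the classical argument for Case 1.
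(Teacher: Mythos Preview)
Your argument has a genuine gap in Case 2. The assertion that ``attaching a $2$-handle to the compression body on the other side \dots\ preserves the compression-body class'' is false. For a concrete counterexample, let $K$ be any nontrivial tunnel-number-one knot in $S^3$: its exterior $E(K)$ is obtained from a genus-$2$ handlebody by attaching a single $2$-handle, yet $E(K)$ is not a compression body, since its torus boundary is incompressible while $E(K)$ is neither a solid torus nor $T^2\times I$. So after a compression of the Heegaard surface, the side that received the $2$-handle need not be a compression body, and your route to ``$W_0$ is irreducible'' breaks down. Relatedly, you never really use the strong irreducibility of $\Sigma$; without it one could compress on both sides and the complementary pieces can be quite wild.

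The paper's proof fixes exactly this point. Strong irreducibility of $S'$ forces all the compressions to occur on a single side, say $H_2$: once you compress in $H_2$, Casson--Gordon says the resulting boundary is incompressible from the $H_1$-side, so no subsequent compression can occur there. Thus one side of $S$ is a disjoint union of handlebodies (hence irreducible), and the other side $W$ is $H_1$ with $2$-handles attached. The paper does \emph{not} claim $W$ is a compression body; instead it observes that $S'$ is still a strongly irreducible Heegaard surface of the bounded manifold $W$, and invokes \cite{CG} directly to conclude that $W$ is irreducible. From ``both complementary pieces are irreducible'' and $M\neq S^3$, the conclusion follows by the standard argument you sketch. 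The ball-choice step you flag as the main obstacle is indeed routine once irreducibility on both sides is in hand; the substantive missing ingredient in your proposal is the one-sided-compression observation together with the Casson--Gordon irreducibility result.
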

\begin{proof}
The case that $S$ is a strongly irreducible Heegaard surface is trivial, since a handlebody is irreducible and $M\ne S^3$.  Suppose $S$ is obtained by compressing a strongly irreducible Heegaard surface $S'$.   Let $H_1$ and $H_2$ be the two handlebodies in the Heegaard splitting along $S'$.  Since $S'$ is strongly irreducible, the compressions occur only on one side, say in $H_2$.  So one side of $S$ is a union of handlebodies lying in $H_2$ and the other side of $S$, denoted by $W$, is obtained by adding 2-handles to $H_1$.  Since no component of $S$ is a 2-sphere, $\partial W$ has no sphere component and $S'$ can be viewed as a strongly irreducible Heegaard surface of the manifold with boundary $W$.  By \cite{CG}, $W$ is irreducible.  So none of the two submanifolds of $M$ bounded by $S$ is reducible.  Since $M$ is not $S^3$, this implies that no component of $S$ is contained in a 3-ball.
\end{proof}

\begin{definition}\label{Dkmin}
Let $S_i$ ($i=1,2$) be an almost Heegaard surface carried by $B$, let $J_i$ be the almost vertical arcs associated to $S_i$, and let $S'_i$ be the Heegaard surface obtained by adding tubes to $S_i$ along arcs in $J_i$.  We say $(S_1, J_1)$ and $(S_2, J_2)$ are similar if the Heegaard surfaces $S_1'$ and $S_2'$ are isotopic.
Suppose the total length of the arcs in $J_1$ is bounded from above by a number $K$ (note that this implies that the number of components of $J_1$ is at most $K$).  We say $(S_1, J_1)$ or simply say $S_1$ is \textbf{$K$-minimal} if, for any $(S_2, J_2)$ similar to $(S_1, J_1)$ and with $length(J_2)\le K$, we have $weight(S_1)\le weight(S_2)$, where $weight(S_1)$ is the weight of $S_1$ with respect to $B$ defined at the beginning of this section.
\end{definition}

We finish this section with following observation.

\begin{lemma}\label{Lfinite}
Let $B$ be a branched surface in $M$ as above. In particular, $B$ does not carry any normal or almost normal 2-sphere. 
Let $S\subset N(B)$ be a normal or an almost normal surface carried by $N(B)$, and we suppose $S$ is either a strongly irreducible Heegaard surface or an almost strongly irreducible Heegaard surface for $B$.  If $S$ is an almost Heegaard surface, we suppose the total length of the almost vertical arcs associated to $S$ is bounded from above by a fixed number $K$ and suppose $S$ is $K$-minimal. Let $A\subset N(B)$ be a vertical annulus in $N(B)$.  Suppose $A\cap S$ consists of simple closed curves that are essential in $S$.  Suppose a core curve of $A$ bounds an embedded disk $D$ carried by $N(B)$ and $D$ does not pass through the possible almost normal piece in $B$.  Then there is an number $k$ depending on $B$, $M$ and the fixed number $K$ above, such that $|A\cap S|\le k$.  Moreover, $k$ can be algorithmically calculated.
\end{lemma}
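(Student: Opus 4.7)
The plan is to bound $|A\cap S|$ by contradicting the $K$-minimality of $S$: given many intersection curves, I use the disk $D$ to build many compressing disks for $S$, extract a parallelism region in the compression side, and use it to construct a similar almost Heegaard surface of strictly smaller weight.

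First I analyze the intersection. Enumerate $A\cap S=\{c_1,\dots,c_n\}$ in cyclic order along $A$. Each $c_i$ is essential in $S$ and, being transverse to the $I$-fibers of $A$, is a section of the projection $A=S^1\times I\to S^1$; hence each $c_i$ is parallel in $A$ to the core. For each $i$, pushing $D$ along the subannulus of $A$ from the core to $c_i$ produces an embedded disk $D_i\subset N(B)$ carried by $N(B)$ with $\partial D_i=c_i$ and disjoint from the almost normal piece (since $D$ is). By Lemma~\ref{Lnonesting}, each $c_i$ bounds a compressing disk $E_i$ for $S$; and by the almost strong irreducibility of the derived Heegaard surface applied through Lemma~\ref{Lnonesting}, the $E_i$ may all be arranged on the same side $H^+$ of $S$, which is irreducible by the argument of Lemma~\ref{L3-ball}. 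Standard innermost-disk and outermost-arc arguments in $\overline{H^+}$ let me take the $E_i$'s to be pairwise disjoint and disjoint from each subannulus of $A$ lying in $H^+$.

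Next I produce parallelism balls. Let $A_j$ denote the subannulus of $A$ between $c_j$ and $c_{j+1}$. These alternate between $H^+$ and the opposite side, so at least $\lfloor n/2\rfloor$ of them lie in $H^+$. For each such $A_j$, the sphere $A_j\cup E_j\cup E_{j+1}\subset\overline{H^+}$ bounds a $3$-ball $V_j\subset\overline{H^+}$ by irreducibility. The ball $V_j$ is a product region with $A_j$ on one face and $E_j\cup E_{j+1}$ on the other, so there is an ambient isotopy of $M$ supported near $V_j$ that sweeps $S$ across $V_j$ and removes the two intersection curves $c_j,c_{j+1}$ from $A\cap S$.

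Finally I reach the contradiction. Because $J$ has total length at most $K$, its arcs meet $A$ in at most some $N=N(K,B)$ points, computable from $K$ and the combinatorial structure of $B$. If $n>2N+4$, then some $V_j$ with $A_j\subset H^+$ is disjoint from $J$, so the above isotopy leaves $J$ intact. The isotoped surface is no longer normal, but by the barrier technique \cite[Theorem 3.2]{JR} — using the unchanged portion of $S$ together with the almost normal piece as a barrier — I re-normalize to a normal or almost normal surface $S^*$ carried by $B$ with the same derived Heegaard surface as $S$. Then $(S^*,J)$ is similar to $(S,J)$ with $\mathrm{length}(J)\le K$; moreover $S^*$ has strictly smaller weight than $S$, since $|A\cap S^*|\le n-2<n$ and the weight of a $B$-carried surface is bounded below by its intersection count with $A$ times the number of branch sectors of $B$ traversed by $A$. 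This contradicts the $K$-minimality of $S$, so $|A\cap S|\le 2N+4=:k$, an algorithmically computable bound. The main obstacle will be making the re-normalization step precise — one must verify that the barrier normalization yields a surface in the same similarity class as $(S,J)$ rather than a different one, and that the resulting normal surface genuinely has smaller weight rather than reverting to $S$ itself.
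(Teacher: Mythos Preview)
Your argument has a genuine gap at the ``isotopy across $V_j$'' step. The ball $V_j\subset\overline{H^+}$ has boundary sphere $A_j\cup E_j\cup E_{j+1}$, and $\partial V_j\cap S=c_j\cup c_{j+1}$; the surface $S$ does not pass through $V_j$ at all. There is no ambient isotopy of $S$ supported near $V_j$ that removes $c_j,c_{j+1}$ from $A\cap S$: the ball only exhibits the two compressing disks $E_j,E_{j+1}$ as parallel through $A_j$, which says nothing about moving $S$. Your description of $V_j$ as ``a product region with $A_j$ on one face and $E_j\cup E_{j+1}$ on the other'' is not a product structure in any useful sense --- one side is an annulus and the other is two disks. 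So the weight-reducing isotopy you need simply is not there.

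Even granting some isotopy, your re-normalization and weight estimate are both unsound. The inequality ``weight $\ge |A\cap S|\times(\text{sectors traversed by }A)$'' is false in general, and the barrier normalization of an isotoped (non-normal) surface can perfectly well return $S$ itself, as you note. The paper avoids both problems by working on the \emph{other} side of $S$ --- the side $H_1$ \emph{without} compressing disks bounded by $A\cap S$. There the subannuli $A_{2k+1}\subset A$ are $\partial$-parallel in $H_1$, yielding annuli $\Gamma_{2k+1}\subset S$. Using Claim~1 (no planar piece of $S$ on the negative side of $A$, else one builds a carried $2$-sphere from copies of $D$), these $\Gamma$'s are forced to nest and to cross from the positive to the negative side of $A$; pairs of consecutive ones glue with subannuli of $A$ to form \emph{normal tori} $T_j$ carried by $B$. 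Kneser's finiteness forces two of these to be $B$-isotopic when $|A\cap S|$ is large, and a Dehn twist along the solid torus bounded by $T_j$ --- an honest isotopy in $M$ --- unwinds one wrap of $S$ and strictly reduces weight while keeping the surface carried by $B$. No re-normalization is needed, and that is exactly what makes the $K$-minimality contradiction go through.
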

\begin{proof}
Let $\gamma\subset A$ be a core curve of $A$ with $\partial D=\gamma$. For any point $x$ in $\gamma$, we fix a normal direction for $A$ at $x$ pointing into the disk $D$.  We call it the positive direction for $A$.  

Let $P\subset N(B)$ be a compact subsurface of $S$ with $\partial P\subset A$.  We say that $P$ is on the negative side of $A$ if for each point $x\in\partial P$, the direction pointing from $x$ into $P$ is the negative direction for $A$.  

\vspace{8pt}
\noindent
\emph{Claim 1}. There is no planar subsurface $P$ of $S$ as above on the negative side of $A$. 
\begin{proof}[Proof of Claim 1]
Suppose there is a planar surface $P$ as above on the negative side of $A$.  Then we can cap off each curve in $\partial P$ using a disk $B$-isotopic to $D$ and obtain a (possibly immersed) 2-sphere carried by $B$.  Since $D$ does not pass through the almost normal piece, the immersed 2-sphere is normal or almost normal, which contradicts Lemma~\ref{LimmersedS2}.
\end{proof}

Suppose $S$ splits $M$ into two submanifolds $H_1$ and $H_2$.  If $S$ is an almost Heegaard surface then $H_i$ may not be connected, but by our assumption on $B$ and $S$, no component of $S$ is a 2-sphere and no component of $H_i$ is a 3-ball.  If $S$ is an almost strongly irreducible Heegaard surface, since $A$ is vertical in $N(B)$, we may assume the almost vertical arcs associated to $S$ are disjoint from $A$, see Definition~\ref{Dvert}.

Since each component of $A\cap S$ is an essential curve in $S$ and bounds an embedded disk $B$-isotopic to the disk $D$ above, by Scharlemann's no-nesting lemma \cite[Lemma 2.2]{S} and by Lemma~\ref{Lnonesting}, each curve in $A\cap S$ must bound a compressing disk in $H_1$ or $H_2$ ($M=H_1\cup_SH_2$).  
Since $S$ is either a strongly irreducible Heegaard surface or an almost strongly irreducible Heegaard surface, curves in $A\cap S$ cannot bound compressing disks in both $H_1$ and $H_2$.  Thus we may assume the compressing disks (for $S$) bounded by $A\cap S$ all lie in $H_2$.  

The curves $A\cap S$ cut $A$ into a collection of subannuli properly embedded in $H_1$ and $H_2$.  Let $A_1,\dots, A_p$ be those subannuli of $A$ properly embedded in $H_1$ and clearly $p\ge\frac{1}{2}|A\cap S|-1$.  

\vspace{8pt}
\noindent
\emph{Claim 2}. Each annulus $A_i$ ($i=1,\dots, p$) above is $\partial$-parallel in $H_1$.
\begin{proof}[Proof of Claim 2]
Suppose on the contrary that $A_i$ is not $\partial$-parallel in $H_1$. 

If $S$ is a strongly irreducible Heegaard surface, then $A_i$ is $\partial$-compressible in $H_1$ and after a $\partial$-compression on $A_i$, we obtain a compressing disk for $H_1$ disjoint from $A_i$.  However, this contradicts that $S$ is a strongly irreducible Heegaard surface because $\partial A_i$ bounds compressing disks in $H_2$.

Next we suppose $S$ is an almost strongly irreducible Heegaard surface.  Let $J$ be the collection of almost vertical arcs associated to $S$ and let $S'$ be the strongly irreducible Heegaard surface obtained by adding tubes to $S$ along arcs in $J$.  Let $H_1'$ and $H_2'$ be the two handlebodies in the Heegaard splitting of $M$ along $S'$ corresponding to $H_1$ and $H_2$ respectively.  Since we have assumed that $J$ is disjoint from $A$, we may view $A_i$ as an annulus properly embedded in $H_1'$.  By our assumption on $A\cap S$ above, each component of $\partial A_i$ bounds a compressing disk in $H_2'$.  Since $S'$ is a strongly irreducible Heegaard surface, as in the argument above, $A_i$ must be $\partial$-parallel in $H_1'$.  Let $\Gamma_i\subset S'$ be the annulus in $S'$ bounded by $\partial A_i$ and parallel to $A_i$ in $H_1'$.  Since we have assumed $A_i$ (as an annulus in $H_1$) is not $\partial$-parallel in $H_1$, $\Gamma_i$ must contain a tube that we added to $S$.  Next we compress $S'$ along the meridional disks of these tubes to get back the surface $S$.   By our construction, since $S$ has no 2-sphere component, the meridional curve of each tube is an essential curve in $S'$.  As the annulus $\Gamma_i$ contains some tube, the compressions on $S'$ changes $\Gamma_i$ into a pair of disks.  This means that $\partial A_i=\partial\Gamma_i$ is a pair of trivial circles in $S$, contradicting our hypothesis that $A\cap S$ consists of essential curves in $S$.  Thus Claim 2 is also true if $S$ is an almost strongly irreducible Heegaard surface.
\end{proof}

Let $\Gamma_i\subset S$ be the annulus in $S$ bounded by $\partial A_i$ and parallel to $A_i$ in $H_1$.  Since $A_i$ is a vertical annulus in $N(B)$ and $\Gamma_i$ is transverse to the $I$-fibers of $N(B)$, the solid torus bounded by $A_i\cup\Gamma_i$ in $H_1$ must contain a component of $M-N(B)$.  Since $|M-N(B)|$ is bounded and the $A_i$'s are disjoint in $H_1$, if $|A\cap S|$ is sufficiently large, some of the $\Gamma_i$'s must be nested in $S$.  Without loss of generality, we suppose $\Gamma_1\subset\Gamma_2\subset\dots\subset \Gamma_q$.  By assuming $|A\cap S|$ to be large, we may also choose these $\Gamma_i$'s so that $(\Int(\Gamma_i)-\Gamma_{i-1})\cap A=\emptyset$ for each $1<i\le q$.  Note that $q$ is large if $|A\cap S|$ is large.

Since each $\Gamma_i$  is parallel to $A_i$ in $H_1$, a small neighborhood of $\partial\Gamma_j$ in $\Gamma_j$ (for any $j$) must be a pair of annuli lying on the same side of $A$.  By Claim 1, $\Gamma_i$ is not on the negative side of $A$.  Hence a small neighborhood of $\partial\Gamma_j$ in $\Gamma_j$  must be a pair of annuli lying on the positive side of  $A$, for each $j$.  
This means that, for any $j=2,\dots,q$, each of the two annular components of $\Gamma_{j}-\Int(\Gamma_{j-1})$ connects the positive side of $A$ to the negative side of $A$, as shown in Figure~\ref{wrap}(a).  Let $\Gamma'$ be a component of $\Gamma_q-\Int(\Gamma_1)$ and let $\Gamma_j'$ ($j=2,\dots,q$) be the component of $\Gamma_{j}-\Int(\Gamma_{j-1})$ lying in $\Gamma'$.  Since $S$ contains at most one almost normal piece, we may choose $\Gamma'$ to be the component of $\Gamma_q-\Int(\Gamma_1)$ that does not contain an almost normal piece. 
Let $A_j'\subset A$ be the subannulus of $A$ bounded by $\partial\Gamma_j'$.  By our assumption that $(\Int(\Gamma_i)-\Gamma_{i-1})\cap A=\emptyset$ for each $1<i\le q$, we have $A\cap\Gamma_j'=\partial\Gamma_j'$ and hence $\Gamma_j'\cup A_j'$ is an embedded torus or Klein bottle in $M$.  Since $\Gamma_j'$ connects the positive side of $A$ to the negative side of $A$, $T_j'=\Gamma_j'\cup A_j'$ can be perturbed slightly into a surface $T_j$ transverse to the $I$-fibers of $N(B)$, see Figure~\ref{wrap}(c).   
By our assumption that $\Gamma'$ does not contain an almost normal piece, each $T_j$ is a normal torus or Klein bottle.  By Corollary~\ref{Cklein}, $T_j$ cannot be a Klein bottle.  Hence $T_j$ is a normal torus carried by $B$ and by Lemma~\ref{Ltorus} each $T_j$ bounds a solid torus in $M$.  

So each $T_i'$ bounds a solid torus and the annulus $A_i'\subset T_i'$ is part of the boundary of the solid torus.  Since each $\Gamma_j'$ connects the positive side of $A$ to the negative side of $A$ and $A\cap\Gamma_j'=\partial\Gamma_j'$, $\Gamma_{i-1}'$ lies either totally in the solid torus bounded by $T_i'$ or totally outside the solid torus.  As shown in Figure~\ref{wrap}(b, c), this implies that these tori $T_i$'s ($j=2,\dots,q$) are disjoint after a small perturbation.  A theorem of Kneser \cite{Kn} says that a compact 3-manifold contains only finitely many disjoint non-parallel normal surfaces.  Thus if $q$ is large, $T_j$ and $T_{j-1}$ are $B$-isotopic for some $j$,  see Figure~\ref{wrap}(c).  This means that $\Gamma_j'$ and $\Gamma_{j-1}'$ are $B$-isotopic and the annulus $\Gamma_j'\cup\Gamma_{j-1}'$ wraps around the normal torus $T_j$ more than once, see Figure~\ref{wrap}(b).  Thus we can unwrap it by a Dehn twist on $T_j$ (which is an isotopy in $M$ because $T_j$ bounds a solid torus) and get a surface $S''$ isotopic to $S$ with smaller weight.

Note that in the argument above, if $q$ is large, then the number of $B$-parallel tori $T_i$'s is large.
If $S$ is an almost Heegaard surface, the almost vertical arcs associated to $S$ are properly embedded in $\overline{M-S}$ and (by our assumptions above) are disjoint from the vertical annulus $A$.  By the definition of $K$-minimal, see Definition~\ref{Dkmin}, the total number of the almost vertical arcs associated to $S$ is at most $K$, thus by assuming $q$ to be sufficiently large, we may suppose the number of $B$-parallel tori $T_i$'s is so large that we can choose the annulus $\Gamma_j'\cup\Gamma_{j-1}'$ above to be disjoint from the almost vertical arcs associated to $S$.  Hence the Dehn twist on $T_j$ above does not affect the arcs associated to $S$.  This means that $S''$ is an almost Heegaard surface isotopic to $S$ and with the same set of almost vertical arcs.  Furthermore, the two Heegaard surfaces derived from $S$ and $S''$ are isotopic, since the Dehn twist is an isotopy on $M$.  This contradicts the hypothesis that $S$ is $K$-minimal.

Therefore $q$ and $|A\cap S|$ must be bounded by a number $k$ that depends on $B$, $M$ and $K$, and it follows from the proof above that $q$ and $k$ can be algorithmically calculated.
\end{proof}

\begin{figure}
\begin{center}
\psfrag{(a)}{(a)}
\psfrag{(b)}{(b)}
\psfrag{(c)}{(c)}
\psfrag{A}{$A$}
\psfrag{G'}{$\Gamma'$}
\psfrag{Gi}{$\Gamma_i$}
\psfrag{Gi1}{$\Gamma_{i-1}$}
\psfrag{Tj}{$T_j$}
\psfrag{Tj1}{$T_{j-1}$}
\includegraphics[width=4.0in]{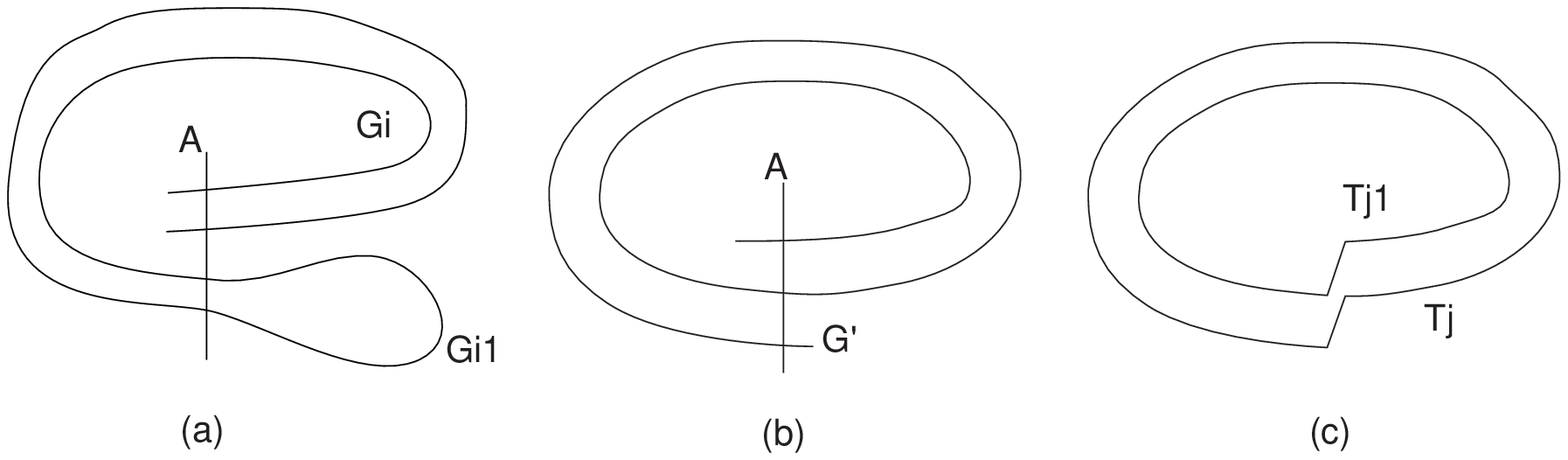}
\caption{}\label{wrap}
\end{center}
\end{figure}

\section{$D^2\times I$ regions for a surface carried by a branched surface}\label{SD2}

\begin{notation}\label{NB}
Throughout this paper, we assume our manifold $M$ is not a Seifert fiber space and admits a 0-efficient triangulation. 
 Unless specified, we use $B$ to denote a branched surface obtained by gluing normal disks and at most one almost normal piece as in Proposition~\ref{Pfinite}.  As in section~\ref{Spre}, we may assume $B$ does not carry any normal $S^2$.  By Lemma~\ref{LnoS2}, $B$ does not carry any almost normal 2-sphere neither.  
\end{notation}

The goal of this section is to eliminate certain ``bubbles" called $D^2\times I$ regions (see Definition~\ref{Dball} below) for a strongly irreducible Heegaard surface carried by $B$.  To achieve this, we have to compress the Heegaard surface into an almost Heegaard surface for $B$ (see Definition~\ref{Dai}).  Moreover, we also need a bound on the length of the associated almost vertical arcs (see Definitions~\ref{Dvert} and \ref{Dai}) to be able to algorithmically recover the original Heegaard surface in the end.

\begin{definition}\label{Dball}
Let $B$ be a branched surface as above and let $S$ be an orientable surface carried by $N(B)$.  Let $A\subset N(B)$ be a vertical annulus with $\partial A\subset S$.  Suppose both curves in $\partial A$ are trivial in $S$.  Let $D_0$ and $D_1$ be the two disks bounded by $\partial A$ in $S$. Suppose $D_0\cup A\cup D_1$ is an embedded sphere bounding a 3-ball $E$ in $M$.  We call $E$ a \emph{$D^2\times I$ region} for $S$ and $B$. We call $D_0\cup D_1$ the horizontal boundary of $E$, denoted by $\partial_hE$ and call $A$ the vertical boundary of $E$, denoted by $\partial_vE$. If $E\subset N(B)$, i.e. $D_0$ is $B$-isotopic to $D_1$, then we say the $D^2\times I$ region is trivial, otherwise we say $E$ is non-trivial.  If $E\cap S=D_0\cup D_1$ (i.e.~$\Int(E)\cap S=\emptyset$), then we say $E$ is a \emph{simple} $D^2\times I$ region, otherwise we call $E$ a \emph{stuffed $D^2\times I$ region}.  We say $E$ is a \emph{good} $D^2\times I$ region if $S\cap E$ consists of disks with boundary circles in $A$.  To simplify notation, we also say that the $D^2\times I$ region is bounded by $D_0\cup D_1$ or bounded by $A$.  
\end{definition}

There is a slight ambiguity for a $D^2\times I$ region depending on whether $D_0$, $D_1$ and $E$ in Definition~\ref{Dball} are on the same side of $A$.  We say $E$ is of type I if after collapsing $A$ into a circle, $D_0\cup A\cup D_1$ becomes a 2-sphere with a cusp pointing out of $E$, see Figure~\ref{type} for a 1-dimensional schematic picture.  We say $E$ is of type II if after collapsing $A$ into a circle, $D_0\cup A\cup D_1$ becomes a 2-sphere with a cusp pointing into $E$, see Figure~\ref{type}.  We say $E$ is of type III if after a small perturbation, $D_0\cup A\cup D_1$ becomes a 2-sphere carried by $N(B)$, see Figure~\ref{type}.  

\begin{figure}
\begin{center}
\psfrag{I}{type I}
\psfrag{II}{type II}
\psfrag{III}{type III}
\psfrag{D0}{$D_0$}
\psfrag{D1}{$D_1$}
\psfrag{A}{$A$}
\psfrag{E}{$E$}
\includegraphics[width=4.0in]{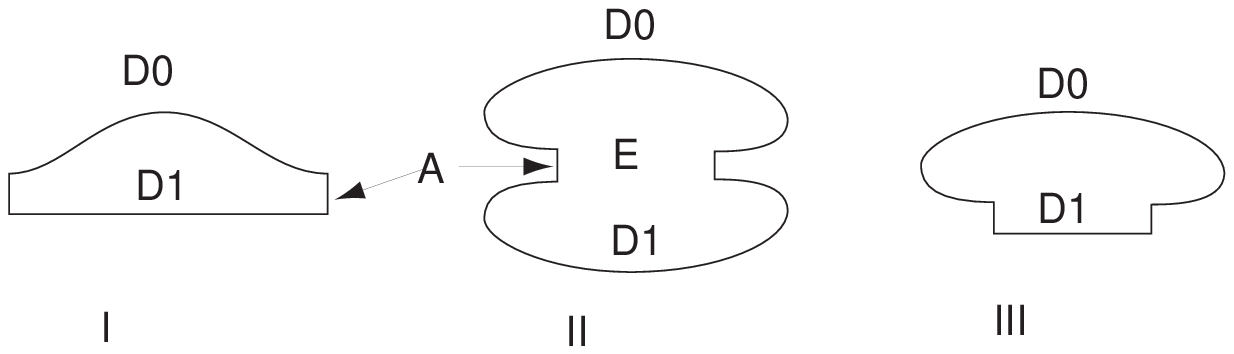}
\caption{}\label{type}
\end{center}
\end{figure}

\begin{lemma}\label{LtypeI}
Let $S\subset N(B)$ be a normal or an almost normal surface carried by $N(B)$, and  suppose $S$ is either a strongly irreducible Heegaard surface or an almost strongly irreducible Heegaard surface. Then every $D^2\times I$ region for $S$ is of type I.
\end{lemma}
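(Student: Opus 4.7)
The plan is to eliminate types II and III separately, in each case reducing the problem to the no-sphere property of the branched surface $B$ established in Lemmas~\ref{LnoS2} and \ref{LimmersedS2}.

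For type III, observe that $D_0$ and $D_1$ lie on opposite sides of the vertical annulus $A$, so the sphere $D_0\cup A\cup D_1$ admits a small perturbation making it transverse to the $I$-fibers of $N(B)$. Since $B$ is built from normal disks and at most one almost normal piece (Notation~\ref{NB}), the resulting perturbed sphere is an embedded normal or almost normal $2$-sphere carried by $B$. But our standing assumption together with Lemma~\ref{LimmersedS2} excludes the existence of any such sphere carried by $B$, yielding a contradiction.

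For type II, the cusp of $N(B)$ along $A$ points into $E$, so a small portion $W\subset N(B)\cap E$ adjacent to $A$ sits inside $E$ on the cusp side. The idea is to slide the annulus $A$ along $I$-fibers of $N(B)$ across the cusp, obtaining a new annulus $A'\subset \mathrm{Int}(E)$ whose two boundary circles land in $\mathrm{Int}(D_0)$ and $\mathrm{Int}(D_1)$ respectively. The sphere $\Sigma$ constructed from $A'$ together with the two sub-disks of $D_0$ and $D_1$ lying between $\partial A$ and $\partial A'$ is then carried by $N(B)$ and transverse to its $I$-fibers, hence a normal or almost normal $2$-sphere carried by $B$---again contradicting Lemma~\ref{LimmersedS2}.

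The principal obstacle is the type II case, particularly when $E$ is stuffed: the interior of $E$ may contain additional components of $S$ that obstruct the push. To address this I would first simplify $S\cap\mathrm{Int}(E)$ via an innermost-disk argument on the sliding disks in $E$. An innermost circle of such intersections bounds a disk in $M$ with interior disjoint from $S$; if that circle is essential in $S$, Lemma~\ref{Lnonesting} (the extension of Scharlemann's no-nesting lemma to almost strongly irreducible Heegaard surfaces) converts it into a compressing disk for $S$, and the type II geometry then forces compressing disks on both sides of $S$, contradicting the (almost) strong irreducibility of $S$; if the circle is trivial in $S$, a standard disk swap reduces the intersection complexity and the push-across-cusp argument proceeds as above. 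Either way a contradiction ensues, completing the proof.
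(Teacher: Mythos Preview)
Your treatment of type~III is fine and matches the paper. The real issue is type~II, where your central construction does not work.

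You claim that by sliding $A$ along $I$-fibers ``across the cusp'' you obtain a vertical annulus $A'\subset\Int(E)$ and that the sphere built from $A'$ together with pieces of $D_0,D_1$ is transverse to the $I$-fibers. But any push of $A$ along $I$-fibers produces another \emph{vertical} annulus; the result is never transverse. More fundamentally, the cusp at $\partial A$ in type~II points into $E$, which means the collars of $\partial D_0$ in $D_0$ and of $\partial D_1$ in $D_1$ lie on the \emph{same} side of $A$. Smoothing $D_0\cup A$ near $\partial D_0$ forces $A$ to tilt one way, while smoothing $D_1\cup A$ near $\partial D_1$ forces the opposite tilt; the two requirements are incompatible, so no perturbation of $D_0\cup A\cup D_1$ (or any sphere built from sub-pieces) can be made transverse to the $I$-fibers. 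This is precisely what distinguishes type~II from type~III, and it is why the paper does \emph{not} attempt to produce a carried $2$-sphere directly in the type~II case. (The paper does cap off a planar surface $P\subset E$ with disks $B$-parallel to $D_0$, but that works only because $P$ goes to the opposite side of $A$ from $D_0$; you have no such $P$ in hand.)

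Your last paragraph, invoking Lemma~\ref{Lnonesting} and strong irreducibility, is headed in the right direction but is far too vague to stand as a proof. The assertion that ``the type~II geometry then forces compressing disks on both sides of $S$'' is not justified: once you know the essential curves $\gamma_i\subset A\cap S$ bound compressing disks, strong irreducibility tells you they all lie on the \emph{same} side, not both. The paper's argument from that point is substantially more intricate: it shows the subannuli of $A$ in $H_1$ are $\partial$-parallel, traces the parallel annuli $\Gamma_{2k+1}\subset S$ and proves they lie outside $E$, then assembles the resulting solid tori with $E$ into a $3$-ball $E''$ containing a component of $S$, contradicting Lemma~\ref{L3-ball}. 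None of these steps is present in your sketch.
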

\begin{proof}
First, since $B$ does not carry any normal or almost normal $S^2$, type III $D^2\times I$ region does not exist. 
Suppose the lemma is false and there is a  type II $D^2\times I$ region $E$. Let $\partial_hE=D_0\cup D_1$ and $\partial_vE=A$ be as in Definition~\ref{Dball}.  If $A\cap S=\partial A$, then since $E$ is of type II, we can enlarge $E$ into a 3-ball $E'$ so that $\partial E'\cap S=\emptyset$.  However, since $E$ is of type II, $\Int(E)\cap S\ne\emptyset$.  As  $\partial E'\cap S=\emptyset$, this means that a component of $S$ lies in the 3-ball $E'$, which contradicts Lemma~\ref{L3-ball}.   So we may suppose $\Int(A)\cap S\ne\emptyset$.  Let $n=|\Int(A)\cap S|$.  Suppose $n$ is minimal among all type II $D^2\times I$ regions.  

By our construction of $B$, the possible almost normal piece in $B$ totally lies in a branch sector of $B$.  If $S$ is an almost normal surface, the weight of $S$ at the branch sector that contains the almost normal piece is one.  Since $A$ is a vertical annulus in $N(B)$ with $\partial A\subset S$, this implies that if $D_i$ intersects the almost normal piece then it must contain the whole almost normal piece.
As $S$ is normal or almost normal, $D_0$ and $D_1$ cannot both contain almost normal pieces.  So we may suppose $D_0$ does not intersect an almost normal piece.

\vspace{8pt}
\noindent
\emph{Claim}. There is no connected planar subsurface $P$ of $S$ such that $P\cap A=\partial P$, $\partial D_i\subset\partial P$ ($i=0$ or 1) and $P\ne D_i$.
\begin{proof}[Proof of the Claim]
The proof is similar to the proof of Claim 1 of Lemma \ref{Lfinite}. 
Suppose there is such a planar subsurface as in the Claim. As $P\cap A=\partial P$ and $P\ne D_i$, $P$ is properly embedded in either $E$ or $M-\Int(E)$.  Since $E$ is a type II $D^2\times I$ region and $\partial D_i\subset\partial P$ for some $i$ with $P\ne D_i$, $P$ must be properly embedded in $E$.  In particular, a neighborhood of $\partial P$ in $P$ and a neighborhood of $\partial D_i$ in $D_i$ lie on different sides of $A$. This means that we can construct a 2-sphere carried by $N(B)$ by capping off each circle in $\partial P$ using a disk $B$-isotopic to $D_0$.  Since $D_0$ does not contain an almost normal piece, we get a normal or an almost normal $S^2$.  This contradicts Lemma~\ref{LimmersedS2}.  
\end{proof}

Let $\gamma_0=\partial D_0$, $\gamma_1,\dots, \gamma_n$, $\gamma_{n+1}=\partial D_1$ be the curves of $A\cap S$ and we suppose $\gamma_i$ lies between $\gamma_{i-1}$ and $\gamma_{i+1}$ for each $i$.
We first consider the case that some $\gamma_i$ ($1\le i\le n$) is trivial in $S$.  Let $\Delta_i$ be the subdisk of $S$ bounded by $\gamma_i$.  If $\Int(\Delta_i)$ is disjoint from $A$, then $\Delta_i\cup D_0$ and $\Delta_i\cup D_1$ (together with subannuli of $A$) bound two $D^2\times I$ regions.  Since the $D^2\times I$ region $E$ is of type II and there is no type III $D^2\times I$ region, at least one of the two $D^2\times I$ regions is of type II.  This contradicts our hypothesis that $n=|\Int(A)\cap S|$ is minimal among all type II $D^2\times I$ regions.  So we may suppose $\Int(\Delta_i)\cap A\ne\emptyset$.  Let $\gamma_j$ be a component of $\Int(\Delta_i)\cap A$ that is innermost in $\Delta_i$.  By applying the argument for $\gamma_i$ above to $\gamma_j$, we can conclude that $\gamma_j$ must be either $\gamma_0=\partial D_0$ or $\gamma_{n+1}=\partial D_1$. Without loss of generality, we may suppose $\gamma_j=\gamma_0=\partial D_0$ which means that $D_0\subset \Delta_i$.  However, this implies that $\Delta_i$ contains a planar subsurface $P$ as in the Claim above and hence this cannot happen.  Thus every $\gamma_i$ ($1\le i\le n$) must be essential in $S$.

Suppose $S$ splits $M$ into two submanifolds $H_1$ and $H_2$.  If $S$ is an almost Heegaard surface then $H_i$ may not be connected, but by our assumption on $B$ and $S$, no component of $S$ is a 2-sphere and no component of $H_i$ is a 3-ball.  If $S$ is an almost strongly irreducible Heegaard surface, since $A$ is vertical in $N(B)$, we may assume the almost vertical arcs associated to $S$ are disjoint from $A$.

Let $A_i$ be the subannulus of $A$ between $\gamma_i$ and $\gamma_{i+1}$.  We may suppose $A_i$ is properly embedded in $H_1$ if $i$ is odd and properly embedded in $H_2$ if $i$ is even.  
So $A_0\cup D_0$ gives a disk in $H_2$ bounded by $\gamma_1$.  Since we have concluded that each component of $\Int(A)\cap S$ is an essential curve in $S$, by Scharlemann's no-nesting lemma \cite[Lemma 2.2]{S} and by Lemma~\ref{Lnonesting}, each curve in $\Int(A)\cap S$ must bound a compressing disk in $H_1$ or $H_2$ ($M=H_1\cup_SH_2$).  Since $\gamma_1$ bounds a disk in $H_2$ and since $S$ is either a strongly irreducible Heegaard surface or an almost strongly irreducible Heegaard surface, every $\gamma_i$ ($1\le i\le n$) bounds a compressing disk in $H_2$.  
As in Claim 2 of Lemma~\ref{Lfinite}, $A_{2k+1}$ must be $\partial$-parallel in $H_1$ for each $k$.

Let $\Gamma_{2k+1}\subset S$ be the annulus in $S$ with $\partial\Gamma_{2k+1}=\partial A_{2k+1}$ and parallel to $A_{2k+1}$ in $H_1$.  If $\Gamma_{2k+1}$ contains $D_0$ or $D_1$, then $\Gamma_{2k+1}-(D_0\cup D_1)$ contains a planar surface as in the Claim above.  So we may assume no $\Gamma_{2k+1}$ contains $D_0$ nor $D_1$.  

Since each curve in $S\cap\Int(A)$ is essential in $S$, $\Gamma_{2k+1}\cap A$ consists of curves essential in $\Gamma_{2k+1}$.  
Thus if $\Int(\Gamma_{2k+1})\cap A\ne\emptyset$, then one can always find a subannulus $P'$ of $\Gamma_{2k+1}$ properly embedded in the $D^2\times I$ region $E$.  Since $E$ is of type II,  as in the proof of the claim, one can obtain a normal or an almost normal 2-sphere carried by $B$ by capping off each curve in $\partial P'$ using a disk $B$-isotopic to $D_0$, which contradicts Lemma~\ref{LimmersedS2}.  So $\Int(\Gamma_{2k+1})\cap A=\emptyset$.  Moreover, this argument also implies that each $\Gamma_{2k+1}$ must be properly embedded in $M-\Int(E)$, since $E$ is of type II.

Recall that $\Gamma_{2k+1}$ is parallel to $A_{2k+1}$ in $H_1$ for each $k$. 
Let $T_k$ be the solid torus in $H_1$ bounded by $A_{2k+1}\cup\Gamma_{2k+1}$.  Since $\Gamma_{2k+1}$ is properly embedded in $M-\Int(E)$, $T_k$ must lie in $M-\Int(E)$ with $T_k\cap\partial E=A_{2k+1}$.  Moreover, $E\cup T_k$ is a 3-ball.  Let $E'$ be the union of $E$ and all these solid tori $T_k$.  So $E'$ is a 3-ball and by the definition of $A_i$, $\partial E'$ is the union of $D_0\cup D_1$, the $\Gamma_{2k+1}$'s and the $A_{2k}$'s.  We can enlarge $E'$ slightly into a 3-ball $E''$ to enclose $D_0\cup D_1$ and these $\Gamma_{2k+1}$'s.  Since $E$ is a type II $D^2\times I$ region, this means that $\partial E''\cap S=\emptyset$.  Hence a component of $S$ lies in the 3-ball $E''$, which again contradicts Lemma~\ref{L3-ball}. 
Therefore there is no type II $D^2\times I$ region.
\end{proof}

\begin{lemma}\label{Lstuff}
Let $S\subset N(B)$ be a normal or an almost normal surface carried by $N(B)$, and suppose $S$ is either a strongly irreducible Heegaard surface or an almost strongly irreducible Heegaard surface.  
Let $E$ be a stuffed $D^2\times I$ region for $S$ with $\partial_h E=D_0\cup D_1$ and $\partial_vE=A$, as in Definition~\ref{Dball}.  Then each component of $S\cap E$ is either a disk with boundary in $A$ or an unknotted annulus which is $\partial$-parallel in $E$ to a subannulus of $A$.  Moreover, the annuli in $E\cap S$ are non-nested in the sense that they are isotopic (relative to the boundary) to a collection of non-nested subannuli of $A$.
\end{lemma}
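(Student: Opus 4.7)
The plan is to combine the type~I classification from Lemma~\ref{LtypeI} with a boundary-curve analysis of each component $F$ of $S\cap E$ other than $D_0$ and $D_1$, using Lemmas~\ref{Lnonesting}, \ref{LimmersedS2}, \ref{L3-ball}, \ref{Ltorus}, Corollary~\ref{Cklein}, and the strong irreducibility hypothesis. By Lemma~\ref{LtypeI}, $E$ is of type~I, so $D_0$ and $D_1$ face $E$ from the same side of $A$, and any disk in $N(B)$ that is $B$-isotopic to $D_0$ lies on the opposite side of $A$. Write $M=H_1\cup_S H_2$; for such an $F$ the boundary $\partial F\subset\Int(A)$ consists of simple closed curves, each either trivial or essential in $A$.

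First I would control the curves of $S\cap\Int(A)$. By an argument parallel to the Claim in the proof of Lemma~\ref{LtypeI}, no curve of $S\cap\Int(A)$ can be trivial in $S$ and bound a non-disk planar subsurface of $S$: capping such a configuration with disks $B$-parallel to $D_0$ (chosen to avoid the almost normal piece) would produce a forbidden immersed normal or almost normal 2-sphere carried by $B$, contradicting Lemma~\ref{LimmersedS2}. Consequently every essential-in-$A$ curve of $S\cap\Int(A)$ is essential in $S$; by Lemma~\ref{Lnonesting} each bounds a compressing disk for $S$, and strong irreducibility forces all such compressing disks onto a single side, say $H_2$. For an innermost trivial-in-$A$ curve $c\subset S\cap\Int(A)$, the innermost disk $d\subset A$ bounded by $c$ is disjoint from $S$ in its interior and is a compressing disk for $S$ lying on the $A$-side opposite to $E$.

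In the second step I would classify $F$ via its boundary. If $\partial F$ contains a trivial-in-$A$ curve, pick an innermost such $c\subset\partial F$ and use the associated disk $d\subset A$ together with the type~I position of $E$: any further boundary component of $F$ would produce a 3-ball in $M$ enclosing a piece of $S$, contradicting Lemma~\ref{L3-ball}, so $F$ is a single disk bounded by $c$. If instead $\partial F$ consists entirely of essential-in-$A$ curves $\alpha_1,\dots,\alpha_k$, then as in Claim~2 of Lemma~\ref{Lfinite} the subannuli of $A$ between consecutive $\alpha_i$'s that lie on the $H_1$ side must be $\partial$-parallel in $H_1$; combining this $\partial$-parallelism with Lemma~\ref{L3-ball} and the type~I geometry rules out $k\ge 3$ and any higher-genus topology for $F$, forcing $F$ to be an annulus. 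A properly embedded annulus in the 3-ball $E$ with boundary two essential curves in $A\subset\partial E$ is automatically $\partial$-parallel to the subannulus of $A$ cut off by $\partial F$.

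For the non-nested conclusion, suppose annular components $F_1,F_2$ of $S\cap E$ are $\partial$-parallel to nested subannuli $A_1\subsetneq A_2$ of $A$. Then $F_1$ sits in the solid-torus pillow $V_2\subset E$ bounded by $F_2\cup A_2$, and $F_1\cup A_1$ bounds a smaller pillow $V_1\subset V_2$. After a small perturbation, $T_i:=F_i\cup A_i$ becomes a normal torus carried by $B$ (Klein bottles being excluded by Corollary~\ref{Cklein}), so by Lemma~\ref{Ltorus} each $T_i$ bounds a solid torus in $M$. Using the nested configuration together with the type~I structure, one then constructs either an immersed normal or almost normal 2-sphere carried by $B$ (contradicting Lemma~\ref{LimmersedS2}) or a 3-ball enclosing a component of $S$ (contradicting Lemma~\ref{L3-ball}), ruling out the nesting. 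The main obstacle will be the essential-in-$A$ case of the classification --- ruling out $k\ge 3$ or higher-genus $F$ --- which requires a delicate combinatorial bookkeeping of the $\partial$-parallel $H_1$-subannuli of $A$, the $H_2$-side compressing disks, and the type~I cusp geometry, all while respecting Lemma~\ref{L3-ball}.
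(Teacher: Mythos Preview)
Your proposal assembles the right lemmas but has a basic misconception and leaves the central step unargued. First, there are no trivial-in-$A$ curves: since $A$ is a vertical annulus and $S$ is transverse to the $I$-fibers of $N(B)$, every component of $S\cap A$ is a core curve of $A$. Your entire case ``$\partial F$ contains a trivial-in-$A$ curve'' is therefore vacuous, and the associated disk argument does not apply. Relatedly, the inference ``consequently every essential-in-$A$ curve of $S\cap\Int(A)$ is essential in $S$'' is false: such a curve can bound a disk in $S$ lying inside $E$, and these are exactly the disk components appearing in the conclusion. The paper handles them by showing any such disk $\Delta_i$ must lie in $E$ (if $\Delta_i-E\ne\emptyset$, then since $E$ is type~I one caps the boundary curves of the planar surface $\Delta_i-E$ by disks $B$-parallel to $D_0$ and obtains a forbidden sphere via Lemma~\ref{LimmersedS2}); the disk $\Delta_i$ then cuts $E$ into two smaller $D^2\times I$ regions, and one reduces inductively to the case where every $\gamma_i$ is essential in $S$.

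Second, you correctly flag the essential-curve case as ``the main obstacle,'' but you do not carry it out: the claim that $\partial$-parallelism of the $H_1$-subannuli ``rules out $k\ge 3$ and any higher-genus topology for $F$'' is not justified, and the subannuli of $A$ between consecutive boundary curves of a \emph{single} component $F$ need not be the subannuli $A_{2k+1}$ to which Claim~2 applies. The paper avoids this bookkeeping entirely by not analyzing components $F$ individually. Instead, once all $\gamma_i$ are essential in $S$, it takes the subannuli $A_{2k+1}\subset A$ lying in $H_1$ (each $\partial$-parallel in $H_1$ to an annulus $\Gamma_{2k+1}\subset S$, exactly as in the proof of Lemma~\ref{LtypeI}) and proves the single fact $\Gamma_{2k+1}\subset E$: if $\Gamma_{2k+1}-E\ne\emptyset$ then, again because $E$ is type~I, capping the planar piece $\Gamma_{2k+1}-E$ with disks $B$-parallel to $D_0$ produces a sphere carried by $B$, contradicting Lemma~\ref{LimmersedS2}. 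The $\Gamma_{2k+1}$'s then account for all of $\gamma_1,\dots,\gamma_n$ and hence exhaust $S\cap\Int(E)$.

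Finally, non-nestedness is immediate in the paper's setup: each $\Gamma_{2k+1}$ is parallel in $E$ to $A_{2k+1}$, and the $A_{2k+1}$'s are by construction pairwise disjoint (hence non-nested) subannuli of $A$. Your proposed argument via tori $F_i\cup A_i$, Lemma~\ref{Ltorus} and Corollary~\ref{Cklein} is unnecessary, and as written it does not produce a contradiction (you assert ``one then constructs either \dots\ or \dots'' without saying how). The essential idea you are missing is that the type~I orientation of $E$ makes $M-\Int(E)$ the side on which capping by $D_0$-parallel disks is available, so the sphere-obstruction forces every relevant piece of $S$ to stay inside $E$; once that is in hand, both the annulus classification and the non-nestedness drop out without further combinatorics.
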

\begin{proof} First, by Lemma~\ref{LtypeI}, $E$ is of type I. 
The proof is almost identical to the proof of Lemma~\ref{LtypeI}.  However, since the $D^2\times I$ region in the proof of Lemma~\ref{LtypeI} is of type II and $E$ is of type I here, we need to interchange the roles of $E$ and $M-E$ in the argument.  

Let $\gamma_i$ and $A_i$ ($i=0,\dots, n+1$, $\gamma_0=\partial D_0$ and $\gamma_{n+1}=\partial D_n$) be as in the proof of Lemma~\ref{LtypeI}.  The first case is that some $\gamma_i$ ($1\le i\le n$) bounds a disk $\Delta_i$ in $S$.  If $\Delta_i-E\ne\emptyset$, then as in the proof of the claim in Lemma~\ref{LtypeI}, $\Delta_i-E$ is a planar surface in $M-E$ and we can obtain a normal or an almost normal 2-sphere carried by $B$ by capping off its boundary curves using disks $B$-parallel to $D_0$, contradicting Lemma~\ref{LimmersedS2}.
So we can conclude that the disk $\Delta_i$ is properly embedded in $E$ and $\Delta_i$ cuts $E$ into a pair of smaller $D^2\times I$ regions.  Thus by taking a sub-$D^2\times I$ region of $E$ if necessary, we may assume that each $\gamma_i$ ($1\le i\le n$) is essential in $S$.

Using the same notation as in the proof of Lemma~\ref{LtypeI} and by the same argument, we may assume each $A_{2k+1}$ is $\partial$-parallel in $H_1$.  Let $\Gamma_{2k+1}\subset S$ be the annulus in $S$ bounded by $\partial A_{2k+1}$ and parallel to $A_{2k+1}$ in $H_1$.  As in the proof of Lemma~\ref{LtypeI}, if $\Gamma_{2k+1}-E\ne\emptyset$, then $\Gamma_{2k+1}-E$ is a planar surface in $M-E$ and we can obtain a normal or an almost normal 2-sphere carried by $B$ by capping off its boundary curves using disks $B$-parallel to $D_0$, which contradicts Lemma~\ref{LimmersedS2}.  So $\Gamma_{2k+1}-E=\emptyset$ and $\Gamma_{2k+1}$ must be properly embedded in $E$.  Hence $S\cap E$ consists of these unknotted annuli $\Gamma_{2k+1}$'s as in the proof of Lemma~\ref{LtypeI}.  As each $\Gamma_{2k+1}$ is parallel to $A_{2k+1}$ in $E$, these annuli are non-nested and Lemma~\ref{Lstuff} holds.
\end{proof}

\begin{corollary}\label{Cstuff}
Let $S$ be a normal or an almost normal strongly irreducible Heegaard surface carried by $B$.  Then there is a normal or an almost normal surface $S'$ carried by $B$ such that 
\begin{enumerate}
\item $S'$ is an almost strongly irreducible Heegaard surface and $S$ can be derived from $S'$, see Definition~\ref{Dai}, 
\item  every $D^2\times I$ region for $S'$ is a good $D^2\times I$ region, see Definition~\ref{Dball}.
\end{enumerate}
\end{corollary}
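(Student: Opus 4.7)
The plan is to obtain $S'$ from $S$ by iteratively compressing away the annulus components of $S\cap E$ in stuffed $D^2\times I$ regions, each compression being recorded by an almost vertical arc along which one can later re-tube to recover $S$.

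Suppose $S$ has a stuffed $D^2\times I$ region $E$ with $\partial_hE=D_0\cup D_1$ and $\partial_vE=A$, and let $\Gamma\subset S\cap E$ be one of the annulus components guaranteed by Lemma~\ref{Lstuff}; so $\Gamma$ is parallel in $E$ to a subannulus $A'\subset A$, with $\Gamma\cup A'$ bounding a solid torus $V\subset E$. The core curve $\gamma^*$ of $\Gamma$ is parallel through $V$ to a core circle of $A'$, which in turn is parallel through a subannulus of $A$ to $\partial D_0$, and chasing through these parallelisms produces an embedded disk in $M$ with boundary $\gamma^*$. Since $\gamma^*$ is essential in $S$, Lemma~\ref{Lnonesting} upgrades this to a compressing disk $D^*$ for $S$. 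I choose $D^*$ on the side of $\Gamma$ opposite $V$ and carried by $N(B)$ (by a small $B$-isotopy making it transverse to the $I$-fibers), and compress $S$ along $D^*$ to obtain a surface $S_1$ still carried by $B$, in which $\Gamma$ has been replaced by a pair of disks $\Delta_+,\Delta_-$ whose boundaries are the circles $\partial\Gamma=\gamma_1\cup\gamma_2\subset A$; the barrier-based normalization of \cite{JR} can be used to keep $S_1$ normal or almost normal. The two new disks together with the subannulus $A'$ bound a $D^2\times I$ region for $S_1$ that contains no further annulus components.

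To recover $S$ from $S_1$, I take the short arc $\alpha$ joining a point of $\Int(\Delta_+)$ to a point of $\Int(\Delta_-)$ through the thin slab between the two parallel copies of $D^*$; adding a tube to $S_1$ along $\alpha$ rebuilds $\Gamma$. After splitting $B$ along the disk of contact corresponding to $D^*$ to form a sub-branched surface $B'$, this slab becomes a $D^2\times I$ or almost $D^2\times I$ component of $M-\Int(N(B'))$ and $\alpha$ is a vertical arc of that component, so $\alpha$ is almost vertical in the sense of Definition~\ref{Dvert}. Each compression strictly increases $\chi$ by $2$, so the process terminates in finitely many steps; the final surface $S'$ has no annulus components in any stuffed $D^2\times I$ region, so by Lemma~\ref{Lstuff} every $D^2\times I$ region for $S'$ is good, and adding tubes to $S'$ along the collected arcs reproduces $S$, which certifies $S'$ as an almost strongly irreducible Heegaard surface with $S$ derived from it in the sense of Definition~\ref{Dai}.

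The principal obstacle is verifying that the compressing disk $D^*$ can be chosen to interact well with $B$: one must arrange both that $S_1$ stays carried by $B$ and normal or almost normal throughout the iteration, and that the slab between $\Delta_\pm$ really is an (almost) $D^2\times I$ component of $M-\Int(N(B'))$ for a suitable sub-branched surface $B'$ of $B$, so that the recorded arc $\alpha$ genuinely meets Definition~\ref{Dvert}. One must also check that the successive compressions can be chosen with mutually disjoint interiors so that no new stuffed $D^2\times I$ region with an annulus is created en route, which is what makes the termination really useful.
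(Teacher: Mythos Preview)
Your approach has a genuine gap in how you choose the compressing disk.  You invoke Lemma~\ref{Lnonesting} to produce an abstract compressing disk $D^*$ for $S$, then assert that you may take $D^*$ ``carried by $N(B)$ (by a small $B$-isotopy making it transverse to the $I$-fibers)'' and that after compressing and invoking barrier normalization the result stays normal or almost normal and carried by $B$.  None of this is justified: Lemma~\ref{Lnonesting} only gives you a compressing disk in a handlebody, with no control over its position relative to $N(B)$; there is no reason it can be made transverse to the $I$-fibers.  Invoking the barrier normalization of \cite{JR} afterward is dangerous---that procedure can globally alter the surface, destroying the local picture you need both to stay carried by $B$ and to identify the tube arc $\alpha$ as almost vertical.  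Likewise, your claim that splitting $B$ along a ``disk of contact corresponding to $D^*$'' yields a sub-branched surface with the slab between $\Delta_\pm$ an almost $D^2\times I$ component is unsupported; splitting along a disk of contact does not produce a sub-branched surface in the sense used in Definition~\ref{Dvert}, and $D^*$ need not correspond to a disk of contact at all.  You yourself flag all of these issues in your final paragraph without resolving them.

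The paper's proof avoids all of this by working entirely inside the $D^2\times I$ region $E$.  Since each annulus $\Gamma$ in $S\cap E$ is $\partial$-parallel in $E$ to a subannulus of $A$, one compresses $\Gamma$ along a meridional disk of the solid torus it bounds in $E$ and then pushes the two resulting disks, together with the other disk components of $S\cap E$, to be $B$-isotopic to $D_0$ (chosen to avoid the almost normal piece).  This is an explicit local move: the new surface is visibly carried by $N(B)$ and visibly normal or almost normal, with no normalization step needed.  Because the annuli are non-nested in the sense of Lemma~\ref{Lstuff}, the recovery arcs are simply vertical arcs in $N(B)$ connecting adjacent parallel copies of $D_0$---these have length $1$ and trivially satisfy Definition~\ref{Dvert}.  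The missing idea in your argument is exactly this: use the concrete geometry of $E$ rather than an abstract compressing disk.
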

\begin{proof}
Corollary~\ref{Cstuff} follows from Lemma~\ref{Lstuff}.  Let $E$ be a stuffed $D^2\times I$ region for $S$ with $\partial_h E=D_0\cup D_1$ and $\partial_vE=A$.  Since $S$ is normal or almost normal, we may assume one of the two disks in $\partial_hE$, say $D_0$, does not contain an almost normal piece.  By Lemma~\ref{Lstuff}, $S\cap E$ consists of disks and a collection of unknotted  $\partial$-parallel annuli in $E$.  So we can compress each unknotted annulus in $E$ into a pair of disks, and then push all the disks in $\Int(E)$ into disks $B$-isotopic to $D_0$, see Figure~\ref{iso2}(a) for a schematic picture (ignore the vertical dashed arc in the picture after isotopy for now).  Let $S_1$ be the resulting surface and clearly $E\cap S_1$ consists of disks.  Since $D_0$ does not contain an almost normal piece, $S_1$ is either normal or almost normal.  Moreover, since the annuli in $E\cap S$ are non-nested in the sense of Lemma~\ref{Lstuff}, one can recover $S$ from $S_1$ by adding tubes to $S_1$ along some vertical arcs in $N(B)$ connecting disk components of $S_1\cap E$, see the dashed vertical arc in Figure~\ref{iso2}(a) for a picture. So $S_1$ is an almost Heegaard surface and $S$ can be derived from $S_1$.  By repeating this argument on all stuffed $D^2\times I$ regions, we eventually get a desired surface $S'$.  Furthermore, the set of arcs associated to $S'$ is a collection of vertical arcs in $N(B)$.
\end{proof}

\begin{figure}
\begin{center}
\psfrag{(a)}{(a)}
\psfrag{(b)}{(b)}
\psfrag{comp}{compress and isotope}
\psfrag{E}{$E$}
\psfrag{E'}{$E'$}
\psfrag{V}{$V$}
\psfrag{iso}{isotopy}
\includegraphics[width=5.0in]{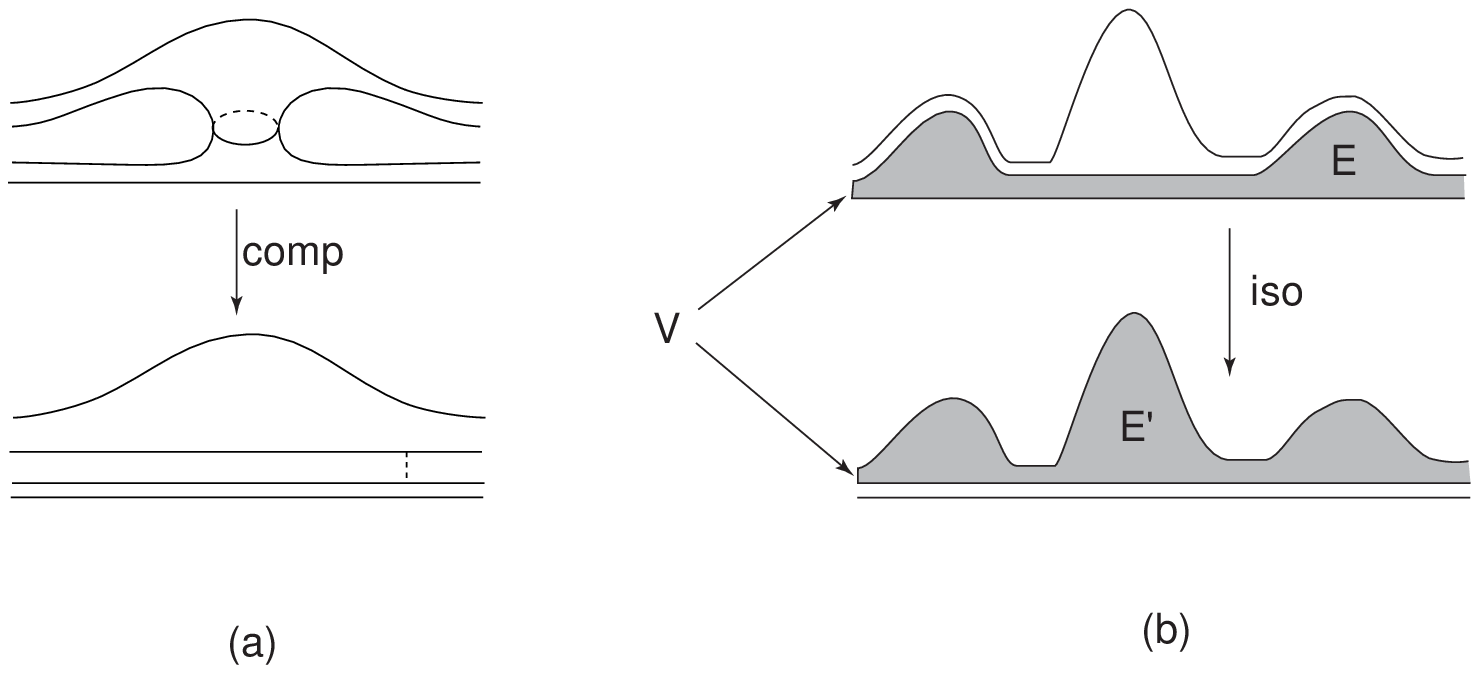}
\caption{}\label{iso2}
\end{center}
\end{figure}

\begin{lemma}\label{Last}
Let $S$ be a  normal or an almost normal surface carried by $B$ and suppose $S$ is either a strongly irreducible Heegaard surface or an almost strongly irreducible Heegaard surface.  Let $V$ be a component of $\partial_vN(B)$ and suppose $N(B)$ does not carry a disk $D$ which is $B$-parallel to a disk in $S$ and with $\partial D\subset\Int(V)$.  
Let $A$ be an embedded vertical annulus in $N(B)$ with $A\supset V$ and $\partial A\subset S$.  Suppose both components of $\partial A$ are trivial in $S$ and each curve in $\Int(A)\cap S$ (if not empty) is essential in $S$.  Then the two disks bounded by $\partial A$ in $S$ are non-nested in $S$ and hence (together with $A$) bound a $D^2\times I$ region for $S$ and $B$.
\end{lemma}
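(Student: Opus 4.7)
My plan is to argue by contradiction: suppose the two disks $D_0,D_1\subset S$ bounded by $\partial A$ are nested, say $D_0\subsetneq D_1$. Put $\tilde A:=D_1-\mathrm{Int}(D_0)$, the annulus in $S$ with $\partial\tilde A=\partial A$. Any essential curve in $\mathrm{Int}(A)\cap S$ cannot lie inside the disk $D_1$, so $\mathrm{Int}(A)\cap\tilde A=\emptyset$, and $T:=A\cup\tilde A$ is an embedded closed surface in $M$ with $\chi(T)=0$. First I would rule out $T$ being a Klein bottle: otherwise a twisted $I$-bundle neighborhood of $T$ has torus boundary bounding a solid torus in the atoroidal, irreducible, non-lens-space $M$, making $M$ a Seifert fiber space, contrary to Notation~\ref{NB}. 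So $T$ is a torus. By the standard argument used in Lemma~\ref{Ltorus} (compressibility in atoroidal $M$, together with irreducibility and the non-lens-space hypothesis), $T$ bounds a solid torus $N$ in $M$ and is incompressible in $M-\mathrm{Int}(N)$.

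Next I would observe that $\gamma_0=\partial D_0$ is essential on $T$ since it cuts $T$ into the two annuli $A$ and $\tilde A$, neither a disk. If $\mathrm{Int}(D_0)\subset M-\mathrm{Int}(N)$, then $D_0$ would be a compressing disk for $T$ in $M-\mathrm{Int}(N)$, contradicting incompressibility. Hence $\mathrm{Int}(D_0)\subset\mathrm{Int}(N)$, so $D_0$ is a meridian disk of $N$ and $\gamma_0$ is meridional on $\partial N$. Since the core curve $c$ of $V$ lies on $A\subset T$ and is parallel to $\gamma_0$ on $T$, $c$ is likewise meridional in $N$.

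Now I would build the forbidden disk. Let $A_V\subset A$ be the subannulus between $\gamma_0$ and $c$; when $\mathrm{Int}(A)\cap S\ne\emptyset$, choose $c$ in $V$ so as to minimize interference with the interior intersection curves of $A\cap S$. The topological disk $D_0\cup A_V$ lies in $N(B)$ (since $D_0\subset S\subset N(B)$ and $A_V\subset A\subset N(B)$) and has boundary $c\subset\mathrm{Int}(V)$. A small local perturbation that tilts $A_V$ off of the $I$-fibers (keeping $\partial A_V$ on $V$) and smooths the corner along $\gamma_0$ produces an embedded disk $D\subset N(B)$ transverse to the $I$-fibers. The slide along $A_V$'s $I$-fibers gives a $B$-isotopy from $D$ to $D_0$, so $D$ is $B$-parallel to the disk $D_0\subset S$. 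Thus $D$ is a disk carried by $N(B)$, $B$-parallel to a disk in $S$, with $\partial D\subset\mathrm{Int}(V)$, contradicting the hypothesis of the lemma.

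The hard part will be the last perturbation: $D_0\cup A_V$ contains the vertical annulus $A_V$, which is tangent to the $I$-fibers, so it must be tilted just enough to become transverse without sacrificing embeddedness or the $B$-parallelism with $D_0$. This requires a delicate local analysis near $V\subset\partial_vN(B)$, using that the core curve $c$ is horizontal on $V$, to arrange the perturbation so that the resulting $D$ is both embedded in $N(B)$ and transverse to the $I$-fibers throughout.
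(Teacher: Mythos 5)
There is a genuine gap, and it sits exactly where you flag the ``hard part.'' Your entire contradiction rests on perturbing $D_0\cup A_V$ into an embedded disk $D$ carried by $N(B)$ (transverse to the $I$-fibers), $B$-parallel to $D_0$, with $\partial D\subset\Int(V)$. Whether such a tilt is possible is not a matter of delicate local analysis that always succeeds: it is governed by the branch direction of $V$. If the cusp direction at $\partial D_0$ induced from $V$ points \emph{into} $D_0$, then yes, one can $B$-isotope $D_0$ so that its boundary slides into $\Int(V)$, producing the forbidden disk --- but this case is immediate and does not need any of your torus machinery. If the cusp direction points \emph{out of} $D_0$ (and likewise out of $D_1$), which is the case the hypothesis does not exclude and which the paper identifies as the main case, no such disk exists and no tilt of $A_V$ yields a surface transverse to the fibers on the $D_0$ side; your construction simply cannot be carried out. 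You never make this case distinction, so your argument proves nothing in the essential case. (The paper's proof makes the dichotomy explicit: it disposes of the ``points into $D_i$'' case in one sentence via the hypothesis, and then uses the ``points out of both'' configuration to perturb $\Sigma\cup A$ into a normal or almost normal torus, conclude via Corollary~\ref{Cklein} that $A$ joins the two sides of the separating surface $S$, and derive a contradiction either with separation of $S$ (when $\Int(A)\cap S=\emptyset$) or with strong irreducibility, since $\gamma_1$ and $\gamma_n$ then bound disjoint compressing disks on opposite sides of $S$.)

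A secondary problem: you invoke ``the standard argument used in Lemma~\ref{Ltorus}'' to say $T=A\cup\tilde A$ bounds a solid torus and is incompressible outside it, but that lemma is stated for \emph{normal} tori, and $T$ contains the vertical annulus $A$, so it is not a priori normal or almost normal. Establishing that $T$ can be perturbed to a surface carried by $N(B)$ is itself contingent on the branch-direction analysis you skipped. A general embedded torus in an irreducible atoroidal manifold need not bound a solid torus (it may lie in a ball or bound a cube with a knotted hole), so this step is not free. In the end the meridian computation is also a detour: even granting it, the only contradiction you aim for is with the no-disk hypothesis, and that reduces back to the perturbation you cannot perform in the main case.
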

\begin{proof}
Let $D_0$ and $D_1$ be the two disks in $S$ bounded by $\partial A$.  Suppose the lemma is false and $D_0\subset D_1$.  As $A$ is embedded, $\partial D_0$ is disjoint from $\partial D_1$.  Let $\Sigma= D_1-\Int(D_0)$ be the annulus between $\partial D_0$ and $\partial D_1$.  Since $\Int(A)\cap S$ (if not empty) consists of essential curves in $S$ and every curve in $\Sigma$ is trivial in $S$, we have $\Sigma\cap A=\partial\Sigma$ and $\Sigma\cup A$ is an embedded surface.

As the component $V$ of $\partial_vN(B)$ lies in $A$, the branch direction at $V$ induces a normal direction for $A$ and a normal direction for $\partial D_i$ in $S$.  If the induced direction at $\partial D_i$ points into $D_i$, then we can perform a $B$-isotopy on $D_i$ pushing $\partial D_i$ into $\Int(V)$, which contradicts our hypothesis that no such disk exists.  So we may suppose the induced (branch) direction at $\partial D_i$ points out of $D_i$ for both $i=0,1$.  This implies that a small neighborhood of $\partial\Sigma$ in $\Sigma$ is a pair of annuli lying on different sides of $A$.  Thus after a small perturbation, $\Sigma\cup A$ becomes an embedded normal or almost normal torus or Klein bottle carried by $N(B)$.  Since $\Sigma$ connects one side of $A$ to the other side, $\Sigma\cup A$ is a Klein bottle if and only if the two annular components of a small neighborhood of $\partial A$ in $A$ lie on the same side of $S$ (recall that $S$ is separating).  However, by Corollary~\ref{Cklein}, $M$ contains no normal or almost normal Klein bottle, so $\Sigma\cup A$ cannot be a Klein bottle and hence the two annular components of a small neighborhood of $\partial A$ in $A$ lie on different sides of $S$, in other words, $A$ connects one side of $S$ to the other side of $S$.  
If $\Int(A)\cap S=\emptyset$, then this conclusion immediately implies that $S$ is non-separating in $M$, a contradiction.  So we may suppose $\Int(A)\cap S\ne\emptyset$.  
Let $\gamma_0=\partial D_0,\gamma_1,\dots,\gamma_{n+1}=\partial D_1$ be the curves of $A\cap S$ and we suppose $\gamma_i$ lies between $\gamma_{i-1}$ and $\gamma_{i+1}$ for each $i$.  Let $A_0$ and $A_1$ be the subannuli of $A$ bounded by $\gamma_0\cup\gamma_1$ and $\gamma_n\cup\gamma_{n+1}$ respectively.  The conclusion above says that $A_0$ and $A_1$ lie on different sides of $S$.  However, since $\gamma_1$ and $\gamma_n$ bound disks $A_0\cup D_0$ and $A_1\cup D_1$ respectively, by the hypothesis that $\Int(A)\cap S$ is essential in $S$, this means that $\gamma_1$ and $\gamma_n$ bound disjoint compressing disks on different sides of $S$.  This contradicts our hypothesis that $S$ is either a strongly irreducible or an almost strongly irreducible Heegaard surface.
\end{proof}

Our main goal in this section is to eliminate all non-trivial $D^2\times I$ regions.

\begin{lemma}\label{Lalmostcarry}
Let $S$ be a closed normal or almost normal surface fully carried by $B$. Suppose $S$ is either a strongly irreducible Heegaard surface or an almost strongly irreducible Heegaard surface. 
Then there is a normal or an almost normal surface $S'$ carried by $B$ and isotopic to $S$ in $M$ such that every good $D^2\times I$ region for $S'$ and $B$ is a trivial $D^2\times I$ region.
\end{lemma}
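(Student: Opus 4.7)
The plan is to prove the lemma by choosing $S'$ to be a weight-minimizer in the appropriate isotopy class, and then showing that any non-trivial good $D^2\times I$ region would force a contradiction with minimality. Set $K$ to be an upper bound determined by $S$ (for instance $K=0$ initially, since $S$ itself is an almost Heegaard surface with empty set of associated arcs), and consider the collection $\mathcal{C}$ of all almost Heegaard surfaces $\widetilde{S}$ carried by $B$ such that (i) $\widetilde{S}$ is ambient isotopic to $S$ in $M$, (ii) the Heegaard surface derived from $\widetilde{S}$ is isotopic to $S$, and (iii) the total length of the associated almost vertical arcs is at most $K$. This set is nonempty, and by choosing a suitable notion of complexity (e.g.\ lexicographic ordering by number of non-trivial good $D^2\times I$ regions, then weight with respect to $B$), we select $S'$ to be $K$-minimal in the sense of Definition~\ref{Dkmin}.

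Next, I would argue by contradiction: suppose $S'$ carries a non-trivial good $D^2\times I$ region $E$ with $\partial_hE=D_0\cup D_1$ and $\partial_vE=A$. By Lemma~\ref{LtypeI}, $E$ is of type I, so the cusp points out of $E$. Since $E$ is non-trivial, $E$ properly contains at least one component of $M-\Int(N(B))$, and goodness means $E\cap S'$ consists of finitely many disjoint disks $D_0,D^{(1)},\dots,D^{(k-1)},D_1$ whose boundaries lie on $A$. The core idea is to perform a canonical ambient isotopy that pushes one of these disks (say $D_0$) across $E$ and past $D_1$, together with any interior disks, thereby collapsing the bubble $E$. After straightening out the resulting surface and re-carrying it by $N(B)$ using vertical fibers, we obtain an almost Heegaard surface $S''$ carried by $B$, ambient isotopic to $S'$, with the same derived Heegaard surface, and whose associated arc set differs from that of $S'$ only by the addition of a controlled number of short vertical arcs coming from the collapsed region (or none at all, if $E$ was simple). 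Crucially, $S''$ has strictly smaller weight than $S'$ because the disks $D_0,D^{(1)},\dots,D_1$ either merge or cancel through the isotopy.

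The main obstacle — and this is precisely the ``surprisingly difficult'' step the author flags in the introduction — is ensuring that the isotopy terminates, i.e.\ that the new surface $S''$ remains in the class $\mathcal{C}$ with the same bound $K$ on the total length of associated arcs. The concern is that collapsing one $D^2\times I$ region could spawn several new ones elsewhere along $A$, or could wrap the surface nontrivially around a normal torus. To bound the damage, I would use Lemma~\ref{Lfinite} applied to the vertical annulus $A$: since each essential curve in $S'\cap A$ bounds a disk carried by $N(B)$ (a parallel copy of $D_0$ or $D_1$), the intersection $|A\cap S'|$ is a priori bounded by some explicit $k=k(B,M,K)$. Combined with Lemma~\ref{Last} to detect when new $D^2\times I$ regions are forced into existence, and Corollary~\ref{Cklein} to rule out Klein-bottle obstructions in the analysis of nested parallel annuli, this gives a uniform bound on the length of arcs we might need to introduce when re-carrying $S''$ on $B$.

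Finally, since the weight of $S''$ is strictly less than that of $S'$, and since $S''\in\mathcal{C}$, we contradict the $K$-minimality of $S'$. Therefore $S'$ contains no non-trivial good $D^2\times I$ region, proving the lemma. I expect the bulk of the technical work to lie in making the canonical collapsing isotopy explicit in the stuffed-free setting (already arranged by Corollary~\ref{Cstuff}), verifying that the re-carried surface remains normal or almost normal (here we again use that $B$ carries at most one almost normal piece, which must be tracked through the isotopy), and in pinning down the bookkeeping that keeps the total arc length within $K$ — the latter being where Lemma~\ref{Lfinite} does the heaviest lifting.
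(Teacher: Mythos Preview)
Your minimality approach has a genuine gap at its central step: the claim that pushing $D_0$ across $E$ yields a surface of \emph{strictly smaller weight} is false in general. The isotopy replaces $D_0$ by a disk $B$-parallel to $D_1$, so the weight changes by $w(D_1)-w(D_0)$, which can be positive, zero, or negative. The disks do not ``merge or cancel''; one is simply swapped for a parallel copy of the other. Even if you push whichever disk has larger weight, you are stuck when $w(D_0)=w(D_1)$, and in that case you must still rule out the possibility that the isotopy creates a new non-trivial $D^2\times I$ region elsewhere --- which is exactly the delicate termination issue the paper flags. Your invocation of Lemma~\ref{Lfinite} to control this is a misapplication: that lemma requires $A\cap S'$ to consist of curves \emph{essential} in $S'$, whereas for a good $D^2\times I$ region every curve of $A\cap S'$ bounds a disk in $S'$ by definition.

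The paper's proof does not use weight at all. Instead it introduces a different complexity: for each component $V$ of $\partial_vN(B)$ that is the \emph{belt} of a simple $D^2\times I$ region, it counts $C_S(V)=|X_V-N(B)|$, the number of components of $M-\Int(N(B))$ trapped inside. The argument shows (Claim~3) that after the isotopies across $E$, either the surface is no longer fully carried by $B$, or $V$ is no longer a belt, or $C_S(V)$ strictly \emph{increases}; since $C_S(V)\le|M-B|$, the third option cannot repeat indefinitely. A separate argument (Claim~4) shows that once $V$ ceases to be a belt it never becomes one again, and passing to a sub-branched surface can happen only finitely often. This gives termination without any global minimization. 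Also note that this lemma produces $S'$ genuinely isotopic to $S$ --- no compressions, no tubes --- so the $K$-minimal and associated-arc machinery you import from Definition~\ref{Dkmin} is not needed here; that bookkeeping belongs to the next lemma (Lemma~\ref{Lal}), where compressions actually occur.
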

\begin{proof}
In the proof, we only consider good $D^2\times I$ regions, in other words, for any $D^2\times I$ region $E$ is this proof, we always assume $E\cap S$ consists of disks, see Definition~\ref{Dball}.

Let $E$ be any simple $D^2\times I$ region with $\partial_hE=D_1\cup D_2$.  Recall from Definition~\ref{Dball},  simple $D^2\times I$ region means that $E\cap S=\partial_hE=D_1\cup D_2$.   We can perform an isotopy on $S$ by pushing $D_1$ across $E$ into a disk $B$-parallel to $D_2$, as shown in Figure~\ref{iso}(a) (ignore the dashed arcs in the picture for now).  Our main task is to show that this process ends in a finite number of steps.   First note that we may assume that after an isotopy in Figure~\ref{iso}(a), the resulting surface remains normal or almost normal.  To see this, if a disk, say $D_1$, contains the almost normal piece (as we explained at the beginning of the proof of Lemma~\ref{LtypeI}, if $D_i$ intersects the almost normal piece then it must contain the whole almost normal piece), then we push $D_1$ across $E$ into a disk $B$-parallel to $D_2$ and the surface after the isotopy is normal.  Moreover, the surface after this isotopy does not pass through the branch sector that contains the almost normal piece.   This means that a $D^2\times I$ region in any future isotopy does not involve the almost normal piece and we always get a normal surface.  For this reason, we may assume that for any simple $D^2\times I$ region in the proof, its horizontal boundary $D_1\cup D_2$ does not contain the almost normal piece.

We say a non-trivial $D^2\times I$ region is innermost if it does not contain any other non-trivial $D^2\times I$ region.   Clearly an innermost good $D^2\times I$ region must be a simple $D^2\times I$ region, see Definition~\ref{Dball}.  Let $V$ be a component of $\partial_vN(B)$.  We say $V$ is a \emph{belt} of a good $D^2\times I$ region $E$ if $V\subset A=\partial_vE$ and $E$ contains the component $Z$ of $M-\Int(N(B))$ with $\partial Z\supset V$.

\vspace{10pt}
\noindent
\emph{Claim 1}.  For any innermost good non-trivial $D^2\times I$ region $E$, a component of $\partial_vN(B)$ is the belt of $E$.
\begin{proof}[Proof of Claim 1]
A key ingredient in the proof is that $N(B)$ fully carries $S$. 
Let $E$ be an innermost good non-trivial $D^2\times I$ region with $\partial_hE=D_1\cup D_2$ and $\partial_vE=A$.  So $E$ must be a simple $D^2\times I$ region.   Since $A$ is vertical in $N(B)$, we can give $A$ a product structure $S^1\times I$ such that $\{x\}\times I$ is a subarc of an $I$-fiber of $N(B)$ for each $x\in S^1$.    If for some $x\in S^1$, $\{x\}\times I$ does not contain a vertical arc of $\partial_vN(B)$, then we can shrink $E$ a little to get a slightly smaller $D^2\times I$ region inside $E$, which contradicts the hypothesis that $E$ is innermost.  So we may assume each $\{x\}\times I$ contains a vertical arc of $\partial_vN(B)$.  If $\alpha\cap\partial_vN(B)$ has more than one component for some vertical arc $\alpha=\{x\}\times I$ of $A$ (this happens only if $\alpha$ corresponds to a double point in the branch locus), then the subarc of $\alpha$ between the two components of $\alpha\cap\partial_vN(B)$ can be horizontally pushed slightly into an $I$-fiber of $N(B)$, see $J_0$ and $J_1$ in Figure~\ref{iso}(c) for a schematic picture of how this subarc is pushed. 
As $S$ is fully carried by $N(B)$, $S$ intersects every $I$-fiber of $N(B)$ and this implies that 
$S$ must non-trivially intersect the subarc of $\alpha$ between the two components of $\alpha\cap\partial_vN(B)$.  So $S\cap\Int(\alpha)\ne\emptyset$ and this contradicts our conclusion above that $E$ is a simple $D^2\times I$ region.  Thus each $\{x\}\times I$ in $A=S^1\times I$ contains exactly one vertical arc of $\partial_vN(B)$ and this implies that $A$ contains a component $V$ of $\partial_vN(B)$.  Let $Z$ be the component of $M-\Int(N(B))$ that contains $V$.  If $Z$ lies outside $E$, then we can shrink $E$ a little to get a smaller $D^2\times I$ region, which contradicts that $E$ is innermost.  So $Z\subset E$ and $V$ is the belt of $E$.
\end{proof}

\begin{figure}
\begin{center}
\psfrag{(a)}{(a)}
\psfrag{(b)}{(b)}
\psfrag{(c)}{(c)}
\psfrag{iso}{isotopy}
\psfrag{E}{$E$}
\psfrag{D0}{$D_0$}
\psfrag{D1}{$D_1$}
\psfrag{N}{$N(B')$}
\psfrag{J}{$J$}
\psfrag{J0}{$J_0$}
\psfrag{J1}{$J_1$}
\includegraphics[width=5.0in]{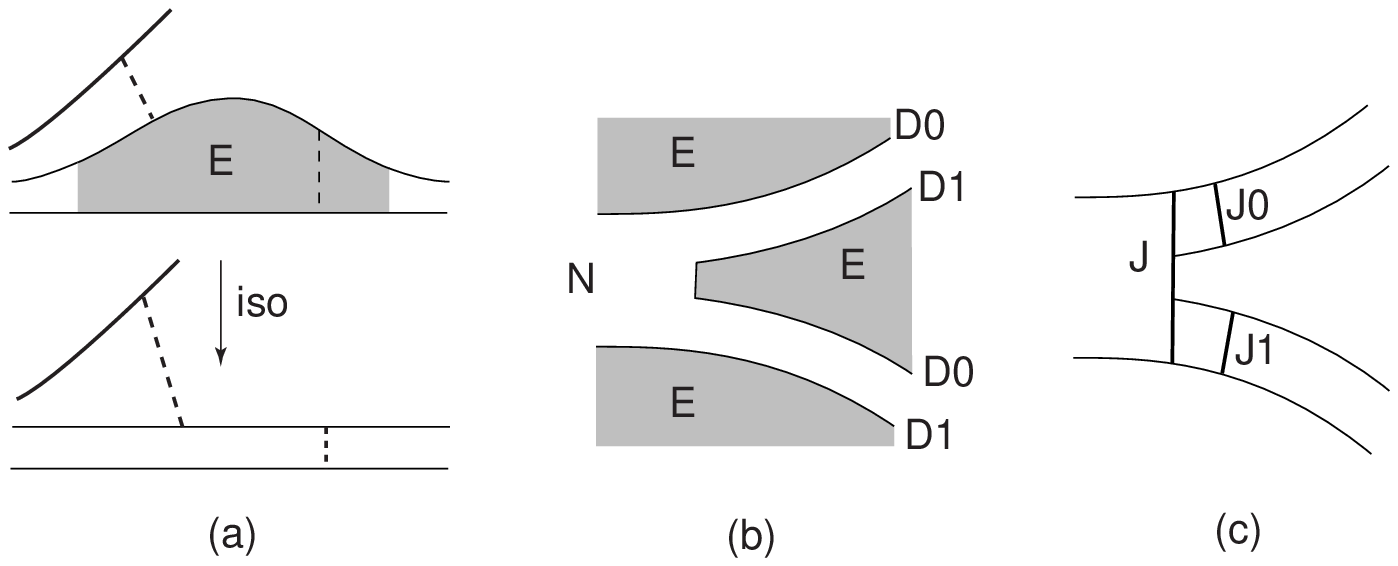}
\caption{}\label{iso}
\end{center}
\end{figure}

Let $P\subset N(B)$ be a compact surface carried by $N(B)$.  We say $P$ is a \emph{splitting surface} for $S$ if $\partial P\subset\Int(\partial_vN(B))$ and $P\cap S=\emptyset$.  Note that we can split $N(B)$ along $P$ (i.e. delete a small neighborhood of $P$ from $N(B)$) to get a fibered neighborhood of a branched surface carrying $S$.  If $P$ is a disk, then $P$ is called a disk of contact, see \cite{FO}.

\vspace{10pt}
\noindent
\emph{Claim 2}.  Let $E$ be a simple $D^2\times I$ region and suppose a component $V$ of $\partial_vN(B)$ is the belt of $E$.  Then there is a splitting surface $P$ (which may be an empty set) lying in $\Int(E)$
 such that each component of $P$ is a planar surface and  after splitting $N(B)$ along $P$, we get a fibered neighborhood $N(B')$ of a branched surface $B'$ that fully carries $S$ and $E-\Int(N(B'))$ is a $D^2\times I$ component of $M-\Int(N(B'))$ (see Definition~\ref{Dcomp}).  Furthermore, if $E$ is an innermost $D^2\times I$ region, then each component of $P$ is a disk.
\begin{proof}[Proof of Claim 2] 
Suppose $\partial_h E=D_0\cup D_1$ and $\partial_vE=A$.  Since $V$ is the belt of $E$, $V\subset A$. We may assume $S\subset\Int(N(B))$.
 
For any $x$ in $\Int(D_0)\cup\Int(D_1)$, let $I_x$ be the $I$-fiber of $N(B)$ that contains $x$ and let $K_x$ be the component of $I_x\cap E$ that contains $x$.  So $K_x$ is either an arc properly embedded in $E$ or an arc with one endpoint $x$ and the other endpoint in $\partial_hN(B)$.  Suppose $K_x$ is not properly embedded in $E$ for some $x\in\Int(D_0)\cup\Int(D_1)$, i.e., one endpoint of $K_x$ is $x$ and the other endpoint, denoted by $x'$, lies in $\partial_hN(B)\cap\Int(E)$.  We show next that such $K_x$ does not contain a vertical arc of $\partial_vN(B)$.  Suppose on the contrary that $K_x$ contains a vertical arc of $\partial_vN(B)$ and let $J_0$ be the component of $K_x-\Int(\partial_vN(B))$ that contains $x'$.  Clearly $J_0\subset\Int(E)$, and since $E$ is a simple $D^2\times I$ region, this means that $J_0\cap S=\emptyset$.  However, as illustrated in Figure~\ref{iso}(c), $J_0$ can be horizontally pushed slightly into an $I$-fiber of $N(B)$.  Since $S$ is fully carried by $N(B)$, $S$ intersects every $I$-fiber of $N(B)$ and this means that $J_0\cap S\ne\emptyset$, a contradiction.  Hence, for any $x\in\Int(D_0)\cup\Int(D_1)$, if $K_x$ is not properly embedded in $E$, then $K_x$ does not contain a vertical arc of $\partial_vN(B)$. 

If $K_x$ is not properly embedded in $E$ for every $x\in\Int(D_0)\cup\Int(D_1)$, then the conclusion above on $K_x$ implies that $D_0$ and $D_1$ are $B$-isotopic to disk components $D_0'$ and $D_1'$ of $\partial_hN(B)$ respectively and $D_0'\cup D_1'\subset E$.  Moreover, $\partial D_0'\cup\partial D_1'$ bounds the component $V$ of $\partial_vN(B)$ ($V$ is the belt of $E$).  So $D_0'\cup V\cup D_1'$ bounds a $D^2\times I$ component of $M-\Int(N(B))$ and the claim holds with $P=\emptyset$.  

Next we assume $K_x$ is properly embedded in $E$ for some $x\in\Int(D_0)\cup\Int(D_1)$.  If $K_x\cap\partial_vN(B)$ contains two vertical arcs of $\partial_vN(B)$, then the subarc $J_1$ of $K_x$ between the two components of $K_x\cap\partial_vN(B)$ can be horizontally pushed slightly into an $I$-fiber of $N(B)$, see Figure~\ref{iso}(c) for a schematic picture of how $J_1$ is pushed.  Since $S$ is fully carried by $N(B)$, as above, we have $S\cap J_1\ne\emptyset$, which implies that $S\cap \Int(K_x)\ne\emptyset$ as $J_1\subset \Int(K_x)$.  However, since $E$ is a simple $D^2\times I$ region, $S\cap\Int(K_x)=\emptyset$, a contradiction.  Thus, if $K_x\cap\partial_vN(B)\ne\emptyset$ for some $x\in\Int(D_0)\cup\Int(D_1)$, then $K_x\cap\partial_vN(B)$ is a single vertical arc of $\partial_vN(B)$.

Let $U$ be the union of all such $K_x$'s ($x\in\Int(D_0)\cup\Int(D_1)$) that are properly embedded in $E$.  Since a component of $\partial_vN(B)$ is the belt of $E$, it follows from the definition of belt that $\partial_vE\cap\overline{U}=\emptyset$.  Hence $U$ must be an $I$-bundle over a compact surface $P$ and the horizontal boundary of $U$, denoted by $\partial_hU$, is a compact subsurface of $\Int(D_0)\cup\Int(D_1)$.  This implies that if a component of $U$ is a twisted $I$-bundle over a non-orientable surface, then one can cap off the boundary of the non-orientable surface using disks and obtain a closed non-orientable surface embedded in the 3-ball $E$, which is impossible.  Thus $U$ is a product $P\times I$, where each component of $P$ is a planar surface.  Now we consider the vertical boundary of $U$, denoted by $\partial_vU$.  So $\partial_vU$ is a collection of vertical annuli properly embedded in $E$.  By our construction of $U$, if $K_x\subset\partial_vU$, then $K_x\cap\partial_vN(B)\ne\emptyset$.  The discussion above says that if $K_x\cap\partial_vN(B)\ne\emptyset$, then $K_x\cap\partial_vN(B)$ is a single vertical arc of $\partial_vN(B)$.  This implies that each component of $\partial_vU$ contains a component of $\partial_vN(B)$.  Thus we may view $P$ as a splitting surface and clearly after we cut $E\cap N(B)$ along $P$, we get a $D^2\times I$ component of $M-\Int(N(B'))$, where $B'$ is the branched surface obtained by splitting $N(B)$ along $P$.  As $P\subset\Int(E)$, by our construction, $S$ is still fully carried by $N(B')$.

If a component $Q$ of $\partial_hU$ is not a disk, then $(D_0\cup D_1)-Q$ has a disk component which determines a smaller $D^2\times I$ region inside $E$.  This means that if $E$ is innermost, then $\partial_hU$ and hence $P$ must be a union of disks.  
\end{proof}

It follows from Claim 2 that if $E$ is an innermost good non-trivial $D^2\times I$ region, then there is exactly one component of $M-\Int(N(B))$ lying in $E$ and this component becomes a $D^2\times I$ component after splitting $N(B)$ along $P$, where $P$ is a collection of disks as in Claim 2.  This means that this component of $E-\Int(N(B))$ is an almost $D^2\times I$ component of $M-\Int(N(B))$, see Definition~\ref{Dcomp}. 

Given any $D^2\times I$ region $X$, we define the complexity $c(X)$ of $X$ to be the number of components of $M-\Int(N(B))$ that lie in $X$. So $c(X)=|X-N(B)|$ and clearly $c(X)\le |M-B|$.  Suppose a component $V$ of $\partial_vN(B)$ is the belt of some good $D^2\times I$ region for $S$, then it follows from the definition of good $D^2\times I$ region that there is a unique simple $D^2\times I$ region $X_V$ for $S$ of which $V$ is the belt.  We define the complexity $C_S(V)$ of $V$ for $S$ to be $C_S(V)=c(X_V)=|X_V-N(B)|$.  

Let $E$ be a non-trivial simple $D^2\times I$ region with $\partial E=D_0\cup A\cup D_1$.  As shown in Figure~\ref{iso}(a) (ignore the dashed arc in the picture for now), we can perform an isotopy on $S$ by pushing $D_0$ (resp. $D_1$) across $E$ to a disk $B$-isotopic to $D_1$ (resp. $D_0$)

\vspace{10pt}
\noindent
\emph{Claim 3}.  Let $E$ be a non-trivial simple $D^2\times I$ region and suppose a component $V$ of $\partial_vN(B)$ is the belt of $E$.  Then one can perform some isotopies on $S$ as shown in Figure~\ref{iso}(a) and get a surface $S_1$ such that either 
\begin{enumerate}
\item $N(B)$ carries but not fully carries $S_1$ or 
\item $V$ is not the belt of any simple $D^2\times I$ region for $S_1$, or 
\item $V$ is the belt of a simple $D^2\times I$ region for $S_1$ but the complexity $C_{S_1}(V)>C_S(V)$.
\end{enumerate}
\begin{proof}[Proof of Claim 3] 
By the proof of Claim 2, we can split $N(B)$ along a planar splitting surface in $E$ and get a fibered neighborhood $N(B')$ of a new branched surface $B'$ such that $E-\Int(N(B'))$ is a $D^2\times I$ component of $M-\Int(N(B'))$.  Note that $N(B')$ still fully carries $S$.  So after some $B'$-isotopy on $S$ pushing $\partial_hE$ to $\partial_hN(B')$, we may assume $\partial_hE=D_0\cup D_1\subset S$ is a pair of disk components of $\partial_hN(B')$, $V=\partial_vE\subset\partial_vN(B')$, and the $D^2\times I$ region $E$ is a $D^2\times I$ component of $M-\Int(N(B'))$.

For any $x\in D_i$, let $I_x$ be the $I$-fiber of $N(B')$ that contains $x$.  Since $D_0\cup D_1\subset\partial_hN(B')$, the interior of $I_x$ does not intersect $D_0\cup D_1$ unless $I_x$ contains a vertical arc of $V$ in which case $\pi(I_x)$ is a point in the branch locus of $B'$, where $\pi:N(B')\to B'$ is the projection that collapses each $I$-fiber to a point.   

Next we show that there is a point $x$ in $D_0$ (or $D_1$) such that $I_x$ does not intersect $D_1$ (or $D_0$ respectively). 

Suppose on the contrary that for every point $x$ in $D_0\cup D_1$, $I_x$ intersects both $D_0$ and $D_1$.  It follows from $D_0\cup D_1\subset\partial_hN(B')$ that if $\pi(I_x)$ is not a point in the branch locus of $B'$, then $\Int(I_x)$ is disjoint from $D_0\cup D_1$.  Since we have assumed that $I_x$ intersects both $D_0$ and $D_1$, if $\pi(I_x)$ is not a point in the branch locus of $B'$, then one endpoint of $I_x$ lies in $D_0$ and the other endpoint of $I_x$ lies in $D_1$.   If $\pi(I_x)$ is a point in the branch locus,  by taking the limit of points $y\in D_0\cup D_1$ near $x$ with $\pi(I_y)$ not in the branch locus, as shown in Figure~\ref{iso}(b), we can also conclude that the two endpoints of $I_x$ lie in different components of $D_0\cup D_1$.  This means that the union of all the $I_x$ ($x\in D_0\cup D_1$) is a fibered neighborhood of a branched surface, in fact, it must be the whole of $N(B')$ and $M-\Int(N(B'))=E$, see Figure~\ref{iso}(b).  However, since $N(B')$ fully carries $S$, the product structure of $E=D^2\times I$ and the $I$-fiber structure of $N(B')-S$ gives an $I$-bundle structure for $\overline{M-S}$ and $\overline{M-S}$ must be an $I$-bundle over a closed surface.  This contradicts our hypothesis on $S$. 

So we may suppose there is a point $x$ in $D_0$  such that $I_x$ does not intersect $D_1$.
Now we push $D_1$ back into $\Int(N(B'))$ and then we perform an isotopy on $S$ by pushing $D_0$ across $E$ into a disk $B'$-parallel to $D_1$ as shown in Figure~\ref{iso}(a).  Since $I_x\cap D_1=\emptyset$, $|I_x\cap S|$ is reduced after the isotopy.  We use $S'$ to denote the surface after this isotopy.  If $S'$ is still fully carried by $B'$ after the isotopy, then since $E$ is a $D^2\times I$ component of $M-\Int(N(B'))$ and $S'$ is separating, we can isotope $S'$ so that $\partial_hE\subset S'$ and the $D^2\times I$ component $E$ becomes a $D^2\times I$ region for $S'$ and $B'$.  So we can perform the same $B'$-isotopy on $S'$ which reduces $|I_x\cap S'|$.  Thus after a finite number of such isotopies, we obtain a surface $S_1$ isotopic to $S$ with $|I_x\cap S_1|=0$.  This means that $S_1$ is carried but not fully carried by $N(B')$. 

By our construction, $N(B')$ is obtained from $N(B)$ by deleting a small neighborhood of a splitting surface.  Thus we may view $N(B')\subset N(B)$ and  view $S_1$ as a surface carried by $B$.  Although $S_1$ is not fully carried by $N(B')$, $S_1$ may still be fully carried by $N(B)$.  Next we suppose part (1) of the claim is not true and $S_1$ is fully carried by $N(B)$.

Suppose part (2) of the lemma is also not true, i.e.~$V$ is the belt of some simple $D^2\times I$ region $E'$ for $S_1$.  We may view $S_1\subset N(B')\subset N(B)$.  By our construction, the splitting planar surface $P$ in Claim 2 (for $E$) remains disjoint from $S_1$ after the isotopies above and $P$ remains $B$-parallel to a subsurface of $S_1$.  Moreover, if $P\ne\emptyset$, $\partial P$ is incident to every component of $E-\Int(N(B))$.  Since $P$ is $B$-parallel to a subsurface of $S_1$ and $V$ is the belt for both $E$ and $E'$, this implies that every component of $E-\Int(N(B))$ must be a component of $E'-\Int(N(B))$, see Figure~\ref{iso2}(b) for a schematic picture.  So $C_{S_1}(V)\ge C_S(V)$.  Furthermore, if $C_{S_1}(V)=C_S(V)$, then $E-\Int(N(B))=E'-\Int(N(B))$ and after splitting $N(B)$ along $P$, we get a $D^2\times I$ component in $E'$.  This means that we have not finished the isotopies above (i.e., $|I_x\cap S_1|\ne 0$) and $S_1$ is still fully carried by $N(B')$, a contradiction.  Thus by our construction of $S_1$, $C_{S_1}(V)>C_S(V)$ and part (3) of the claim holds.
\end{proof}

It follows from the definition that the complexity $C_S(V)\le |M-B|$.  Hence if we perform the isotopy in Claim 3 on simple $D^2\times I$ regions of which $V$ is the belt, then part (3) of Claim 3 cannot occur infinitely many times, in other words, after a finite steps of isotopies as in Claim 3, either the resulting surface is no longer fully carried by $N(B)$ or $V$ is no longer the belt of a simple $D^2\times I$ region for the resulting surface. 
In the next claim, we prove that if a component of $\partial_vN(B)$ is not the belt of a $D^2\times I$ region, then it is not the belt of a $D^2\times I$ region for the surface after the isotopy in Claim 3.

\vspace{10pt}
\noindent
\emph{Claim 4}. Let $S_1$ be the surface after the isotopy in Claim 3 and suppose $S_1$ is still fully carried by $B$. Let $U$ be a component of $\partial_vN(B)$ and suppose $U\ne V$, where $V$ is the belt of the $D^2\times I$ region $E$ in Claim 3.  If $U$ is not the belt of a simple $D^2\times I$ region for $S$ and $B$, then $U$ is not the belt of a simple $D^2\times I$ region for $S_1$ and $B$.  
\begin{proof}[Proof of Claim 4]
Suppose the claim is false and $U$ is the belt of a simple $D^2\times I$ region $E_1$ for $S_1$ and $B$.  Suppose $\partial_hE_1=\Theta_0\cup\Theta_1\subset S_1$ and $\partial_vE_1=A_1$ is a vertical annulus containing $U$.  First note that a core curve of $\Int(U)$ does not bound a disk $\Delta$ that is carried by $N(B)$ and $B$-parallel to a subdisk of $S$. To see this, if such a disk $\Delta$ exists, then the union of $\Theta_i$ and a parallel copy of $\Delta$ is a (possibly immersed) 2-sphere carried by $N(B)$.  Since we have assume at the beginning of the lemma that our $D^2\times I$ region does not contain the almost normal piece, the 2-sphere above is normal or almost normal, which contradicts Lemma~\ref{LimmersedS2}.

Since $U$ is a component of $\partial_vN(B)$ and $S$ is fully carried by $B$, $U$ can be vertically extended to a vertical annulus $A_U\subset N(B)$ that contains $U$ and is properly embedded in a component of $\overline{M-S}$.  Let $\gamma_0$ and $\gamma_1$ be the two boundary curves of $A_U$.  If both $\gamma_0$ and $\gamma_1$ are trivial in $S$, since the core curve of $\Int(U)$ does not bound a disk $\Delta$ that is $B$-parallel to a subdisk of $S$ as above, by Lemma~\ref{Last}, the union of $A_U$ and the two disks bounded by $\gamma_0$ and $\gamma_1$ in $S$ is an embedded 2-sphere bounding a $D^2\times I$ region $E_U$ for $S$.  Let $Z_U$ be the component of $M-\Int(N(B))$ containing the component $U$ of $\partial_vN(B)$.  If $Z_U$ lies outside $E_U$, then the core curve of $\Int(U)$ bounds a disk $B$-parallel to the subdisk of $S$ bounded by $\gamma_i$, a contradiction to our conclusion on $U$ above.  Thus $Z_U$ lies in $E_U$ and hence $U$ is the belt of the simple $D^2\times I$ region $E_U$, contradicting our hypothesis that $U$ is not the belt of a simple $D^2\times I$ region for $S$.  Thus at least one component of $\partial A_U$, say $\gamma_0$, is non-trivial in $S$. Our goal is to show that the isotopy in Claim 3 does not affect $\gamma_0$.

Let $E$ be the simple $D^2\times I$ region for $S$ in the isotopy in the proof of Claim 3.  We use the same notation as Claim 3, in particular $\partial_hE=D_0\cup D_1$ and $\partial_vE\supset V$.   We may suppose the isotopy that changes the surface $S$ to $S_1$ is the operation pushing $D_0$ across the simple $D^2\times I$ region $E$, and suppose $U$ is not a belt before the isotopy but becomes the belt of the simple $D^2\times I$ region $E_1$ for $S_1$ after the isotopy.  
We first show that $U$ lies outside $E$.  Suppose $U\subset E$.  Since $E$ is a simple $D^2\times I$ region, by Claim 2, $U$ must lie in the boundary of a product $P\times I$, where $P$ is a planar splitting surface in Claim 2.  This means that $\partial A_U=\gamma_0\cup\gamma_1$ lies in $\partial_hE=D_0\cup D_1$, which contradicts our conclusion above that $\gamma_0$ is essential in $S$. So $U$ lies outside $E$.

If $\gamma_0\cap D_0=\emptyset$, then the isotopy (which pushes $D_0$ across $E$ to a disk parallel to $D_1$) does not affect $\gamma_0$ and $\gamma_0$ remains an essential curve in $S_1$, which implies that $U$ cannot be the belt of any $D^2\times I$ region for $S_1$, a contradiction.  So we may assume $\gamma_0\cap D_0\ne\emptyset$.  Since $\gamma_0$ is non-trivial in $S$, $\gamma_0\not\subset D_0$ and this means that $\gamma_0\cap\partial D_0\ne\emptyset$.  

Next we show that there is an $I$-fiber of $N(B)$ that intersects $D_0$ in exactly one point.  The main reason for this is that the $D^2\times I$ region $E$ is simple.  Let $J$ be any $I$-fiber in $\pi^{-1}(L)$ where $L$ is the branch locus of $B$ and $\pi:N(B)\to B$ is the collapsing map.  Note that each component of $J-\Int(\partial_vN(B))$ can be horizontally pushed slightly into an $I$-fiber of $N(B)$, see Figure~\ref{iso}(c) for a 1-dimension lower schematic picture (in this picture, the two components of $J-\Int(\partial_vN(B))$ are pushed into $J_0$ and $J_1$). 
Let $x\in\gamma_0\cap\partial D_0$ and let $J_x$ be the $I$-fiber of $N(B)$ containing $x$.  
Using the notation in Claim 3, $V$ is the belt of the simple $D^2\times I$ region $E$.  Hence $\pi(\partial D_i)=\pi(V)$ and $\pi(\gamma_0)=\pi(U)$ are  curves in the branch locus $L$.  As $x\in\gamma_0\cap\partial D_0$, $\pi(J_x)$ is a double point in the branch locus and $J_x$ contains a vertical arc $\alpha_v$ of $V$ and a vertical arc $\alpha_u$ of $U$.  Let $J'$ be the component of $J_x-\Int(\partial_vN(B))$ that lies between $\alpha_u$ and $\alpha_v$.  Next we show $J'\cap S=x$.  As $U$ lies outside $E$, we have $x\in J'$.  We may suppose $S\subset\Int(N(B))$ and hence $x\in\Int(J')$.  So $x$ cuts $J'$ into two arcs $J'_u$ and $J'_v$ with $\partial J_u'-x$ and $\partial J_v'-x$ being endpoints of $\alpha_u$ and $\alpha_v$ respectively.  Note that $J_u'$ is a subarc of a vertical arc of the annulus $A_U$  and $J_v'$ is a subarc of a vertical arc of the annulus $\partial_vE$. 
Since $A_U$ is properly embedded in $\overline{M-S}$, $J_u'\cap S=x$.  Since the $D^2\times I$ region $E$ is simple, $J_v'\cap S=x$.  Hence $J'\cap S=x$ is a single point.  As illustrated in Figure~\ref{iso}(c), we can horizontally push $J'$ to be an $I$-fiber $J_0$ of $N(B)$ such that $J_0\cap S=J_0\cap D_0$ is a single point. 

Therefore, after the isotopy pushing $D_0$ across $E$ to a disk parallel to $D_1$, $J_0$  does not intersect the resulting surface $S_1$, which means that $S_1$ is not fully carried by $N(B)$. This contradicts our hypothesis on $S_1$.
\end{proof}

Lemma~\ref{Lalmostcarry} follows from the 4 claims above.  Suppose there is a non-trivial good $D^2\times I$ region for $S$ and $B$.  By Claim 1, a component $V$ of $\partial_vN(B)$ must be the belt of a simple $D^2\times I$ region.   
Then we perform the isotopy in the proof of Claim 3.  Since the complexity $C_S(V)$ is bounded by $|M-B|$, after a finite steps of isotopies across simple $D^2\times I$ regions of which $V$ is the belt, either the resulting surface is no longer fully carried by $B$ or $V$ is no longer the belt of a good $D^2\times I$ region for the resulting surface.  By Claim 4, if the resulting surface is still fully carried by $N(B)$, then $V$ will not be the belt of a good $D^2\times I$ region after any future isotopy.  As $|\partial_vN(B)|$ is bounded, after finitely many isotopies as in Claim 3, either the resulting surface is no longer fully carried by $N(B)$ or no component of $\partial_vN(B)$ is the belt of a good $D^2\times I$ region.  In the latter case, by Claim 1, there is no non-trivial good $D^2\times I$ region for the resulting surface and Lemma~\ref{Lalmostcarry} holds.  If the surface after isotopy is no longer fully carried by $B$, then we take a sub-branched surface of $B$ that fully carries the surface and apply the argument above on this sub-branched surface.  Since the number of sub-branched surfaces of $B$ is bounded, part (1) of Claim 3 can only happen a bounded number of times.  Therefore the isotopies end in a finite number of steps and Lemma~\ref{Lalmostcarry} holds.
\end{proof}

If a $D^2\times I$ region $E$ for $S$ and $B$ is not good, then by Lemma~\ref{Lstuff}, we may assume every non-disk component of $E\cap S$ is an unknotted annulus.  Next we will compress those unknotted annuli in a stuffed $D^2\times I$ region as in Corollary~\ref{Cstuff}.  However, since we are interested in an algorithm to list all the Heegaard surfaces of bounded genus, we need to keep track of the compressions and be able to algorithmically recover the original Heegaard surface in the end.

\begin{lemma}\label{Lal}
Suppose $S$ is a strongly irreducible Heegaard surface fully carried by $B$, where $B$ is as above and $S$ is a normal or an almost normal surface.  Then there is an almost Heegaard surface $S'$ with respect to $B$ (see Definition~\ref{Dai}) and a set of almost vertical arcs $J$ associated to $S'$ such that 
\begin{enumerate}
\item $S'$ is normal or almost normal,
\item $S$ can be derived from $S'$ by adding tubes to $S'$ along arcs in $J$,
\item the length of each arc in $J$ is bounded from above by a number that depends only on $M$ and $B$,
\item there is no non-trivial $D^2\times I$ region for $S'$ and $B$.  
\end{enumerate}
Furthermore, given $S'$, there is an algorithm to construct a finite set of Heegaard surfaces that contains a Heegaard surface isotopic to $S$.
\end{lemma}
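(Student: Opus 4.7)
The plan is to combine Corollary~\ref{Cstuff} and Lemma~\ref{Lalmostcarry}, keeping careful track of the associated almost vertical arcs. First I would apply Corollary~\ref{Cstuff} to $S$, producing an almost strongly irreducible Heegaard surface $S_0$ from which $S$ is derived; by that corollary the associated set $J_0$ consists of vertical arcs in $N(B)$ (each of length $1$), and every $D^2\times I$ region for $S_0$ and $B$ is good. In particular $S_0$ is normal or almost normal, so properties (1) and (2) hold trivially for $(S_0, J_0)$.

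Next I would run the $D^2\times I$-elimination procedure from the proof of Lemma~\ref{Lalmostcarry} on $S_0$, viewing it as an almost strongly irreducible Heegaard surface. Each step is an ambient isotopy of $M$ pushing one horizontal disk of a simple $D^2\times I$ region $E$ across $E$; I propagate $J_0$ by letting any arc $\alpha$ that meets $\Int(E)$ pass through the almost $D^2\times I$ component exhibited in Claim~2 of that proof, which by Definition~\ref{Dvert} keeps $\alpha$ almost vertical with respect to the new surface and $B$. By Claims~3 and~4 of Lemma~\ref{Lalmostcarry}, the process terminates after finitely many steps in a surface $S'$ carrying no non-trivial good $D^2\times I$ region; since stuffed regions cannot be recreated by ambient isotopy of a surface already satisfying the conclusion of Corollary~\ref{Cstuff}, property (4) follows. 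For (3), observe that an arc in $J$ gains length only when the push crosses an almost $D^2\times I$ component it traverses, and the number of such components is at most $|M-B|$ times the number of sub-branched surfaces of $B$, a quantity depending only on $M$ and $B$.

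The algorithmic part proceeds as follows. Given the bound $K$ from (3) and the surface $S'$, enumerate all finite collections $J$ of disjoint almost vertical arcs in $\overline{M-S'}$ of total length at most $K$. Up to $B$-isotopy, each such arc is determined by a sequence of at most $K$ choices of a sub-branched surface of $B$ together with a ($D^2\times I$ or almost $D^2\times I$) component and a pair of horizontal disks it connects, so the enumeration is finite and computable from $B$ and the triangulation. For each $J$, form the surface obtained by adding tubes to $S'$ along the arcs of $J$; the resulting finite list contains a Heegaard surface isotopic to $S$.

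I expect the main obstacle to be the bookkeeping in the second step: ensuring that during the iterated push-isotopies the updated arcs remain disjoint, have their endpoints on the updated surface, remain almost vertical in the precise sense of Definition~\ref{Dvert}, and — most delicately — that the total length stays bounded. Verifying this last point requires showing that each push across a $D^2\times I$ region either leaves an arc's length unchanged or extends it by exactly one while \emph{consuming} the almost $D^2\times I$ component it passes through, so that no arc revisits the same component twice. This is where one must use the fact that the push in Claim~3 of Lemma~\ref{Lalmostcarry} strictly increases the complexity $C_S(V)$ or else eliminates $V$ as a belt, which prevents arcs from accumulating additional length at the same component.
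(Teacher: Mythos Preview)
Your overall strategy matches the paper's, but there are two genuine gaps.

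First, your claim that ``stuffed regions cannot be recreated by ambient isotopy'' is unjustified, and the paper explicitly does \emph{not} rely on it. After running the elimination procedure the paper allows for the possibility that a non-good $D^2\times I$ region $E_g$ appears for the resulting surface $S'$, and then argues separately (using the Casson--Gordon lemma on strongly irreducible splittings) that the compressing disks for the annular components of $E_g\cap S'$ lie on the opposite side from the arcs $\Gamma'$, so one can compress again as in Corollary~\ref{Cstuff} without disturbing the associated arcs. You need this step; it does not come for free.

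Second, and more seriously, your mechanism for bounding the arc length is wrong. The ``consuming'' argument---that each push extends an arc by one while using up an almost $D^2\times I$ component, so no component is revisited---fails because the process in Claim~3 of Lemma~\ref{Lalmostcarry} can push across the \emph{same} region $E$ several times (reducing $|I_x\cap S|$ step by step), and after passing to a sub-branched surface the same underlying component of $M-N(B)$ may well be revisited later. The paper's bound works by a different mechanism: one first uses Scharlemann's theorem \cite[Theorem~2.1]{S} to show that for an innermost $D^2\times I$ region $E_2$, the intersection $\Gamma_2\cap E_2$ is a \emph{single unknotted arc} connecting the two horizontal disks. Consequently, any arc that lies inside $E_2$ is \emph{reset} to a vertical arc of length $1$ by the push, while an arc disjoint from $\Int(Y)$ (where $Y=E_2-\Int(N(B_2))$) can have \emph{both} ends stretched through $Y$, gaining at most $2$ in length. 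Because of the reset property, the entire batch of pushes across $E$ in Claim~3 increases the length of any given arc by at most $2$. Combining this with the global bound $K$ on the number of Claim~3 batches (coming from $|\partial_vN(B)|$, the complexity bound $C_S(V)\le |M-B|$, and the finite number of sub-branched surfaces) yields a length bound of $2K+1$. Your proposal omits the Scharlemann step entirely, and without it there is no reason an arc inside $E_2$ stays controlled under repeated pushes.
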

\begin{proof}
If there is a stuffed $D^2\times I$ region for $S$ that is not good, then we can compress $S$ as in Corollary~\ref{Cstuff} to get an almost Heegaard surface $S_1$ with respect to $B$ such that every $D^2\times I$ region for $S_1$ is a good $D^2\times I$ region.  Moreover, the set of  arcs $\Gamma_1$ associated to the almost Heegaard surface $S_1$ are vertical arcs in $N(B)$.  

Now we perform isotopies as in Lemma~\ref{Lalmostcarry} to eliminate all the good non-trivial $D^2\times I$ regions.  We also extend the isotopies to the associated arcs $\Gamma_1$ so that the surface after the isotopies remains an almost strongly irreducible Heegaard surface.   
Let $S_1$ and $\Gamma_1$ be as above.  Since $B$ does not carry any normal or almost normal $S^2$, $S_1$ has no 2-sphere component and the total number of arcs in $\Gamma_1$ is less than the genus of $S$.  Before we proceed, we first explain why the isotopies may change an arc in $\Gamma_1$ into an almost vertical arc with respect to $B$, see Definition~\ref{Dvert}.

In the proof of Lemma~\ref{Lalmostcarry}, we perform isotopies on a simple non-trivial $D^2\times I$ region $E$.  Although a simple $D^2\times I$ region may not be innermost, we can divide the isotopy on $E$ into several steps with each step being such an isotopy on an innermost $D^2\times I$ region inside $E$.  Thus we can assume the $D^2\times I$ region $E$ in the isotopy in Claim 3 of Lemma~\ref{Lalmostcarry} is innermost. 
As in Claim 2 in the proof of Lemma~\ref{Lalmostcarry}, since $E$ is innermost, $E-\Int(N(B))$ is an almost $D^2\times I$ component of $M-\Int(N(B))$.  After the isotopies in Claim 3 of Lemma~\ref{Lalmostcarry}, an arc in $\Gamma_1$ may be stretched through the almost $D^2\times I$ component $E-\Int(N(B))$ and may no longer be a vertical arc in $N(B)$ after the isotopy.  So after the isotopies, an arc in $\Gamma_1$ may become the union of a vertical arc in the almost $D^2\times I$ component $E-\Int(N(B))$ (see Definition~\ref{Dcomp}) and possibly a pair of vertical arcs in $N(B)$ at the two ends, see the dashed arcs in Figure~\ref{iso}(a) for a schematic picture of such change on arcs associated to an almost Heegaard surface.  The resulting arc is by definition an almost vertical arc with respect to $B$, see Definition~\ref{Dai}.

To prove the lemma, we use the following inductive argument. We perform the isotopies in the proof of Lemma~\ref{Lalmostcarry}.  Suppose we are at a certain stage of the isotopies and let $S_2$ be the current surface which is isotopic to $S_1$ above.  Suppose 
$S_2$ is an almost strongly irreducible Heegaard surface and let $\Gamma_2$ be the union of the associated almost vertical arcs.  Let $B_2$ be the sub-branched surface of $B$ fully carrying $S_2$.  Suppose we are to perform the isotopies in Claim 3 of Lemma~\ref{Lalmostcarry} on an innermost $D^2\times I$ region $E_2$ for $S_2$ and $B_2$.

First note that arcs in $\Gamma_2$ may intersect $E_2$.  As $\partial_vE_2$ is a vertical annulus in $N(B_2)$, we may assume $\Gamma_2\cap \partial_vE_2=\emptyset$.  Recall that each arc in $\Gamma_2$ is properly embedded in $\overline{M-S_2}$.  This implies that $\Gamma_2\cap E_2$ is a collection of properly embedded arcs in the innermost $D^2\times I$ region $E_2$.  Notice that the annulus $\partial_vE_2$ is properly embedded in a component of $H$ of $\overline{M-S_2}$ and is incompressible in $H-\Int(E_2)$, because otherwise, the union of a compressing disk for $\partial_vE_2$ outside $E_2$ and a disk in $E_2$ parallel to a component of $\partial_hE_2$ is an embedded 2-sphere separating the two disks of $\partial_hE_2$, which means that a component of $S_2$ lies in a 3-ball contradicting Lemma~\ref{L3-ball}. 

Next we claim that $\Gamma_2\cap E_2$ is a single unknotted arc connecting the two disk components of $\partial_hE_2$.   To see this, we need the assumption that after we add tubes to $S_2$ along arcs in $\Gamma_2$, the resulting Heegaard surface is strongly irreducible.  As the $D^2\times I$ region $E_2$ is innermost, we can first shrink $E_2$ a little to get a slightly smaller 3-ball $E_2'\subset E_2$ disjoint from $S_2$.   We may view the strongly irreducible Heegaard surface $S$ as the surface obtained by adding tubes to $S_2$ along $\Gamma_2$.  Hence we may view the intersection of $S$ and the 3-ball $E_2'$ as a collection of annuli along arcs in $E_2'\cap\Gamma_2$.  
A theorem of Scharlemann \cite[Theorem 2.1]{S} says that if the intersection of 
the boundary 2-sphere of a 3-ball with the two handlebodies of a strongly irreducible Heegaard splitting is incompressible in the corresponding handlebodies, then the intersection of the 3-ball with the Heegaard surface is a planar unknotted surface.  Note that by our construction, the meridional curve of each added tube is an essential curve in $S$ (otherwise $S_2$ contains a 2-sphere component carried by $B$, which contradicts our hypotheses on $B$).  Let $H_1$ and $H_2$ be the two handlebodies in the Heegaard splitting along $S$ and suppose the meridional disks of the added tubes are compressing disks in $H_1$.  So $\partial E_2'\cap H_1$ is a collection of compressing disks.  Now we consider $\partial E_2'\cap H_2$.  By our conclusion above that the annulus $\partial_vE_2$ is incompressible outside $E_2$, any compressing disk for $\partial E_2'\cap H_2$ in $H_2$ can be isotoped disjoint from $\partial_vE_2$.  This implies that a maximal compression on $\partial E_2'\cap H_2$ in $H_2$ cuts $E_2'$ into a collection of smaller 3-balls, and by Scharlemann's theorem above, the intersection of each 3-ball with $S$ is a single unknotted annulus.  This implies that $\Gamma_2\cap E_2$ is a collection of unknotted and unlinked arcs in the 3-ball $E_2$.  If the two endpoints of an arc in $\Gamma_2\cap E_2$ lie in the same disk component of $\partial_hE_2$, then since arcs in $\Gamma_2\cap E_2$ are unknotted and unlinked, adding tubes along $\Gamma_2$ yields a stabilized Heegaard surface, a contradiction.  Thus every arc in $\Gamma_2\cap E_2$ connects the two disk components of $\partial_hE_2$.  Furthermore, the argument above implies that if $\Gamma_2\cap E_2$ contains more one arc then tubing along $\Gamma_2$ also yields a stabilized Heegaard surface.  Thus if $\Gamma_2\cap E_2\ne\emptyset$, then $\Gamma_2\cap E_2$ is a single unknotted arc connecting the two disks of $\partial_hE_2$.

Let $\hat{\beta}$ be an arc in  $\Gamma_2\cap E_2$.  The isotopy in Claim 3 of Lemma~\ref{Lalmostcarry} (as illustrated in Figure~\ref{iso}(a)) on $E_2$ changes $\hat{\beta}$ to an arc isotopic to a vertical arc in $N(B_2)$ (since $E_2$ is innermost and each arc in $\Gamma_2$ is properly embedded in $\overline{M-S_2}$), see the dashed vertical arc in Figure~\ref{iso}(a) for a schematic picture of $\hat{\beta}$.  So we may choose the arc $\hat{\beta}$ after this isotopy on $S_2$ to be a vertical arc in $N(B_2)$, and this isotopy reduces the length of $\hat{\beta}$ back to 1 (see Definition~\ref{Dvert}).

Let $Y=E_2-\Int(N(B_2))$ be the almost $D^2\times I$ component of $M-\Int(N(B_2))$ inside the innermost $D^2\times I$ region $E_2$. 
For an arc $\alpha$ in $\Gamma_2$ that does not intersect $\Int(Y)$, the isotopy may stretch $\alpha$ into a longer arc which is equivalent to adding a vertical arc of $Y$ (plus a pair of possible vertical arcs in $N(B_2)$ at the two ends) to the original $\alpha$, see the dashed non-vertical arc in Figure~\ref{iso}(a) for a schematic picture of $\alpha$.   As $\alpha$ is an almost vertical arc by our hypothesis, the new arc after this isotopy remains an almost vertical arc (for the surface after isotopy) with respect to $B$.  Moreover, the length of the resulting almost vertical arc is increased by at most 2, since the isotopy may stretch both ends of $\alpha$ through $Y$.  Therefore, after we finish all the isotopies in Lemma~\ref{Lalmostcarry}, we obtain an almost strongly irreducible Heegaard surface $S'$ and there is no non-trivial good $D^2\times I$ region for $S'$ and $B$.

Logically it is possible that there is a $D^2\times I$ region $E_g$ for $S'$ that is not good, though by our construction, no such $D^2\times I$ region exists for $S_1$.  Let $\Gamma'$ be the set of almost vertical arcs associated to $S'$.  By Lemma~\ref{Lstuff}, the non-disk components of $E_g\cap S'$ are a collection of $\partial$-parallel annuli non-nested in $E_g$.   This case seems trickier than Corollary~\ref{Cstuff} because a compressing disk of $E_g\cap S'$ might intersect $\Gamma'$, but next we show that this cannot happen.  Let $A_g$ be an annular component of $E_g\cap S'$ and let $\Delta_g$ be a compressing disk for $A_g$ in $E_g$, where $\partial\Delta_g$ is a core curve of $A_g$.   We claim that $\partial\Delta_g$ is an essential curve in $S'$.  Suppose $\partial\Delta_g$ bounds a disk $O_g$ in $S'$.  As before, by capping off the boundary circle of a disk component of $O_g-\Int(A_g)$ using a disk $B$-parallel to a component of $\partial_h E_g$, we get a normal or an almost normal 2-sphere carried by $B$, which contradicts Lemma~\ref{LimmersedS2}.  Thus $\partial\Delta_g$ is an essential curve in $S'$.  Let $H_1'$ and $H_2'$ be the two submanifolds of $M$ bounded by $S'$ and suppose $\Delta_g$ is a compressing disk in $H_1'$.  We may view the Heegaard surface $S$ as the surface obtained by adding tubes to $S'$ along arcs in $\Gamma'$.  Let $H_1$ and $H_2$ be the two handlebodies in the Heegaard splitting along $S$ corresponding to $H_1'$ and $H_2'$ respectively.  If $\Delta_g\cap\Gamma'\ne\emptyset$ (or if $\Delta_g$ and $\Gamma'$ lie on the same side of $S'$), then since $\Delta_g\subset H_1'$, the meridional disks of the added tubes along $\Gamma'$ are compressing disks in $H_2$.  As $S'$ is the surface obtained from compressing $S$ along the meridional disks of these tubes, $H_1'$ is the manifold obtained by adding 2-handles to $H_1$ (along the meridians the tubes).  However, by \cite{CG}, the hypothesis that $S$ is strongly irreducible implies that $\partial H_1'$ is incompressible in $H_1'$, which contradicts the assumption that $\Delta_g$ is a compressing disk in $H_1'$.  Thus $\Delta_g$ and $\Gamma'$ lie on different sides of $S'$, in particular, $\Delta_g\cap\Gamma'=\emptyset$.
So the compressing disks for the annular components of $E_g\cap S'$ are disjoint from $\Gamma'$.  Hence we can compress and isotope $S'$ as in Corollary~\ref{Cstuff} (see Figure~\ref{iso2}(a)) and change $E_g$ into a good $D^2\times I$ region.  Note that the total number of compressions is bounded by the genus of $S$, so after finitely many operations and isotopies as above, we may assume there is no non-trivial $D^2\times I$ region for our final almost strongly irreducible Heegaard surface $S'$.  

Recall that in the proof of Lemma~\ref{Lalmostcarry}, after each step of the isotopies (i.e. all the isotopies in Claim 3 of Lemma~\ref{Lalmostcarry}) on a  non-trivial simple $D^2\times I$ region $E$, either 
\begin{enumerate}
\item the resulting surface is no longer fully carried by the branched surface and we have to take a sub-branched surface in the next step of the isotopies, or
\item the belt of $E$ is no longer a belt of any simple $D^2\times I$ region for the surface after isotopy, or
\item the belt $V$ of $E$ is a belt of a simple $D^2\times I$ region for the surface after isotopy, but the complexity of $V$ is increased.  Note that by definition, the complexity $C_S(V)\le |M-B|$.
\end{enumerate}

Let $E$ be the simple $D^2\times I$ region in Claim 3 of Lemma~\ref{Lalmostcarry}.  By our argument above, if an almost vertical arc associated to the surface lies in $E$, then the isotopy (pushing a disk in $\partial_hE$ across $E$) changes this arc back to a vertical arc of $N(B)$.  This means that, although we may need to push disks across $E$ several times to get a surface not fully carried by $N(B')$ in Claim 3 of Lemma~\ref{Lalmostcarry}, the length of each associated almost vertical arc is increased by at most 2 after we finish all the isotopies across $E$ in Claim 3 of Lemma~\ref{Lalmostcarry}.

If (1) and (2) above do not occur after one step of the isotopies (i.e. all the isotopies in Claim 3 of Lemma~\ref{Lalmostcarry}), then we perform another isotopy on a simple $D^2\times I$ region of which $V$ is the belt.  As the complexity $C_S(V)\le |M-B|$, after finite steps of such isotopies, either the resulting surface is no longer fully carried by $B$ or $V$ is no longer the belt of a $D^2\times I$ region.  If the resulting surface is no longer fully carried by $N(B)$, then we take a sub-branched surface of $B$ that fully carries the surface.  As $B$ has only finitely many sub-branched surfaces, the possibility (1) above can occur only finitely many times.  
Guaranteed by Claim 4 of Lemma~\ref{Lalmostcarry}, this means that there is a number $K$ depending on the branched surface $B$, such that we only need at most $K$ steps of such isotopies to eliminate all the non-trivial $D^2\times I$ regions.  Thus the above argument means that the length of each almost vertical arc associated to the final surface $S'$ is at most $2K+1$.  
So by enumerate all possible sub-branched surfaces and counting components of the branch locus, we can calculate (an upper bound for) $K$.  We would like to emphasize that the proof above shows the existence of such a surface $S'$.  There is an algorithm to calculate $K$, but we do not have an algorithm to find $S'$.  Nonetheless, we will show below that if the surface $S'$ above is given, then we can use the bound on the length of associated arcs to recover the original Heegaard surface. 

Suppose we have found our final almost strongly irreducible Heegaard surface $S'$ carried by the branched surface $B$ in the lemma. Up to isotopy, there are only finitely many subarcs of $I$-fibers properly embedded in a component of $\overline{M-S'}$.  Moreover, as $B$ has only finitely many possible sub-branched surfaces, up to isotopy, there are only finitely many almost vertical arcs for $S'$ with length at most $2K+1$.  By enumerating and tubing along all possible almost vertical arcs of length at most $2K+1$ using all possible sub-branched surfaces of $B$, we can construct a finite set of surfaces.  Using Haken's algorithm \cite{Ha} and the algorithm to recognize a 3-ball \cite{Th}, we can determine whether or not each side of a surface is a handlebody.  So we can determine which surfaces in our list are Heegaard surfaces. Thus we get a finite set of Heegaard surfaces, one of which is isotopic to $S$.  
\end{proof}

\section{Sum of surfaces carried by a branched surface}\label{Ssum}

As we describe in section~\ref{Spre}, given a finite set of closed surfaces $T_1,\dots, T_n$ carried by $N(B)$, the sum of these surfaces $T=\sum_{i=1}^nn_iT_i$ is a surface obtained by canonical cutting and pasting on copies of the $T_i$'s along the intersection curves.  In this section, we will obtain information of $T$ from certain intersection patterns of the $T_i$'s.  Our main goal in this section is to show that if an almost strongly irreducible Heegaard surface $S$ has no non-trivial $D^2\times I$ region, then either the coefficient of some torus summand for $S$ is bounded or the branched surface has some nice property, see Lemma~\ref{Lflare}.

\begin{definition}\label{Dproduct}
Let $T_1$ and $T_2$ be closed and orientable surfaces carried by $N(B)$ and suppose $T_1$ is transverse to $T_2$.  Suppose $T_i$ ($i=1,2$) has a subsurface $F_i$ such that $\partial F_1=\partial F_2=F_1\cap F_2\subset T_1\cap T_2$.  We say that $F_1$ and $F_2$ bound a (pinched) \textbf{product region} if 
\begin{enumerate}
\item $F_1\cup F_2$ bounds a handlebody $X=F\times [1,2]/\mathord{\sim}$ with $(x,s)\sim (x,t)$ for any  $x\in\partial F$ and $s,t\in [1,2]$, where $F_i$ is (the image of) $F\times\{i\}$ ($i=1,2$) in $\partial X$; and 
\item there is a small neighborhood $A_F$ of $\partial F$ in $F$ such that for any $x\in\Int(F)\cap A_F$,
  $\{x\}\times [1,2]$ is a subarc of an $I$-fiber of $N(B)$.
\end{enumerate}
Note that if $F_i$ is a disk, then $X$ is basically a $D^2\times I$ region (for $T_1+T_2$).  If $F_i$ is an annulus, then $X$ is a solid torus of the form $bigon\times S^1$ and in this case we simply call $X$ a $bigon\times S^1$ region.   We say $X$ is a \textbf{trivial product region} if $X\subset N(B)$ and for each $x\in\Int(F)$, $\{x\}\times [1,2]$ is a subarc of an $I$-fiber of $N(B)$.  We say $X$ is \textbf{innermost} if $X\cap T_i=F_i$ for both $i=1,2$.  Note that if $X$ is a trivial product region, since $F_i$ is carried by $N(B)$, there must be an innermost trivial product region inside $X$.  Suppose $X$ is an innermost trivial product region, then we can perform a $B$-isotopy on $T_i$, which we call a \textbf{trivial isotopy}, to eliminate $X$ (as well as the set of double curves $\partial F_i\subset T_1\cap T_2$) by pushing either $F_1$ or $F_2$ across $X$. 
\end{definition}

Before we proceed, we would like to point out a reason why trivial isotopy is important for surfaces carried by $N(B)$.  

\begin{proposition}\label{Pproduct}
Given any two embedded orientable and closed surfaces $F$ and $G$ carried by $N(B)$, if $F\cap G\ne\emptyset$ but $F$ can be isotoped disjoint from $G$ via $B$-isotopy, then $F$ and $G$ must form a trivial product region and one can isotope $F$ disjoint from $G$ using a sequence of trivial isotopies.  
\end{proposition}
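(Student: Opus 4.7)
The plan is to exploit the fact that a $B$-isotopy restricts, on each $I$-fiber of $N(B)$, to an order-preserving self-homeomorphism fixing the endpoints in $\partial_hN(B)$. Consequently $\pi$, every intersection count $|F\cap I_b|$, and the vertical order of the sheets of $F$ over each fiber are preserved throughout the isotopy. The proof will exhibit an innermost trivial product region formed by $F$ and $G$ and then induct on $|F\cap G|$.

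To locate such a region, I would take the given $B$-isotopy $h_t$ from $F_0=F$ to $F_1$ disjoint from $G$ and perturb it within the class of $B$-isotopies so that $F_t$ meets $G$ transversely except at finitely many critical times $t_1<\cdots<t_k$, at each of which a single Morse tangency between $F_t$ and $G$ occurs (both being graphs over $B$ in $I$-fiber coordinates near the tangency point). Because $|F_0\cap G|>0$ while $|F_1\cap G|=0$, and neither birth nor saddle events can be the very last, the last critical event must be a death event in which a circle component $\gamma\subset F_{t_k^-}\cap G$ shrinks to a point. Just before this event, $\gamma$ bounds a small disk on $F_{t_k^-}$ and a small disk on $G$; together they co-bound a 3-ball lying inside $N(B)$ whose vertical slices are subarcs of $I$-fibers. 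This is a trivial $D^2\times I$ product region for $(F_{t_k^-},G)$ in the sense of Definition~\ref{Dproduct}.

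The key and hardest step is to push this trivial product region back from time $t_k^-$ all the way to time $0$. I would induct on the number of preceding critical events, running $h_t$ in reverse. A critical event disjoint from the candidate region leaves it essentially unchanged; an event occurring inside the candidate either creates a smaller product subregion (birth or death inside) or splits/merges the boundary of the candidate (saddle inside). At each stage one selects an innermost trivial product region for the current pair, and backward induction produces an innermost trivial product region for the original pair $(F,G)$. Note that successive backward saddles can promote the disk type obtained at $t_k^-$ to an annulus type, so the region at time $0$ may be a $bigon\times S^1$ region rather than a $D^2\times I$ region, matching the full generality of Definition~\ref{Dproduct}.

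Finally, performing the trivial isotopy across this innermost region reduces $|F\cap G|$ by at least one component while keeping the resulting surface carried by $N(B)$ and still $B$-isotopic to a surface disjoint from $G$; induction on $|F\cap G|$ then completes the proof. The backward-transfer step is the only genuinely delicate point; the innermost selection at each stage insulates the candidate from events happening outside it and lets the induction go through.
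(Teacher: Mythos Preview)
Your approach is genuinely different from the paper's. The paper pulls the whole isotopy back to the abstract product $F\times I$: it chooses $h:F\times I\to N(B)$ transverse to $G$, so $h^{-1}(G)$ is a properly embedded surface with boundary only in $F\times\{0\}$, and then invokes Waldhausen's \cite[Proposition~5.4]{W} to find an innermost product region $P\subset F\times I$ between a component $G_1$ of $h^{-1}(G)$ and a subsurface $F_1$ of $F\times\{0\}$. The remaining work is to show that $h(P)$ really is a \emph{trivial} product region in $N(B)$, which the paper does by a direct argument with vertical arcs (the set $F_1''$) and a separation argument in $N_G=\overline{N(B)-G}$. So the paper's key lemma is Waldhausen's parallel-surface theorem in a product, applied once; there is no Cerf theory and no tracking through time.

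Your argument, by contrast, works in the time direction and tries to transport a trivial product region backward through the critical events of a generic $B$-isotopy. The gap is precisely the step you flag as delicate: the backward transfer through a saddle that involves the boundary curve of the candidate region. You assert that ``at each stage one selects an innermost trivial product region for the current pair'', but you have not shown that such a region exists at time $t_i^-$ once the saddle at $t_i$ touches $\partial D_F$. Concretely, if the saddle (viewed backward) merges $\gamma=\partial D_F=\partial D_G$ with another curve $\gamma'$ of $F_{t_i^-}\cap G$, the resulting single curve need not bound a disk in $G$, and there is no reason the surviving pieces assemble into a trivial product region of any type; your remark that the region may become a $bigon\times S^1$ is a hope, not an argument. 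Establishing this persistence is essentially the content of Waldhausen's result, so the Cerf approach does not bypass the main difficulty---it relocates it. If you want to salvage the strategy, the clean fix is exactly what the paper does: replace the movie by the static surface $h^{-1}(G)\subset F\times I$ and quote Waldhausen to obtain the innermost product region directly, then check triviality in $N(B)$.
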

\begin{proof}
This proposition is similar to a theorem of Waldhausen \cite[Proposition 5.4]{W}.  Let $h:F\times I\to N(B)$ be the $B$-isotopy that moves $F$ disjoint from $G$.  We may suppose (1) $h(F\times\{0\})=F$, (2) $h(F\times\{1\})\cap G=\emptyset$, (3) $h(\{x\}\times I)$ lies in an $I$-fiber of $N(B)$ for each $x\in F$,  and (4) each $h(F\times\{t\})$ is an embedded surface in $N(B)$ transverse to the $I$-fibers.  Moreover, we may choose $h$ to be transverse to $G$ in the sense that $h^{-1}(G)$ is a collection of surfaces properly embedded in $F\times I$.  Since $h(F\times\{1\})\cap G=\emptyset$, the boundary of $h^{-1}(G)$ lie in $F\times\{0\}$.  Thus by a theorem of Waldhausen \cite{W}, a component $G_1$ of $h^{-1}(G)$ and a subsurface $F_1$ of $F\times\{0\}$ bound an innermost product region $P$ in $F\times I$, in particular, $F_1\cap G_1=\partial F_1=\partial G_1$, $P\cap (F\times\{0\})=F_1$ and $P\cap h^{-1}(G)=G_1$.  As $P$ is innermost, $h(\Int(P))\cap G=\emptyset$.

Let $N_G$ be the compact 3-manifold obtained by cutting $N(B)$ open along $G$.  Let $G_+$ and $G_-$ be the two components of $\partial N_G$ corresponding to the two sides of $G$.  Since $h(F\times\{0\})=F$ and $h(\Int(P))\cap G=\emptyset$, we may view $F_1'=h(F_1)$ as a surface properly embedded in $N_G$ with $\partial F_1'$ lying $G_+$ or $G_-$.  Without loss of generality, we may assume $\partial F_1'\subset G_+$.  The manifold $P$ has an induced product structure from $F\times I$.  For any vertical arc $\alpha$ properly embedded in $P$ ($\alpha$ is a subarc of an $I$-fiber of $F\times I$), we denote the two endpoints of $\alpha$ by $x_\alpha=\alpha\cap F_1$ and $y_\alpha=\alpha\cap G_1$.  Then the product structure of $P$ gives rise to a projection $p:F_1'\to G_+$ with $p(h(x_\alpha))=h(y_\alpha)$ for each such arc $\alpha$.  So $F_1'$ can be projected into $G_+$ in $N_G$ and this means that $F_1'$ is null-homologous in $H_2(N_G, G_+)$ and hence $F_1'$ is separating in $N_G$.

Note that $N_G=\overline{N(B)-G}$ has an induced $I$-fiber structure from $N(B)$.  Let $\alpha$ be a vertical arc in $N_G$ and we call $\alpha$ a vertical arc between $F_1'$ and $G_+$ if (1) $\Int(\alpha)\cap F_1'=\emptyset$, (2) one endpoint $x$ of $\alpha$ lies in $F_1'$ and the other endpoint of $\alpha$ is $p(x)\in G_+$. 
 Let $F_1''$ be the union of the points $x\in F_1'$ that is an endpoint of a vertical arc between $F_1'$ and $G_+$ described above.  We may view $\partial F_1'\subset F_1''$.   As $F_1'$ is a subsurface of $F$ and is transverse to the $I$-fibers, by the product structure of $P$, a small neighborhood of $\partial F_1'$ in $F_1'$ clearly lies in $F_1''$.  If $F_1''=F_1'$, then by our construction, $F_1'$ and a subsurface of $G_+$ bounded by $\partial F_1'$ bound a trivial product region in $N_G$.  This implies that $F_1'$ ($F_1'\subset F$) and a subsurface of $G$ bound a trivial product region $P'$ in $N(B)$.  If $F\cap\Int(P')\ne\emptyset$, then one can find an innermost trivial product region inside $P'$.  Hence Proposition~\ref{Pproduct} holds. 

Suppose $F_1''\ne F_1'$.  By our construction, $F_1''$ is a closed set in $F_1'$ containing a small neighborhood of $\partial F_1'$. Let $x\in\Int(F_1')$ be a boundary point of $F_1''$.  Let $\alpha_x$ be the vertical arc in $N_G$ between $F_1'$ and $G_+$ with $x\in\partial\alpha$.  As $x$ is a boundary point of $F_1''$, we have the following two cases.  The first case is that $\alpha$ contains a vertical arc of $\partial_vN(B)$.  This case is impossible because the homotopy pushing $F_1'$ to $G_+$ above is totally inside $G_+\cup\Int(N_G)$.  The second case is that the other endpoint of $\alpha_x$ is a point in $\partial F_1'$ and if one slightly move $x$ to a point $x' \in F_1'-F_1''$, then the interior of the corresponding vertical arc (connecting $x'$ to $p(x')$) intersects $F_1'$.  However, if this case happens, since a neighborhood of $\partial F_1'$ lies in $F_1''$, a vertical arc of $N(G)$ (one can use a subarc of the vertical arc connecting $x'$ to $p(x')$ above) connects one side of $F_1'$ to the other side.  This means that $F_1'$ is non-separating in $N_G$, contradicting our conclusion above on $F_1'$.
\end{proof}

Let $T_1,\dots, T_m$ be a collection of embedded closed surfaces in general position and carried by $N(B)$.  We can define a \textbf{complexity} of the intersection to be $(t,d)$ in lexicographic order, where $t$ is the number of triple points and $d$ is the number of double curves in the intersection.  Suppose two surfaces, say $T_1$ and $T_2$ in this collection form an innermost trivial product region $P$, i.e., a collection of double curves in $T_1\cap T_2$ bound surfaces $F_1\subset T_1$ and $F_2\subset T_2$ such that $F_1\cup F_2$ bounds the trivial product region $P$ and $\Int(P)\cap (T_1\cup T_2)=\emptyset$.  Note that other surfaces $T_i$ ($i\ne 1,2$) may intersect $P$.  Let $t_1$ and $t_2$ be the numbers of triple points (in the intersection of the $T_i$'s) lying in $F_1$ and $F_2$ respectively.
Without loss of generality, we may assume $t_1\le t_2$, and if $t_1=t_2$, we assume the number of double curves in $F_1$ is no larger than the number of double curves in $F_2$.  Then we can perform a trivial isotopy as above on $T_2$, pushing $T_2$ across $P$ to eliminate the product region $P$.  This operation is basically replacing $T_2$ by $(T_2-F_2)\cup F_1$. After a small perturbation, this operation eliminates the triple points in $F_2$ but gains copies of triple points in $\Int(F_1)$.  Since $t_1\le t_2$, this $B$-isotopy on $T_2$ does not increase the total number of triple points.  In fact the isotopy eliminates all the triple points (if any) that lie on $\partial F_i$.  As the double curves $\partial F_1=\partial F_2$ are eliminated, by our assumption above on the number of double curves in $F_1$ and $F_2$, this operation reduces the complexity.  Using this complexity, we can conclude that after some trivial isotopies, no pair of surfaces in this collection form any trivial product region.

Let $T_1,\dots, T_m$ be closed surfaces carried by $N(B)$ and in general position.  Let $\Gamma=\bigcup_{i=1}^mT_i$.  Suppose each $T_i$ is a separating surface in $M$. This implies that for any component $N$ of $M-\Gamma$, the inclusion map $i:N\to M$ naturally extends to an embedding/inclusion $i:\overline{N}\to M$ where $\overline{N}$ is the closure of $N$ under path metric. In other words, no two points in $\partial\overline{N}$ correspond to the same point in $\Gamma$, and $\overline{N}$ is an embedded compact submanifold of $M$. 

As illustrated in Figure~\ref{deform}(a), the 2-complex $\Gamma$ can be naturally deformed into a branched surface $B^\Gamma$ lying in $N(B)$ and transverse to the $I$-fibers.  The intersection curves of these $T_i$'s correspond to the branch locus of $B^\Gamma$.  We may identify each component of $M-\Gamma$ to a corresponding component of $M-B^\Gamma$ and the only difference is that, when viewed as a component of $M-B^\Gamma$, some corners of a component of $M-\Gamma$ are smoothed out and some corners become cusps.  

Let $X$ be the closure of a component of $M-\Gamma$ (or $M-B^\Gamma$) and let $\alpha$ be a simple closed curve in $\partial X$.  Since each $T_i$ is separating and by the discussion above, $\alpha$ is a simple closed curve in the 2-complex $\Gamma$.  We say $\alpha\subset\partial X$ is a \textbf{good curve} if $\alpha$ becomes a smooth curve once we deform $\Gamma$ into the branched surface $B^\Gamma$ as above, i.e. $\alpha$ does not cross the cusps.  If $\alpha\subset\partial X$ is a good curve, then the intersection of $B^\Gamma$ with a small/thin vertical annulus in $N(B)$ that contains $\alpha$ is a train track $\tau_\alpha$ and $\tau_\alpha$ consists of the smooth circle $\alpha$ and possibly some ``tails" all lying on the same side of $\alpha$.

\begin{lemma}\label{Lgood}
Let $T_1,\dots, T_m$, $\Gamma$ and $X$ be as above and let $\gamma\subset\partial X$ be a good curve.
Let $S=\sum_{i=1}^mn_iT_i$.  Suppose there is some number $k$ such that $n_i\ge k$ for each $n_i$, then $S$ contains at least $k$ disjoint simple closed curves $B$-isotopic to $\gamma$.
\end{lemma}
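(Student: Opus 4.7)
My plan is to work with the vertical annulus $A$ in $N(B)$ that contains $\gamma$ and has $A\cap B^{\Gamma}$ equal to the train track $\tau_\gamma$ described just before the lemma (the smooth circle $\gamma$ together with tails all lying on one fixed side). I will show that $S\cap A$ contains at least $k$ essential closed loops. Since every such loop is isotopic inside $A$ to the core of $A$, and an isotopy within a vertical annulus of $N(B)$ is a $B$-isotopy, this will give the required $k$ disjoint curves on $S$ that are $B$-isotopic to $\gamma$.

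I would first compute the weight of $S$ on each branch sector of $B^\Gamma$. Each $T_i$, viewed as a closed surface carried by $N(B^\Gamma)$, consists precisely of the branch sectors of $B^\Gamma$ that arose from pieces of $T_i$, so $T_i$ has weight $1$ on each such sector and weight $0$ on all others. Hence $S=\sum n_iT_i$ has weight $n_j$ at every branch sector $b\subset T_j$ of $B^\Gamma$. Because $\gamma\subset\partial X\subset\Gamma$, every branch sector of $B^\Gamma$ through which $\gamma$ passes is a subsurface of some $T_j$, and the hypothesis $n_j\ge k$ then gives that the weight of $S$ along $\gamma$ is at least $k$ at every point.

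The main step is to exploit the one-sided tail structure of $\tau_\gamma$. Above each point of $\gamma$, the sheets of $S$ meeting the $I$-fiber of $A$ through that point will be linearly ordered by their position along the fiber, from the tail-free end to the tail-side end. At each branch locus crossing along $\gamma$ the branch equation of $B^\Gamma$ reads $w_{\mathrm{before}}=w_{\mathrm{after}}+w_{\mathrm{cusp}}$ (or the reverse), and because the cusping sector always attaches on the same, tail side of $\gamma$, the sheets of $S$ that peel off into the cusping sector are always at the tail-side end of the ordering. In particular, the $k$ sheets at the tail-free end persist through every branch locus crossing. Tracing them once around $\gamma$, the linear ordering is preserved at each crossing so the monodromy is trivial, and they assemble into $k$ disjoint embedded closed curves in $S\cap A$, each essential in $A$ and therefore $B$-isotopic to $\gamma$.

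The hard part will be verifying rigorously that the sheet-ordering behaves as claimed at each branch locus crossing, i.e.\ that the peeled-off sheets are truly at the tail end. This reduces to a local calculation in the vertical annulus $A$: near a tail-attachment point, $\tau_\gamma$ looks like a ``Y'' with two arcs continuing smoothly along $\gamma$ and a third coming off on the designated side, so translating the branch equation into the $I$-fiber picture gives the required matching of sheets. Once this local matching is established, globalization around $\gamma$ is routine, and the lemma follows.
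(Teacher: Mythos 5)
Your proof is correct and takes essentially the same approach as the paper's: both work inside the vertical annulus $A$ containing $\gamma$ and exploit the fact that the tails of $\tau_\gamma$ all lie on one side of $\gamma$. The paper concludes by exhibiting a vertical fiber of $A$ disjoint from the $\partial$-parallel arc components of $S\cap A$, while you track the $k$ sheets at the tail-free end once around $\gamma$; these are two bookkeepings of the same local picture.
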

\begin{proof}
Let $A$ be a small vertical annulus in $N(B)$ that contains $\gamma$.  Let $\tau=A\cap \Gamma$. By our definition of good curve, after deforming $\Gamma$ into a branched surface, we may view $\tau$ as a train track which consists of $\gamma$ and some ``tails" all on the same side of $\gamma$.  We may assume $S$ lies in a small neighborhood of $\Gamma$.  This implies that $S\cap A$ consists of some simple closed curves $B$-isotopic to $\gamma$ and some arcs whose endpoints all lie in the same component of $\partial A$ (in particular, there is no spirals in $A\cap S$).  Since $S$ is transverse to the $I$-fibers, it is easy to see from the above property of $S\cap A$ that there must be a vertical arc of $A$ that does not intersect any of these $\partial$-parallel arcs in $S\cap A$, i.e. the vertical arc only intersects the closed curves in $S\cap A$.  Since $n_i\ge k$ for each $n_i$, $S\cap A$ contains at least $k$ simple closed curves $B$-isotopic to $\gamma$. 
\end{proof}

\begin{lemma}\label{LgoodH}
Let $T_1,\dots, T_m$ and $\Gamma$ be as above.  Suppose  $N(B)$ fully carries $\Gamma$, i.e., every $I$-fiber of $N(B)$ intersects some $T_i$.  Then for any simple closed curve $C$ in $\partial_hN(B)$, there is a good curve in $\Gamma=\cup_{i=1}^mT_i$ that is $B$-isotopic to $C$.
\end{lemma}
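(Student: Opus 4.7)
The plan is to push $C$ along $I$-fibers of $N(B)$ to the first point of $\Gamma$ encountered, producing a curve $C'\subset\Gamma$, and then verify that $C'$ is both $B$-isotopic to $C$ and a good curve.

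For each $x\in C\subset\partial_hN(B)$, since $\Gamma$ is fully carried by $N(B)$, the $I$-fiber through $x$ meets $\Gamma$; let $f(x)$ be the fiber-parameter distance from $x$ to the first such meeting. Because each $T_i$ is a closed surface transverse to the fibers of $N(B)$, the function $f$ is continuous: upper semicontinuity follows from local transversality of the first-hit sheet, and lower semicontinuity from $\Gamma$ being closed. Define $C'$ to be the curve obtained by sliding each $x\in C$ a distance $f(x)$ along its $I$-fiber. Then $C'\subset\Gamma$ is a simple closed curve, and the full family $(x,s)\mapsto$ point at fiber-distance $s\cdot f(x)$ from $x$, $s\in[0,1]$, is a $B$-isotopy from $C$ to $C'$.

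To verify $C'$ is a good curve, I would take a small vertical annulus $A'$ in $N(B)$ with core $C'$ and examine $A'\cap\Gamma$ after deforming $\Gamma$ into $B^\Gamma$. The annular strip between $C$ and $C'$ in $N(B)$, swept out by the fiber segments of length $f$, has interior disjoint from $\Gamma$ by the first-hit construction, so no other sheets of $\Gamma$ meet $A'$ on the $C$-side of $C'$. Any further sheets of $\Gamma$ meeting $A'$ must therefore appear only as tails on the opposite side of $C'$ in the train track $A'\cap B^\Gamma$, which is exactly the train-track picture characterizing a good curve recalled in the proof of Lemma~\ref{Lgood}.

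The main obstacle I anticipate is verifying that $C'$ itself is a smooth circle in the train track $A'\cap B^\Gamma$: namely, that wherever $C'$ crosses a branch curve of $B^\Gamma$ arising from an intersection $T_i\cap T_j$, the transition along $C'$ goes across the merged (smooth) sheet of the canonical resolution and not across a cusp. For this I would analyze the local picture at $T_i\cap T_j$: the canonical deformation $\Gamma\to B^\Gamma$ (determined by compatibility with the $I$-fiber structure of $N(B)$) places the cusps of $B^\Gamma$ into the two ``wedge'' complementary regions of $\Gamma$, while the ``envelope'' regions above and below lie on the smooth (merged) side of the resolution. Since $C'$ lies on the boundary of the envelope region $X_0$ reached from $\partial_hN(B)$ by short fiber pushes, every sheet transition along $C'$ is on the smooth side, so $C'$ is a good curve as required.
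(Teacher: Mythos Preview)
Your approach is correct and is essentially the same idea as the paper's: push $C$ along the $I$-fibers to the first point of $\Gamma$. The paper streamlines your continuity and local cusp analysis by routing through the embedded surface $S=\sum_{i=1}^m T_i$: since $\Gamma$ is fully carried so is $S$, the first-hit curve $C'$ lies in $S$ with $C\cup C'$ bounding a vertical annulus $A$ satisfying $A\cap S=C'$, and because the deformation $\Gamma\to B^\Gamma$ is exactly the canonical cutting-and-pasting that produces $S$, the curve $C'$ viewed back in $\Gamma$ is automatically smooth in $B^\Gamma$---so your envelope/wedge verification, while correct, is subsumed in one line.
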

\begin{proof}
Let $S=\sum_{i=1}^mT_i$.  Our hypotheses imply that $S$ is fully carried by $B$.  Thus there is a simple closed curve $C'$ in $S$ such that $C'\cup C$ bounds a vertical annulus $A$ in $N(B)$ and $A\cap S=C'$.  This means that, before the canonical cutting and pasting for $S=\sum_{i=1}^mT_i$, $C'$ corresponds to a good curve in $\Gamma=\cup_{i=1}^mT_i$.
\end{proof}

\begin{figure}
\begin{center}
\psfrag{(a)}{(a)}
\psfrag{(b)}{(b)}
\psfrag{(c)}{(c)}
\psfrag{d}{deform}
\psfrag{al}{$\alpha$}
\psfrag{be}{$\beta$}
\includegraphics[width=5.0in]{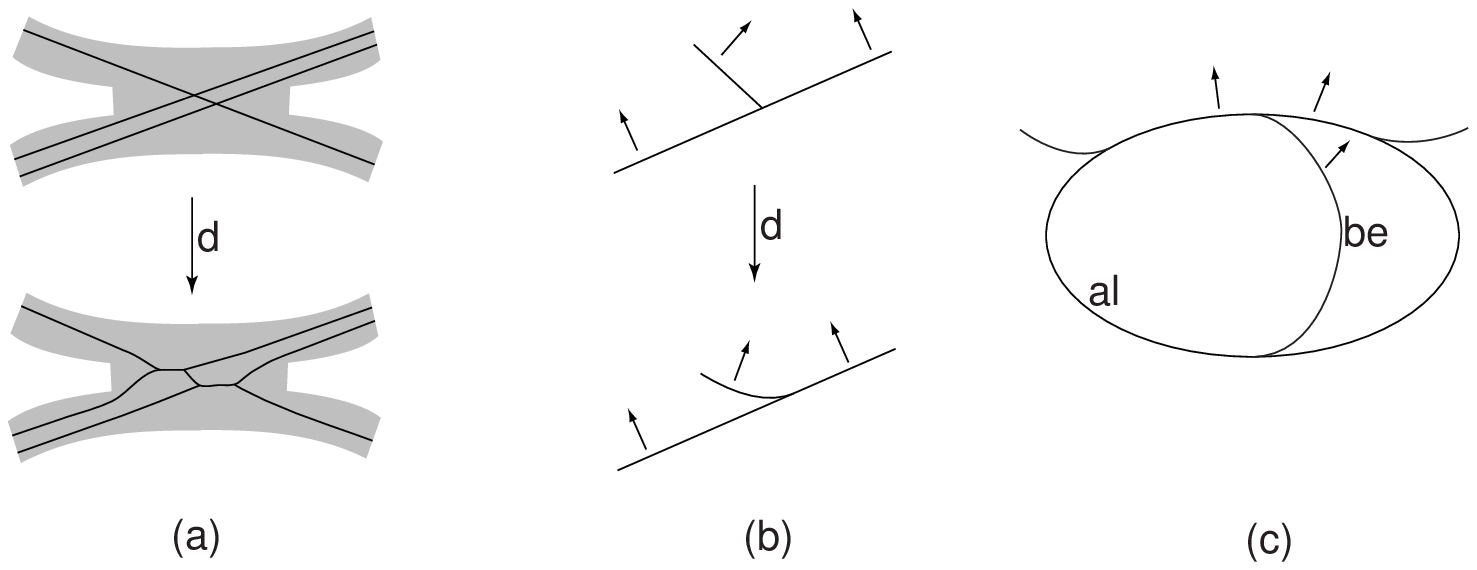}
\caption{}\label{deform}
\end{center}
\end{figure}

\begin{definition}\label{Dflare}
Let $D$ be a disk and $F$ a compact planar surface all carried by $N(B)$ and with either $D\cap F=\emptyset$ or $D\cap F=\partial D\subset\partial F$. 
Let $\gamma_0,\dots,\gamma_k$ be the boundary curves of $F$. Suppose $\gamma_0$ and a small annular neighborhood $A_0$ of $\gamma_0$ in $F$ are $B$-parallel to $\partial D$ and a small annular neighborhood of $\partial D$ in $D$ respectively.  Moreover, if $D\cap F\ne\emptyset$, we require $\gamma_0=\partial D=D\cap F$.  Let $F'$ be a maximal subsurface of $F$ that contains $A_0$ and is $B$-parallel to a subsurface of $D$.  We say $F$ is a \textbf{flare} based at $D$ if $F-F'$ consists of annular neighborhoods of $\gamma_1,\dots,\gamma_k$ in $F$, see Figure~\ref{flare} for a schematic picture of a flare (in this picture, $F$ is an annulus and $D\cap F=\partial D\subset\partial F$).  So a flare surface $F$ is a piece of surface that is not $B$-parallel to a subsurface of $D$, but a large subsurface $F'$ of $F$ is $B$-parallel to a subsurface of $D$ containing $\partial D$.  Let $\gamma_1',\dots,\gamma_k'$ be the components of $\partial F'-\gamma_0$ and let $\alpha_i$ ($i=1,\dots,k$) be the curve in $D$ that is $B$-parallel to $\gamma_i'$.  So the planar subsurface of $D$ bounded by $\partial D$ and the $\alpha_i$'s is $B$-parallel to $F'$.  We call the $\alpha_i$'s the \textbf{flare locus}.  
Note that $\alpha_i\cup\gamma_i'$ bounds a vertical annulus $A_i$ in $N(B)$.  If a vertical arc of $A_i$ does not contain a vertical arc of $\partial_vN(B)$, then one can enlarge $F'$ a little to get a slightly larger subsurface of $F$ $B$-parallel to a subsurface of $D$, which contradicts our assumption that $F'$ is maximal. So each vertical arc of $A_i$ must contain a vertical arc of $\partial_vN(B)$.  In particular, $\alpha_i\subset\pi^{-1}(L)$ where $L$ is the branch locus of $B$ and $\pi:N(B)\to B$ is the projection.  Moreover, the normal direction of $\alpha_i$ in $D$ induced from the branch direction (of the corresponding arcs in the branch locus)  points out of the subdisk of $D$ bounded by $\alpha_i$, see Figure~\ref{flare}.      
Note that if $k\ge 2$, then one can find a simple closed curve in $F'$ that cuts off an annular neighborhood of some $\gamma_i$ ($i\ge 1$) in $F$ such that this annular subsurface of $F$ is a flare based at a subdisk of $D$.   
Let $D_i$ ($i=1,\dots,k$) be the subdisk of $D$ bounded by the component $\alpha_i$ of the flare locus.  
We say a flare $F$ and its base $D$ are \textbf{innermost} if its flare locus is innermost in $D$ in the sense that for any flare surface $G$ based at a subdisk of $D$ and with the flare locus of $G$ in $\bigcup_{i=1}^kD_i$, then the flare locus of $G$ and the flare locus of $F$ are the same.  If the flare $F$ is innermost, then $F$ must be an annulus, since otherwise an annular neighborhood of $\gamma_i$ ($i\ge 1$) in $F$ described above is such a flare $G$ based at a subdisk of $D$.  We say a branched surface $B$ has a flare if there are flare surface $F$ and base disk $D$ carried by $B$ as above.  Clearly if there is a flare based at $D$, then there must be an innermost flare based at a subdisk of $D$.
\end{definition}

\begin{figure}
\begin{center}
\psfrag{F}{$F$}
\psfrag{D}{$D$}
\includegraphics[width=1in]{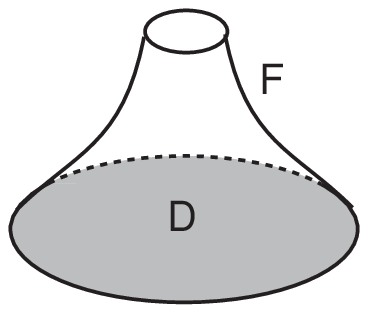}
\caption{}\label{flare}
\end{center}
\end{figure}

The notion of flare can be viewed as a generalization of non-trivial $D^2\times I$ region, see Lemma~\ref{LflareD2}.

\begin{lemma}\label{LflareD2}
Let $S$ be any surface carried by $N(B)$.  If there is a non-trivial $D^2\times I$ region for $S$ and $B$, then $B$  has a flare.
\end{lemma}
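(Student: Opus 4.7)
The plan is to construct a flare directly from the non-trivial $D^2\times I$ region. Let $E$ be such a region with $\partial_h E = D_0 \cup D_1$ and $\partial_v E = A$, and take $D := D_0$, a disk carried by $N(B)$. Since $A$ is a vertical annulus in $N(B)$, it provides a $B$-parallelism between a collar of $\partial D_1$ in $D_1$ and a collar of $\partial D_0$ in $D_0$; but $D_0$ and $D_1$ are not globally $B$-parallel (as $E$ is non-trivial), so there is a maximal ``extent'' to which $D_1$ is $B$-parallel to a subsurface of $D_0$, and the complement of this extent should supply the flare.

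More precisely, let $G \subseteq D_1$ be the maximal connected subsurface of $D_1$ that contains a collar of $\partial D_1$ and is $B$-parallel to a subsurface of $D_0$. Then $G$ is a proper planar subsurface of $D_1$ with inner boundary curves $\beta_1, \ldots, \beta_m \subset \mathrm{Int}(D_1)$; each $\beta_i$ bounds an innermost disk $U_i$ in $D_1$ with $U_i \cap G = \beta_i$. For each $i$, pick a slightly smaller concentric open disk $U_i' \subset U_i$ with boundary $\alpha_i'$, and set
\[
F := D_1 - \bigcup_{i=1}^{m} U_i'.
\]
Then $F$ is a compact planar surface carried by $N(B)$, disjoint from $D = D_0$, with boundary curves $\gamma_0 := \partial D_1$ and $\gamma_i := \alpha_i'$ for $i = 1, \ldots, m$.

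I then claim $(D, F)$ is a flare with $F' := G$. This requires verifying: (i) $F'$ contains a collar $A_0$ of $\gamma_0$ in $F$ and is $B$-parallel to a subsurface of $D$ --- direct from the construction; (ii) $F'$ is maximal within $F$ with these properties --- any subsurface of $F$ strictly larger than $G$ would meet some annular strip $\overline{U_i} - U_i'$, and a $B$-parallelism there would extend $G$ into $U_i$, contradicting maximality of $G$; (iii) $F - F'$ consists of the half-open annuli $\overline{U_i} - U_i' - \beta_i$, each an annular neighborhood of $\gamma_i$ in $F$; (iv) the flare locus $\alpha_i \subset D$ ($B$-parallel to $\beta_i$) has branch direction pointing out of the subdisk of $D$ it bounds --- if it pointed inward, the sheet of $B$ carrying $D_0$ would continue across $\beta_i$ into $U_i$, allowing $G$ to be extended and contradicting maximality.

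The main obstacle is making rigorous the notion of ``maximal $B$-parallel subsurface'' and checking that this maximum is attained by a connected planar subsurface $G$ with the stated boundary structure. What is needed is that compatible local $B$-isotopies on overlapping subsurfaces can be assembled into a global one, which should follow from a direct compatibility argument in the spirit of Proposition~\ref{Pproduct} together with the transversality of $D_0, D_1$ to the $I$-fibers of $N(B)$.
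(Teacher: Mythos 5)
Your proposal is correct and is essentially the paper's own argument in expanded form: the paper simply grows an annular neighborhood of $\partial D_0$ in $D_0$ until it is no longer $B$-isotopic into $D_1$, obtaining a flare based at $D_1$, while you carry out the mirror-image construction (flare based at $D_0$, with $F$ cut out of $D_1$) and spell out the verification of Definition~\ref{Dflare}. The maximality and connectedness issues you flag at the end are real but are exactly the points the paper also treats as ``fairly obvious,'' so nothing essential is missing.
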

\begin{proof}
This lemma is fairly obvious. Let $E$ be a non-trivial $D^2\times I$ region with $\partial_hE=D_0\cup D_1$ and $\partial_vE=A$.  After enlarge $E$ a little if necessary, we may assume a small annular neighborhood of $\partial D_0$ in $D_0$ is $B$-parallel to   an annular neighborhood of $\partial D_1$ in $D_1$.  Since $E$ is non-trivial, $D_0$ is not $B$-parallel to $D_1$.  So one can find a sufficiently large neighborhood of $\partial D_0$ in $D_0$ that is not $B$-isotopic to a subsurface of $D_1$.  This gives a flare based at $D_1$.
\end{proof}

\begin{corollary}\label{Cdisk}
Let $T_1,\dots, T_n$ be a collection of normal tori carried by $N(B)$.  Let $B_T$ be the sub-branched surface of $B$ that fully carries $\bigcup_{i=1}^nT_i$.  Suppose $B_T$ has no flare.  Then, after some $B_T$-isotopy on the $T_i$'s, every double curve in $T_i\cap T_j$ is essential in both $T_i$ and $T_j$, for any $i,j$.
\end{corollary}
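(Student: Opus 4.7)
The plan is to argue by contradiction and produce a flare on $B_T$. Assume that for every collection $\{T_i'\}$ in the $B_T$-isotopy class of $\{T_i\}$, some double curve in $\bigcup T_i'$ is inessential in one of the two tori containing it. Among such representatives, I would pick one minimizing the complexity $(t,d)$ (triple points first, then double curves) of $\bigcup T_i'$ and relabel it $\{T_i\}$. Pick a double curve $\gamma\subset T_i\cap T_j$ that is trivial in, say, $T_i$, and choose $\gamma$ innermost in $T_i$, so that $\gamma=\partial D$ bounds a disk $D\subset T_i$ with $\Int(D)\cap T_j=\emptyset$.

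Next I would analyze the local picture near $\gamma$. Since $T_i$ and $T_j$ are transverse to the $I$-fibers of $N(B_T)$ and meet transversely along $\gamma$, they form two crossing sheets in a neighborhood of $\gamma$, and $D$ occupies one specific ``quadrant'' of any transverse cross section. Let $T_j^D$ be the subsurface of $T_j$ on the side of $\gamma$ in $T_j$ for which the $I$-fibers of $N(B_T)$ through a small annular neighborhood of $\gamma$ in $T_j^D$ meet $D$ near $\partial D$; the correct side is determined by the branch directions along $\gamma$ in the 2-complex $B^{\Gamma}$ described before Lemma~\ref{Lgood}. Because $T_j$ is a torus, $T_j^D$ is planar: a disk if $\gamma$ is trivial in $T_j$, an annulus if $\gamma$ is essential in $T_j$. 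Setting $F=T_j^D$, I would verify the initial hypotheses of a flare based at $D$ in the sense of Definition~\ref{Dflare}: $F$ is compact planar and carried by $N(B_T)$, $D\cap F=\partial D\subset\partial F$, and a small annular neighborhood of $\partial D$ in $F$ is $B$-parallel (through the $I$-fibers) to one in $D$.

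Now let $F'\subseteq F$ be the maximal subsurface containing this annular neighborhood that is $B$-parallel via the $I$-fibers of $N(B_T)$ to a subsurface $F^{\flat}\subset D$. If $F'\subsetneq F$, then $F-F'$ consists of annular neighborhoods of the remaining boundary components of $F$, so $F$ is a flare based at $D$ in the sense of Definition~\ref{Dflare}, contradicting the hypothesis that $B_T$ has no flare. Otherwise $F'=F$, so $F$ and $F^{\flat}$ bound a (pinched) product region as in Definition~\ref{Dproduct}. Applying Proposition~\ref{Pproduct} together with the trivial-isotopy discussion following Definition~\ref{Dproduct}, I would eliminate $\gamma$ from $T_i\cap T_j$ via a sequence of trivial isotopies that do not increase $(t,d)$ and strictly decrease the number of double curves, contradicting the minimality of the chosen representative.

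The main obstacle will be the local step of choosing the correct side of $T_j$ at $\gamma$ and verifying the required $B$-parallelism of annular neighborhoods. Because $T_i$ and $T_j$ cross transversely at $\gamma$, the wrong side of $T_j$ would place $F$ and $D$ on opposite sides of the intersection and destroy the parallelism; the right choice must be forced by the branch-direction data along $\gamma$, and making this precise requires a careful analysis in a vertical tubular neighborhood. A related subtle case is when $\gamma$ is essential in $T_j$, so $T_j-\gamma$ is a single annulus and one cannot literally ``pick a side''; here one likely needs to use Lemma~\ref{Ltorus} (so that $\gamma$ is a meridian of the solid torus bounded by $T_j$) together with Corollary~\ref{Cmeridion} to reduce back to a configuration where the flare/product dichotomy above applies.
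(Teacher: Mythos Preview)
Your overall strategy---produce either a flare or a trivial product region from an innermost trivial double curve---is exactly the paper's approach, but your execution has two gaps that the paper handles differently.

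First, in the case $\gamma$ is trivial in both $T_i$ and $T_j$, your surface $F=T_j^D$ is a disk $D'$, and your dichotomy ``$F'\subsetneq F\Rightarrow$ flare'' fails: by Definition~\ref{Dflare} a flare must satisfy that $F-F'$ is a union of collars of $\gamma_1,\dots,\gamma_k$, so a disk $F$ (with $k=0$) can only be a flare when $F'=F$, which is the other branch of your dichotomy. The paper avoids this by noting that $D\cup D'$ is an embedded $2$-sphere (by the innermost choice), hence bounds a $D^2\times I$ region; Lemma~\ref{LflareD2} then says this region is trivial (else $B_T$ has a flare), which contradicts the assumption that no trivial product regions remain. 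You should invoke Lemma~\ref{LflareD2} here rather than trying to read off a flare from $D'$ directly.

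Second, in the case $\gamma$ is essential in $T_j$, your proposed reduction via Lemma~\ref{Ltorus} and Corollary~\ref{Cmeridion} is off-track; neither is needed. The paper's argument is direct: take the small collar $A_\alpha$ of $\gamma$ in $T_j$ that is $B_T$-parallel to a collar of $\partial D$ in $D$ (this side is forced, as you suspected, by transversality to the $I$-fibers), and extend it in $T_j$ to the maximal subsurface $B_T$-parallel into $D$. Since $\gamma$ is essential in $T_j$, this extension cannot exhaust a disk, so the maximal subsurface is a proper annulus and a slightly larger annulus in $T_j$ is a flare based at $D$---contradiction. Your worry that ``$T_j-\gamma$ is a single annulus so one cannot pick a side'' is a non-issue: locally near $\gamma$ there are still two collars, and exactly one of them is $B_T$-parallel into $D$; that is all the argument needs.
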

\begin{proof} Using the complexity of the intersection of the $T_i$'s defined after Proposition~\ref{Pproduct}, we can conclude that, after some $B_T$-isotopies, the $T_i$'s do not form any trivial product region.  Thus we may assume $T_i$ and $T_j$ do not form any trivial product region for any $i, j$.  

Let $\alpha\subset T_i\cap T_j$ be a double curve and suppose $\alpha$ bounds a disk $D$ in $T_i$.  As both $T_i$ and $T_j$ are transverse to the $I$-fibers of $N(B_T)$, there is a collar annulus $A_\alpha$ of $\alpha$ in $T_j$ (with $\alpha\subset\partial A_\alpha$) that is $B_T$-parallel to an annular neighborhood of $\alpha$ in $D$.  If $\alpha$ is an essential curve in $T_j$, then as in the proof of Lemma~\ref{LflareD2} and since $\alpha$ is essential in $T_j$, we can extend $A_\alpha$ to a subsurface $P$ of $T_j$ such that $\alpha\subset\partial P$ and $P$ is a flare based at $D$, a contradiction to our hypotheses.

Suppose $\alpha$ also bounds a disk $D'$ in $T_j$.  By choosing $\alpha$ to be an innermost such curve, we may suppose $D\cup D'$ is an embedded 2-sphere.  Since $B$ does not carry any normal 2-sphere, $D$ and $D'$ must bound a product region which determines a $D^2\times I$ region.  Since $B_T$ has no flare, by Lemma~\ref{LflareD2}, such a $D^2\times I$ region must be trivial, which means that $D\cup D'$ bounds a trivial product region, a contradiction to the assumption at the beginning that there is no trivial product region. 
\end{proof}

\begin{lemma}\label{Linnermost}
Let $F$ be a flare based at $D$ as above and suppose $F$ and $D$ are innermost.  Let $\alpha\subset D$ be the flare locus, let $\gamma\subset F$ be the corresponding curve $B$-isotopic to $\alpha$, and let $A$ be the vertical annulus in $N(B)$ bounded by $\alpha\cup\gamma$.  Then there must be a component $V$ of $\partial_vN(B)$ such that $V\subset A$.
\end{lemma}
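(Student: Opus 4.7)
The plan is to argue by contradiction: assume no component $V$ of $\partial_v N(B)$ lies in $A$, and produce a strictly smaller flare, based at a subdisk of $D$ with flare locus inside $D_1$ but distinct from $\alpha$, contradicting the innermost hypothesis on $F$ and $D$.

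First I would use the last sentence of Definition~\ref{Dflare}: for every $p \in \alpha$ the vertical arc of $A$ over $p$ contains a full vertical arc of some component $V_{j(p)}$ of $\partial_v N(B)$. Only finitely many components of $\partial_v N(B)$ meet $A$, and the contribution sets $\alpha_j = \{p \in \alpha : V_j \text{ contributes at } p\}$ are closed subsets of $\alpha$ that cover it. If a single component $V^*$ contributed at every $p \in \alpha$, then $\pi(V^*)$ would be a smooth embedded circle in $L$ containing $c := \pi(\alpha)$, hence equal to $c$; since every vertical arc of $V^*$ then sits inside the corresponding vertical arc of $A$, this gives $V^* \subset A$, a contradiction. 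Consequently, at least two distinct components $V_1 \neq V_2$ contribute, and a closed-cover argument on the connected circle $\alpha$ yields a point $p_0 \in \alpha_1 \cap \alpha_2$ at which both $V_1$ and $V_2$ contribute vertical arcs inside the $I$-fiber over $\pi(p_0)$. An $I$-fiber over a smooth point of $L$ meets $\partial_v N(B)$ in a single vertical arc, so $\pi(p_0)$ must be a double point of $L$ where two smooth arcs $\ell_1, \ell_2$ of $L$ cross, with $\pi(V_i)$ locally following $\ell_i$.

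Next I would use $p_0$ to build the smaller flare. At $\pi(p_0)$ the vertical arcs of $V_1$ and $V_2$ sit at two different heights within the vertical arc of $A$. Over each point of the arc $\alpha_1 \subset \alpha$ on which $V_1$ contributes, I take the portion of the vertical arc of $A$ lying between $\alpha$ and the arc of $V_1$; this yields a vertical sub-annulus of $A$, and the local cusp structure at $\pi(p_0)$ lets me close it off at the transitions via a slight detour into $D_1$, producing an embedded vertical annulus $A'$ whose lower boundary is a simple closed curve $\alpha'$ in $D_1$ distinct from $\alpha$, and whose upper boundary is a curve in $F'$ that is $B$-parallel to $\alpha'$ via the existing $B$-parallelism between $F'$ and the annular neighborhood of $\partial D$ in $D$. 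Capping $A'$ at the bottom with the subdisk of $D$ bounded by $\alpha'$ and at the top with the corresponding annular subsurface of $F'$ produces a planar surface $G$ carried by $N(B)$; checking Definition~\ref{Dflare}, $G$ is a flare surface based at a subdisk of $D$ whose flare locus contains $\alpha' \subsetneq D_1$, contradicting the innermost hypothesis on $F, D$.

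The technical heart of the argument---and my main expected obstacle---is the local construction of $A'$ and $\alpha'$ at the double point $\pi(p_0)$. One must unpack the local model of the branched surface at $\pi(p_0)$ to verify that the sub-annulus of $A$ truncated at the level of $V_1$ does complete consistently into an embedded vertical annulus with simple closed boundary inside $D_1$, rather than producing a non-embedded or incomplete configuration, and that the capping surfaces satisfy the flare conditions of Definition~\ref{Dflare}. The innermost hypothesis (which makes $F$ an annulus and $D_1$ a single subdisk) streamlines the bookkeeping, but the cusp and branch directions at $\pi(p_0)$ must be checked carefully for the construction to go through.
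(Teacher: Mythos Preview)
Your overall strategy---assume no component of $\partial_vN(B)$ lies in $A$ and derive a smaller flare contradicting innermostness---matches the paper's, but the execution has a genuine gap at exactly the place you flag. The paper does not try to build $\alpha'$ by a local ``detour'' at a single double point $p_0$; instead it globalizes the situation by introducing the graph $\Gamma=\{x\in\Int(D): I_x^+ \text{ contains a vertical arc of }\partial_vN(B)\}$ and its associated transversely oriented train track $\tau_\Gamma\subset D$. The flare locus $\alpha$ is then a smooth circle in $\tau_\Gamma$ with branch direction pointing out of $D_\alpha$, and the desired conclusion ($V\subset A$) becomes the statement that $\alpha$ is \emph{simple}, meaning a collar of $\alpha$ in $D_\alpha$ meets $\tau_\Gamma$ only in $\alpha$. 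The paper shows: (i) if $\alpha$ is not innermost among such smooth outward-pointing circles in $\tau_\Gamma$, the smaller circle $\alpha'$ immediately yields a flare based at a subdisk of $D_\alpha$, contradicting the innermost-flare hypothesis; and (ii) if $\alpha$ is innermost but not simple, then because $B$ fully carries a closed surface, $\tau_\Gamma$ fully carries compact curves, forcing a smooth arc $\beta$ in $\tau_\Gamma$ properly embedded in $D_\alpha$, and $\beta$ together with a subarc of $\alpha$ produces a smaller outward circle---again a contradiction.

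Your argument misses ingredient (ii) entirely: you never invoke the hypothesis that $B$ fully carries a closed surface (implicit from Notation~\ref{NB} and Proposition~\ref{Pfinite}), and without it there is no reason the portion of $\tau_\Gamma$ inside $D_\alpha$ closes up into a circle rather than being a tree of dead ends. Concretely, your $\alpha_1$ need not be a single arc, and even at a genuine double point $p_0$ the branch of $\ell_2$ lying inside $D_\alpha$ may terminate without returning to $\alpha$, so the ``close it off via a slight detour into $D_1$'' step cannot be completed in general. Moreover, your description of the new flare $G$ is not quite right: the paper builds the smaller flare not from pieces of $F'$, but by taking a short annulus around the far boundary circle of the component $V'$ of $\partial_vN(B)$ sitting over $\alpha'$; this annulus is automatically a flare based at the subdisk bounded by $\alpha'$ with flare locus $\alpha'$. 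To repair your argument you would essentially need to reconstruct the train-track analysis and use the fully-carried hypothesis.
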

\begin{proof}
Since $F$ and $D$ are innermost, by our discussion in Definition~\ref{Dproduct}, $F$ must be an annulus.  For each $x\in D$, let $I_x$ be the $I$-fiber of $N(B)$ containing $x$. By fixing a normal direction for $D$, we can say one component of $I_x-x$, denoted by $I_x^+$, is on the positive side of $D$ and the other component of $I_x-x$, denoted by $I_x^-$, is on the negative side.  Note that it is possible that $I_x^\pm$ intersects other part of $D$.  As $\gamma\cup\alpha$ bounds an embedded vertical annulus $A$ in $N(B)$, we may suppose $\gamma$ is on the positive side of $D$ in this sense.  

Let $\Gamma$ be the union of all the points in $\Int(D)$ with the property that for each $x\in\Gamma$, $I_x^+$ (not the whole $I_x$) contains a vertical arc of $\partial_vN(B)$.  By the local picture of a branched surface (see Figure~\ref{branch}), it is clear that $\Gamma$ is a trivalent graph in $\Int(D)$ with each vertex corresponding to a double point of the branch locus of $B$.  By the definition of flare locus, $\alpha\subset\Gamma$.  Each arc in $\Gamma$ has a normal direction in $D$ inherited from the branch direction at the corresponding arc in the branch locus of $B$ (or $\partial_vN(B)$).
As illustrated in Figure~\ref{deform}(b) (also see \cite[Figure 2.3]{L1}), $\Gamma$ can be naturally deformed into a transversely oriented train track $\tau_\Gamma$ in $D$.  As we mentioned in the definition of flare locus, $\alpha$ is a smooth circle in the train track with induced normal direction pointing out of the subdisk $D_\alpha$ of $D$ bounded by $\alpha$.  Note that since the train track $\tau_\Gamma$ is transversely orientable, $\tau_\Gamma$ does not form any monogon. As every simple closed curve in $D$ bounds a disk in $D$, the no-monogon property implies that any simple closed curve $c$ carried by $\tau_\Gamma$ must correspond to a smooth simple closed curve in $\tau_\Gamma$ (in other words, in a fibered neighborhood of $\tau_\Gamma$ in $D$, the curve $c$ intersects each $I$-fiber at most once).

Let $D_\alpha$ be the subdisk of $D$ bounded by $\alpha$. 
We say $\alpha$ is \emph{innermost} if $\alpha$ is the only smooth simple closed curve in $\tau_\Gamma\cap D_\alpha$ with branch direction pointing out of the subdisk bounded by this curve.   
We say $\alpha$ is \emph{simple} if there is an annular collar neighborhood $A_\alpha$ of $\alpha$ in $D_\alpha$ such that $A_\alpha\cap\tau_\Gamma=\alpha$.  

\vspace{10pt}
\noindent
Claim.  If $\alpha$ is innermost then $\alpha$ must be simple.
\begin{proof}[Proof of the Claim]
The reason why this claim is true is that $B$ fully carries a closed surface.  Let $S$ be a closed surface fully carried by $N(B)$.  For any point $x\in\Gamma$, by our construction of $\Gamma$, $I_x^+$ contains one or two vertical arcs of $\partial_vN(B)$ and $I_x^+$ contains two vertical arcs of $\partial_vN(B)$ if and only if $x$ is a vertex of $\Gamma$.  Let $p_x$ be the component of $I_x^+-\partial_vN(B)$ that contains $x$ and let $V_x=I_x^+-p_x$.  By our construction, (the closure of) each component of $V_x-\partial_vN(B)$ can be horizontally pushed slightly into an $I$-fiber of $N(B)$, see Figure~\ref{iso}(c) for a schematic picture.  Since $S$ is fully carried by $B$, this implies that, for any point $x\in\Gamma$, we have $S\cap V_x\ne\emptyset$.  From the local pictures of $B$, it is easy to see that the intersection of $S$ with the union of the $V_x$'s ($x\in\Gamma$) is a collection of compact curves that can be fully carried by the train track $\tau_\Gamma$ after projecting these curves to $D$.  The fact that $\tau_\Gamma$ fully carries some compact curves implies that, if $\alpha$ is not simple (i.e., $\Int(A_\alpha)\cap\tau_\Gamma\ne\emptyset$), then there must be a smooth arc $\beta$ in $\tau_\Gamma$ such that $\beta$ is properly embedded in $D_\alpha$.  Since the train track $\tau_\Gamma$ is transversely orientable, as shown in Figure~\ref{deform}(c), $\beta$ and a subarc of $\alpha$ form a smooth circle with branch direction pointing outwards.  This contradicts the hypothesis that $\alpha$ is innermost.
\end{proof}

Let $A_\alpha$ be a small annular neighborhood of $\alpha$ in $D_\alpha$. 
If $\alpha$ is innermost, by the claim above, $A_\alpha\cap\Gamma=\alpha$.  The construction of $\Gamma$ plus $A_\alpha\cap\Gamma=\alpha$ implies that the small annulus $A_\alpha$ is $B$-isotopic to an annulus $A_h$ in $\partial_hN(B)$, where $A_h$ is an annular collar neighborhood (in $\partial_hN(B)$) of a boundary component $\alpha_v$ of $\partial_hN(B)$.  In particular, $\alpha_v$ is $B$-isotopic to $\alpha$ and is on the positive side of $D$.  Let $V$ be the component of $\partial_vN(B)$ that contains $\alpha_v$.  Since $\alpha$ is a flare locus, each vertical arc of $A$ (recall that $A$ is the vertical annulus bounded by $\alpha\cup\gamma$) must contain a vertical arc of $\partial_vN(B)$ (see Definition~\ref{Dflare}).  This implies that $V\subset A$ and the lemma holds.

It remains to prove that $\alpha$ is an innermost such smooth circle in $\tau_\Gamma$.  Suppose $\alpha$ is not innermost, then $D_\alpha$ contains an innermost such smooth simple closed curve $\alpha'$ in $\tau_\Gamma\cap D_\alpha$ with induced normal direction pointing out of the subdisk of $D_\alpha$ bounded by $\alpha'$.  Then the argument above implies that there is a boundary component $\alpha_v'$ of $\partial_hN(B)$ such that $\alpha'\cup\alpha_v'$ bounds a vertical annulus $P$ in $N(B)$.  Let $V'$ be the component of $\partial_vN(B)$ that contains $\alpha_v'$.  So $A'=P\cup V'$ is a vertical annulus in $N(B)$. Let $\gamma'=\partial A'-\alpha'$ be the other boundary circle of $A'$.  Then one can easily construct a small annulus $F'$ containing $\gamma'$ and transverse to the $I$-fibers of $N(B)$.  Since $A'$ contains a component of $\partial_vN(B)$, $F'$ is clearly not totally $B$-parallel to an annulus in $D_a$ and $F'$ is a flare based at a subdisk of $D_\alpha$ whose flare locus is $\alpha'$.  This contradicts the hypothesis that the flare $F$ is innermost.
\end{proof}

\begin{definition}\label{Dbalanced}
Let $A=S^1\times I$ be an annulus and $\alpha=S^1\times\{x\}$ ($x\in\Int(I)$) be a core curve of $A$.  We first fix a direction along $\alpha$.  Let $\lambda$ be a properly embedded essential arc in $A$ transverse to the $I$-fibers.  As $\lambda$ is an essential arc of $A$, we may suppose $\lambda\cap\alpha$ is a single point.  We assign $\lambda$ the direction along $\lambda$ pointing from the endpoint $\lambda\cap (S^1\times\{0\})$ to the endpoint $\lambda\cap (S^1\times\{1\})$.  Since $\lambda$ is transverse to the $I$-fibers, the projection $\pi:A\to\alpha$ maps $\lambda$ to a subarc of $\alpha$.  As shown in Figure~\ref{balance}(a), we say $\lambda$ is a positive arc if the induced direction of $\pi(\lambda)$ agrees with the direction of $\alpha$, and we call $\lambda\cap\alpha$ a positive intersection point.  Otherwise we say $\lambda$ and $\lambda\cap\alpha$ are negative.  Let $\Lambda$ be a collection of disjoint properly embedded essential arcs in $A$ and transverse to the $I$-fibers.  We say $\Lambda$ is \textbf{balanced} if the number of positive arcs  equals the number of negative arcs in $\Lambda$.
\end{definition}

\begin{figure}
\begin{center}
\psfrag{(a)}{(a)}
\psfrag{(b)}{(b)}
\psfrag{al}{$\alpha$}
\psfrag{cp}{cutting and pasting}
\psfrag{+}{$+$}
\psfrag{-}{$-$}
\includegraphics[width=5.0in]{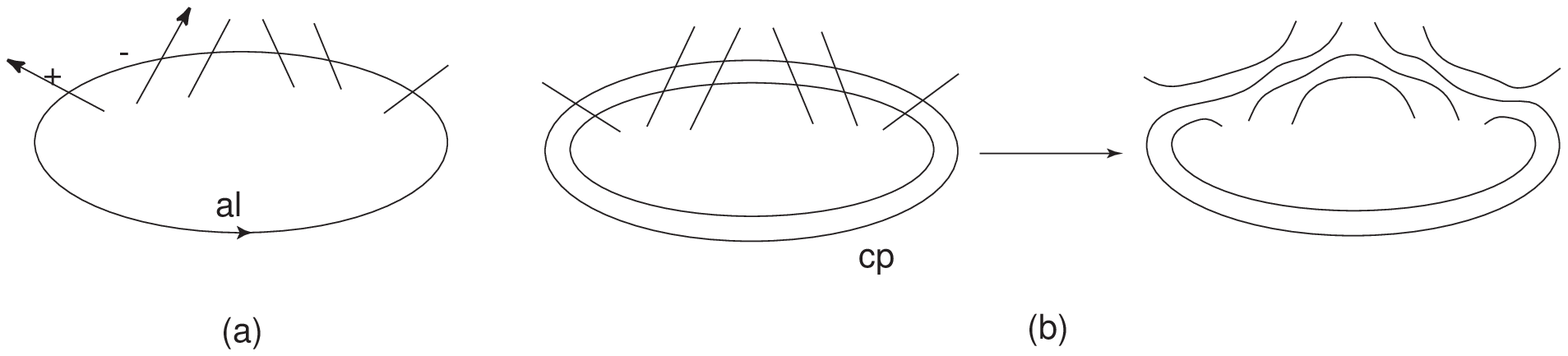}
\caption{}\label{balance}
\end{center}
\end{figure}

\begin{lemma}\label{Lbalanced}
Let $A=S^1\times I$ be a vertical annulus in $N(B)$ and let $\alpha$ be a core curve in $A$. Let $\Lambda$ be a collection of disjoint essential arcs in $A$ transverse to the $I$-fibers.  Suppose $\Lambda$ is balanced.  Then there is a number $k\le \frac{1}{2}|\Lambda|$ such that, if $m\ge k$, $\Lambda+m\alpha$ (the curve obtained by canonical cutting and pasting of $\Lambda$ and $m$ parallel copies of $\alpha$) consists of some $\partial$-parallel arcs in $A$ and $m-k$ circles parallel to $\alpha$.
\end{lemma}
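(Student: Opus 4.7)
I plan to analyze $\Lambda+m\alpha$ one component at a time. Any closed component is transverse to the $I$-fibers of $A$, so its $\theta$-component of tangent is nowhere zero; the projection to $S^1$ is therefore a connected covering, necessarily of degree one, and the component is $B$-isotopic to $\alpha$. Every arc component has $\theta$ monotone in the lift, so it is either $\partial$-parallel or essential (positive or negative in the sense of Definition~\ref{Dbalanced}). Since canonical cutting and pasting acts only at interior crossings, the emergence direction at each of the $2|\Lambda|$ boundary endpoints of $\Lambda+m\alpha$ is inherited from $\Lambda$: a positive arc emerges rightward at its bottom on $\partial_0 A$ and leftward at its top on $\partial_1 A$, while a negative arc emerges oppositely. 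By the balanced hypothesis each boundary component of $A$ then carries $|\Lambda|/2$ rightward- and $|\Lambda|/2$ leftward-emerging endpoints.

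A $\partial$-parallel arc on $\partial_i A$ pairs one rightward-emerging with one leftward-emerging endpoint on $\partial_i A$; a positive essential arc of $\Lambda+m\alpha$ joins a rightward-emerging bottom endpoint to a leftward-emerging top endpoint, while a negative essential arc does the reverse. These endpoint constraints force the number $p_e^+$ of positive essential components to equal the number $p_e^-$ of negative essential components. Write $p_e(m)$ for this common value; in particular $p_e(0)=|\Lambda|/2$ because $\Lambda+0\cdot\alpha=\Lambda$ consists entirely of essential arcs.

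The heart of the proof is to show $p_e(k)=0$ for some $k\le |\Lambda|/2$. I proceed by induction on $m$. Suppose $p_e(m)\ge 1$ and introduce one additional copy $\alpha^\ast$ of $\alpha$ in general position. The essential components of $\Lambda+m\alpha$ meet $\alpha^\ast$ in a cyclic sequence of $2p_e(m)$ signed crossings, and because $p_e^+=p_e^-\ge 1$, by pigeonhole this cyclic sequence contains at least one $(+,-)$ adjacency. At such an adjacency, let $\mu_+$ and $\mu_-$ be the corresponding positive and negative essential components, and apply the canonical resolution rules from Definition~\ref{Dbalanced}: tracing from the bottom of $\mu_+$ upward forces an exit rightward along the short $\alpha^\ast$-segment to the adjacent crossing with $\mu_-$, where the resolution then sends the trajectory down $\mu_-$ to its bottom endpoint, producing a single new arc joining the two bottom endpoints. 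A parallel trace joins the top endpoints through the complementary segment(s) of $\alpha^\ast$. The disjointness hypothesis on $\Lambda$, which bounds the cumulative $\theta$-extent of the successive arc pieces in the lift below $2\pi$, guarantees that both new arcs are $\partial$-parallel in $A$ rather than wrapping around. Hence $p_e(m+1)\le p_e(m)-1$, and starting from $p_e(0)=|\Lambda|/2$ we reach $p_e(k)=0$ for some $k\le |\Lambda|/2$.

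Once $p_e(k)=0$, every arc of $\Lambda+k\alpha$ is $\partial$-parallel and can be $B$-isotoped into a small collar of $\partial A$ disjoint from $\alpha$, while the circle components of $\Lambda+k\alpha$ are already parallel to $\alpha$. Any further copy of $\alpha$ can then be made disjoint from all these components after $B$-isotopy, contributing exactly one new circle parallel to $\alpha$. Iterating gives the description of $\Lambda+m\alpha$ as $|\Lambda|$ $\partial$-parallel arcs together with $m-k$ circles parallel to $\alpha$ for every $m\ge k$. The principal technical point is the inductive step: verifying from the canonical resolution rules, together with the disjointness of $\Lambda$, that the arcs produced at a $(+,-)$ adjacency really are $\partial$-parallel rather than arcs that wrap nontrivially around $A$; this is where the balanced and disjoint hypotheses enter in an essential way.
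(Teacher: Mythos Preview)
Your overall strategy is sound and genuinely different from the paper's: rather than building explicit layers of ``bottom arcs'' $\Lambda_1,\Lambda_2,\ldots$ as the paper does, you track the number $p_e(m)$ of essential arcs and argue it strictly decreases. This is a clean alternative, and the endgame (once $p_e=0$, further copies of $\alpha$ just add parallel circles) is fine.

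However, the inductive step contains a real error. You correctly observe that at a $(+,-)$ adjacency the lower halves of $\mu_+$ and $\mu_-$ are joined by the short $\alpha^\ast$-segment into a single arc with both endpoints on $\partial_0 A$, hence $\partial$-parallel. But your claim that ``a parallel trace joins the top endpoints through the complementary segment(s) of $\alpha^\ast$'' is false in general: the upper half of $\mu_+$ is joined via the segment $[i-1,i]$ to whatever half-arc sits at crossing $i-1$, while the upper half of $\mu_-$ is joined via $[i+1,i+2]$ to the half-arc at crossing $i+2$. These are typically different components, and if for instance $s_{i-1}=+$, the arc containing the top of $\mu_+$ is in fact essential. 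Your appeal to a $2\pi$ bound on $\theta$-extent is also misplaced: any arc with both endpoints on the same component of $\partial A$ is automatically $\partial$-parallel, so no such estimate is needed.

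The fix is a direct count. After isotoping circles and $\partial$-parallel arcs of $\Lambda+m\alpha$ away from $\alpha^\ast$, the resolution with $\alpha^\ast$ produces exactly one arc for each of the $2p_e(m)$ segments of $\alpha^\ast$, namely the arc (half of $\mu_i$) $\cup$ (segment $[i,i+1]$) $\cup$ (half of $\mu_{i+1}$); no closed components arise. This arc is essential precisely when $s_i=s_{i+1}$. In the cyclic sign sequence with $p_e(m)$ plusses and $p_e(m)$ minuses, the number of sign-changes $(+,-)$ equals the number of $(-,+)$ changes, call it $b\ge 1$; hence the number of sign-preserving pairs is $2p_e(m)-2b$. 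This gives $2p_e(m+1)=2p_e(m)-2b\le 2p_e(m)-2$, i.e.\ $p_e(m+1)\le p_e(m)-1$, which is exactly what you need. With this correction your argument goes through and yields the same bound $k\le|\Lambda|/2$ as the paper, while also showing (as you note) that $\Lambda+k\alpha$ has exactly $|\Lambda|$ $\partial$-parallel arcs and no circles.
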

\begin{proof}
The lemma is fairly obvious, see Figure~\ref{balance}(b).  The argument below also gives a way to determine $k$.

We use the notation in Definition~\ref{Dbalanced}.  In particular $\alpha$ has a fixed direction along $\alpha$ and each arc in $\Lambda$ intersects $\alpha$ in a single point.  Let $\beta$ be a subarc of $\alpha$ with $\partial\beta\subset\alpha\cap\Lambda$.  Using the direction along $\alpha$ we can call one endpoint of $\beta$ the starting point and the other endpoint of $\beta$ the ending point.  We say $\beta$ is a bottom arc if the starting point of $\beta$ (along this direction of $\alpha$) is a positive intersection point and the ending point of $\beta$ is a negative intersection point. 

We will inductively construct a sequence of sets of subarcs of $\alpha$.  Since $\Lambda$ is balanced, there is a maximal collection of disjoint subarcs of $\alpha$, denoted by $\Lambda_1$, such that  each arc $\beta$ in $\Lambda_1$ is a bottom arc and $\beta\cap \Lambda=\partial\beta$.  Let $\lambda_1$ be the union of the endpoints of the arcs in $\Lambda_1$.  Since arcs in $\Lambda_1$ are disjoint, the remaining intersection points $(\alpha\cap\Lambda)-\lambda_1$ is balanced in the sense that the number of positive intersection points equals the number of negative intersection points. 

Suppose we have inductively constructed sets of arcs $\Lambda_1,\dots,\Lambda_n$.  Let $\lambda_i$ be the endpoints of the arcs in $\Lambda_i$.  Suppose the remaining intersection points $\sigma_n=\alpha\cap \Lambda-\bigcup_{i=1}^n\lambda_i$ is balanced in the above sense.  If $\sigma_n\ne\emptyset$, since $\sigma_n$ is balanced, there is a maximal collection of disjoint subarcs of $\alpha$, denoted by $\Lambda_{n+1}$, such that each arc $\beta$ in $\Lambda_{n+1}$ is a bottom arc and $\beta\cap\sigma_n=\partial\beta$.  Since the collection is maximal at each step of the induction, the interior of each arc in $\Lambda_{i+1}$ must contain at least one arc of $\Lambda_i$.  So we can inductively construct a sequence of sets of arcs $\Lambda_1,\dots,\Lambda_k$ such that $\bigcup_{i=1}^k\lambda_i=\alpha\cap \Lambda$. The number $k$ depends on the intersection pattern of $\alpha$ with $\Lambda$ and can be easily determined.  In particular $k\le\frac{1}{2} |\alpha\cap\Lambda|$.

Now we consider $\Lambda+k\alpha$.  In $\Lambda+\alpha$, arcs in $\Lambda_1$ above are connected by subarcs of $\Lambda$ to form a collection of $\partial$-parallel arcs in $A$.  Moreover, we may view $\Lambda+2\alpha$ as $(\Lambda+\alpha)+\alpha$ and assume the $\partial$-parallel components of $\Lambda+\alpha$ do not intersect $\alpha$.  So inductively, as shown in Figure~\ref{balance}(b), the arcs $\Lambda_i$ in the $i$-th copy of $\alpha$ are connected by subarc arcs of $\Lambda$ forming a collection of $\partial$-parallel arcs in $A$ with endpoints in the same circle $S^1\times\{0\}$ (using the notation in Definition~\ref{Dbalanced}).  It follows from our construction of $\Lambda_i$, after cutting and pasting, $\Lambda+k\alpha$ consists of $\partial$-parallel arcs in $A$.  Since any additional copy of $\alpha$ can be isotoped to be disjoint from the $\partial$-parallel arcs, the lemma follows.
\end{proof}

The following lemma is similar in spirit to Lemma~\ref{Lbalanced}.  It deals with surfaces instead of curves.

\begin{lemma}\label{LbalancedD}
Let $F$ and $\Lambda$ be compact orientable surfaces carried by $N(B)$.  Suppose $F\cap \Lambda$ is a collection of simple closed curves in $F$. Let $k$ be the number of components of $F\cap\Lambda$ that are trivial in $F$. If $m\ge k$, then after $B$-isotopy, $(\Lambda+mF)\cap F$ contains no trivial curve in $F$.  In particular, if $F\cap\Lambda$ consists of trivial curves in $F$ and $m\ge k$, then $\Lambda+mF$ is disjoint from $F$ after $B$-isotopy.
\end{lemma}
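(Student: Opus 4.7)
The plan is to induct on $k$, mimicking the bottom-arc cancellation of Lemma~\ref{Lbalanced} one dimension up. The base case $k=0$ is immediate, since $(\Lambda+0\cdot F)\cap F=\Lambda\cap F$ already contains no trivial curves. For the inductive step, I will establish the following reduction: when $k\geq 1$, after canonical cutting and pasting and a suitable $B$-isotopy, the intersection $F\cap(\Lambda+F)$ contains at most $k-1$ curves trivial in $F$. Granted the reduction, writing $\Lambda+mF=(\Lambda+F)+(m-1)F$ and applying the inductive hypothesis to $\Lambda+F$ (with at most $k-1$ trivial intersection curves) shows that $(\Lambda+mF)\cap F$ has no trivial curves whenever $m-1\geq k-1$, i.e.\ $m\geq k$. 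The ``in particular'' clause follows at once: if every intersection curve starts out trivial and all $k$ of them have been removed, nothing is left to intersect $F$, so a final $B$-isotopy separates $\Lambda+mF$ from $F$.

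To produce the reduction, pick $\gamma\subset F\cap\Lambda$ innermost among curves trivial in $F$, bounding a disk $D_\gamma\subset F$ with $\Int(D_\gamma)\cap\Lambda=\emptyset$. In a small vertical annulus $A_\gamma\subset N(B)$ containing $\gamma$, both $F$ and $\Lambda$ meet $A_\gamma$ transverse to the $I$-fibers, and a cross-sectional slice through $\gamma$ reproduces exactly the one-dimensional picture of Lemma~\ref{Lbalanced}: a horizontal segment from $F$ crossing a tilted segment from $\Lambda$ at a single point. The canonical cutting and pasting at $\gamma$ is forced to be the unique resolution transverse to the $I$-fibers. Because $\gamma$ is innermost in $F$, the disk $D_\gamma$ is clean, and this forced resolution yields a local sheet of $\Lambda+F$ containing a subdisk $B$-parallel to $D_\gamma$ (attached to the side of $\Lambda$ pointing away from $F$ along the $I$-fibers). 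A $B$-isotopy supported in a small $I$-fiber neighborhood of $D_\gamma$ then pushes this subdisk off $F$; the innermost condition $\Int(D_\gamma)\cap\Lambda=\emptyset$ guarantees that nothing obstructs the push, and since the isotopy is localized, no new intersection curves can be created elsewhere. Hence $\gamma$ is removed from $F\cap(\Lambda+F)$, and the count of trivial intersection curves has strictly decreased.

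The main technical obstacle is making rigorous the claim that the canonical cut-and-paste at the innermost trivial curve $\gamma$ genuinely produces a $B$-parallel copy of $D_\gamma$ as a subdisk of one of the two local sheets of $\Lambda+F$, and that the ensuing $B$-isotopy introduces no new trivial intersection curves. This is the exact two-dimensional counterpart of the bottom-arc cancellation in the proof of Lemma~\ref{Lbalanced}: innermost disks in $F$ play the role of bottom arcs on $\alpha$, and one added copy of $F$ suffices to absorb each one. Essential intersection curves are untouched by this absorption process (the canonical cut-and-paste along an essential curve cannot create a trivial curve, as that would yield a compressing disk disjoint from $\Lambda$ past which one of the local sheets could be pushed), so the inductive bookkeeping on trivial curves is self-contained and the count drops by exactly one per copy of $F$ added, as required.
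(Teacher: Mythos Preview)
Your proof is correct and follows essentially the same inductive strategy as the paper: one added copy of $F$ absorbs at least one trivial intersection curve, so $k$ copies suffice. The paper phrases the reduction step via a product neighborhood $F\times I\subset N(B)$, viewing $\Lambda\cap(F\times I)$ as a union of transverse annuli $A_1,\dots,A_n$ and observing that for a trivial $A_i$ the sum $A_i+F$ is a pair of $\partial$-parallel surfaces in $F\times I$ which can be $B$-isotoped off the middle level; your innermost-disk formulation is the same argument in different language. One small remark: your parenthetical justification that essential curves stay essential (``that would yield a compressing disk\dots'') is not quite the right reason---the cleaner point, visible in the $F\times I$ picture, is simply that after pushing the $\partial$-parallel pieces off the middle level, the remaining crossings of level $1/2$ come only from the untouched annuli $A_j$ ($j\neq i$), whose intersection curves with $F$ are exactly the original ones minus $\gamma$.
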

\begin{proof}
Let $F\times I\subset N(B)$ be a product of $F$ and an interval with each $\{x\}\times I$ a subarc of an $I$-fiber of $N(B)$.  We may view $F=F\times\{1/2\}$ and by the hypothesis, we may assume $\Lambda\cap(F\times I)$ is a collection of annuli $A_1,\dots, A_n$ with each $A_i$ transverse to the $I$-fibers of $F\times I$ and $F\cap A_i$ is a simple closed curve in $F$.  Without loss of generality, we may assume $A_i\cap F$ is trivial in $F$ if $i\le k$.

For any $i\le k$, as $A_i\cap F$ is a trivial circle in $F$, $A_i+F$ is a pair of $\partial$-parallel surfaces in $F\times I$ and hence $A_i+F$ is disjoint from $F$ after $B$-isotopy.  Although the surface $A_i+F$ may intersect other annuli $A_j$'s ($j\ne i$), the intersection is disjoint from $F$ after $B$-isotopy. 
This means that after $B$-isotopy, the number of circles in $(\Lambda+F)\cap F=(F+\sum_{i=1}^nA_i)\cap F$ that are trivial in $F$ is at most $k-1$.  Thus inductively, $(\Lambda+kF)\cap F$ contains no trivial circles after $B$-isotopy and the lemma holds.
\end{proof}

In Lemma~\ref{LflareD2}, we showed that if there is a non-trivial $D^2\times I$ region, then $B$ has a flare.  The following Lemma can be viewed as a certain converse of Lemma~\ref{LflareD2}.

\begin{lemma}\label{Lflare}
Let $T_i$ ($i=0,1,\dots n$) and $H$ be closed surfaces carried by $N(B)$.  Suppose each $T_i$ is a normal torus and $S=H+\sum_{i=0}^nn_iT_i$ is either a strongly irreducible Heegaard surface or an almost strongly irreducible Heegaard surface.  If $S$ is an almost Heegaard surface, we suppose the total length of the almost vertical arcs associated to $S$ is bounded by a fixed number $K$ and suppose $S$ is $K$-minimal, see Definition~\ref{Dkmin}.  Let $B_T$ be the sub-branched surface of $B$ that fully carries $\bigcup_{i=0}^nT_i$. 
If $B_T$ contains a flare, then either there is a non-trivial $D^2\times I$ region for $S$ and $B$, or there is a number $k$ depending on $K$ and the intersection pattern of $H$ with the $T_i$'s such that some coefficient $n_i$ is smaller than $k$.  Furthermore, $k$ can be algorithmically determined.
\end{lemma}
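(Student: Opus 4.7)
The approach is to use the flare to manufacture a vertical annulus whose core bounds a disk in $N(B_T)$, and then to analyze $A'\cap S$ via Lemma~\ref{Lfinite} together with the branch equations to get the required dichotomy.

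\emph{Step 1 (Extract the annulus).} Passing to an innermost flare, pick a flare annulus $F$ based at a disk $D$ inside $B_T$. Lemma~\ref{Linnermost} produces a component $V$ of $\partial_vN(B_T)$ contained in the vertical annulus $A\subset N(B_T)$ bounded by the flare locus $\alpha\subset D$ and the corresponding curve $\gamma\subset F$. The subdisk $D_\alpha\subset D$ bounded by $\alpha$ is carried by $N(B_T)$; since $B_T$ carries only normal tori, $D_\alpha$ does not pass through any almost normal piece of $B$. Vertically extend $A$ inside $N(B_T)$ to a vertical annulus $A'\supset A$ with $\partial A'\subset\partial_hN(B_T)$, so every $I$-fiber of $A'$ is a full $I$-fiber of $N(B_T)$.

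\emph{Step 2 (Counting identity).} After a small perturbation pushing $\partial A'$ off $S$, each component of $A'\cap S$ is a simple closed curve transverse to the $I$-fibers of $A'$, hence a longitude intersecting each $I$-fiber exactly once. Thus for any $I$-fiber $I_x$ of $A'$ the branch equations for $S=H+\sum_{i=0}^n n_iT_i$ give
\[
|A'\cap S|\;=\;|I_x\cap S|\;=\;|I_x\cap H|+\sum_{i=0}^n n_i\,|I_x\cap T_i|.
\]
Because $\bigcup T_i$ fully carries $B_T$ and $A'\subset N(B_T)$, for any $x\in\alpha$ the $I$-fiber through $x$ meets at least one $T_{i_0}$, so $|I_x\cap T_{i_0}|\ge 1$ for that index.

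\emph{Step 3 (Main dichotomy).} Consider whether every component of $A'\cap S$ is essential in $S$. If so, the core of $A'$ is $B$-parallel to $\alpha$ and bounds a disk $B$-parallel to $D_\alpha$ (which avoids the almost normal piece); thus Lemma~\ref{Lfinite} applies to the $K$-minimal surface $S$ and yields an algorithmically computable bound $|A'\cap S|\le k_0=k_0(B,M,K)$. Combined with the identity of Step~2 this forces
\[
n_{i_0}\;\le\;\frac{k_0-|I_x\cap H|}{|I_x\cap T_{i_0}|}\;\le\;k_0,
\]
giving the required bound $k$ on some coefficient $n_{i_0}$, depending on $K$ and on $|I_x\cap H|,|I_x\cap T_{i_0}|$, i.e.\ on $K$ and the intersection pattern of $H$ with the $T_i$'s.

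\emph{Step 4 (Producing the $D^2\times I$ region).} If some curve $\gamma_0\subset A'\cap S$ is inessential in $S$, then being a longitude of $A'$ it is $B$-parallel to $\alpha$, so it bounds a disk in $N(B)$ $B$-parallel to $D_\alpha$, and it bounds a disk in $S$ by hypothesis. Travel vertically along $A'$ from $\gamma_0$ across $V$; after cleaning up nested trivial intersections and $\partial$-parallel arcs by means of Lemmas~\ref{Lbalanced} and~\ref{LbalancedD}, we either locate a second inessential curve $\gamma_1\in A'\cap S$ on the opposite side of $V$ from $\gamma_0$, or we find that all curves on that side are essential, in which case the argument of Step~3 applied to the sub-annulus on that side already delivers the coefficient bound and we are done. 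In the first subcase let $A''$ be the sub-annulus of $A'$ bounded by $\gamma_0\cup\gamma_1$; then $A''$ is a vertical annulus in $N(B)$ with $V\subset A''$, $\partial A''\subset S$, both boundary curves trivial in $S$, and every interior intersection essential in $S$. Lemma~\ref{Last} then produces a $D^2\times I$ region for $S$ and $B$. The presence of the component $V$ of $\partial_vN(B_T)\subset\partial_vN(B)$ inside the vertical boundary $A''$ forces this region to be non-trivial, since a trivial $D^2\times I$ region would be $B$-parallel inside $N(B)$ and could not contain $V$.

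The main obstacle is Step~4: one has to argue that the inessential curve $\gamma_0$ can actually be complemented by a second inessential curve on the far side of $V$, and that the cleaning-up by the balance lemmas does not disturb the $K$-minimality used in Step~3. Verifying that the resulting $D^2\times I$ region is genuinely non-trivial hinges on the presence of the vertical boundary component $V$ in $A''$, so throughout the construction one must be careful never to lose $V$ when trimming $A'$ to $A''$.
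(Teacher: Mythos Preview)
Your Steps~1--3 are sound: when every curve of $A'\cap S$ is essential in $S$, Lemma~\ref{Lfinite} combined with the branch-equation count does bound some $n_{i_0}$. The problem is entirely in Step~4, and the gap there is genuine, not just a matter of bookkeeping.

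Given a single inessential curve $\gamma_0$, nothing in your argument produces a second one on the far side of $V$. Lemmas~\ref{Lbalanced} and~\ref{LbalancedD} are balance statements about canonical cutting and pasting; they do not locate curves inessential in $S$. Your fallback---apply Step~3 to the sub-annulus $A''$ on the far side if all curves there are essential---also fails: Lemma~\ref{Lfinite} bounds $|A''\cap S|$, but the $I$-fibers of $A''$ are proper subarcs of the full $I$-fibers of $N(B_T)$ and need not meet any $T_i$, so no bound on any coefficient follows. The non-triviality argument also needs more than you give it: one must check, as the paper does, that part of $V$ is incident to $M-\Int(N(B))$ and that no disk with boundary in $\Int(V)$ is $B$-parallel into $S$, the latter via an immersed-sphere argument using $\Delta$ and Lemma~\ref{LimmersedS2}.

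The paper's route is structurally different. It assumes all $n_i\ge k$ and manufactures the two disks independently, one on each side of $V$. On the $\alpha$ side, the innermost-flare hypothesis forces each of the $k$ good-curve copies $\alpha_i\subset\hat T=\sum n_iT_i$ (produced via Lemmas~\ref{LgoodH} and~\ref{Lgood}) to bound a disk in $\hat T$ that is $B_T$-isotopic to $\Delta$---this is the key Claim in the paper's proof and it uses innermostness essentially---and a balance count against $H$ then yields a disk $D_a\subset S$ $B$-isotopic to $\Delta$. On the $\beta$ side, the paper passes to nearby curves $\beta_i'$ bounding disks $\Delta_i'\supset\Delta$, shows $H\cap V_\beta$ is balanced because the endpoints of each arc of $H\cap\Delta_i'$ carry opposite signs, obtains at least $k-h$ closed curves of $S$ in $V_\beta$ by Lemma~\ref{Lbalanced}, and then invokes Lemma~\ref{Lfinite} to force one of them to be trivial in $S$, giving $D_\gamma'$. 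Pairing $D_a$ with $D_\gamma'$ across $V$ then yields the non-trivial $D^2\times I$ region via Lemma~\ref{Last}. The essential difference from your approach is that both disks are \emph{built} from the largeness of all the $n_i$, rather than one being assumed and the other sought.
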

\begin{proof}
By the proof of Lemma~\ref{Linnermost}, if $N(B_T)$ contains a flare, then using an innermost flare,  we have a component $V$ of $\partial_vN(B_T)$ and a boundary circle $\alpha$ of $V$ with the following properties
\begin{enumerate}
\item  $\alpha$ bounds an embedded disk $\Delta$ carried by $N(B_T)$, ($\alpha$ corresponds to the flare locus for the innermost flare and $\Delta$ is the subdisk of the base disk bounded by the flare locus.)
\item let $P_\alpha$ be the component of $\partial_hN(B_T)$ containing $\alpha$, then a small annular neighborhood of $\alpha$ in $P_\alpha$ is $B_T$-parallel to an annular neighborhood of $\alpha$ in $\Delta$.  In other words, the normal direction at $\partial\Delta$ induced from the branch direction points out of $\Delta$.
\end{enumerate}
Moreover, since the flare in Lemma~\ref{Linnermost} is innermost, we may assume that no proper subdisk of $\Delta$ is a base of a flare.

Let $\beta$ be the other boundary curve of $V$ and let $P_\beta$ be the component of $\partial_hN(B_T)$ that contains $\beta$.

By Lemma~\ref{LgoodH}, there are good curves $\gamma_\alpha$ and $\gamma_\beta$ in the 2-complex $\cup_{i=0}^nT_i$ that are $B_T$-isotopic to $\alpha$ and $\beta$ respectively. We may assume $\gamma_\alpha$ and $\gamma_\beta$ are close to $\alpha$ and $\beta$ respectively and $\gamma_\alpha\cup\gamma_\beta$ bounds a vertical annulus containing $V$. Let $\hat{T}=\sum_{i=0}^nn_iT_i$, so the surface $S$ in our lemma can be expressed as $S=H+\hat{T}$.  Suppose each $n_i\ge k$ for some number $k$.  Next we will use $\alpha$ and $\beta$ to show that if $k$ is large, then there is a  non-trivial $D^2\times I$ region for $S$ and this implies the lemma.  

By Lemma~\ref{Lgood}, there are at least $k$ curves in $\hat{T}$ that are $B_T$-parallel to $\gamma_\alpha$ and $\alpha$, and we denoted these $k$ curves by $\alpha_1,\dots,\alpha_k$.  Similarly by Lemma~\ref{Lgood}, there are at least $k$ curves in $\hat{T}$ that are $B_T$-parallel to $\gamma_\beta$ and $\beta$, and we denoted these $k$ curves by $\beta_1,\dots,\beta_k$. Note that, since each $T_i$ is separating, $\gamma_\alpha$ and $\gamma_\beta$ must be disjoint good curves in the 2-complex $\cup_{i=0}^nT_i$.  So the $\alpha_i$'s and $\beta_i$'s are disjoint curves in $\hat{T}$.  Moreover we may assume the $\alpha_i$'s and $\beta_i$'s are very close to $\alpha$ and $\beta$ respectively. 
Since $\alpha$ and $\beta$ are the boundary curves of a component $V$ of $\partial_vN(B_T)$, similar to the assumption on $\gamma_\alpha\cup\gamma_\beta$ above, we may assume $\alpha_i\cup\beta_j$ bounds a vertical annulus containing $V$ for any $i, j$.

\vspace{10pt}
\noindent
Claim.  Each $\alpha_i$ ($i=1,\dots,k$) bounds a disk in $\hat{T}$ which is $B_T$-isotopic to $\Delta$.
\begin{proof}[Proof of the Claim]
We first consider $\Delta\cap\hat{T}$.  Since $\alpha=\partial\Delta\subset\partial_vN(B)$, we may assume $\Delta\cap\hat{T}\subset\Int(\Delta)$.  Let $\gamma$ be a component of $\Delta\cap\hat{T}$ which is innermost in $\Delta$ and let $\Delta_\gamma\subset\Delta$ be the subdisk of $\Delta$ bounded by $\gamma$.  Since both $\Delta$ and $\hat{T}$ are transverse to the $I$-fibers, there is a small collar annulus $T_\gamma$ of $\gamma$ in $\hat{T}$ such that $\gamma$ is a component of $\partial T_\gamma$ and $T_\gamma$ is $B_T$-isotopic to an annular neighborhood of $\gamma$ in $\Delta_\gamma$.  If $\gamma$ bounds a disk in $\hat{T}$ that contains $T_\gamma$ and is $B_T$-parallel to $\Delta_\gamma$, then we can perform a $B_T$-isotopy on $\hat{T}$ to eliminate $\gamma$.  If $\gamma$ does not bound such a disk in $\hat{T}$, then as in the proofs of Lemma~\ref{LflareD2} and Corollary~\ref{Cdisk}, we can extend $T_\gamma$ to a subsurface $A_\gamma\subset\hat{T}$ that contains $T_\gamma$ and is not $B_T$-isotopic to a subsurface of $\Delta_\gamma$. This gives a flare based at $\Delta_\gamma$ and contradicts our assumption at the beginning that no subdisk of $\Delta$ is the base of a flare.  Therefore, after some $B_T$-isotopies on $\hat{T}$, we may assume $\Delta\cap\hat{T}=\emptyset$.

Similarly, since $\alpha_i$ is $B_T$-isotopic to $\partial\Delta=\alpha$, there is a small collar annulus $A_i$ of $\alpha_i$ in $\hat{T}$ such that $\alpha_i$ is a component of $\partial A_i$ and $A_i$ is $B_T$-isotopic to an annular neighborhood of $\partial\Delta$ in $\Delta$.  As above, either $\alpha_i$ bounds a disk in $\hat{T}$ which is $B_T$-isotopic to $\Delta$, or we can extend $A_i$ in $\hat{T}$ to get a flare based at a subdisk of $\Delta$, which contradicts our assumption on $\Delta$.  Hence the claim holds
\end{proof}

Now we consider the $\beta_i$'s.  Although $\beta_i$ may not bound a disk carried by $N(B_T)$, there is a curve $\beta_i'\subset\hat{T}$ parallel and close to $\beta_i$ in $\hat{T}$ such that $\beta_i'$ bounds a disk carried by $N(B_T)$ and containing the disk $\Delta$ above.  To see this, let $C_i$ be a small annular neighborhood of $\beta_i$ in $\hat{T}$.  Let $P_\beta$ be the component of $\partial_hN(B_T)$ that contains $\beta$, then one boundary circle of $C_i$ is always $B_T$-parallel to a circle in $P_\beta$ (which is parallel to $\beta$ in $P_\beta$), and we choose $\beta_i'$ to be the other boundary circle of $C_i$.  No matter how small $C_i$ is, $\beta_i'\cup\alpha$ always bounds an annulus $A_i'$ transverse to the $I$-fibers ($A_i'$ can be obtained by slightly pushing the vertical annulus $Y_i$ between $\alpha$ and $\beta_i$ so that $\beta_i$ is pushed to $\beta_i'$ while $\alpha$ is fixed.  There is only one direction to tilt $Y_i$ since $Y_i$ contains the component $V$ of $\partial_vN(B_T)$).    As $\alpha$ bounds the disk $\Delta$, $A_i'\cup\Delta$ is a disk bounded by $\beta_i'$ and carried by $N(B_T)$.

We may choose all the $\beta_i'\subset\hat{T}$ ($i=1,\dots, k$) to be $B_T$-isotopic and suppose they lie in a vertical annulus $V_\beta\subset N(B_T)\subset N(B)$.  Now we consider the surface $H$ in the lemma ($S=H+\hat{T}$).  After some $B$-isotopy on $H$, we may assume each arc of $H\cap V_\beta$ is essential in $V_\beta$.  Moreover, we claim that after $B$-isotopy on $H$ if necessary, $H\cap V_\beta$ is balanced.  To see this, we consider the disk $\Delta_i'=A_i'\cup\Delta$ bounded by $\beta_i'$ above. 
  We can fix a direction along $\beta_i'$ and define positive and negative intersection points as in Definition~\ref{Dbalanced}.  Since both $H$ and $\Delta_i'$ are transverse to the $I$-fibers, the two endpoints of any arc in $H\cap\Delta_i'$ have opposite signs.  This implies that $H\cap\beta_i'$ is balanced.  
We may view $\beta_i'$ as a core curve of $V_\beta$.  Hence $H\cap V_\beta$ is balanced.

Let $h=\frac{1}{2}|H\cap V_\beta|$.  We may view $\beta_1',\dots,\beta_k'$ as $k$ copies of the core curve of $V_\beta$ and assume $\hat{T}\cap V_\beta=\cup_{i=1}^k\beta_i'$, so by Lemma~\ref{Lbalanced}, if $k>h$, $S\cap V_\beta=(H+\hat{T})\cap V_\beta$ contains at least $k-h$ circles and each circle is $B$-isotopic to $\beta_i'$.  Let $\gamma_1,\dots,\gamma_{k-h}$ be the $k-h$ circles in $S\cap V_\beta$.  Recall that $\beta_i'$ bounds a disk $\Delta_i'$ carried by $N(B_T)\subset N(B)$ and we may view $\Delta_i'$ as a disk bounded by a core curve of $V_\beta$.  Since $B_T$ fully carries a collection of normal tori, $B_T$ does not contain any almost normal piece and the disk $\Delta_i'$ does not contain any almost normal piece.  Thus by Lemma~\ref{Lfinite} (setting $V_\beta=A$ in Lemma~\ref{Lfinite}), if $k-h$ is sufficiently large, then the $\gamma_j$'s cannot all be essential in $S$, in other words, some $\gamma_j$ must bound a disk in the surface $S$.  We denote this disk by $D_\gamma$.  
Since $\beta_j'$ is close to $\beta_j$ and $\beta$, by slightly shrinking or enlarging $D_\gamma$ in $S$, we can find a disk $D_\gamma'$ in $S$ with $\partial D_\gamma'=\gamma'$ $B$-isotopic to the curve $\beta$. 
Next we will use $\alpha$ to find another disk in $S$ that together with $D_\gamma'$ bounds a non-trivial $D^2\times I$ region for $S$.

Let $D_i$ ($i=1,\dots, k$) be the disk in $\hat{T}$ bounded by $\alpha_i$ as in the Claim.   Since these disks $D_i$'s are all $B_T$-isotopic to $\Delta$, we may view the $D_i$'s as $k$ parallel copies of the same disk and assume the intersection patterns of $H$ with the $D_i$'s are all the same.    Each component of $H\cap D_i$ is either a trivial circle in $D_i$ or an arc properly embedded in $D_i$.  The intersection of $H$ with a small vertical annulus in $N(B)$ that contains $\partial D_i$ is always balanced, since the two endpoints of each arc in $H\cap D_i$ have opposite signs.  Let $h'$ be the number of components of $H\cap D_i$.  
As illustrated in Figure~\ref{balance}(b) and similar to the arguments in Lemma~\ref{Lbalanced} and Lemma~\ref{LbalancedD}, if $k>h'$, there are at least $k-h'$ disjoint disks in $S=H+\hat{T}$ that are $B$-isotopic to $D_i$.  By assuming $k>h'$, we know that there is a disk $D_a\subset S$ $B$-isotopic to $D_i$ and $\Delta$.  

By our construction, $\partial D_a\cup\partial D_\gamma'$ bounds a vertical annulus $A'\subset N(B)$ that contains the component $V$ of $\partial_vN(B_T)$.  We may view $N(B_T)\subset N(B)$.  By viewing $B_T$ as a branched surface obtained by deleting some branch sectors from $B$, we see that part of $V$ (now viewed as a vertical annulus in $N(B)$) is incident to a component of $M-\Int(N(B))$.
So if a core curve of $\Int(V)$ bounds a disk $D_V$ carried by $N(B)$ and $B$-parallel to a subdisk of $S$, then we can obtain a possibly immersed normal or an almost normal 2-sphere by capping off $\partial\Delta$ using a disk $B$-parallel to $D_V$, a contradiction to Lemma~\ref{LimmersedS2}.  Thus by Lemma~\ref{Last} and its proof, $D_a$ and $D_\gamma'$ are non-nested in $S$ and hence $D_a\cup A'\cup D_\gamma'$ bounds a $D^2\times I$ region $E'$ for $S$ and $B$.    Since $V\subset A'$ is incident to a component of $M-\Int(N(B))$ and $D_a$ is $B$-isotopic to the disk $\Delta$ bounded by $\alpha$, $E'$ must contain a component of $M-N(B)$.  So $E'$ is a non-trivial  $D^2\times I$ region for $S$ and $B$.

Next we show that the number $k$ can be algorithmically determined.  First, by solving branch equations, similar to \cite{AL}, we can check each component of $\partial_vN(B_T)$ to find a disk $\Delta$ that satisfies properties (1) and (2) at the beginning of the proof.  Moreover, using the argument in Lemma~\ref{Linnermost}, we can find a subdisk of $\Delta$ corresponding to an innermost flare.  Thus we can algorithmically find a disk $\Delta$ in the proof above.  In the proof above, since $\alpha\cup\beta$ bounds a component of $\partial_vN(B_T)$, the numbers $h$ and $h'$ above depend only on $H\cap\Delta$ and the intersection pattern of $H$ with the normal tori $T_i$'s near the component $V$ of $\partial_vN(B_T)$.  Moreover, the constant in Lemma~\ref{Lfinite} depends only on $B$, $M$ and $K$.  Thus we can algorithmically find a number $k$ which depends on $B$, $M$, $K$, $H\cap\Delta$ and the intersection pattern of $H$ and the $T_i$'s near $V$, such that if every coefficient $n_i$ in $S=H+\sum_{i=0}^nn_iT_i$ is larger than $k$, there must be a non-trivial $D^2\times I$ region for $S$ and $B$.
\end{proof}

\section{Intersection of normal tori}\label{Sinter}

\begin{notation}\label{N1}
Let $B$ be a branched surface as in Notation~\ref{NB}. 
In this section, we fix a set of normal tori $\mathcal{T}=\{T_1,\dots, T_n\}$ carried by $B$.  Let $B_T$ be the sub-branched surface of $B$ fully carrying $\bigcup_{i=1}^nT_i$.  Suppose $B_T$ does not contain any flare.  Note that, using the complexity defined after Proposition~\ref{Pproduct} and after some trivial isotopies, we may suppose no pair of tori form any trivial product region and by Corollary~\ref{Cdisk}, every curve of $T_i\cap T_j$ must be essential in both $T_i$ and $T_j$.    
We say a torus $T$ can be generated by the set of tori $\mathcal{T}$ if $T$ is a component of $F=\sum_{i=1}^nn_iT_i$ for some non-negative integers $n_i$'s, where $T_i\in\mathcal{T}$.  In this paper we use $\mathcal{G}(\mathcal{T})$ to denote the set of tori that can be generated by $\mathcal{T}$.  Clearly if $\mathcal{T}$ consists of disjoint tori, then $\mathcal{G}(\mathcal{T})=\mathcal{T}$.  Note that  $\mathcal{G}(\mathcal{T})$ is not the same as the solution space $\mathcal{S}(\mathcal{T})=\{\sum_{i=1}^nn_iT_i\}$ mentioned in section~\ref{Spre}, as every surface in $\mathcal{G}(\mathcal{T})$ is connected.  Each component of a surface in $\mathcal{S}(\mathcal{T})$ is a surface in $\mathcal{G}(\mathcal{T})$.  Moreover, since $B$ does not carry any normal 2-sphere, every surface in $\mathcal{G}(\mathcal{T})$ is a normal torus carried by $B$.  So by Lemma~\ref{Ltorus}, every torus in $\mathcal{G}(\mathcal{T})$ bounds a solid torus in $M$.
\end{notation}
 

\begin{lemma}\label{Lproduct}
Let $T_1$ and $T_2$ be tori in the set $\mathcal{T}$ and let $B_T$ be the branched surface as above.  Suppose $B_T$ does not contain any flare.  Suppose $F_i$ ($i=1,2$) is an annulus in $T_i$ with $\partial F_1=\partial F_2\subset T_1\cap T_2$, and suppose $F_1\cup F_2$ bounds a (pinched) product region $X$, see Definition~\ref{Dproduct}.  We view the solid torus $X$ as $bigon\times S^1$.  Let $D$ be a meridional disk of $X$.  Suppose $D\subset N(B_T)$ and $D$ is either vertical in $N(B_T)$ or transverse to the $I$-fibers of $N(B_T)$.  Then $X$ must be a trivial product region in $N(B_T)$.
\end{lemma}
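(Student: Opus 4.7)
The plan is to write $X=F\times[1,2]/\!\sim$ as in Definition~\ref{Dproduct} with $F$ an annulus, so $F_i=F\times\{i\}$, and by hypothesis, in a collar $A_F$ of $\partial F$ the product arcs $\{x\}\times[1,2]$ are already subarcs of $I$-fibers of $N(B_T)$. Write $D=\mu_0\times[1,2]/\!\sim$ for a meridional arc $\mu_0\subset F$. Let $F^*\subseteq F$ denote the (open) set of $x$ for which $\{x\}\times[1,2]$ is a subarc of an $I$-fiber of $N(B_T)$. Then $A_F\subseteq F^*$, and establishing $F^*=F$ together with the natural $I$-fiber thickening will yield $X\subset N(B_T)$ with the trivial product structure.

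In Case 1, where $D$ is vertical, the vertical foliation of $D$ by $I$-fiber subarcs must run between the two boundary arcs $\delta_1\subset F_1$ and $\delta_2\subset F_2$, and so coincides with the foliation of $D$ by the product arcs $\{x\}\times[1,2]$ for $x\in\mu_0$. Hence $\mu_0\subset F^*$ and $F^*\supseteq A_F\cup\mu_0$. Since $\mu_0$ cuts the annulus $F$ into a disk, any component of $F\setminus F^*$ lies inside that disk and is bounded by a simple closed curve $C\subset\Int(F)$. The strategy is to show that such a $C$ forces a flare on $B_T$: the failure of the $I$-fiber flow across $C$ means that some $I$-fiber through $F_1$ terminates at $\partial_hN(B_T)$ before reaching $F_2$, producing a component $V\subset\partial_vN(B_T)$ sitting above $C$. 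Applying the innermost-flare analysis of Lemma~\ref{Linnermost} then gives a base disk for $B_T$ together with a planar subsurface of $T_1$ that is $B_T$-parallel to most (but not all) of it, contradicting the no-flare hypothesis. So $F^*=F$ and $X$ is trivial.

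In Case 2, where $D$ is transverse to the $I$-fibers, I would cut $X$ open along $D$ to obtain a $3$-ball $E$ whose sphere boundary is the union of four disks $D_+,D_-,F_1^{\mathrm{cut}},F_2^{\mathrm{cut}}$, all carried by $N(B_T)$ and transverse to the $I$-fibers. The $I$-fibers of $N(B_T)$ passing through $\Int(E)$ must join these boundary disks in pairs; if some such fiber runs from $F_i^{\mathrm{cut}}$ to $D_\pm$ rather than from $F_1^{\mathrm{cut}}$ to $F_2^{\mathrm{cut}}$, the same flare-extraction argument produces a flare on $B_T$, again a contradiction. Hence every $I$-fiber through $\Int(E)$ connects $F_1^{\mathrm{cut}}$ to $F_2^{\mathrm{cut}}$, so $E$ has a product $I$-fiber structure; regluing $D_+$ to $D_-$ recovers $X$ with the required trivial product structure.

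The main obstacle in both cases is the flare-extraction step: converting a local failure of the vertical-flow alignment into an honest flare on $B_T$ in the sense of Definition~\ref{Dflare}. This requires pinpointing the precise component of $\partial_vN(B_T)$ where the terminating $I$-fiber exits to $\partial_hN(B_T)$, and exhibiting a base disk carried by $N(B_T)$ together with a subsurface of $T_1$ (or $T_2$) that witnesses the flare structure. Lemma~\ref{Linnermost} supplies the framework for this once the location of the failure is identified, at which point the no-flare assumption delivers the contradiction.
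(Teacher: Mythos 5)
Your overall strategy -- cut $X$ open along $D$ to get a ball bounded by disks carried by $N(B_T)$, then use the no-flare hypothesis to force the product structure to be trivial -- is the same as the paper's. But the step you yourself flag as ``the main obstacle,'' namely converting a failure of the fiberwise product structure into an honest flare, is exactly the content of the proof and you have not carried it out. Worse, the lemma you point to, Lemma~\ref{Linnermost}, runs in the wrong direction: it starts from an (innermost) flare and locates a component of $\partial_vN(B)$; it does not manufacture a flare from a terminating $I$-fiber. The tool you actually need is Lemma~\ref{LflareD2}: a \emph{non-trivial} $D^2\times I$ region yields a flare (because one of the two horizontal disks has a collar $B$-parallel to a collar of the other, but the disks are not $B$-parallel, so a maximal $B$-parallel subsurface stops short and its frontier is a flare locus). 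The paper's proof is precisely this: deform $F_1\cup F_2\cup D$ into part of a branched surface as in Figure~\ref{deform}(a), observe that the cut-open ball $X_D$ becomes a $D^2\times I$ region (with its vertical annulus pinched to a cusp circle) for a surface obtained by cutting and pasting copies of $F_1$, $F_2$, $D$, and then apply Lemma~\ref{LflareD2} together with the no-flare hypothesis to conclude $X_D$, hence $X$, is trivial. As written, your proposal leaves this reduction unproved, so there is a genuine gap.

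Two smaller points. First, your Case~1 is unnecessary: if $D$ is vertical, a small perturbation makes it transverse to the $I$-fibers, reducing to Case~2 immediately; your separate analysis of the vertical foliation of $D$ and the set $F^*$ only reintroduces the same unresolved flare-extraction problem. Second, in Case~2 you speak of ``the $I$-fibers of $N(B_T)$ passing through $\Int(E)$'' joining the boundary disks in pairs, but a priori $E$ need not lie in $N(B_T)$ at all -- a non-trivial product region contains components of $M-\Int(N(B_T))$ -- so the fiber-matching picture you describe already presupposes part of what is to be proved. Recognizing $X_D$ as a $D^2\times I$ region for a carried surface and invoking Lemma~\ref{LflareD2} sidesteps both issues.
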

\begin{proof}
The disk $D$ cuts the solid torus $X$ into a 3-ball $X_D$.  If $D$ is vertical in $N(B_T)$, a slight perturbation on $D$ can changed $D$ into a disk transverse to the $I$-fibers of $N(B_T)$.  Thus we may assume next that $D$ is transverse to the $I$-fibers of $N(B_T)$.

Since $F_1$, $F_2$ and $D$ are all carried by $N(B_T)$, similar to Figure~\ref{deform}(a), we can deform $F_1\cup F_2\cup D$ into (part of a) branched surface. By Definition~\ref{Dproduct}, $X$ is deformed into a $bigon\times S^1$ region and the 3-ball $X_D$ is naturally deformed into a $D^2\times I$ region (with its vertical boundary annulus pinched into a cusp circle) for a surface obtained by cutting and pasting of copies of $F_1$, $F_2$ and $D$.  Since $B_T$ has no flare, by Lemma~\ref{LflareD2}, $X_D$ deforms into a trivial $D^2\times I$ region in $N(B_T)$.  This means that $X\subset N(B_T)$ and $X$ must be a trivial product region.
\end{proof}

\begin{definition}\label{Dgood}
Let $T$ be a normal torus carried by $B$ and $\hat{T}$ the solid torus in $M$ bounded by $T$. 
Let $A\subset N(B)$ be a vertical annulus properly embedded in either $\hat{T}$ or $M-\Int(\hat{T})$.  Suppose $A$ is isotopic relative to $\partial A$ to a subannulus $A_T$ of $T$, i.e. $A\cup A_T$ bounds a solid torus $X$ and a meridional curve of $\partial X$ consists of a vertical arc of $A$ and an essential arc of $A_T$.  We call $X$ a $monogon\times S^1$ region.  Note that if we collapse (using $\pi:N(B)\to B$) the vertical annulus $A$ into a circle, then the meridional disk of $X$ becomes a monogon.  If there exists such an annulus $A$, then we say $T$ bounds a $monogon\times S^1$ region. 
If such an annulus $A$ lies in $\hat{T}$ and $\partial A$ is a pair of essential and non-meridional curves in $T$, then we say $T$ is a \textbf{good torus}.
\end{definition}

\begin{lemma}\label{Lmono}
Let $\mathcal{T}$  be a set of normal tori carried by $N(B)$.  Suppose the intersection of the tori in $\mathcal{T}$ has no triple point and each double curve in the intersection is essential in both corresponding tori.  Then all but finitely many tori in $\mathcal{G}(\mathcal{T})$ are good tori.  
\end{lemma}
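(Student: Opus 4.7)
The plan is to observe first that the hypotheses force each piece of $T_i$ (a connected component of $T_i$ minus $T_i\cap\bigcup_{j\neq i}T_j$) to be an annulus, since a disjoint family of essential simple closed curves on a torus must be mutually parallel.  A generated torus $T\in\mathcal{G}(\mathcal{T})$ is thus assembled by canonical cutting and pasting from copies of the finitely many annular pieces, and is determined (up to $B$-isotopy) by the multiplicities with which each piece appears.  For any fixed constant $K$, only finitely many generated tori have all piece multiplicities bounded by $K$, so it suffices to show that whenever some piece appears in $T$ with multiplicity at least $K$ (for a suitable choice of $K$), the torus $T$ is good.

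Suppose $T$ contains at least three parallel copies $C_1,C_2,C_3$ of a single piece $P$ of some $T_i$.  Between the consecutive pairs $(C_1,C_2)$ and $(C_2,C_3)$ in the $I$-fiber direction of $N(B_T)$ sit pinched product (slab) regions $S_{12},S_{23}$, each of the form $P\times I$, whose vertical boundary consists of two vertical annuli having boundary circles on $T$ that are parallel copies of $\partial P$.  Since $T$ separates $M$ with $\hat{T}$ the solid torus on one side (by Lemma~\ref{Ltorus}), and since $S_{12},S_{23}$ lie on opposite sides of the middle copy $C_2$, exactly one of these slabs is contained in $\hat{T}$; assume it is $S_{12}$.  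Choose a vertical annulus $A$ in the vertical boundary of $S_{12}$; then $A\subset\hat{T}$, and $A$ together with the annulus $A_T\subset T$ spanning $\partial A$ bounds a solid torus inside $\hat{T}$, providing a monogon$\times S^1$ region.

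It remains to verify that $\partial A$ consists of essential non-meridional curves in $T$.  For essentiality: if $\partial A$ were trivial in $T$, its two bounding disks in $T$, together with either $A$ or with the corresponding vertical annulus in the boundary of $S_{23}$ on the opposite side of $T$, would form a $2$-sphere bounding a non-trivial $D^2\times I$ region for $T$ and $B_T$, contradicting the no-flare hypothesis via Lemma~\ref{LflareD2}.  For non-meridionality: if each component of $\partial A$ bounded a disk in $\hat{T}$, then capping $A$ with these disks and slightly perturbing $A$ to be transverse to the $I$-fibers produces a (possibly immersed) normal $2$-sphere carried by $N(B_T)$, contradicting Lemma~\ref{LimmersedS2}; alternatively, the meridional case constrains the slope of $\partial P$ on $T$ to be the meridian slope, which, via Corollary~\ref{Cmeridion} and the assembly structure of $T$, restricts the piece multiplicities of $T$ to only finitely many exceptional combinatorial types.

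Combining the above, any generated torus $T$ with all piece multiplicities bounded by $K$ (plus the finitely many exceptional meridional configurations) lies in a finite set, and every other $T\in\mathcal{G}(\mathcal{T})$ is a good torus.  The main obstacle will be making the non-meridionality analysis precise: converting the sphere obtained by capping $A$ with meridional disks into a genuine immersed normal $2$-sphere carried by $N(B_T)$ requires that the disks themselves be carried by $N(B_T)$ (not merely embedded in $\hat{T}$), which in turn demands a careful analysis of how meridional disks of $\hat{T}$ interact with the branched surface structure of $B_T$.
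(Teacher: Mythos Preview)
Your overall strategy---show that any $T\in\mathcal{G}(\mathcal{T})$ with some piece appearing with multiplicity at least $3$ is a good torus---is the same as the paper's, and your slab argument for producing a vertical annulus $A\subset\hat{T}$ is essentially the paper's use of $V=\pi^{-1}(c)$ for a core curve $c$ of an annular branch sector of $B^T$.

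The essentiality of $\partial A$ in $T$ can be handled more directly than your no-flare argument: since every branch sector of $B^T$ is an annulus and $T$ is a torus carried by $B^T$, the preimages in $T$ of the branch locus form a family of disjoint simple closed curves decomposing $T$ into annuli; on a torus such decomposing curves are automatically essential, and the curves of $T\cap V$ are parallel to them.

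The genuine gap is the non-meridionality step, exactly where you flag the obstacle. Your option~(a) fails for the reason you state: the meridional disks of $\hat{T}$ are not carried by $N(B_T)$, so capping $A$ does not produce a carried $2$-sphere. Your option~(b) is not an argument---nothing in Corollary~\ref{Cmeridion} or the combinatorics of the annulus pieces bounds the multiplicities in the meridional case. The paper resolves this with a construction you are missing. With three consecutive intersection curves in $V$ one gets not only the annulus $A_1\subset\hat{T}$ but also an adjacent vertical annulus $A_2$ properly embedded in $M-\Int(\hat{T})$. Assuming the curves are meridional, a barrier/normalization argument shows that $A_2$ together with a subannulus of $T$ also bounds a $\text{monogon}\times S^1$ region, this time \emph{outside} $\hat{T}$. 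One then glues the two monogon regions (the inner one from $A_1$ and the outer one from $A_2$) together with two parallel copies of an annulus $Q\subset T$ chosen so that the induced normal directions at $\partial Q$ point inward; the result is a new normal torus $T_Q$ carried by $B$ whose meridian is visibly \emph{different} from that of $T$ (a meridional disk of the solid torus bounded by $T_Q$ is the union of the two monogons). Since $T$ and $T_Q$ share the curves $\partial A_i$ as essential intersection curves, this contradicts Corollary~\ref{Cmeridion}. That two-sided monogon construction is the key idea needed to finish the proof.
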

\begin{proof}
As in Figure~\ref{deform}(a), the tori in $\mathcal{T}$ naturally deform into a branched surface $B^T$.  Since the intersection of the tori has no triple point, each double curve of the tori in $\mathcal{T}$ corresponds to a thin annular branch sector of $B^T$ and the branch locus of $B^T$ consists of disjoint curves.  So each branch sector of $B^T$ is an annulus and every torus in $\mathcal{G}(\mathcal{T})$ is carried by $B^T$.

Next we show that all but finitely many tori carried by $B^T$ are good tori. 
Let $A$ be an annulus branched sector or $B^T$ and let $T$  be a torus carried by $B^T$.  Clearly $T$ is a normal torus and by Lemma~\ref{Ltorus} $T$ bounds a solid torus $\hat{T}$ in $M$. 

Let $t$ be the weight of $T$ at the branch sector $A$, i.e. $t=|T\cap\pi^{-1}(\Int(A))|$ where $\pi:N(B^T)\to B^T$ is the collapsing map.  
Let $c$ be a core curve of $A$.  If $t\ge 3$,  the intersection of $T$ and the vertical annulus $V=\pi^{-1}(c)$ of $N(B^T)$ contains $t\ge 3$ curves.  By our hypotheses on $\mathcal{T}$ and construction of $B^T$, each curve in $T\cap V$ is an essential curve in $T$.  If $t\ge 3$, 3 consecutive curves in $T\cap V$ give two subannuli $A_1$ and $A_2$ of $V$ properly embedded in $\hat{T}$ and $M-\Int(\hat{T})$ respectively.  After a small isotopy, we may assume $\partial A_1$ and $\partial A_2$ are disjoint in $T$.  As $\partial A_1$ is essential in $T$, the annulus $A_1$ is $\partial$-parallel in the solid torus $\hat{T}$.  
So the vertical annulus $A_1$ and a subannulus of $T$ bound a $monogon\times S^1$ region in $\hat{T}$.  By the definition of good torus, either $T$ is a good torus or $\partial A_i$ consists meridional curves of $T$.  

\vspace{10pt}
\noindent
\emph{Claim}. If $\partial A_i$ consists of meridional curves of $T$, then $A_2$ and a subannulus of $T$ bound a $monogon\times S^1$ region outside the solid torus $\hat{T}$.  
\begin{proof}[Proof of the Claim]
The claim is implicitly proved in \cite[Section 5]{L4}. 
Let $C_1$ and $C_2$ be the two annuli in $T$ with $\partial C_1=\partial C_2=\partial A_2$ and $C_1\cup C_2= T$.  Let $X_1=A_2\cup C_1$ and $X_2= A_2\cup C_2$.  The tori $X_1$ and $X_2$ are not normal tori, but by \cite[Theorem 3.2]{JR} $T\cup A_2$ is a barrier for the normalization process.  So either (1) $X_i$ can be normalized (in $M-(\hat{T}\cup A_2)$) into a normal torus isotopic to $X_i$ or (2) similar to the proofs of Lemma~\ref{LnoS2} and Corollary~\ref{Cklein}, some compression occurs and $X_i$ becomes a 2-sphere during the normalization process, in which case $X_i$ bounds a solid torus outside $\hat{T}\cup A_2$.  Since every normal torus bounds a solid torus, in either case, $X_i$ bounds a solid torus in $M$, which we denote by $\hat{X}_i$.  

By the hypothesis in the claim, $\partial A_i$ consists of meridional curves of $T$. 
We first consider the case that the solid torus $\hat{X}_i$ lies outside $\hat{T}$.   In this case, if the intersection number of a meridional curve of $\partial \hat{X}_i$ and a component of $\partial A_2$ is one, then $\hat{X}_i$ is a $monogon\times S^1$ region and the claim holds.  Otherwise, the union of $\hat{X}_i$ and a neighborhood of a meridional disk of $\hat{T}$ bounded by a component of $\partial A_2$ is a non-trivial punctured lens space, which contradicts our hypothesis that $M$ is irreducible and is not a lens space.    

The remaining case is that both $\hat{X}_1$ and $\hat{X}_2$ contain $\hat{T}$.  By our discussion on normalizing $X_i$ above, this happens only if $X_i$ is parallel to a normal torus.  By Lemma~\ref{Ltorus}, this means that $X_i=\partial\hat{X}_i$ is incompressible in $M-\Int(\hat{X}_i)$.  Since $\partial A_i$ consists of meridional curves of $T$, a core curve of $C_1$ bounds a meridional disk $D_1$ of $\hat{T}$.  So $D_1\cap X_2=\emptyset$.  Since $X_1=\partial\hat{X}_1$ is incompressible in $M-\Int(\hat{X}_1)$ and since $\hat{T}\subset\hat{X}_1$, $D_1$ must be a meridional disk for $\hat{X}_1$.  After compressing $\hat{X}_1$ along $D_1$, we obtain a 3-ball $E\subset\hat{X}_1$.  Since $X_2\subset\hat{T}\cup A_2\subset\hat{X}_1$ and $D_1\cap X_2=\emptyset$, the torus $X_2$ lies in the 3-ball $E$.  We may suppose $X_2\subset\Int(E)$.  Hence $X_2$ is compressible in $E$.  Since the solid torus $\hat{X}_2$ bounded by $X_2$ contains $\hat{T}$ and $D_1$, $X_2$ does not bound a solid torus in the 3-ball $E$.  As $X_2$ is incompressible outside the solid torus $\hat{X}_2$, this implies that the submanifold of $E$ bounded by $X_2\cup\partial E$ is a ball with a knotted hole.  Since $X_2$ bounds a solid torus in $M$, this means that $M$ must be $S^3$, a contradiction to our hypothesis on $M$.
\end{proof}

Suppose $T$ is not a good torus, then as in the discussion before the claim, $\partial A_i$ must be meridional curves of $T$ and 
by the Claim above, $A_2$ and a subannulus of $T$ bound a $monogon\times S^1$ region outside $\hat{T}$. 
Let $P_i\subset T$ ($i=1,2$) be the subannulus of $T$ such that $\partial P_i=\partial A_i$ and $P_i\cup A_i$ bounds a $monogon\times S^1$ region.  Since $A_1$ and $A_2$ are assumed to be disjoint, $\partial P_1$ and $\partial P_2$ are disjoint in $T$.  We assign a normal direction for each component of $\partial P_i$ in $T$ which points out of $P_i$.  It is easy to see that, for any configuration of $\partial A_i$ in $T$, there is always an annulus $Q\subset T$ (note that $\Int(Q)$ is allowed to contain curves in $\partial P_i$) such that one component of $\partial Q$ is a curve in $\partial P_1$, the other component of $\partial Q$ is a curve in $\partial P_2$, and the normal directions defined above at both curves of $\partial Q$ point into $Q$, see Figure~\ref{monogon}(a) for a 1-dimensional schematic picture.  Now, as shown in Figure~\ref{monogon}(b), we can use a copy of $P_1$, a copy of $P_2$ and 2 parallel copies of $Q$ to form a normal torus $T_Q$ carried by $B$.  Moreover, as shown in Figure~\ref{monogon}(b), $T_Q$ bounds a solid torus which is the union of the two $monogon\times S^1$ regions (bounded by copies of $P_1\cup A_1$ and $P_2\cup A_2$) and a product neighborhood of $Q$.  A meridional disk of this solid torus is formed by the union of two monogons and hence the meridian for $T_Q$ is not $\partial P_i$.  By Corollary~\ref{Cmeridion}, this contradicts that $\partial P_i$ ($\partial P_i=\partial A_i$) bounds embedded disks in $\hat{T}$.  Thus curves of $\partial A_i$ are not meridional curves of $T$ and $T$ is a good torus.

\begin{figure}
\begin{center}
\psfrag{(a)}{(a)}
\psfrag{(b)}{(b)}
\psfrag{A1}{$A_2$}
\psfrag{A2}{$A_1$}
\psfrag{mono}{monogon}
\psfrag{Q}{$Q$}
\psfrag{solid}{solid torus}
\includegraphics[width=4.5in]{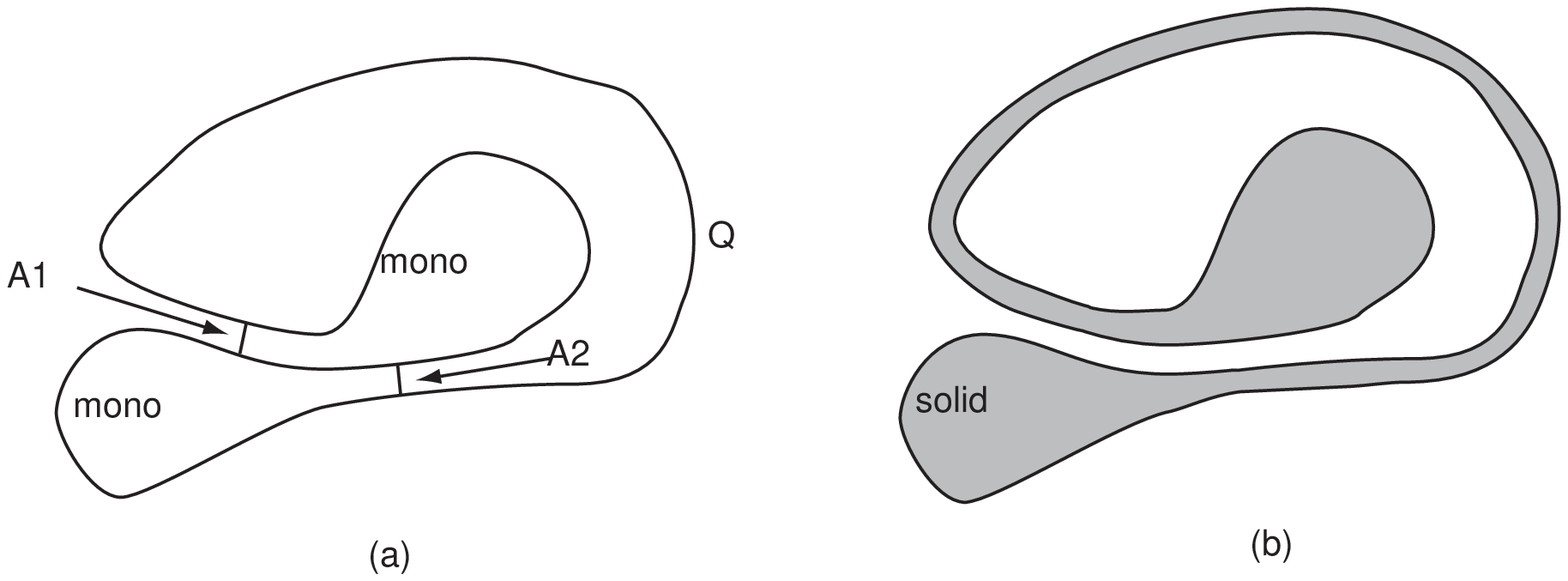}
\caption{}\label{monogon}
\end{center}
\end{figure}

The argument above says that if a torus $T$ carried by $B^T$ is not a good torus, then its weight at each annular branch sector $A$ above (of $B^T$) is at most 2. 
The number of tori carried by $B^T$ and with weight at most 2 at every branch sector is clearly finite, and it is trivial to algorithmically enumerate all such tori.  
\end{proof}

For any two surfaces $T$ and $F$ carried by $N(B)$, we say $T$ non-trivially intersects $F$ if $T\cap F\ne\emptyset$ after any $B$-isotopy.

\begin{lemma}\label{Lgoodone}
Let $\mathcal{T}'$ be any finite set of normal tori carried by $B$.  Suppose the intersection of the tori in $\mathcal{T}'$ contains no triple point and each double curve is essential in both corresponding tori.   Then there is an algorithm that either 
\begin{enumerate}
\item finds a good torus $T$ carried by $B$ such that $T$ non-trivially intersects at least one torus in $\mathcal{T}'$ and the intersection of the tori in $T\cup\mathcal{T}'$ has no triple point, or 
\item lists all possible tori in $\mathcal{G}(\mathcal{T}')$, in which case $\mathcal{G}(\mathcal{T}')$ is a finite set.
\end{enumerate}
\end{lemma}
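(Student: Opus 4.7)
My plan is to analyse the branched surface $B^{\mathcal{T}'}$ obtained by deforming $\bigcup_{T\in\mathcal{T}'}T$ as in Figure~\ref{deform}(a). Under our hypotheses, the branch locus of $B^{\mathcal{T}'}$ is a disjoint union of simple closed curves, every branch sector is an annulus, and $\mathcal{G}(\mathcal{T}')$ coincides with the set of connected tori carried by $B^{\mathcal{T}'}$. As a first step I would apply Lemma~\ref{Lmono} to $B^{\mathcal{T}'}$, which yields an algorithmically computable finite list $\mathcal{F}$ of the non-good tori in $\mathcal{G}(\mathcal{T}')$; they are characterised by having weight at most $2$ at each annular branch sector, and so can be enumerated from the branch equations of $B^{\mathcal{T}'}$.

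The next step is to search for a good torus that can play the role of $T$ in case~(1). I would enumerate the finitely many fundamental solutions of the branch equations of $B^{\mathcal{T}'}$ produced by Haken's finite basis theorem, and then test each connected good torus arising among them or among their bounded combinations. For any such candidate $T$ that is not $B^{\mathcal{T}'}$-isotopic to some $T_i\in\mathcal{T}'$, the intersection $T\cap T_i$ can be read directly from the weight vector of $T$: each component of $T\cap T_i$ sits in $T_i$ parallel and close to some double curve of $\mathcal{T}'$ in $T_i$, and is therefore essential in $T_i$. Because the double curves of $\mathcal{T}'$ in $T_i$ are pairwise disjoint by hypothesis, a small generic $B^{\mathcal{T}'}$-isotopy separates the parallel families inside $T_i$ and prevents any triple point from forming in $T\cup\mathcal{T}'$. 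Moreover, by Proposition~\ref{Pproduct}, if $T$ were $B$-isotopic off every $T_i$ then $T$ would be $B^{\mathcal{T}'}$-isotopic to some $T_i$; since we have excluded this, $T$ non-trivially intersects at least one torus in $\mathcal{T}'$, and $T$ witnesses case~(1).

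If no candidate good torus passes the test, then every good torus in $\mathcal{G}(\mathcal{T}')$ is $B^{\mathcal{T}'}$-isotopic to some $T_i$, and so $\mathcal{G}(\mathcal{T}')=\mathcal{T}'\cup\mathcal{F}$ up to isotopy, giving the finite list required in case~(2). The main obstacle I foresee is establishing the effectiveness of the search: one must justify that it suffices to examine the fundamental solutions and their small combinations rather than every integer combination. I expect this to be handled by a complexity-reduction argument on the weight vectors modelled on the one used after Proposition~\ref{Pproduct}: adding further copies of a $T_i$ to a carried torus contributes only additional parallel copies of curves already controlled by $\mathcal{T}'$, and so produces no new $B^{\mathcal{T}'}$-isotopy class of torus beyond what the fundamental solutions already realise.
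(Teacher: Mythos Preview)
Your proposal has two genuine gaps that the paper's proof handles differently.

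First, your use of Proposition~\ref{Pproduct} is incorrect. You claim that if a torus $T$ carried by $B^{\mathcal{T}'}$ can be $B$-isotoped disjoint from every $T_i\in\mathcal{T}'$, then $T$ must be $B^{\mathcal{T}'}$-isotopic to some $T_i$. Proposition~\ref{Pproduct} says no such thing: it only tells you that the isotopy moving $T$ off a given $T_i$ proceeds through trivial product regions. The paper instead proves that such a $T$, once pushed off all the $T_i$'s, must be $B$-parallel to a torus component of $\partial_hN(B^{\mathcal{T}'})$; this is a finite set, but it need not coincide with $\mathcal{T}'$. Consequently your concluding equation $\mathcal{G}(\mathcal{T}')=\mathcal{T}'\cup\mathcal{F}$ in case~(2) is unjustified: there can be good tori in $\mathcal{G}(\mathcal{T}')$ that are disjoint from every $T_i$ after isotopy, hence useless for case~(1), yet not in $\mathcal{T}'$.

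Second, and more seriously, your search strategy has no termination proof, as you yourself flag. Restricting to fundamental solutions of $B^{\mathcal{T}'}$ and ``bounded combinations'' is not justified by anything you have written; the heuristic that adding copies of $T_i$ only produces parallel curves does not control the isotopy classes of the resulting tori. The paper avoids this entirely by a different mechanism: it builds an increasing chain $\mathcal{T}_1=\mathcal{T}'\subset\mathcal{T}_2\subset\cdots$ by closing under components of pairwise sums $T_i+T_j$. At each stage one checks whether any new torus has weight $\ge 3$ at some branch sector (hence is good, by Lemma~\ref{Lmono}) and is not parallel to a torus component of $\partial_hN(B^{\mathcal{T}'})$ (hence non-trivially intersects some $T_i$). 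If this never happens, every torus encountered lies in the finite set of tori with all weights $\le 2$ together with the finitely many $\partial_hN(B^{\mathcal{T}'})$-parallel tori, so the chain stabilises and $\mathcal{G}(\mathcal{T}')\subset\mathcal{T}_n$ is finite. This saturation argument is what makes the algorithm halt; your proposal lacks a substitute for it.
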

\begin{proof}
After some trivial isotopies, we may assume there is no trivial product region (see Definition~\ref{Dproduct}) between any two tori in $\mathcal{T}'$. 
As shown in Figure~\ref{deform}(a), the union of the tori in $\mathcal{T}'$ naturally deforms into a branched surface $B^T$.  Since each torus in $\mathcal{T}'$ is carried by $N(B)$, we may view $B^T\subset N(B^T)\subset N(B)$.  
By the proof of Lemma~\ref{Lmono}, given any torus carried by $B^T$, if its weight at an annular branch sector of $B^T$ is at least 3, then it must be a good torus. 

Suppose $T$ is a torus carried by $B^T$ and suppose $T$ can be isotoped disjoint from every torus in $\mathcal{T}'$ via $B$-isotopy.  By Proposition~\ref{Pproduct}, for any torus $T_i$ in $\mathcal{T}'$, one can eliminate the double curves $T\cap T_i$ by a sequence of trivial isotopies that eliminate trivial product regions between $T$ and $T_i$.  
Suppose there is another torus $T_j$ in $\mathcal{T}'$ that is already disjoint from $T$ before these isotopies on $T$ and $T_i$.  For any trivial product region $P$ bounded by subsurfaces of $T$ and $T_i$, if $P\cap T_j\ne\emptyset$, then since $T_j\cap T=\emptyset$, a component of $T_j\cap P$ and a subsurface of $T_i$ in $\partial P$ must bound a trivial product region in $P$, which contradicts our assumption at the beginning that $T_i$ and $T_j$ do not bound any trivial product region.  This means that $T_j$ does not intersect any trivial product region bounded by $T$ and $T_i$.  Thus, after the sequence of trivial isotopies that move $T$ disjoint from $T_i$ as in Proposition~\ref{Pproduct}, $T$ remains disjoint from $T_j$.  As $\mathcal{T}'$ is a finite set, this means that after some $B$-isotopies on $T$, $T$ is disjoint from every torus in $\mathcal{T}'$.  Hence $T$ is disjoint from $B^T$ and $N(B^T)$ after some $B$-isotopies.  

By our construction, we may view $B^T\subset N(B^T)\subset N(B)$, $T\subset N(B)$ and $T\cap N(B^T)=\emptyset$.  However since $T$ is carried by $N(B^T)$ before these isotopies, there is a torus $T'\subset N(B^T)$ that represents the same torus as $T$ in $N(B)$, i.e., $T\cup T'$ bounds a product region $T^2\times I$ in $N(B)$, where each $I$-fiber of the product region is a subarc of an $I$-fiber of $N(B)$.  Note that $T'\subset N(B^T)$ but $T\cap N(B^T)=\emptyset$.  If a component of $\partial_hN(B^T)$ lies in the product region $T^2\times I$, it must be transverse to the $I$-fibers of the product $T^2\times I$.  This implies that the component of $\partial_hN(B^T)\cap(T^2\times I)$ that is closest to $T$ in the product region $T^2\times I$ must be a torus parallel to $T$. So, if $T$ can be isotoped disjoint from every torus in $\mathcal{T}'$ via $B$-isotopy, then $T$ is parallel to a torus component of $\partial_hN(B^T)$.   Since $|\partial_hN(B^T)|$ is finite, there are only finitely many tori carried by $B^T$ that can be isotoped disjoint from all the tori in $\mathcal{T}'$ via $B$-isotopy, and we can algorithmically find all such tori.

Next we inductively construct a sequence of subsets of tori carried by $B^T$.  First we set $\mathcal{T}_1=\mathcal{T}'$.  Suppose we have constructed a subset $\mathcal{T}_n$.  For every pair of tori $T_i$ and $T_j$ in $\mathcal{T}_n$ with $T_i\cap T_j\ne\emptyset$, by adding the components of $T_i+T_j$ that are not already in $\mathcal{T}_n$ to $\mathcal{T}_n$, we obtain a set of tori $\mathcal{T}_{n+1}\supset\mathcal{T}_n$.  If $\mathcal{T}_{n+1}=\mathcal{T}_n$ for some $n$, i.e., we do not get any new torus type from $T_i+T_j$ for any $T_i$ and $T_j$ in $\mathcal{T}_n$, then since $\mathcal{T}'\subset\mathcal{T}_n$, it is easy to see that $\mathcal{G}(\mathcal{T}')\subset \mathcal{T}_n$ and $\mathcal{G}(\mathcal{T}')$ is a finite set.  We can algorithmically build the sequence of sets of tori $\mathcal{T}_n$ carried by $B^T$.  

Recall that we have proved in Lemma~\ref{Lmono} that if the weight of a torus carried by $B^T$ at a branch sector of $B^T$ is at least 3, then it is a good torus.  Moreover, by the argument above, we can check whether or not a torus in $\mathcal{T}_n$ is $B$-parallel to a torus component of $\partial_hN(B^T)$ and determine whether or not it can be isotoped disjoint from every tori in $\mathcal{T}'$.  So eventually either we find a good torus $T$ that non-trivially intersects at least one torus in $\mathcal{T}'$, or $\mathcal{T}_{n+1}=\mathcal{T}_n$ for some $n$ and we obtain a complete (finite) list of tori containing $\mathcal{G}(\mathcal{T}')$.  

Furthermore, all the tori above are carried by $B^T$.  By the no-triple-point hypothesis on $\mathcal{T}'$ and by the construction of $B^T$, the branch locus of $B^T$ has no double point and for any torus $T$ carried by $B^T$, after some $B^T$-isotopy, the intersection of the tori in $T\cup\mathcal{T}'$ has no triple point.  
\end{proof}

\begin{lemma}\label{Lslope}
Let $\mathcal{T}$ and $B_T$ be as in Notation~\ref{N1}. In particular, $B_T$ has no flare. 
Let $T$ be a normal torus in $\mathcal{T}$ and let $\hat{T}$ be  the solid torus bounded by $T$.   Suppose there is a vertical annulus $A$ of $N(B_T)$ properly embedded in $\hat{T}$ with $\partial A$ essential in $T$.  Let $T'$ be any other torus carried by $N(B_T)$ that non-trivially intersects $T$.  Then every  curve in $T\cap T'$ that is essential in $T$ always has the same slope in $T$ as the slope of $\partial A$. 
\end{lemma}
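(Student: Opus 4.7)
The plan is to argue by contradiction. Assume that some curve $\gamma\subset T\cap T'$ essential in $T$ has slope $s'\neq s$ on $T$, where $s$ denotes the slope of $\partial A$. Then $\gamma$ meets $\partial A$ transversely in $T$, so $\gamma\cap\partial A\neq\emptyset$, and in particular $T'\cap A\neq\emptyset$.

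First I would reduce to a clean picture of $T'\cap\hat T$. Since $B_T$ has no flare, Corollary~\ref{Cdisk} lets me $B_T$-isotope so that every curve of $T\cap T'$ is essential in both $T$ and $T'$; using the complexity introduced after Proposition~\ref{Pproduct}, I may also assume $T$ and $T'$ form no trivial product region. Cutting the torus $T'$ along the essential circles of $T\cap T'$ gives annuli, so $T'\cap\hat T$ is a disjoint union of annuli properly embedded in $\hat T$. Let $F$ be the annulus component of $T'\cap\hat T$ having $\gamma$ as a boundary component; the two boundary components of $F$ then share the same slope $s'$ on $T$.

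Next I would compare $A$ and $F$ inside the solid torus $\hat T$. Since $\partial A$ has slope $s$ and $\partial F$ has slope $s'$ on $T$ with $s\neq s'$, the curves $\partial A$ and $\partial F$ intersect transversely in $T$, so $A\cap F\cap T\neq\emptyset$ and, consequently, $A\cap F$ contains an arc $\alpha$ with $\partial\alpha\subset\partial A\cap\partial F$. Since $A$ is vertical in $N(B_T)$ and $F\subset T'$ is transverse to the $I$-fibers of $N(B_T)$, the arc $\alpha$ is transverse to the $I$-fibers of $A$. Taking a small collar neighborhood of $\alpha$ in $F$ together with a suitable subdisk of $A$ (closing up through the subarcs of $\partial A$ and $\partial F$ bounding $\alpha$), I would build a disk $D$ carried by $N(B_T)$ along with a subsurface $P$ of $F$ such that an annular neighborhood of a boundary component of $P$ is $B_T$-parallel to an annular neighborhood of $\partial D$ in $D$, but $P$ itself is not $B_T$-parallel to a subdisk of $D$. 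This realizes exactly the data of a flare based at $D$ in the sense of Definition~\ref{Dflare}, contradicting the standing hypothesis that $B_T$ has no flare, as in the proof of Lemma~\ref{LflareD2}.

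The hardest step will be the last one: turning the abstract slope mismatch into a concrete flare. The delicate point is to check that the candidate subsurface $P$ really fails to be $B_T$-parallel to a subdisk of $D$. This is precisely where the inequality $s\neq s'$ is used: were the slopes equal, the relevant piece of $F$ would be $B_T$-parallel to the corresponding disk on $A$ (giving only a trivial product region, which was removed at the start), whereas the slope mismatch forces the parallel collar of $P$ to fail to extend across all of $F$, producing the required honest flare. Once that flare is built, the contradiction with the no-flare assumption on $B_T$ completes the proof.
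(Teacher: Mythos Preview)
Your flare construction has a genuine gap. In Definition~\ref{Dflare}, both the base disk $D$ and the flare surface must be \emph{carried} by $N(B_T)$, i.e.\ transverse to the $I$-fibers. But $A$ is a \emph{vertical} annulus, so no subdisk of $A$ is carried by $N(B_T)$. Your sentence ``a suitable subdisk of $A$ \dots\ I would build a disk $D$ carried by $N(B_T)$'' therefore does not produce a legitimate base disk, and the subsequent appeal to the no-flare hypothesis is unjustified. Nor can you easily repair this by perturbing: an arc $\alpha\subset A\cap F$ that is essential in $A$ bounds no subdisk of $A$ at all, while for $\partial$-parallel arcs the paper shows (using Lemma~\ref{Lproduct} and the no-flare hypothesis) that they can always be removed by $B_T$-isotopy, so they do not lead to a contradiction either.

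The paper's argument is quite different and does not manufacture a flare from the slope mismatch. After reducing so that every arc of $T'\cap A$ is essential in $A$, it shows (by a branch-direction/monogon obstruction, Figure~\ref{cusp}) that these arcs are also essential in each annular component $A'$ of $T'\cap\hat{T}$. Since $A'$ is $\partial$-parallel in $\hat{T}$, it cobounds a solid torus $X$ with a subannulus $\Sigma_T\subset T$; the meridian disk of $X$ becomes a \emph{monogon} after deforming $T\cup A'$ into a branched surface, whereas each rectangle of $A\cap X$ becomes a \emph{bigon}. A bigon cannot be a meridian of $X$, so every rectangle is $\partial$-parallel in $X$, forcing the arcs $\partial A\cap\Sigma_T$ to be $\partial$-parallel in $\Sigma_T$. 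Hence a core of $\Sigma_T$ misses $\partial A$, and the slopes agree. The essential content is this cusp-count (monogon vs.\ bigon) inside $X$, which your proposal does not reach.
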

\begin{proof}
As in Notation~\ref{N1}, since $B_T$ has no flare, after some $B_T$-isotopies removing double curves trivial in both tori, we may assume each curve in $T\cap T'$ is essential in both $T$ and $T'$.

As before, $A$ and a subannulus of $T$ bound a $monogon\times S^1$ region in $\hat{T}$. 
Since each curve in $T\cap T'$ is essential in both $T$ and $T'$, the lemma holds trivially if $T'\cap\partial A=\emptyset$.  Next we suppose $T'\cap\partial A\ne\emptyset$.

Since $A$ is vertical in $N(B_T)$ and $T'$ is carried by $N(B_T)$, each arc in $T'\cap A$ is transverse to the induced $I$-fibers of $A=S^1\times I$.  We first consider the case that some arc in $T'\cap A$ is $\partial$-parallel in $A$.  Suppose there is such an arc and let $\alpha\subset T'\cap A$ be an outermost $\partial$-parallel arc in $A$.  Let $\beta\subset\partial A$ be the arc parallel to $\alpha$ and with $\partial\beta=\partial\alpha$.  Let $D_\alpha\subset A$ be the bigon bounded by $\alpha\cup\beta$ in $A$.  The intersection of $T'$ and the solid torus $\hat{T}$ is a collection of annuli.  Let $A_\alpha$ be the annular component of $T'\cap\hat{T}$ that contains $\alpha$.  We first consider the case that $\alpha$ is also a $\partial$-parallel arc in $A_\alpha$. In this case, $\alpha$ and a subarc of $\partial A_\alpha$ bound a subdisk $D_A$ of $A_\alpha$.  After a slight perturbation (or pinching $D_\alpha$ to $\beta$), $D_A\cup D_\alpha$ becomes a disk transverse to the $I$-fibers of $N(B_T)$.  To simplify notation, we still use $D_A\cup D_\alpha$ to denote the disk after the perturbation.  Note that $D_A\cup D_\alpha$ is properly embedded in the solid torus $\hat{T}$.  If the disk $D_A\cup D_\alpha$ is an essential disk in $\hat{T}$, then similar to the proofs of Lemma~\ref{LflareD2} and Corollary~\ref{Cdisk}, a subsurface of $T$ is a flare based at $D_A\cup D_\alpha$, a contradiction to our hypothesis on $B_T$.  So $D_A\cup D_\alpha$ must be a $\partial$-parallel disk in $\hat{T}$. Moreover, as in Lemma~\ref{LflareD2}, the no-flare hypothesis also implies that $D_A\cup D_\alpha$ and the subdisk of $T$ bounded by $\partial(D_A\cup D_\alpha)$ form a trivial $D^2\times I$ region.  Hence we can perform a $B_T$-isotopy on $T'$ to eliminate $\alpha$.  Now we suppose the arc $\alpha$ is  essential in $A_\alpha$ (recall that $\alpha$ is still $\partial$-parallel in $A$). In this case, let $\Sigma_\beta$ be the annulus in $T$ bounded by $\partial A_\alpha$ and containing $\beta$, then $A_\alpha\cup \Sigma_\beta$ bounds a $bigon\times S^1$ region.  The bigon $D_\alpha\subset A$ is a meridional disk for this $bigon\times S^1$ region and $D_\alpha$ is vertical in $N(B_T)$, so by Lemma~\ref{Lproduct}, the $bigon\times S^1$ region is a trivial product region.  Thus a trivial isotopy can remove the two double curves (i.e. $\partial A_\alpha$) in the $bigon\times S^1$ region.   Thus after some $B_T$-isotopies as above, we may assume every arc of $T'\cap A$ is essential in $A$.

Let $A'$ be an annular component of $T'\cap\hat{T}$.  By the assumption above, $A'\cap A$ is a collection of arcs essential in $A$.   Let $\gamma$ be a component of $A'\cap A$.  If we deform $T'\cup T$ into a branched surface $B^{T'}$, $\partial\gamma$ corresponds to two points at the cusp (i.e. branch locus) of the branched surface.  Since $A$ is vertical in $N(B_T)$ and $A'$ is transverse to the $I$-fibers of $N(B_T)$, as shown in Figure~\ref{cusp}(a), the branch directions of $B^{T'}$ at the two endpoints of $\gamma$ must be opposite with respect to $T'$ (one points into the solid torus bounded by $T'$ the other points out).  Thus $A'$ cuts $A$ into a collection of quadrilaterals and after we deform $T'\cup T$ into a branched surface, each quadrilateral is deformed into a bigon.

\begin{figure}
\begin{center}
\psfrag{(a)}{(a)}
\psfrag{(b)}{(b)}
\psfrag{A}{$A$}
\psfrag{g}{$\gamma$}
\psfrag{g'}{$\gamma'$}
\psfrag{bd}{branch direction}
\psfrag{dg}{$D_\gamma$}
\includegraphics[width=4.0in]{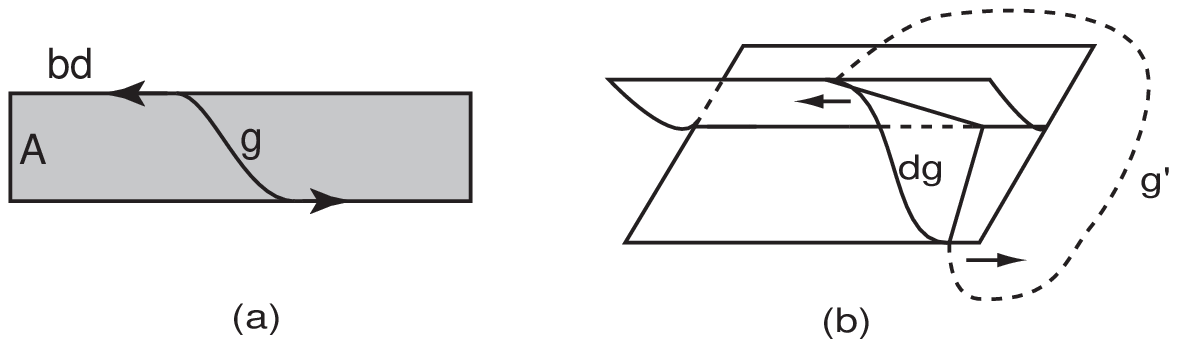}
\caption{}\label{cusp}
\end{center}
\end{figure}

Next we show that each arc in $A'\cap A$ must also be an essential arc in $A'$.  Otherwise, a component $\gamma$ of $A'\cap A$ is an outermost $\partial$-parallel arc in $A'$.  Let $\gamma'$ be the arc in $\partial A'$ such that $\partial\gamma'=\partial\gamma$ and $\gamma\cup\gamma'$ bounds a disk $D_\gamma$ in $A'$.  Note that $\partial A$ cuts $T$ into two annuli and let $\Gamma\subset T$ be the annulus that contains $\gamma'$.  Since $\gamma$ is essential in $A$, the two endpoints in $\partial\gamma=\partial\gamma'$ lie in different components of $\partial A=\partial\Gamma$, which implies that $\gamma'$ must be an essential arc of $\Gamma$.  However, since $A$ is vertical in $N(B_T)$ and $\gamma$ is an essential arc of $A$, if we collapse $A$ into a cusp circle (like the projection $\pi:N(B_T)\to B_T$) and shrink $\gamma$ to a point (along the cusp circle), then $D_\gamma$ becomes a monogon properly embedded in the region bounded by $A\cup\Gamma$.  In fact, since $D_\gamma\subset A'\subset T'$ is carried by $B_T$, we can first collapse $A$ in to a cusp circle, which deforms $T$ into a branched surface, then since $D_\gamma\subset T'$ and as shown in Figure~\ref{cusp}(b), we can add in $D_\gamma$ (as a branch sector) and naturally deform $T\cup D_\gamma$ into a branched surface transverse to the $I$-fibers of $N(B_T)$.   This means that, as a branch sector, $D_\gamma$ is a monogon.  However, as illustrated in Figure~\ref{cusp}(b), such a branched surface cannot have a monogon branch sector because the induced branch directions at $\partial\gamma'$ ($\partial\gamma'=\partial\gamma$) are not compatible along $\gamma'$.  This means that no such disk $D_\gamma$ exists and hence each arc of $A'\cap A$ is also essential in $A'$.

Since $A'$ is a properly embedded annulus in $\hat{T}$ with $\partial A'$ essential in $T$, $A'$ is $\partial$-parallel in $\hat{T}$.  So $\partial A'$ bounds a subannulus $\Sigma_T$ of $T$ such that $A'$ is parallel to $\Sigma_T$ in $\hat{T}$.  So $A'\cup\Sigma_T$ bounds a solid torus $X$ in $\hat{T}$ and there is a meridional disk $\Delta_X$ of $X$ whose boundary $\partial\Delta_X$ consists of an essential arc of $A'$ and an essential arc of $\Sigma_T$.   By our conclusion on the induced branch direction at $\partial\gamma$ (see Figure~\ref{cusp}(a)), if we deform $T\cup A'$ into a branched surface as in Figure~\ref{deform}(a), $\Delta_X$ becomes a monogon and $X$ becomes a solid torus of the form $mongon\times S^1$. 

Recall that the arcs in $A'\cap A$ cut $A$ into a collection of quadrilaterals.  So $A\cap X$ is a collection of such quadrilaterals.  Each quadrilateral becomes a bigon after we deform $T\cup A'$ into a branched surface as above. Since the meridional disk $\Delta_X$ deforms into a monogon and since each quadrilateral of $A\cap X$ deforms into a bigon, every component of $A\cap X$ must be a $\partial$-parallel disk in the solid torus $X$.  For any quadrilateral $Q$ of $A\cap X$, two edges in $\partial Q$ are arcs in $A'\cap A$ and the other two edges of $\partial Q$ are arcs properly embedded in the annulus $\Sigma_T$.  Since $\partial Q$ is trivial in $\partial X$, for every component $Q$ of $A\cap X$, the two arcs of $\partial Q\cap\Sigma_T$ must be $\partial$-parallel arcs in $\Sigma_T$.  This means that one can find a core curve of $\Sigma_T$ that is disjoint from $\partial A$.  Hence the slope of $\partial A$ in $T$ is the same as the slope of $\partial\Sigma_T=\partial A'$ and $T\cap T'$.
\end{proof}

\begin{definition}\label{Dfootball}
Let $T_1$, $T_2$ and $T_3$ be normal tori carried by $N(B)$ and in general position.  Let $p$ and $q$ be triple points in the intersection.  Suppose there are arcs $\alpha_1\subset T_1\cap T_2$, $\alpha_2\subset T_2\cap T_3$ and $\alpha_3\subset T_3\cap T_1$ such that $\partial\alpha_i=p\cup q$ for each $i=1,2,3$, $\alpha_3\cup\alpha_1$ (respectively $\alpha_1\cup\alpha_2$ and $\alpha_2\cup\alpha_3$)  is an embedded trivial circle bounding a disk $D_1$ in $T_1$ (respectively $D_2$ in $T_2$ and $D_3$ in $T_3$).  Suppose $D_1\cup D_2\cup D_3$ is a 2-sphere bounding a 3-ball $X$ in $M$.   Then we say that $X$ is a \textbf{football region}.  In short, $X$ is bounded by 3 bigon disks from the 3 tori.  Note that since $B$ does not carry any normal 2-sphere, if we deform $T_1\cup T_2\cup T_3$ into a branched surface as in Figure~\ref{deform}(a), then $X$ is naturally deformed into a $D^2\times I$ region (with its vertical boundary annulus pinched into a cusp circle) and the cusp circle is $\partial D_i$ for some $i$. 
\end{definition}

\begin{definition}\label{Dregular}
We say a set of normal tori carried by the branched surface $B$ is \textbf{regular} if (1) the intersection of these tori contains no triple point, and (2) the intersection curves are essential and non-meridional curves in the corresponding tori.
\end{definition}

Suppose we have a regular set of normal tori.  The next lemma says sometimes we can add in another torus to enlarge the regular set.

\begin{lemma}\label{Ltriple}
Let $\Sigma$ be a regular set of normal tori carried by $B$ and suppose the union of the tori in $\Sigma$ is a connected 2-complex after any $B$-isotopy that preserves $\Sigma$ as a regular set of tori.  Let $\Gamma$ be another normal torus carried by $B$.  Let $B_T$ be the sub-branched surface of $B$ that fully carries $\Sigma\cup\Gamma$ and suppose $B_T$ does not contain any flare.  Suppose there is a special torus $T\in\Sigma$ such that after any $B_T$-isotopy on $\Gamma$, curves in $T\cap\Gamma$ that are non-trivial in $T$ always have the same slope in $T$ as the intersection curves of $T$ with other tori in $\Sigma$.   
Then after $B_T$-isotopy, $\Sigma\cup\Gamma$ is a regular set of normal tori.
\end{lemma}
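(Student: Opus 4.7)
The plan is to argue in three stages: first make all pairwise intersection curves essential, then verify non-meridionality, and finally eliminate triple points.

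For the first stage, since $B_T$ has no flare and fully carries $\Sigma\cup\{\Gamma\}$, Corollary~\ref{Cdisk} furnishes a $B_T$-isotopy after which every double curve in every pair of tori from $\Sigma\cup\{\Gamma\}$ is essential in both corresponding tori. Because $\Sigma$ was already regular, the trivial product regions that get removed must involve $\Gamma$; hence the isotopies preserve the essentiality of the $\Sigma$-intersection curves and, by the hypothesis on $\Sigma$, the connectedness of the 2-complex $\bigcup\Sigma$.

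For non-meridionality, the hypothesis on the special torus $T$ tells us that essential curves of $T\cap\Gamma$ share their slope with the intersection curves $T\cap T'$ for $T'\in\Sigma$; regularity of $\Sigma$ makes this slope non-meridional in $T$, so the curves of $T\cap\Gamma$ are non-meridional in $T$, and Corollary~\ref{Cmeridion} promotes this to non-meridionality in $\Gamma$. This fixes a non-meridional slope $s_\Gamma$ on $\Gamma$. To spread non-meridionality to the intersections $T''\cap\Gamma$ for the remaining $T''\in\Sigma$, I would use connectedness: along a chain $T=T_0,T_1,\ldots,T_n=T''$ of pairwise intersecting tori in $\Sigma$, induct on $n$ to show that at each stage the essential curves of $T_i\cap\Gamma$ share the common slope of $T_i\cap(\Sigma\setminus\{T_i\})$ on $T_i$, and hence are non-meridional by regularity of $\Sigma$.

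I expect this slope-propagation step to be the main obstacle. In the inductive step, if on $T_{i+1}$ the curve $T_{i+1}\cap\Gamma$ had a slope distinct from that of $T_{i+1}\cap T_i$, their essential representatives on $T_{i+1}$ would cross transversely, producing triple points of $T_i,T_{i+1},\Gamma$; projecting these to $T_i$ forces the two curve systems $T_i\cap T_{i+1}$ and $T_i\cap\Gamma$ to cross there as well, contradicting the inductive hypothesis (or, when $i=0$, the slope hypothesis on $T$). To realize this coincidence as an actual disjointness after isotopy, I would run the no-flare and $monogon\times S^1$ analysis in the style of Lemma~\ref{Lslope} and Lemma~\ref{Lproduct}: a surviving triple point would force either a non-trivial $D^2\times I$ region (ruled out by the no-flare hypothesis via Lemma~\ref{LflareD2}) or a football region whose three bounding disks organize into a product region in $N(B_T)$ that Lemma~\ref{Lproduct} forces to be trivial, so it can be removed by a further $B_T$-isotopy.

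Once the slopes coincide on every torus of $\Sigma$, essential curves of the same slope on each torus are parallel and hence can be made disjoint by isotopy in the torus. I would realize this slope-matching geometrically as a sequence of $B_T$-isotopies on $\Gamma$, handled one torus of $\Sigma$ at a time and compatibly with the connectedness hypothesis, eliminating every triple point while preserving essentiality and non-meridionality. The resulting configuration then satisfies both conditions of Definition~\ref{Dregular}, which is the desired regularity of $\Sigma\cup\{\Gamma\}$.
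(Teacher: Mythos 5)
There is a genuine gap, and it sits exactly where you predicted the main obstacle would be. Your slope-propagation induction conflates ``same slope'' with ``disjoint.'' If $T_{i+1}\cap\Gamma$ and $T_{i+1}\cap T_i$ have different slopes on $T_{i+1}$ they indeed must cross, and each crossing is a triple point of $T_i$, $T_{i+1}$, $\Gamma$, hence a crossing of $T_i\cap T_{i+1}$ with $T_i\cap\Gamma$ on $T_i$. But that does not contradict the inductive hypothesis: two essential curve systems of the \emph{same} slope on a torus can still intersect geometrically (in cancelling pairs), and such intersections are precisely the triple points the lemma asks you to eliminate. So the induction neither rules out triple points nor even forces the slopes to agree; it is circular at the step where you need it most. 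Note also that the hypothesis of the lemma only constrains slopes on the one special torus $T$; the paper does not propagate a slope condition outward, but instead propagates the \emph{location of a triple point} inward along a chain to reduce to the case where $T$ itself contains one, and then works entirely inside $T$.

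The second gap is that ``remove it by a further $B_T$-isotopy'' once a football or product region is found is exactly the hard part, not a routine finish. The paper sets up a minimal counterexample, takes an innermost bigon $D$ in the special torus, and needs two nontrivial claims ($\gamma_A=\gamma_Z$ and $p_\alpha=p_\beta$, each with branch-direction case analysis) just to produce the football region $\Theta$. Even then $\Theta$ is a \emph{trivial} $D^2\times I$ region by the no-flare hypothesis, yet it cannot simply be pushed across: other tori of $\Sigma$ pass through $\Theta$, and the removal isotopy must not create new triple points or destroy regularity of $\Sigma$. Ruling this out consumes Claim 3, the balanced-sign Claim 4, and the Case a/Case b analysis of alternating arcs and shortest bigons in the vertical bigon $\theta$ --- several pages of combinatorics that your proposal replaces with one sentence. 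Also, Lemma~\ref{Lproduct} as stated applies to $bigon\times S^1$ regions with a vertical or carried meridional disk, not to football regions, so it cannot be invoked directly the way you suggest; the relevant tool for $\Theta$ is Lemma~\ref{LflareD2}, and triviality of $\Theta$ alone is not enough. Your first stage (Corollary~\ref{Cdisk} plus removal of trivial product regions) and your use of Corollary~\ref{Cmeridion} for non-meridionality do match the paper, but the core of the argument is missing.
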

\begin{proof} Before we proceed, we would like to remark that in practice, we assume $T$ to be a good torus, which (by Lemma~\ref{Lslope}) guarantees that the conditions on $T$ are satisfied.  Moreover, the special torus $T$ is allowed to be disjoint from $\Gamma$, in which case the lemma holds trivially.

We may assume the intersection of the tori in $\Sigma\cup\Gamma$ is minimal in the sense that the number of triple points is minimal under the conditions that $\Sigma$ is a regular set. 
First note that by this minimality assumption, we may assume
\begin{enumerate}
\item every double curve in the intersection of the tori in $\Sigma\cup\Gamma$ is essential in both corresponding tori, and 
\item  the tori in $\Sigma$ do not form any trivial $bigon\times S^1$ product region.  
\end{enumerate}
To see this, suppose there is a double curve $c$ in $T_i\cap\Gamma$ ($T_i\in\Sigma$) that is trivial in both $T_i$ and $\Gamma$ and let $d_1$ and $d_2$ be the two disks bounded by $c$ in $T_i$ and $\Gamma$ respectively.  By choosing $c$ to be innermost in $T_i$, we may assume $d_1\cap d_2=c$.  As $B$ does not carry any normal $S^2$, $d_1\cup d_2$ forms a $D^2\times I$ region (with its vertical boundary pinched into a cusp circle).  Since $B_T$ has no flare and by Lemma~\ref{LflareD2}, this $D^2\times I$ region is trivial.  Hence we can eliminate the double curve $c$ by performing a trivial isotopy on $\Gamma$ pushing $d_2\subset\Gamma$ across this $D^2\times I$ region.  As $\Sigma$ is a regular set and since $c$ is an innermost trivial curve in $T_i$, $\Int(d_1)$ contains no triple point.  The trivial isotopy on $\Gamma$ above can be viewed as replacing $d_2$ by a parallel copy of $d_1$.  As $\Int(d_1)$ contains no triple points, this means that the number of triple points is not increased by this trivial isotopy.  Moreover, $\Sigma$ is fixed by the isotopy, so the intersection remains minimal in the above sense.  Similar to Corollary~\ref{Cdisk}, the no-flare hypothesis also implies that no double curve in $T_i\cap\Gamma$ is trivial in one torus but essential in the other.  Thus after some trivial isotopy removing trivial double curves, we may assume every double curve in the intersection of the tori in $\Sigma\cup\Gamma$ is essential in both corresponding tori.  Similarly, if two tori in the regular set $\Sigma$ form a trivial $bigon\times S^1$ product region, then as in the discussion after Proposition~\ref{Pproduct}, we can always use a trivial isotopy to eliminate the pair of double curves in an innermost such $bigon\times S^1$ region without increasing the number of triple points.  Note that $\Sigma$ remains a regular set after eliminating the trivial $bigon\times S^1$ region, so the intersection remains minimal in the above sense. Thus, after finitely many such trivial isotopies, we may also assume the tori in $\Sigma$ do not form any trivial $bigon\times S^1$ product region. 

If the intersection of the tori in $\Sigma\cup\Gamma$ has no triple point, then by the hypotheses and Corollary~\ref{Cmeridion}, $\Sigma\cup\Gamma$ is a regular set of normal tori and the lemma holds.  Our goal is to prove that the intersection has no triple point.

Since $\Sigma$ is a regular set of tori, all the triple points of the intersection of $\Sigma\cup\Gamma$ must lie in $\Gamma$. 
If the special torus $T$ does not contain any triple point, then since the union of the tori in $\Sigma$ is connected, there are a sequence of tori $T_0,\dots, T_k$ in $\Sigma$ such that (1) $T_0=T$, (2) $T_i\cap T_{i+1}\ne\emptyset$, (3) $T_k$ contains a triple point and (4) $T_j$ does not contain any triple point if $j<k$.  The curves in $T_{k-1}\cap T_k$ cut $T_k$ into a collection of annuli.  Since $T_{k-1}$ does not contain any triple point, the curves of $T_k\cap\Gamma$ must lie in the interior of these annuli, in other words, the curves of $T_k\cap T_{k-1}$ and $T_k\cap\Gamma$ are disjoint and have the same slope in $T_k$.
So we have a torus $T_k$ such that $T_k$ contains a triple point and $T_k\cap\Gamma$ has the same slope (in $T_k$) as the intersection curves of $T_k$ with other tori in $\Sigma$.  Thus after replacing $T$ by $T_k$ in the proof below if necessary, we may assume that the special torus $T$ contains a triple point.

We use $\Gamma_T$ to denote the union of the double curves of $\Sigma\cup\Gamma$ that lie in $T$.  So $\Gamma_T$ consists of $T\cap\Gamma$ and the intersection of $T$ with other tori in $\Sigma$. By our hypothesis, curves in $\Gamma_T$ have the same slope in $T$.   Next we consider the intersection pattern of curves in $\Gamma_T\subset T$.  Each double point of curves in $\Gamma_T\subset T$ corresponds to a  triple point in the intersection of the tori.   Since the intersection of the tori in $\Sigma$ contains no triple point, all the double points in $\Gamma_T$ lie in $\Gamma\cap T$.  Hence $\Gamma_T$ does not form any triangle (i.e. no 3 curves in $\Gamma_T$ intersecting each other).  This implies that we can find an innermost bigon disk $D\subset T$, such that $D\cap\Gamma_T=\partial D$ and $\partial D$ consists of two (smooth) arcs $\alpha$ and $\beta$ with $\partial\alpha=\partial\beta$ being a pair of double points of $\Gamma_T$.  
Let $T_\alpha$ and $T_\beta$ be the two tori with $\alpha\subset T\cap T_\alpha$ and $\beta\subset T\cap T_\beta$.  Since $\Sigma$ is a regular set, $\Gamma$ is either $T_\alpha$ or $T_\beta$.  Let $A$ and $Z$ be the two points in $\partial\alpha=\partial\beta$.  So $A$ and $Z$ are triple points in the intersection of the tori in $\Sigma\cup\Gamma$ and  $A\cup Z\subset T_\alpha\cap T_\beta$.  Let $\gamma_A$ and $\gamma_Z$ be the double curves of $T_\alpha\cap T_\beta$ that contain $A$ and $Z$ respectively.  

Next we study the properties of $D$ and its nearby regions.  By our construction, no torus intersects $\Int(D)$ and the torus $\Gamma$ is either $T_\alpha$ or $T_\beta$

\vspace{10pt}
\noindent
\emph{Claim 1}.  Let $D$ be the bigon disk as above.  Then $\gamma_A=\gamma_Z$.
\begin{proof}[Proof of the Claim]
Suppose $\gamma_A\ne\gamma_Z$. 
Let $X_\alpha\subset T_\alpha$ and $X_\beta\subset T_\beta$ be the two annuli that are bounded by $\gamma_A\cup\gamma_Z$ and contain $\alpha$ and $\beta$ respectively.  Since $\gamma_A\ne\gamma_Z$, $\partial D$ is an essential curve in the torus $X_\alpha\cup X_\beta$. Since $D$, $T_\alpha$ and $T_\beta$ are all transverse to the $I$-fibers of $N(B_T)$, we can deform $D\cup T_\alpha\cup T_\beta$ into (part of) a branched surface as in Figure~\ref{deform}(a) and view $D$ as a branch sector.  Now $\alpha$ and $\beta$ are viewed as part of the branch locus at the boundary of the branch sector $D$.  There are two cases.
\begin{enumerate}
\item $X_\alpha\cup X_\beta$ is a normal torus transverse to the $I$-fibers of $N(B_T)$, in other words, the two corners of $D$ at $A$ and $Z$ are smoothed out when we deform $X_\alpha\cup X_\beta\cup D$ into a branch surface and $\partial D$ becomes a smooth circle in the branched locus.  Note that $\partial D$ is an essential curve in the normal torus $X_\alpha\cup X_\beta$. 
\item After deforming $X_\alpha\cup X_\beta\cup D$ into branched surface, the two corners of $D$ at $A$ and $Z$ become cusps and $D$ becomes a bigon. 
\end{enumerate}
Note that, as in the proof of Lemma~\ref{Lslope} and illustrated in Figure~\ref{cusp}(b), there is no monogon branch sector and hence $D$ cannot be a monogon and that is why we only have the above two cases to consider. 
 The first case is impossible because, similar to the proofs of Lemma~\ref{LflareD2} and Corollary~\ref{Cdisk}, it implies that a subsurface of the normal torus $X_\alpha\cup X_\beta$ is a flare based at $D$, which contradicts the no-flare hypothesis on $B_T$.   In the second case, the torus $X_\alpha\cup X_\beta$ must bound a $bigon\times S^1$ region $\hat{X}$.  Recall that $\Gamma$ is either $T_\alpha$ or $T_\beta$.  Without loss of generality, we assume $\Gamma=T_\alpha$ in this claim and $X_\alpha\subset\Gamma$.  By Lemma~\ref{Lproduct} and since $B_T$ has no flare, the existence of $D$ implies that $\hat{X}$ is a trivial $bigon\times S^1$ region.   However, if $\hat{X}$ is a trivial $bigon\times S^1$ region, we can perform a trivial isotopy on $\Gamma$ by pushing the annulus $X_\alpha$ across $\hat{X}$ and eliminate the pair of double curves $\gamma_A$ and $\gamma_Z$.  This isotopy on $\Gamma$ can be viewed as replacing $X_\alpha$ by a parallel copy of $X_\beta$.  Since all the triple points lie in $\Gamma$, $\Int(X_\beta)$ contains no triple point and this isotopy reduces the number of triple points (in particular $A$ and $Z$ are eliminated by the isotopy).  This contradicts our assumption that the intersection of $\Sigma\cup\Gamma$ is minimal.  Thus $\gamma_A=\gamma_Z$.
\end{proof}

Since both $T_\alpha$ and $T_\beta$ are separating, the double curves in $T_\alpha\cap T_\beta$ cut $T_\alpha$ into an even number of annuli. Let $Y_\alpha\subset T_\alpha$ be the annulus that contains $\alpha$.  Clearly $\gamma_A$ ($\gamma_A=\gamma_Z$) is a boundary curve of $Y_\alpha$.  Since $\alpha\cap T_\beta=\partial\alpha\subset\gamma_A=\gamma_Z$, $\alpha$ is an arc properly embedded in $Y_\alpha$ with both endpoints in the curve $\gamma_A$ ($\gamma_A=\gamma_Z$).  In particular, there is a subarc $p_\alpha$ of $\gamma_A$ such that $\partial p_\alpha=\partial\alpha=A\cup Z$ and $p_\alpha$ is parallel to $\alpha$ in $Y_\alpha$.  Let $D_\alpha$ be the bigon disk in $Y_\alpha\subset T_\alpha$ bounded by $p_\alpha\cup\alpha$.  Similarly, there is a bigon disk $D_\beta$ in $T_\beta$ such that $\partial D_\beta=\beta\cup p_\beta$ where $p_\beta$ is a subarc of $\gamma_A$ ($\gamma_Z=\gamma_A$) with $\partial p_\beta=\partial\beta=A\cup Z$.

\vspace{10pt}
\noindent
\emph{Claim 2}.  $p_\alpha=p_\beta$ in the curve $\gamma_A$ ($\gamma_A=\gamma_Z$).  
\begin{proof}[Proof of Claim 2]
Suppose the claim is false and $p_\alpha\ne p_\beta$.  Since $\partial p_\alpha=\partial p_\beta=A\cup Z$ and $p_\alpha\cup p_\beta\subset\gamma_A=\gamma_Z$, this implies that $p_\alpha\cup p_\beta$ is the whole curve $\gamma_A$ ($\gamma_A=\gamma_Z$).  However, this means that $\gamma_A$ bounds an embedded disk $E=D_\alpha\cup D\cup D_\beta$ in $M$ and by Lemma~\ref{Ltorus}, $\gamma_A$ must be a meridian of the normal torus $T_\alpha$.  By Corollary~\ref{Cmeridion}, $T_\alpha\cap T_\beta$ consists of meridians in both $T_\alpha$ and $T_\beta$, see Figure~\ref{tori}(a) for a picture.

\begin{figure}
\begin{center}
\psfrag{(a)}{(a)}
\psfrag{(b)}{(b)}
\psfrag{(c)}{(c)}
\psfrag{A}{$A$}
\psfrag{Z}{$Z$}
\psfrag{D}{$D$}
\psfrag{g}{$\gamma'$}
\psfrag{al}{$\alpha$}
\psfrag{be}{$\beta$}
\psfrag{Tb}{$T_\alpha$}
\psfrag{T1}{$T_\beta$}
\psfrag{pa}{$p_\alpha$}
\psfrag{pb}{$p_\beta$}
\includegraphics[width=5.0in]{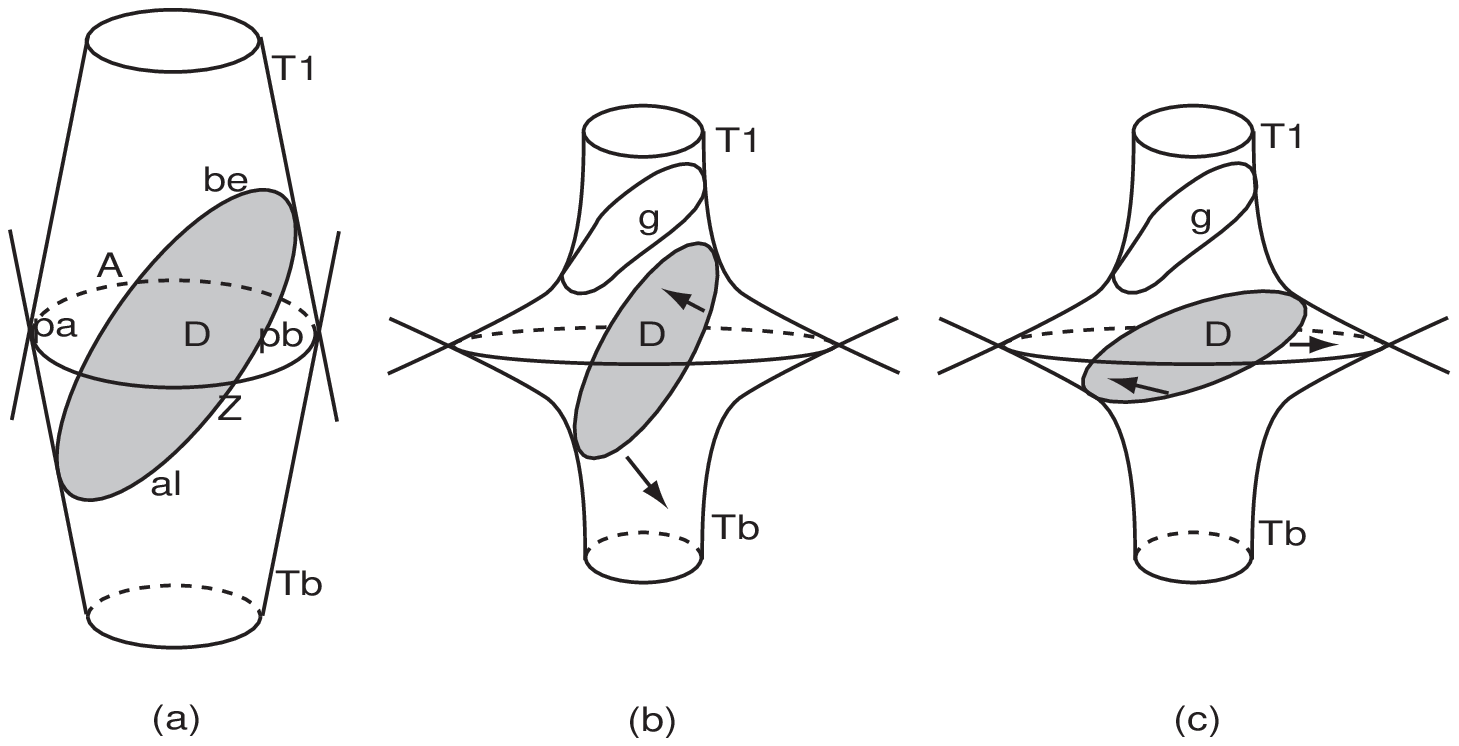}
\caption{}\label{tori}
\end{center}
\end{figure}

As in the proof of Claim 1, if we deform $D\cup T_\alpha\cup T_\beta$ into (part of) a branched surface, $D$ becomes a branch sector that is either a smooth disk or a bigon.  If $D$ becomes a smooth disk, as illustrated in Figure~\ref{tori}(a), we can perform a canonical cutting and pasting along $T_\alpha\cap T_\beta$ and view $\partial D$ as a circle in $T_\alpha+T_\beta$.  Note that $T_\alpha+T_\beta$ is a union of normal tori carried by $B_T$.  By Corollary~\ref{Cmeridion}, the curves of $T_\alpha\cap T_\beta$ are also meridians of the tori in $T_\alpha+T_\beta$. Since $\alpha$ and $\beta$ are parallel to $p_\alpha$ and $p_\beta$ in $T_\alpha$ and $T_\beta$ respectively, $\partial D=\alpha\cup\beta$ must be parallel to $p_\alpha\cup p_\beta=\gamma_A$ in $T_\alpha+T_\beta$, see Figure~\ref{tori}(a).  This means that $D$ can be viewed as a meridional disk of a torus in $T_\alpha+T_\beta$.  Since $D$ is transverse to the $I$-fibers of $N(B_T)$, as in the proofs of Lemma~\ref{LflareD2} and Corollary~\ref{Cdisk}, this implies that a subsurface of $T_\alpha+T_\beta$ is a flare based at $D$, which contradicts our no-flare hypothesis on $B_T$.  

If $D$ becomes a bigon after we deform $D\cup T_\alpha\cup T_\beta$ into (part of) a branched surface, the two corners of $D$ become cusps and the branch direction at $\partial D$ near $\partial\alpha$ is as illustrated in Figure~\ref{cusp}(b) (change $D_\gamma$ in the picture to $D$).  We have two subcases depending on the branch direction at $\partial D$.  

The first subcase is that the branch direction at $\alpha$ points out of $D_\alpha$.  As shown in Figure~\ref{tori}(b) and Figure~\ref{cusp}(b), the branch direction at $\beta$ must point out of $D_\beta$.  After deforming $D\cup T_\alpha\cup T_\beta$ into a branched surface, $D_\alpha$ becomes a branch sector (which we also call it $D_\alpha$) with $\partial D_\alpha=\alpha\cup p_\alpha$ being a cusp circle whose branch direction points out of $D_\alpha$.  Since the branch direction at $\beta$ points out of $D_\beta$, as shown in Figure~\ref{tori}(b), we can isotope the circle $p_\alpha\cup\beta$ in $T_\beta$ to a circle $\gamma'$ which is a meridian of $T_\beta$.  Note that, as shown in Figure~\ref{tori}(b), $\gamma'$ and the cusp circle $\partial D_\alpha=\alpha\cup p_\alpha$ bound a smooth annulus $A'$ (which contains $D$) in the branched surface $D\cup T_\alpha\cup T_\beta$ because of the branch direction at $\beta$.  Since $\gamma'$ is essential in $T_\beta$, similar to the proof of Lemma~\ref{LflareD2} and Corollary~\ref{Cdisk}, we can extend the annulus $A'$ to a flare based at $D_\alpha$, a contradiction.

The second subcase is that the branch direction at $\alpha$ points into $D_\alpha$.  As shown in Figure~\ref{tori}(c) and Figure~\ref{cusp}(b), the branch direction at $\beta$ must point into $D_\beta$.  In this subcase, the disk $D_\alpha\cup D$ becomes a smooth disk once we deform $D\cup T_\alpha\cup T_\beta$ into a branched surface and its boundary $\partial(D_\alpha\cup D)=p_\alpha\cup \beta$ becomes a cusp circle in the branched surface with branch direction pointing out of the disk $D_\alpha\cup D$.  Let $\gamma'$ be the curve as in the previous subcase, see Figure~\ref{tori}(c), and let $A''$ be the smooth annulus in $T_\beta$ between $\gamma'$ and the (cusp) circle $\partial(D_\alpha\cup D)=p_\alpha\cup \beta$.  As in the previous subcase, we can extend the essential annulus $A''$ to a flare based at $D_\alpha\cup D$, a contradiction.
\end{proof}

By Claim 2, we may assume $p_\alpha= p_\beta$.  By the construction of $D$ above, this means that the 3 bigon disks $D$, $D_\alpha$ and $D_\beta$ bound a football region, see Definition~\ref{Dfootball}.  We denote the football region by $\Theta$.  Now we deform the 2-sphere $\partial\Theta=D\cup D_\alpha\cup D_\beta$ into (part of a) branched surface.  Since $B$ does not carry any normal 2-sphere, there must be a cusp in $\partial\Theta$ and the cusp is a circle formed by either $\alpha\cup\beta$, or $\alpha\cup p_\alpha$ or $\beta\cup p_\alpha$.  Since $B_T$ has no flare, in any case, $\Theta$ is deformed into a trivial $D^2\times I$ region in $N(B_T)$ (with its vertical boundary annulus pinched into a cusp circle).  Note that there may be other tori in $\Sigma$ intersecting the 3-ball $\Theta$, though by our assumption on $D$, no torus intersects $\Int(D)$.  Let $\mathcal{T}_1$ be the union of the tori in $\Sigma$ that intersect $\Int(\Theta)$.  

\vspace{10pt}
\noindent
\emph{Claim 3}.  After deforming $D\cup D_\alpha\cup D_\beta$ into (part of a) branched surface as above, the cusp circle cannot be $\alpha\cup\beta$.  In other words, $\partial D$ is not a smooth circle after deforming $D\cup D_\alpha\cup D_\beta$ into branched surface.
\begin{proof}[Proof of Claim 3]
Suppose $\alpha\cup\beta$ is the cusp circle.  As $\Theta$ corresponds to a trivial $D^2\times I$ region, $\Theta\subset N(B_T)$ and every component of $\Int(\Theta)\cap\mathcal{T}_1$ is transverse to the induced $I$-fibers of $\Theta$. Since the torus $\Gamma$ is either $T_\alpha$ or $T_\beta$, without loss of generality, we suppose $D_\alpha\subset\Gamma$ ($\Gamma=T_\alpha$) in this claim.  By our construction of $D$, no torus intersects $\Int(D)$. As $T_\beta\in\Sigma$, this implies that $\Gamma\cap\Theta=D_\alpha$ and $\mathcal{T}_1\cap D_\beta$ is a collection of disjoint arcs with endpoints in $p_\beta$ ($p_\alpha=p_\beta$). 

  We perform two $B_T$-isotopies as illustrated by Figure~\ref{football}(a). We first fix $\Theta$ and perform a $B_T$-isotopy on $\mathcal{T}_1$, by pushing $\Theta\cap\mathcal{T}_1$ along the $I$-fibers, across $D_\alpha\cup D_\beta$ and out of $\Theta$.  The triple points in $\Int(p_\alpha)$ (if any) are eliminated by this $B_T$-isotopy and $\Int(\Theta)\cap\Sigma=\emptyset$ after the isotopy.   Then we can push $D$ across $\Theta$ to eliminate the pair of triple points $A\cup Z$, see Figure~\ref{football}(a).  So the $B_T$-isotopies above reduce the number of triple points in the intersection.  Since $\Gamma\cap\Theta=D_\alpha$, by ignoring $\Gamma$ in the picture, it is easy to see that the above isotopies do not change the intersection pattern of the tori in $\Sigma$ and $\Sigma$ remains a regular set after the isotopies.  This contradicts our assumption that the intersection is minimal. 
\end{proof}

\begin{figure}
\begin{center}
\psfrag{(a)}{(a)}
\psfrag{(b)}{(b)}
\psfrag{(c)}{(c)}
\psfrag{(d)}{(d)}
\psfrag{T0}{$T$}
\psfrag{D1}{$\Delta$}
\psfrag{th}{$\theta$}
\psfrag{A}{$T_\alpha$}
\psfrag{Tb}{$T_\beta$}
\psfrag{Tb2}{$T_\beta=\Gamma$}
\psfrag{Da}{$D_\alpha$}
\psfrag{Db}{$D_\beta$}
\psfrag{tb}{$\theta_\beta$}
\psfrag{td}{$\theta_D$}
\psfrag{b}{$\beta$}
\psfrag{E}{$\Theta$}
\psfrag{D}{$D$}
\psfrag{Z}{$Z$}
\psfrag{d}{$\delta_\alpha$}
\psfrag{V1}{$V_1$}
\psfrag{V2}{$V_2$}
\psfrag{V2p}{$V_2'$}
\psfrag{e1}{$\eta_1$}
\psfrag{e2}{$\eta_2$}
\psfrag{eg}{$\eta_G$}
\psfrag{G}{$G$}
\includegraphics[width=4.75in]{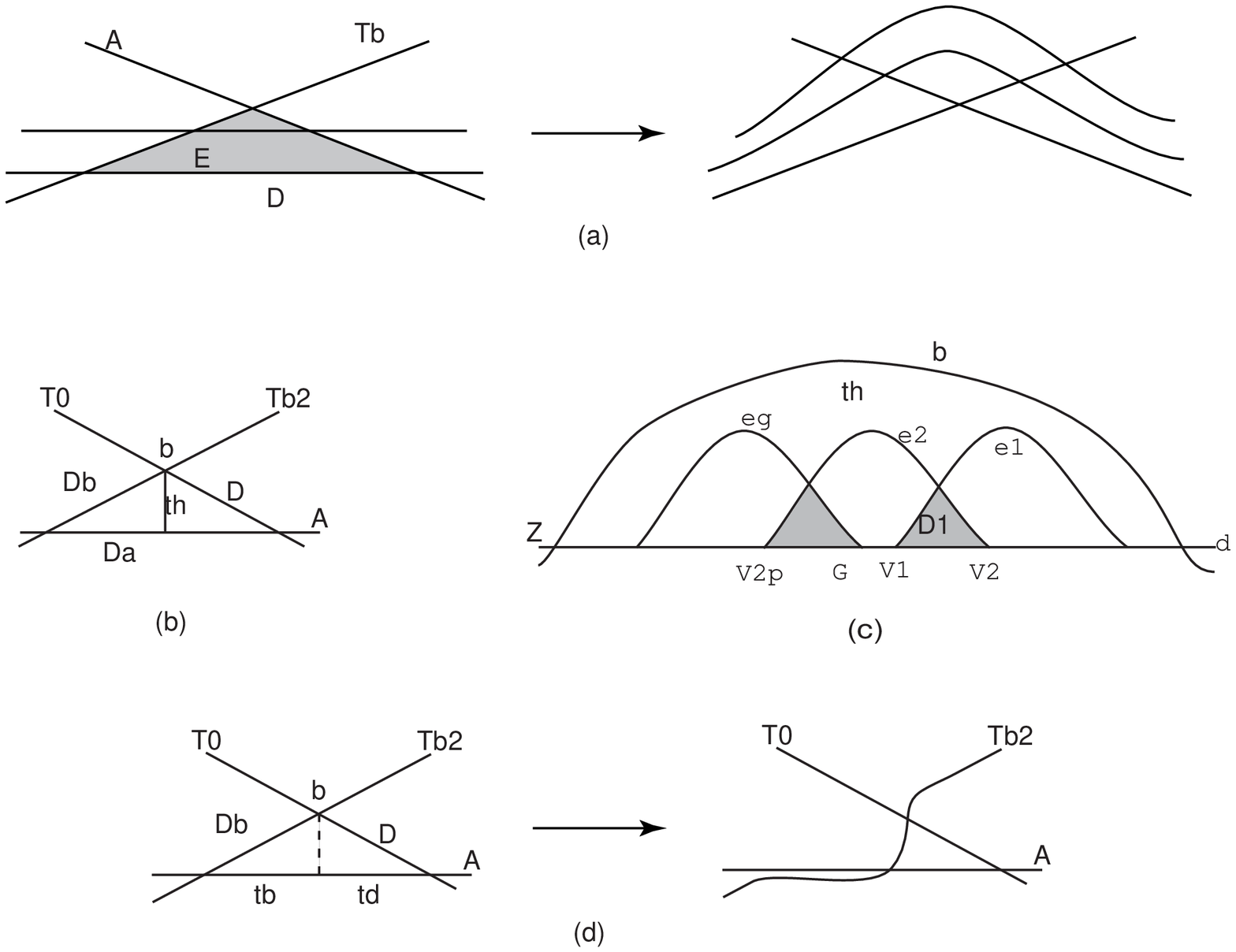}
\caption{}\label{football}
\end{center}
\end{figure}

By Claim 3, the cusp of $\partial\Theta$ must be formed by $p_\alpha\cup\alpha$ or $p_\alpha\cup\beta$. Without loss of generality, we assume $p_\alpha\cup\alpha$ is the cusp circle for $\Theta$.  So $D_\alpha$ is a smooth disk and $D\cup D_\beta$ is the other smooth disk in $\partial\Theta$ (after deforming it into branched surface).  There is a big difference between the cases $\Gamma=T_\alpha$ and $\Gamma=T_\beta$, because $p_\alpha\cup\alpha$ is the cusp circle.

If $\Gamma=T_\alpha$, then $T_\beta\in\Sigma$.  Since the intersection of the tori in $\Sigma$ has no triple point, the intersection arcs $\mathcal{T}_1\cap D_\beta$ has no double point in $\Int(D_\beta)$. By our assumption on $D$, $\Int(\beta)$ contains no triple point.  So $\mathcal{T}_1\cap D_\beta$ consists of disjoint arcs with endpoints in $p_\alpha$ ($p_\alpha=p_\beta$).  Similar to the isotopy in Claim 3, we perform a $B_T$-isotopy on $T_\alpha$ ($T_\alpha=\Gamma$) while fixing every other torus by pushing $D_\alpha$ across $\Theta$ and eliminating the triple points $A$ and $Z$.  This isotopy can be viewed as replacing $D_\alpha$ by a parallel copy of $D\cup D_\beta$.  By our assumption on $D$, the intersection of the tori has no triple points lying in $D-(A\cup Z)$.  Since there is no triple point in $\Int(D_\beta)$, no new triple points are created by this isotopy and all triple points in $p_\alpha=p_\beta$ are also eliminated by this isotopy.  As $\Sigma$ is fixed by the isotopy and the number of triple points is reduced, this contradicts our assumption that the intersection is minimal.  

Next we suppose $\Gamma=T_\beta$, which means $T_\alpha\in\Sigma$.  Recall that $\mathcal{T}_1$ is the union of the tori in $\Sigma$ that intersect $\Int(\Theta)$.  If $\mathcal{T}_1=\emptyset$, then a $B_T$-isotopy pushing $D_\alpha$ across $\Theta$ can eliminate the pair of triple points $A$ and $Z$, which contradicts our assumption that the intersection is minimal.  Thus we may assume $\mathcal{T}_1\ne\emptyset$.  By our assumption on $D$, $\mathcal{T}_1\cap D=\emptyset$.  Hence $\mathcal{T}_1\ne\emptyset$ implies that $\mathcal{T}_1\cap\Int(p_\alpha)\ne\emptyset$.   As $T_\alpha\in\Sigma$, $\mathcal{T}_1\cap D_\alpha$ is a collection of mutually disjoint arcs with endpoints in $p_\alpha$. 

The union of the vertical arcs (of $N(B_T)$) in $\Theta$ with one endpoint in $\beta$ and the other endpoint in $D_\alpha$ is a vertical bigon $\theta\subset\Theta$.  One boundary edge of $\theta$ is $\beta$ and the other boundary edge of $\theta$, denoted by $\delta_\alpha$, is an arc in $D_\alpha$ connecting $A$ to $Z$, see Figure~\ref{football}(b) for a one dimensional schematic picture where the vertical arc denotes $\theta$ and the top vertex denotes $\beta$.   The arcs in $\theta\cap\mathcal{T}_1$ are transverse to the $I$-fibers and with all endpoints in $\delta_\alpha$, since $D\cap\mathcal{T}_1=\emptyset$.

Next we analyze the intersection pattern of $\theta\cap\mathcal{T}_1$.   Our goal is to simplify the intersection of $\theta\cap\mathcal{T}_1$ by isotopies that (1) do not increase the number of triple points and (2) preserve $\Sigma$ as a regular set. 

We first consider the case that $\theta\cap\mathcal{T}_1=\emptyset$.  The arc $\delta_\alpha$ cuts $D_\alpha$ into two subdisks $\theta_\beta$ and $\theta_D$ which are $B_T$-isotopic to $D_\beta$ and $D$ respectively.  As $\theta\cap\mathcal{T}_1=\emptyset$, by the construction of $D$, the arcs of $D_\alpha\cap\mathcal{T}_1$ all lie in $\theta_\beta$.  Now we perform a $B_T$-isotopy on $\Gamma$ ($\Gamma=T_\beta$) as illustrated in Figure~\ref{football}(d), which is basically replacing $D_\beta$ by a copy of the disk $\theta_\beta\cup\theta$ (then perturbing $\theta_\beta\cup\theta$ to be transverse to the $I$-fibers).  Since $T_\alpha$ is a torus in the regular set $\Sigma$, there is no triple point in $\Int(\theta_\beta)$.  As $\theta\cap\mathcal{T}_1=\emptyset$, this isotopy does not gain any new triple point.  Moreover, by our assumption above that $\mathcal{T}_1\cap\Int(p_\alpha)\ne\emptyset$, there are triple points in $\Int(p_\alpha)$ and clearly all triple points in $\Int(p_\alpha)$ are eliminated by this isotopy.  Thus this $B_T$-isotopy on $\Gamma$ reduces the number of triple points.  As $\Sigma$ is fixed by this isotopy, $\Sigma$ remains a regular set.  This contradicts our assumption that the intersection is minimal.  Therefore, we may assume $\theta\cap\mathcal{T}_1\ne\emptyset$.

For any two components $\eta_1$ and $\eta_2$ of $\delta_\alpha\cup(\theta\cap\mathcal{T}_1)$, if $\eta_1\cap\eta_2$ contains more than one point (note that it is possible that $\eta_i=\delta_\alpha$), then $\eta_1$ and $\eta_2$ form a bigon in $\theta$, i.e. there is a subarc $e_i$ of $\eta_i$ ($i=1,2$) with $\partial e_1=\partial e_2\subset\eta_1\cap\eta_2$ such that $e_1\cup e_2$ bounds a disk $\Delta_e\subset\theta$ and $\eta_i\cap\Delta_e=e_i$ ($i=1,2$).  Since $\theta$ is vertical in $N(B_T)$ and $\eta_i$ is transverse to the $I$-fibers, if we deform the tori in $\Sigma$ into a branched surface, then $\Delta_e$ is deformed into a bigon with two cusps at $\partial e_i$.  Let $T_1$ and $T_2$ be the two tori in $\Sigma$ containing $\eta_1$ and $\eta_2$ respectively.  By our assumption on $D$ and $\Theta$, $T_i=T_\alpha$ if and only if $\eta_i=\delta_\alpha$.  

Next we show that the two vertices of the bigon $\Delta_e$ lie in the same double curve of $T_1\cap T_2$.  To prove this, we only need to consider $T_1$ and $T_2$ and can ignore how other tori intersect $\Delta_e$.  We first consider the simplest case that the bigon $\Delta_e$ is innermost with respect to $T_1\cup T_2$, i.e., the case that $\Delta_e\cap(T_1\cup T_2)=\partial\Delta_e=e_1\cup e_2$.  Suppose the two vertices of $\Delta_e$ lie in different double curves of $T_1\cap T_2$.  As $\Delta_e\cap(T_1\cup T_2)=e_1\cup e_2$, this implies that $T_1$ and $T_2$ form a $bigon\times S^1$ product region which contains the bigon $\Delta_e$ as a meridional disk.  As $\theta$ is vertical, by Lemma~\ref{Lproduct}, this $bigon\times S^1$ product region must be trivial, which contradicts our assumption at the beginning of the proof that the tori in $\Sigma$ do not form any trivial $bigon\times S^1$ product region.  Thus the two vertices of any innermost bigon $\Delta_e$ lie in the same double curve of $T_1\cap T_2$.  If $\Int(\Delta_e)\cap(T_1\cup T_2)\ne\emptyset$, then $\Delta_e\cap(T_1\cup T_2)$ cuts $\Delta_e$ into a collection of smaller bigons and there is an innermost bigon $\Delta_e'$ formed by $T_1$ and $T_2$ and with $\Delta_e'\cap(T_1\cup T_2)=\partial\Delta_e'$.  As above, the two vertices of $\Delta_e'$ lie in the same double curve of $T_1\cap T_2$.  Then we can perform an isotopy on $T_1$, similar to a $\partial$-compression, by pushing one edge of $\Delta_e'$ across $\Delta_e'$ to cancel the two vertices (or double points) of $\Delta_e'$. 
Since the two vertices of $\Delta_e'$ lie in the same double curve, this isotopy changes this double curve into a parallel (essential) double curve plus a trivial double curve, and by the no-flare hypothesis, we can eliminate the trivial double curve using a trivial isotopy.  The two isotopies together can be viewed as pulling the double curve containing the two vertices of $\Delta_e'$ across $\Delta_e'$.  In particular, $T_1$ and $T_2$ do not form any trivial $bigon\times S^1$ region after the isotopies.  After finitely many such isotopies, $\Delta_e$ becomes an innermost bigon and we can use the argument above on innermost bigons to conclude that the two vertices of $\Delta_e$ lie in the same double curve.  Moreover, it follows from the isotopies on $\Delta_e'$ above that the two vertices of $\Delta_e$ must lie in the same double curve before all the isotopies on innermost bigons $\Delta_e'$ above.  Note that the only purpose of the isotopies above is to demonstrate that the two vertices of the bigon $\Delta_e$ lie in the same double curve of $T_1\cap T_2$, and we do not actually perform these isotopies in our main proof.
Furthermore, it follows from this argument that $e_i$ is homotopic in $T_i$ (fixing $\partial e_i$) to a subarc of the curve $T_1\cap T_2$ that contains the two vertices of $\Delta_e$.

We say the bigon $\Delta_e$ above is a simple bigon if $\Int(\Delta_e)\cap\Sigma=\emptyset$.  By the conclusion above, the two vertices of $\Delta_e$ lie in the same double curve. 
Suppose $\Delta_e$ above is a simple bigon.  Then we can perform an isotopy on $T_1$, similar to a $\partial$-compression, by pushing $e_1$ across $\Delta_e$ to cancel the two intersection points $\partial e_1$ in $\theta$.  As the bigon $\Delta_e$ is simple, this isotopy does not create any triple point in the torus intersection.  Since $\partial e_i$ lies in the same double curve, the isotopy changes this double curve into a parallel (essential) double curve plus a trivial double curve.  As before, since $B_T$ has no flare, we can eliminate the resulting trivial double curve using a trivial isotopy without increasing the number of triple points.  Moreover, after eliminating this resulting trivial double curve, $\Sigma$ remains a regular set of tori.   Thus after finitely many such isotopies, we may assume the arcs in $\theta\cap\Sigma$ do not form any simple bigon.

\vspace{10pt}
\noindent
\emph{Claim 4}.  Let $\Delta_e$, $e_1$, $e_2$, $T_1$ and $T_2$ be as above.  Let $T'\ne T_1$ be any torus in $\Sigma$ and let $l$ be a double curve of $T'\cap T_1$.   We fix an orientation along $e_1$ and a normal direction for $l$ in $T_1$ and assign positive and negative signs for each point in $\Int(e_1)\cap l$ according to the fixed orientations above.  Then the number of positive intersection points of $\Int(e_1)\cap l$ equals the number of negative intersection points, in particular, $|\Int(e_1)\cap l|$ is an even number. 
\begin{proof}[Proof of Claim 4]
As $\Delta_e\subset\theta$, if we deform $\Sigma$ into a branched surface, then $\Delta_e$ is deformed into a bigon with cusp directions at the two vertices pointing out of $\Delta_e$.  By our discussion above, the two vertices of $\Delta_e$ lie in the same double curve $\gamma_e$ of $T_1\cap T_2$.  Now we view $e_1$, $\gamma_e$ and $l$ as curves in $T_1$. 
Because of the cusp directions at $\partial e_1$, as shown in Figure~\ref{arcs}(a), the two small arc neighborhoods of the two points of $\partial e_1$ in $e_1$ lie on the same side of $\gamma_e$ in $T_1$.  Since $\Sigma$ is a regular set, either $l=\gamma_e$ or $l\cap\gamma_e=\emptyset$ and $l$ is parallel to $\gamma_e$ in $T_1$.  By the conclusion above, $e_1$ is homotopic in $T_1$ (fixing $\partial e_1$) to a subarc of $\gamma_e$ bounded by $\partial e_1$.  By the cusp directions at $\partial e_1$ above and as shown in Figure~\ref{arcs}(a), clearly the number of positive points in $\Int(e_1)\cap l$ equals the number of negative points.  
\end{proof}

\begin{figure}
\begin{center}
\psfrag{(a)}{(a)}
\psfrag{(b)}{(b)}
\psfrag{(c)}{(c)}
\psfrag{(d)}{(d)}
\psfrag{T1}{$T_1$}
\psfrag{T2}{$T_2$}
\psfrag{e1}{$e_1$}
\psfrag{e}{$\gamma_e$}
\psfrag{da}{$\delta_\alpha$}
\psfrag{l}{$l$}
\psfrag{+}{$+$}
\psfrag{-}{$-$}
\psfrag{P}{$P$}
\psfrag{Q}{$Q$}
\psfrag{P1}{$P'$}
\psfrag{Q1}{$Q'$}
\psfrag{g1}{$\gamma_1$}
\psfrag{g}{$\gamma_p'$}
\psfrag{gq}{$\gamma_q'$}
\psfrag{tq}{$\theta_q$}
\includegraphics[width=4.5in]{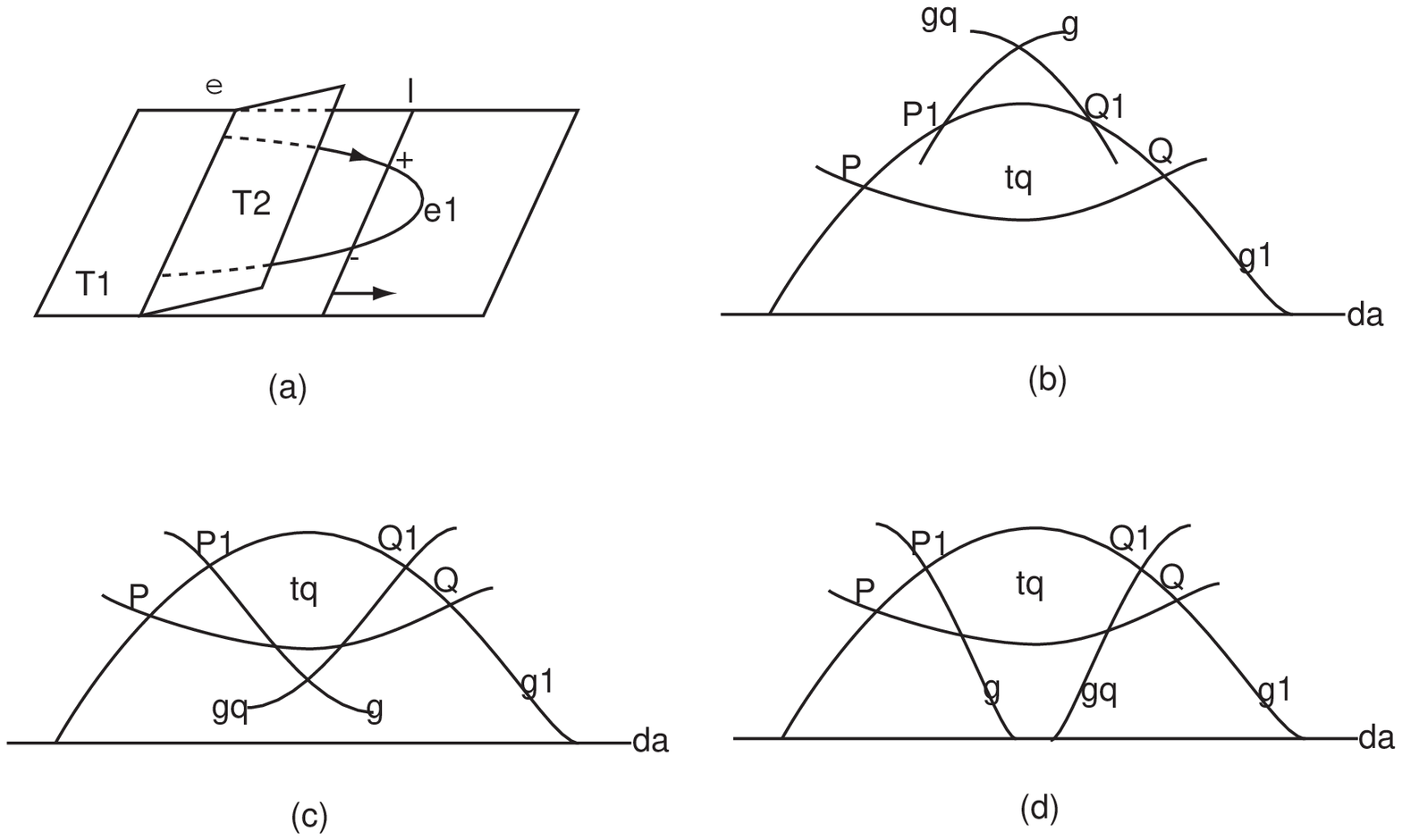}
\caption{}\label{arcs}
\end{center}
\end{figure}

Next we will use the properties of $\theta\cap\Sigma$ to get a contradiction to the no-simple-bigon assumption.  The key reason behind everything is the hypothesis that $B_T$ has no flare.

Let $\eta_1$ and $\eta_2$ be two components of $\theta\cap\mathcal{T}_1$.  So each $\eta_i$ is an arc properly embedded in $\theta$ with $\partial\eta_i\subset\delta_\alpha$.  We say $\eta_1$ and $\eta_2$ alternate along $\delta_\alpha$ if the subarc of $\delta_\alpha$ bounded by $\partial\eta_1$ contains exactly one endpoint of $\eta_2$.  We would like to remind the reader that curves in $\theta\cap\mathcal{T}_1$ are transverse to the $I$-fibers, so the intersection pattern of $\theta\cap\mathcal{T}_1$ cannot be overly complicated.

\vspace{10pt}
\noindent
\emph{Case a}.  There are two arcs $\eta_1$ and $\eta_2$ in $\theta\cap\mathcal{T}_1$ that alternate along $\delta_\alpha$.
\vspace{10pt}

In this case, since both $\eta_1$ and $\eta_2$ are transverse to the $I$-fibers, there must be a triangle $\Delta$ formed by $\eta_1$ and $\eta_2$ and $\delta_\alpha$ (see Figure~\ref{football}(c) for a picture) such that (1) the 3 edges $e_1$, $e_2$ and $e_\alpha$ of $\Delta$ lie in $\eta_1$, $\eta_2$ and $\delta_\alpha$ respectively, and (2) if we deform $\Sigma$ into a branched surface, $\Delta$ becomes a bigon with both cusps at $\partial e_\alpha\subset\delta_\alpha$.  Note that it is possible that $\Int(\Delta)$ intersects other parts of $\eta_i$.  We say the triangle $\Delta$ above is formed by two arcs that alternate along $\delta_\alpha$.   We may assume $\Delta$ is closest to the endpoint $Z$ of $\delta_\alpha$ among all such triangles.  More precisely, let $\delta_z$ be the component of $\delta_\alpha-e_\alpha$ that contains $Z$, we assume there is no triangle (formed by arcs that alternate along $\delta_\alpha$) with an edge in $\delta_z$.

Let $V_i$ ($i=1,2$) be the vertex $e_i\cap e_\alpha$ of $\Delta$, let $T_i$ ($i=1,2$) be the torus containing $\eta_i$, and let $l_i$ ($i=1,2$) be the double curve in $T_i\cap T_\alpha$ that contains the vertex $V_i$, see Figure~\ref{football}(c).  As $\eta_1\cap\eta_2\ne\emptyset$, $T_1$ and $T_2$ are not the same torus and $l_1\ne l_2$.  Without loss of generality, we assume $V_1$ is closer to $Z$ in $\delta_\alpha$ than $V_2$, i.e., $V_2$ lies outside the subarc $\delta_z$ of $\delta_\alpha$ bounded by $Z\cup V_1$. Since $\eta_1$ and $\eta_2$ alternate along $\delta_\alpha$ and are transverse to the (induced) $I$-fibers of $\theta$, as shown in Figure~\ref{football}(c), one endpoint of $\eta_2$ is $V_2$ and the other endpoint of $\eta_2$, which we denote by $V_2'$, lies in $\Int(\delta_z)$.  Let $\delta_2$ be the subarc of $\delta_\alpha$ bounded by $\partial\eta_2=V_2\cup V_2'$.  Recall that if we naturally deform the arcs in $\theta\cap\Sigma$ into a train track, then the subdisk of $\theta$ bounded by $\eta_2\cup\delta_2$ is a bigon with two cusps at $V_2$ and $V_2'$ and the cusp directions pointing out of $\delta_2$.  Moreover, by the argument before Claim 4, $V_2$ and $V_2'$ lie in the same double curve $l_2$ of $T_\alpha\cap T_2$.  

Now we consider $\delta_2\cap l_1$ in $T_\alpha$ (recall that$l_1$ is the double curve of $T_1\cap T_\alpha$ that contains $V_1$). We fix an orientation for $\delta_2$ pointing from $V_2'$ to $V_2$ and assign a sign to each intersection point of $\delta_2\cap l_1$: we call a point $G$ of $\delta_2\cap l_1$ a positive point if the cusp direction  of the arc (in $\theta\cap T_1$ containing $G$) at $G$ agrees with the orientation of $\delta_2$ above, otherwise we call $G$ a negative point.  So by the assumption on $\Delta$, $V_1$ is a negative point in $\delta_2\cap l_1$.  By Claim 4, the number of positive points in $\delta_2\cap l_1$ equals the number of negative points in $\delta_2\cap l_1$.  Note that if an arc of $T_1\cap \theta$ has both endpoints in $\delta_2$, then the signs at the two endpoints are opposite.  As $V_1$ is a negative point and since $\eta_1$ and $\eta_2$ alternate in $\delta_\alpha$, there must be a positive point $G$ in $\delta_2\cap l_1$ such that the other endpoint of the component $\eta_G$ of $T_1\cap\theta$ that contains $G$ lies outside $\delta_2$, see Figure~\ref{football}(c).  By our construction, $G\in l_1\cap\delta_2\subset T_1$, so both $\eta_G$ and $\eta_1$ are arcs in $T_1\cap\theta$.  As $T_1$ is an embedded surface, we have $\eta_G\cap\eta_1=\emptyset$.  This plus the assumption on the positive cusp direction at $G$ implies that (1) $G$ lies in the arc $\delta_2-e_\alpha$ and (2) the other endpoint $\partial\eta_G-G$ of $\eta_G$ lies in the subarc of $\delta_\alpha$ bounded by $Z\cup V_2'$, as shown in Figure~\ref{football}(c). Note that these conclusions hold also because these arcs are all transverse to the (induced) $I$-fibers of $\theta$.  Thus, as shown in Figure~\ref{football}(c), $\eta_2$ and $\eta_G$ alternate in $\delta_\alpha$ and there is a triangle formed by $\eta_2$ and $\eta_G$ that is closer to $Z$ along $\delta_\alpha$ than the triangle $\Delta$, a contradiction to our assumption on $\Delta$ at the beginning.  So Case (a) cannot happen.

\vspace{10pt}
\noindent
\emph{Case b}.  No two arcs in $\theta\cap\mathcal{T}_1$ alternate along $\delta_\alpha$.
\vspace{10pt}

Recall that we have assumed that $\theta\cap\Sigma$ has no simple bigon.  For any arcs $\gamma_1$ and $\gamma_2$ of $\theta\cap\mathcal{T}_1$ that intersect each other, since $\gamma_1$ and $\gamma_2$ do not alternate, $\gamma_1\cap\gamma_2$ contains more than one point.  Hence there is a subarc $q_i$ of $\gamma_i$ ($i=1,2$) with $q_1\cap q_2=\partial q_1=\partial q_2$.  So $q_1\cup q_2$ bounds a bigon $\theta_q\subset\Int(\theta)$.  Let $T_i$ be the torus in $\Sigma$ that contains $q_i$ and $\gamma_i$ ($i=1,2$).    

Let $\pi:\theta\to\delta_\alpha$ be the map collapsing each (induced) $I$-fiber to a point.  Since the arcs in $\theta\cap\Sigma$ are transverse to the $I$-fibers, $\pi(q_1)=\pi(q_2)$ and $\pi(\gamma_i)$ is the subarc of $\delta_\alpha$ bounded by $\partial\gamma_i$ ($i=1,2$).  We may choose $\gamma_1$ to be thinnest in the sense that there is no component $\gamma$ of $\theta\cap\Sigma$ with $\pi(\gamma)\subset\Int(\pi(\gamma_1))$.  Note that since there is no simple bigon and since no arcs of $\theta\cap\mathcal{T}_1$ alternate along $\delta_\alpha$, being thinnest implies that there must be an arc in $\theta\cap\mathcal{T}_1$ intersecting $\Int(\gamma_1)$ and there is a bigon with an edge in $\Int(\gamma_1)$.  After fixing $\gamma_1$, we may assume the bigon $\theta_q$ above is the shortest along $\gamma_1$ in the sense that there is no bigon with an edge totally lying in $\Int(q_1)$.

Let $P$ and $Q$ be the two vertices of the bigon $\theta_q$. 
We first suppose $\Int(q_1)$ contains a double point of $\theta\cap\Sigma$ (i.e. $\Int(q_1)\cap (\Sigma-T_1)\ne\emptyset$).  Let $P'$ be the double point of $\Int(q_1)\cap (\Sigma-T_1)$ that is closest to $P$ in $q_1$, that is, the subarc of $q_1$ bounded by $P\cup P'$ contains no other double point of $\Int(q_1)\cap (\Sigma-T_1)$.  Let $\gamma_p'$ be the component of $\theta\cap\Sigma$ that intersects $q_1$ at $P'$, let $T'$ be the torus in $\Sigma$ containing $\gamma_p'$, and let $l_p'$ be the double curve of $T_1\cap T'$ containing $P'$.    

By deforming the tori in $\Sigma$ into a branched surface as before, we can deform $\theta_q\cap\Sigma$ into a train track and each double point of $\Int(q_1)\cap (\Sigma-T_1)$ becomes a switch of the train track.  We can assign a sign for each double point of $\Int(q_1)\cap (\Sigma-T_1)$ as follows: a double point is positive if the cusp direction at this double point (of the train track $\theta_q\cap\Sigma$) points towards $P$ in $q_1$; otherwise we say this double point is negative.

By Claim 4, there must be another double point $Q'$ in $l_p'\cap\Int(q_1)$ such that $P'$ and $Q'$ have opposite signs.  Let $\gamma_q'$ be the component of $\theta\cap\Sigma$ that intersects $q_1$ at $Q'$.  Since $P'$ and $Q'$ are in the same double curve $l_p'$, both $\gamma_p'$ and $\gamma_q'$ lie in the same torus $T'$.  Hence either $\gamma_p'=\gamma_q'$ or $\gamma_p'\cap\gamma_q'=\emptyset$.  If $\gamma_p'=\gamma_q'$ then $P'$ and $Q'$ are connected by a subarc of $\gamma_p'$ and this means that there is a bigon formed by subarcs of $\gamma_p'$ and $q_1$ with an edge of the bigon lying in the subarc of $q_1$ bounded by $P'\cup Q'$.  This contradicts our assumption that $\theta_q$ is the shortest bigon along $\gamma_1$.  Thus $\gamma_p'\ne\gamma_q'$ and $\gamma_p'\cap\gamma_q'=\emptyset$.  Moreover, by our construction, $\pi(\gamma_1)\cap\pi(\gamma_p')\ne\emptyset$ and $\pi(\gamma_1)\cap\pi(\gamma_q')\ne\emptyset$ in $\delta_\alpha$.
Since no two arcs in $\theta\cap\mathcal{T}_1$ alternate along $\delta_\alpha$, this means that $\pi(\gamma_1)$ and $\pi(\gamma_p')$ are nested in $\delta_\alpha$. 
By our assumption above that $\gamma_1$ is a thinnest component of $\theta\cap\mathcal{T}_1$, we have $\pi(\gamma_1)\subset\pi(\gamma_p')$ and $\pi(\gamma_1)\subset\pi(\gamma_q')$. 

Let $q_1'$ be the subarc of $q_1$ bounded by $P'\cup Q'$.  If $\gamma_p'$ or $\gamma_q'$ intersects $\Int(q_1')$, then there is a bigon formed by $\gamma_p'\cup\gamma_1$ or $\gamma_q'\cup\gamma_1$ with an edge in $q_1'$, which contradicts our above assumption that the bigon $\theta_q$ is the shortest along $\gamma_1$.  Thus we may assume $\Int(q_1')\cap \gamma_p'=\emptyset$ and $\Int(q_1')\cap \gamma_q'=\emptyset$.  This plus the assumption that $P'$ and $Q'$ have opposite signs implies that either (1), as shown in Figure~\ref{arcs}(b) and Figure~\ref{arcs}(c), $\gamma_p'$ and $\gamma_q'$ have to cross each other, contradicting that $\gamma_p'\cap\gamma_q'=\emptyset$ or (2), as shown in Figure~\ref{arcs}(d), an endpoint of $\gamma_p'$ lies in $\pi(q_1')$, contradicting our conclusion above that $\pi(\gamma_1)\subset\pi(\gamma_p')$.  Note that the reason why there is no other configuration is that each $\theta\cap\mathcal{T}_1$ is transverse to the (induced) $I$-fibers of $\theta$.

So the argument above means that $\Int(q_1)$ contains no double point of $\theta\cap\Sigma$.  In this case, we can apply our arguments for $\theta$ above to $\theta_q$ (by viewing $q_1$ as $\beta$ and viewing $q_2$ as $\delta_\alpha$).  Eventually, either we get various contradictions as above, or we can conclude that there is a simple bigon inside $\theta_q$, which contradicts our assumption that there is no simple bigon.

Therefore if the intersection of the tori in $\Sigma\cup\Gamma$ is minimal, then there is no triple point and $\Sigma\cup\Gamma$ is a regular set.
\end{proof}

Although we assume the intersection is minimal at the beginning of the proof of Lemma~\ref{Ltriple}, 
the isotopies that we performed in the proof (in order to reduce the number of triple points) are either trivial isotopies removing trivial product regions, or the isotopies in Figure~\ref{football}(a, d).  All these isotopies can be made algorithmic.  So one can follow the proof of Lemma~\ref{Ltriple} to algorithmically eliminate the triple points and change $\Sigma\cup\Gamma$ into a regular set of tori.

Next we study $\mathcal{S}(\mathcal{T})$, the set of (possibly disconnected) surfaces obtained by cutting and pasting multiple copies of tori in $\mathcal{T}$, see section~\ref{Spre} for definition.  We will also consider the set of tori $\mathcal{G}(\mathcal{T})$ defined in Notation~\ref{N1}.

\begin{lemma}\label{Lset}
Let $\mathcal{T}$ be a finite set of normal tori carried by $B$. Suppose the sub-branched surface $B_T$ of $B$ that fully carries the union of the tori in $\mathcal{T}$ contains no flare.  Then there is a finite set of normal tori $\hat{\mathcal{T}}$ carried by $B$ such that for any $F\in\mathcal{S}(\mathcal{T})$, there is a finite regular subset of normal tori $\mathcal{T}_F\subset\hat{\mathcal{T}}$ such that $F\in\mathcal{S}(\mathcal{T}_F)$.  Moreover, the tori in $\hat{\mathcal{T}}$ and all possible regular subsets $\mathcal{T}_F$ of $\hat{\mathcal{T}}$ can be algorithmically determined.
\end{lemma}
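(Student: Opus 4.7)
The plan is to construct $\hat{\mathcal{T}}$ as the algorithmic closure of $\mathcal{T}$ under two operations: taking components of pairwise canonical sums, and enlarging by good tori produced by Lemma~\ref{Lgoodone}. First I would, using Proposition~\ref{Pproduct} and the complexity argument following its proof together with the no-flare hypothesis on $B_T$ (via Corollary~\ref{Cdisk}), $B_T$-isotope the tori in $\mathcal{T}$ so that no pair forms a trivial product region and every double curve in pairwise intersections is essential in both corresponding tori.

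Given $F=\sum n_iT_i\in\mathcal{S}(\mathcal{T})$, the canonical cutting-and-pasting surface $F$ is an embedded closed surface carried by $B$, and since $B$ does not carry any normal $S^2$ by Notation~\ref{NB}, the components of $F$ are disjoint normal tori, each by definition belonging to $\mathcal{G}(\mathcal{T})$. These components form a disjoint collection, hence a trivially regular set, and $F$ equals the sum of these components with multiplicities; so $F\in\mathcal{S}(\mathcal{T}_F)$ whenever we take $\mathcal{T}_F$ to be the components of $F$. The task therefore reduces to exhibiting an algorithmically computable finite set $\hat{\mathcal{T}}$ of normal tori carried by $B$ such that every torus in $\mathcal{G}(\mathcal{T})$ is $B_T$-isotopic into $\hat{\mathcal{T}}$.

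To produce $\hat{\mathcal{T}}$ I would run the following procedure. Initialize $\mathcal{T}^{(0)}=\mathcal{T}$. At stage $n$, enumerate all maximal triple-point-free sub-collections $\Sigma\subset\mathcal{T}^{(n)}$ whose double curves are essential in both corresponding tori. For each such $\Sigma$ apply Lemma~\ref{Lgoodone}: either this yields a complete finite list of $\mathcal{G}(\Sigma)$, whose elements are added to produce $\mathcal{T}^{(n+1)}$, or it yields a good torus $T$ non-trivially intersecting some torus in $\Sigma$ such that $\Sigma\cup\{T\}$ is still triple-point-free. In the latter case Lemma~\ref{Lslope} shows that the essential intersection curves of $T$ with each torus already in $\Sigma$ share a single slope on $T$, so Lemma~\ref{Ltriple} (with $T$ as the special torus) guarantees that $\Sigma\cup\{T\}$ is a regular set after some $B_T$-isotopy, and I add $T$ to the collection. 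I then close under taking components of pairwise sums so that every intersecting pair has its constituent components recorded.

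The main obstacle, which I expect to be the hard part, is proving that this enlargement stabilizes after finitely many stages, so that $\hat{\mathcal{T}}=\mathcal{T}^{(N)}$ for some $N$. The termination should go as follows: once $\Sigma$ is a maximal regular subset, Lemma~\ref{Lmono} tells us that all but finitely many tori in $\mathcal{G}(\Sigma)$ are good, and each non-good torus has weight at most $2$ at every annular branch sector of the branched surface into which $\Sigma$ deforms. The slope-rigidity provided by Lemma~\ref{Lslope} means that each good torus added at a later stage must intersect previously-added tori along a fixed slope, which confines newly generated tori to a finite combinatorial palette determined by $B_T$; simultaneously the finiteness of branch sectors of $B_T$ bounds how many distinct slope classes can ever be introduced. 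Since Lemma~\ref{Ltriple} preserves regularity under each addition and Lemma~\ref{Lgoodone} is algorithmic, the sequence $\mathcal{T}^{(n)}$ eventually satisfies $\mathcal{T}^{(n+1)}=\mathcal{T}^{(n)}$, yielding the desired finite $\hat{\mathcal{T}}$. For any $F\in\mathcal{S}(\mathcal{T})$ the regular set $\mathcal{T}_F$ is then found by identifying the components of $F$ inside $\hat{\mathcal{T}}$ (with multiplicities), all of which is carried out by following the same algorithm.
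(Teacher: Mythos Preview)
Your reduction in the second paragraph is the fundamental gap. You argue that it suffices to produce a finite $\hat{\mathcal{T}}$ containing (up to $B_T$-isotopy) every torus in $\mathcal{G}(\mathcal{T})$, because then the components of any $F\in\mathcal{S}(\mathcal{T})$ would furnish a disjoint (hence regular) $\mathcal{T}_F\subset\hat{\mathcal{T}}$. But this presupposes that $\mathcal{G}(\mathcal{T})$ is finite, and nothing in the hypotheses guarantees that. Indeed, if $T_1,T_2\in\mathcal{T}$ meet in a single essential non-meridional curve, then the surfaces $nT_1+T_2$ are connected normal tori for every $n$, pairwise non-$B$-isotopic since they have distinct weights at the branch sectors of $B_T$. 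Lemma~\ref{Lgoodone} itself is stated as a dichotomy precisely because case~(2), the finiteness of $\mathcal{G}(\mathcal{T}')$, can fail; when it does, the lemma hands you a good torus instead. Your termination sketch (``confines newly generated tori to a finite combinatorial palette'') does not overcome this, and in the example above it cannot: the palette is genuinely infinite.

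The paper's proof avoids enumerating $\mathcal{G}(\mathcal{T})$ altogether. The key observation is that if one can find a single good torus $T$ non-trivially intersecting some torus of $\mathcal{T}$, then Lemmas~\ref{Lslope} and~\ref{Ltriple} (applied repeatedly with $T$ as the special torus) force $T\cup\mathcal{T}$ itself to be a regular set after $B_T$-isotopy; one then takes $\hat{\mathcal{T}}=T\cup\mathcal{T}$ and $\mathcal{T}_F=\hat{\mathcal{T}}$ for \emph{every} $F$, since $\mathcal{S}(\mathcal{T})\subset\mathcal{S}(\hat{\mathcal{T}})$ trivially. To locate such a good torus algorithmically, the paper partitions $\mathcal{T}$ into disjoint subcollections $\mathcal{P}_1,\dots,\mathcal{P}_m$ and inducts on $m$: the union $\mathcal{P}_1\cup\mathcal{P}_2$ has no triple points, so Lemma~\ref{Lgoodone} applies to it directly, yielding either the desired good torus or a finite $\mathcal{G}(\mathcal{P}_1\cup\mathcal{P}_2)$, which is then repackaged into disjoint subcollections to drop $m$ by one. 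The point is that $\mathcal{T}_F$ is allowed to be a regular set with nonempty intersections, not just a disjoint one; exploiting this is what makes $\hat{\mathcal{T}}$ finite.
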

\begin{proof}
The key point of the lemma is that $\mathcal{T}_F$ is a regular set of tori. 
If the tori in $\mathcal{T}$ are disjoint or more generally if $\mathcal{T}$ is a regular set of tori, then $\hat{\mathcal{T}}=\mathcal{T}$ and there is nothing to prove.  Moreover, since each component of a surface in $\mathcal{S}(\mathcal{T})$ is a surface in $\mathcal{G}(\mathcal{T})$, if $\mathcal{G}(\mathcal{T})$ is a finite set and if we can algorithmically find all possible tori in $\mathcal{G}(\mathcal{T})$, then by setting $\hat{\mathcal{T}}=\mathcal{G}(\mathcal{T})$, the lemma also holds trivially (with the subset $\mathcal{T}_F$ in the lemma being a collection of disjoint tori).  For simplicity, we may assume the union of the tori in $\mathcal{T}$ is a connected 2-complex.  

\vspace{10pt}
\noindent
\emph{Claim}.  The lemma holds for $\mathcal{T}$ if we can algorithmically find a good torus (see Definition~\ref{Dgood}) that non-trivially intersects at least one torus in $\mathcal{T}$.  
\begin{proof}[Proof of the Claim]
Suppose $T$ is a good torus as in the claim and let $T_1\in \mathcal{T}$ be a torus that non-trivially intersects $T$.  By Lemma~\ref{Lslope} and by the definition of good torus, after some trivial isotopies removing trivial intersection curves, $T\cap T_1$ consists of essential and non-meridional curves in $T$ and hence $\{T, T_1\}$ is a regular set of tori.  We can repeatedly apply Lemma~\ref{Ltriple}, using $T$ as the special torus in Lemma~\ref{Ltriple},  to add tori from $\mathcal{T}$ to the regular set.  Eventually we can conclude that $T\cup\mathcal{T}$ is a regular set of tori after $B_T$-isotopy.  As above, we can set $\hat{\mathcal{T}}=T\cup\mathcal{T}$.  
Note that the proof of Lemma~\ref{Ltriple} is algorithmic, so we can algorithmically isotope these tori into a regular set of tori, see the remark after the proof of Lemma~\ref{Ltriple}.  
\end{proof}

Now we use the claim to prove Lemma~\ref{Lset}.  We can divide the set $\mathcal{T}$ into a collection of subsets of tori $\mathcal{P}_1,\dots,\mathcal{P}_m$ such that the tori in each $\mathcal{P}_i$ are disjoint and $\mathcal{T}=\bigcup_{i=1}^m\mathcal{P}_i$.  Clearly if $m=1$, i.e., the tori in $\mathcal{T}$ are disjoint, then Lemma~\ref{Lset} holds trivially.  We use induction on $m$ and suppose Lemma~\ref{Lset} is true for any set of tori with smaller such number $m$.

Next we consider $\mathcal{P}_1\cup\mathcal{P}_2$.  Since the tori in each $\mathcal{P}_i$ are disjoint, the intersection of the tori in $\mathcal{P}_1\cup\mathcal{P}_2$ has no triple point.  Since $B_T$ has no flare, as in Notation~\ref{N1}, after some $B_T$-isotopy removing trivial double curves if necessary, we may assume each double curve in the intersection of the tori in $\mathcal{P}_1\cup\mathcal{P}_2$ is essential in both tori.  Thus by Lemma~\ref{Lgoodone} either (1) we can algorithmically find a good torus $T$ that non-trivially intersects at least one torus in $\mathcal{P}_1\cup\mathcal{P}_2$, or (2) $\mathcal{G}(\mathcal{P}_1\cup\mathcal{P}_2)$ is finite and we can algorithmically find a finite list of tori $\mathcal{G}_p$ containing $\mathcal{G}(\mathcal{P}_1\cup\mathcal{P}_2)$.  As $\mathcal{T}=\bigcup_{i=1}^m\mathcal{P}_i$, in the first possibility, the good torus $T$ non-trivially intersects at least one torus in $\mathcal{T}$ and the lemma follows from the claim above.  Suppose we are in possibility (2) above and $\mathcal{G}(\mathcal{P}_1\cup\mathcal{P}_2)$ is finite.  Note that each surface in $\mathcal{S}(\mathcal{P}_1\cup\mathcal{P}_2)$ consists of copies of disjoint tori in $\mathcal{G}(\mathcal{P}_1\cup\mathcal{P}_2)$.  By enumerating all possible subsets of disjoint tori, we can find a collection of finite subsets of $\mathcal{G}_p\supset\mathcal{G}(\mathcal{P}_1\cup\mathcal{P}_2)$, denoted by $\mathcal{Q}_1,\dots,\mathcal{Q}_k$, such that (1) the tori in each $\mathcal{Q}_i$ are disjoint and (2) $\mathcal{S}(\mathcal{P}_1\cup\mathcal{P}_2)\subset\bigcup_{i=1}^k\mathcal{S}(\mathcal{Q}_i)$.  Since the tori in $\mathcal{Q}_i$ are disjoint and by the induction hypothesis, Lemma~\ref{Lset} holds for each set of tori $\mathcal{Q}_i\cup\mathcal{P}_3\cup\cdots\cup\mathcal{P}_m$.  Since $\mathcal{S}(\mathcal{P}_1\cup\mathcal{P}_2)\subset\bigcup_{i=1}^k\mathcal{S}(\mathcal{Q}_i)$, every element in $\mathcal{S}(\mathcal{T})$ (recall that $\mathcal{T}=\bigcup_{i=1}^n\mathcal{P}_i$) is an element in $\mathcal{S}(\mathcal{Q}_i\cup\mathcal{P}_3\cup\cdots\cup\mathcal{P}_m)$ for some $i$.  As the lemma holds for each $\mathcal{Q}_i\cup\mathcal{P}_3\cup\cdots\cup\mathcal{P}_m$, the lemma clearly holds for $\mathcal{T}=\bigcup_{i=1}^n\mathcal{P}_i$.
\end{proof}

\section{Engulfing normal tori by a solid torus}\label{Sengulf}

\begin{lemma}\label{Lsolid}
Let $\mathcal{T}$ be a regular set of normal tori carried by $B$.  Suppose the union of all the tori in $\mathcal{T}$ is a connected 2-complex $\Sigma$.  As $\mathcal{T}$ is a regular set, we may view a neighborhood $N(\Sigma)$ of $\Sigma$ in $M$ as a Seifert fiber space.  Let $\Gamma_1,\dots,\Gamma_k$ be the boundary tori of $N(\Sigma)$.  Then some $\Gamma_i$ must be isotopic (in $M-\Sigma$) to a normal torus and $\Gamma_i$ bounds a solid torus in $M$ that contains $\Sigma$.
\end{lemma}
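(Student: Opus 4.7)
The plan is to establish two things in succession: (I) some $\Gamma_i$ bounds a solid torus in $M$ containing $\Sigma$; and (II) any such $\Gamma_i$ is isotopic in $M-\Sigma$ to a normal torus, via barrier normalization. The key ingredients are Lemma~\ref{Ltorus}, the barrier technique of \cite[Theorem~3.2]{JR}, the loop theorem, and the non-meridional clause of Definition~\ref{Dregular}.

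For (I), since $M$ is closed, irreducible, atoroidal and not a lens space, every embedded torus in $M$ is compressible and bounds a solid torus on exactly one side, hence is separating. In particular each $\Gamma_i$ is separating, which forces each complementary region $W_j$ of $M-\Int(N(\Sigma))$ to have a single boundary component. Suppose for contradiction that no $\Gamma_i$ bounds a solid torus on the $\Sigma$ side. Then the solid torus bounded by each $\Gamma_i$ lies on the opposite side and must equal $W_{j(i)}$, so every $W_j$ is itself a solid torus, and $M$ is obtained from the Seifert fibered space $N(\Sigma)$ by Dehn filling along each boundary torus. If no filling meridian coincides with the regular-fiber slope on the corresponding $\Gamma_i$, then the Seifert fibration extends across every filling and $M$ is Seifert fibered, contradicting Notation~\ref{NB}. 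Otherwise a regular fiber $\phi$ of $N(\Sigma)$ bounds a meridian disk in some filling solid torus and is therefore nullhomotopic in $M$. But $\phi$ is isotopic to an essential intersection curve $c$ on some $T_j\in\mathcal{T}$, so by the loop theorem $c$ bounds an embedded compressing disk for $T_j$ in $M$; Lemma~\ref{Ltorus}(2) forces this disk to lie in the solid torus bounded by $T_j$, making $c$ a meridian of $T_j$ and contradicting the non-meridional hypothesis in Definition~\ref{Dregular}. So some $\Gamma_i$ must bound a solid torus $V$ with $\Sigma\subset V$.

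For (II), fix such a $\Gamma_i$ with $\Gamma_i=\partial V$ and $\Sigma\subset V$. Since $\Gamma_i$ bounds a solid torus only on the $V$ side (else $M$ is a lens space), $\Gamma_i$ is incompressible in $M-\Int(V)$. I further claim $\Gamma_i$ is incompressible in $V-\Sigma$: otherwise a meridian disk $D$ of $V$ could be taken disjoint from $\Sigma$, cutting $V$ into a $3$-ball $E$ with $\Sigma\subset E$; then any regular fiber $\phi$ of $N(\Sigma)$ lies in the simply-connected $E$, hence is nullhomotopic in $M$, yielding a contradiction by the same loop-theorem argument as in (I). Therefore $\Gamma_i$ is incompressible in $M-\Sigma$. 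Applying the barrier normalization of \cite[Theorem~3.2]{JR} to $\Gamma_i$ with barrier $\Sigma$, no compression can occur, and $\Gamma_i$ is ambient isotopic in $M-\Sigma$ to a normal torus $\Gamma_i'$. The ambient isotopy fixes $\Sigma$ pointwise and carries $V$ to the solid torus $V'$ bounded by $\Gamma_i'$, so $\Sigma\subset V'$, completing the proof.

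The main obstacle is the Dehn-filling case in (I) in which a filling meridian coincides with the fiber slope of $\Gamma_i$: this case is not eliminated by "$M$ is not Seifert" alone and must be ruled out by combining the non-meridional clause of Definition~\ref{Dregular}, Lemma~\ref{Ltorus}(2) and the loop theorem. Precisely the same trio handles the parallel difficulty in (II) where one must show no meridian disk of $V$ can avoid $\Sigma$.
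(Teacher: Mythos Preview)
Your proof is correct and reaches the same conclusion, but the order of operations is reversed compared to the paper. The paper applies barrier normalization to every $\Gamma_i$ \emph{first}: either $\Gamma_i$ isotopes (in $M-\Sigma$) to a normal torus, which by Lemma~\ref{Ltorus} bounds a solid torus (and one then checks whether that solid torus contains $\Sigma$), or a compression occurs during normalization, in which case $\Gamma_i$ already bounds a solid torus in $M-\Sigma$. If every $\Gamma_i$ ends up bounding a solid torus in $M-\Sigma$, then $M-\Int(N(\Sigma))$ is a union of solid tori; since the double curves of $\Sigma$ are non-meridional (hence, as in Corollary~\ref{Cmeridion}, bound no embedded disk in $M$), the Seifert structure extends and $M$ is Seifert fibered, a contradiction. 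Thus both conclusions of the lemma drop out of a single dichotomy.

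Your route instead establishes the topological fact (I) first via the same Seifert/non-meridional mechanism, and then uses (I) to show the selected $\Gamma_i$ is incompressible on both sides in $M-\Sigma$, so that barrier normalization consists only of isotopies. This is conceptually tidy---separating the pure topology from the normalization---but it costs a second invocation of the loop-theorem/meridian argument in (II). The paper's approach is shorter because the normalization step simultaneously detects which $\Gamma_i$ works and supplies the normal torus, and it never needs to prove incompressibility of $\Gamma_i$ in $V-\Sigma$ directly. One small edge case in your version: if $\mathcal{T}$ consists of a single torus, there are no intersection curves and your ``regular fiber isotopic to an intersection curve'' step is vacuous; but then $M-\Int(N(\Sigma))$ being two solid tori already forces $M$ to be a lens space, so the case is handled trivially.
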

\begin{proof}
First note that the tori $\Gamma_i$'s may not be normal surfaces.  If one try to normalize $\Gamma_i$ in $M-\Sigma$, by \cite[Theorem 3.2]{JR}, the 2-complex $\Sigma$ is a barrier for the normalization process.  So either (1) $\Gamma_i$ is isotopic (in $M-\Sigma$) to a normal torus, or (2) a compression occurs when normalizing $\Gamma_i$.  If a compression occurs, then after the compression, $\Gamma_i$ becomes a 2-sphere which either can be normalized (in $M-\Sigma$) to a normal 2-sphere or becomes a 2-sphere inside a tetrahedron during the normalization.  Since the only normal 2-sphere is vertex-linking, if a compression occurs during the normalization, then $\Gamma_i$ must bound a solid torus in $M-\Sigma$.  Since each normal torus bounds a solid torus in $M$, in any case, $\Gamma_i$ bounds a solid torus in $M$ for each $i$.

The argument above implies that either Lemma~\ref{Lsolid} holds or each $\Gamma_i$ bounds a solid torus in $M-\Sigma$, i.e., $M-\Int(N(\Sigma))$ is a collection of solid tori.  Suppose the lemma is false and each $\Gamma_i$ bounds a solid torus in $M-\Sigma$.  Since $\mathcal{T}$ is a regular set of tori, the intersection of the tori contains no triple point and the double curves of the intersection are not meridians of the tori in $\mathcal{T}$.  Since $\mathcal{T}$ is a regular set of normal tori, as in the proof of Corollary~\ref{Cmeridion}, a double curve in the intersection does not bound an embedded disk in $M$.  As $M-\Int(N(\Sigma))$ is a collection of solid tori, this means that $M$ is a Seifert fiber space with each double curve of the intersection a regular fiber.  This contradicts our hypothesis at the beginning that $M$ is not a Seifert fiber space.  Therefore some $\Gamma_i$ must bound a solid torus in $M$ that contains $\Sigma$ and the lemma holds.
\end{proof}

\begin{definition}\label{Dbalanced2}
Let $T$ and $F$ be two closed orientable surfaces carried by $N(B)$.  Let $\alpha$ be a simple closed curve in $T$ and we suppose $\alpha$ is transverse to $F$.  Let $A\subset N(B)$ be a thin vertical annulus that contains $\alpha$ and let $\Lambda=A\cap F$. 
We say $\alpha$ is \textbf{balanced} with respect to $F$ if $\Lambda$ is balanced in $A$ as in Definition~\ref{Dbalanced}. 
\end{definition}

\begin{lemma}\label{LAsign}
Let $T$ and $F$ be two closed surfaces carried by $N(B)$.  Let $\alpha_1$ and $\alpha_2$ be disjoint simple closed curves in $T$ transverse to $F$.  Suppose $\alpha_1\cup\alpha_2$ bounds an annulus in $T$.  If $\alpha_1$ is balanced with respect to $F$, then so is $\alpha_2$.
\end{lemma}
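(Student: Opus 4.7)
The plan is to reinterpret the balance condition at $\alpha_i$ as the vanishing of an algebraic intersection number in $T$, and then to use the fact that $\alpha_1$ and $\alpha_2$ cobound the annulus $C\subset T$ to conclude that the two intersection numbers are equal.

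First I would fix orientations. The manifold $M$ is orientable, and both $T$ and $F$ are closed orientable by hypothesis.  Orient $M$, $T$, and $F$; in particular choose the orientation of $T$ so that its positive normal agrees with the ``up'' direction given by the $I$-bundle structure of $N(B)$.  Orient $\alpha_1$ arbitrarily, and orient $\alpha_2$ so that $\alpha_1$ and $\alpha_2$ are homologous in $T$; equivalently, so that $\partial C=\alpha_1-\alpha_2$ as oriented $1$-chains in $T$.  For $i=1,2$, let $A_i\subset N(B)$ be the thin vertical annulus containing $\alpha_i$ as its core, and set $\Lambda_i=A_i\cap F$ as in Definition~\ref{Dbalanced2}.

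Next I would carry out the following local identification.  Let $\Gamma:=F\cap T$, a $1$-manifold in $T$ canonically oriented from the orientations of $M$, $T$, and $F$; the set $\alpha_i\cap F$ coincides with $\alpha_i\cap\Gamma$.  At each point $p\in\alpha_i\cap\Gamma$, write the tangent to the corresponding arc $\lambda\in\Lambda_i$ at $p$ as $a\,v_\alpha+b\,v_u$ in the basis $(v_\alpha,v_u)$ of $T_pA_i$, with $b>0$ forced by the direction convention in Definition~\ref{Dbalanced}.  A direct linear-algebra calculation in $T_pM$, using the decomposition $T_pT\oplus\mathbb{R}v_u=T_pM$ together with the fact that $T_pF$ is a $2$-plane transverse to $v_u$, shows that $\mathrm{sign}(a)$ coincides, up to a universal sign $\varepsilon\in\{\pm 1\}$ independent of $p$ and of $i$, with the algebraic intersection sign of $\alpha_i$ with $\Gamma$ at $p$ inside the oriented surface $T$.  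Consequently
$$\#\{\text{positive arcs of }\Lambda_i\}-\#\{\text{negative arcs of }\Lambda_i\}\;=\;\varepsilon\cdot(\alpha_i\cdot\Gamma)_T.$$

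Finally, since $\alpha_1$ and $\alpha_2$ cobound the oriented annulus $C\subset T$, we have $[\alpha_1]=[\alpha_2]$ in $H_1(T)$, so $(\alpha_1\cdot\Gamma)_T=(\alpha_2\cdot\Gamma)_T$.  The two signed counts above are therefore equal integers, so one vanishes if and only if the other does; this is exactly the statement that $\alpha_1$ is balanced if and only if $\alpha_2$ is balanced.  The only nontrivial step is the local identification of the combinatorial balance sign with the algebraic intersection sign, which is a straightforward bookkeeping computation with transverse orientations inside $T_pM$ rather than a genuine obstacle.
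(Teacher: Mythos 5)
Your proposal is correct and is essentially the paper's argument in homological packaging: the paper likewise identifies the sign at each point of $\alpha_i\cap F$ with a local intersection sign and transports it across the annulus cobounded by $\alpha_1$ and $\alpha_2$, working directly with the endpoints of the arcs of $F$ meeting that annulus (trivial arcs contribute cancelling pairs, spanning arcs carry the sign from one boundary curve to the other) rather than with $[\alpha_i]\cdot[F\cap T]$ in $H_1(T)$. The only point worth flagging is that your global version needs the standing general-position assumption that $F$ and $T$ are transverse, so that $F\cap T$ is a closed $1$-cycle, whereas the paper's relative version only uses the arcs of $F$ inside the annulus.
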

\begin{proof}
Let $A$ be the annulus bounded by $\alpha_1\cup\alpha_2$ in $T$.  For any essential arc $\gamma$ properly embedded in $A$ and vertical in $A$, a direction along $\alpha_i$ induces a normal direction for $\gamma$ in $A$.  We say two orientations along $\alpha_1$ and $\alpha_2$ are compatible along $A$ if they induce the same normal direction for $\gamma$.  Otherwise we say the two orientations for $\alpha_1$ and $\alpha_2$ are opposite along $A$.  We choose compatible orientations for $\alpha_1$ and $\alpha_2$ along $A$.  Let $\beta$ be any arc of $F\cap A$.  Since both $F$ and $T$ are transverse to the $I$-fibers of $N(B)$, it is easy to see that if $\beta$ is a trivial arc in $A$, i.e., $\partial\beta$ lies in the same curve $\alpha_i$, then the two signs at $\partial\beta$ are opposite in $\alpha_i$.  Similarly, if the two endpoints of $\beta$ lie in different components of $\partial A$, then the two signs at $\partial\beta$ are the same.  Thus the sum of the signs for $\alpha_1$ is equal to the sum for $\alpha_2$ and the lemma holds.
\end{proof}

\begin{lemma}\label{Lmosign}
Let $T$ be a normal torus carried by $N(B)$ and let $\hat{T}$ be the solid torus bounded by $T$.  Suppose there is a vertical annulus $A$ of $N(B)$ properly embedded in $\hat{T}$ with $\partial A$ essential in $T$.  Let $\alpha$ be a component of $\partial A$.  Then $\alpha$ is  balanced with respect to any closed surface carried by $B$. 
\end{lemma}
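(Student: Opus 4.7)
The plan is to exploit the $monogon\times S^1$ structure provided by the hypothesis and reduce balance for $\alpha$ to a signed-count statement on the big annulus $A$, which can then be forced to vanish via the meridional disk of $X$. First, since $A$ is properly embedded in the solid torus $\hat{T}$ with $\partial A$ essential in $T=\partial\hat{T}$, the annulus $A$ is $\partial$-parallel in $\hat{T}$, and together with a subannulus $A_T\subset T$ (whose boundary is $\partial A=\alpha\cup\alpha'$) it cobounds a $monogon\times S^1$ region $X$ as in Definition~\ref{Dgood}. Let $\Delta_X$ be a meridional disk of $X$ whose boundary decomposes as $\beta_A\cup\beta_T$, with $\beta_A\subset A$ a vertical arc (a subarc of an $I$-fiber of $N(B)$ running from $\alpha$ to $\alpha'$) and $\beta_T\subset A_T$ an essential arc. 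After a small perturbation supported away from $\beta_A$, the disk $\Delta_X$ is transverse to the $I$-fibers of $N(B)$.

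Given a closed surface $F$ carried by $B$, put $F$ in general position with respect to $T$, $A$, $A_T$ and $\Delta_X$. The first step is to reduce balance at $\alpha$ to a statement on $A$. The intersection $F\cap A$ consists of closed curves together with arcs, the arcs being either essential (connecting $\alpha$ and $\alpha'$) or $\partial$-parallel. Because $A$ is vertical in $N(B)$, each essential arc is transverse to the $I$-fibers of $A$ and hence projects to a monotone subarc of $\alpha$ whose direction assigns it a sign $\pm$; moreover the two endpoints of such an arc (one on $\alpha$ and one on $\alpha'$) contribute arcs of the same sign to thin vertical annular neighborhoods $A_\alpha$ and $A_{\alpha'}$. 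A $\partial$-parallel arc of $F\cap A$ with both endpoints on $\alpha$ contributes to $A_\alpha$ one positive and one negative arc, giving zero net sign. Therefore $\alpha$ (equivalently, by Lemma~\ref{LAsign}, $\alpha'$) is balanced with respect to $F$ if and only if the signed count of essential arcs of $F\cap A$ vanishes.

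The second and main step uses $\Delta_X$ to force this signed count to be zero. Consider $F\cap\Delta_X$, a $1$-manifold properly embedded in the disk $\Delta_X$, with boundary $F\cap\partial\Delta_X$ lying on $\beta_A\cup\beta_T$. Because $F$ and the perturbed $\Delta_X$ are both transverse to the $I$-fibers of $N(B)$, the standard device already used in Lemma~\ref{Lflare} shows that each arc of $F\cap\Delta_X$ pairs two points of $F\cap\partial\Delta_X$ of opposite sign, where signs along $\partial\Delta_X$ are measured as in Definition~\ref{Dbalanced} after fixing a direction around this loop. Summing over all arcs, the signed total of $F$-intersections along $\partial\Delta_X$ is zero, and splitting into contributions from $\beta_T$ and $\beta_A$ and then sweeping $\beta_T$ across $A_T$ (respectively $\beta_A$ across $A$ through the vertical arc family) identifies these signed counts with $\pm$ the signed count of essential arcs of $F\cap A$. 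Hence the latter vanishes and $\alpha$ is balanced.

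The main obstacle is the sign bookkeeping in the last step: one must carefully verify that, once orientations of $A$, $A_T$, and $\Delta_X$ are fixed, the pairing enforced by the disk $\Delta_X$ really does compute the signed count of essential arcs of $F\cap A$, rather than differing from it by an error coming from the vertical edge $\beta_A$ (where $\Delta_X$ is not itself transverse to the $I$-fibers) or from closed components of $F\cap\Delta_X$ and $F\cap A$. The $monogon\times S^1$ structure, which ensures that $\beta_A$ is a single vertical arc and that $\alpha$ is a longitude of $X$ met exactly once by the meridian curve $\partial\Delta_X$, is what makes this bookkeeping work out cleanly.
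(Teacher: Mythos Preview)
Your reduction in the first step is correct: $\alpha$ is balanced with respect to $F$ precisely when the signed count of essential arcs of $F\cap A$ vanishes.  But the second step, where you bring in the meridional disk $\Delta_X$, has a genuine gap.

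The key unjustified claim is that ``after a small perturbation supported away from $\beta_A$, the disk $\Delta_X$ is transverse to the $I$-fibers of $N(B)$.''  The $monogon\times S^1$ region $X$ is bounded by the vertical annulus $A$ and the subannulus $A_T\subset T$, both of which lie in $N(B)$, but there is no reason $X$ itself is contained in $N(B)$; in general $X$ may contain components of $M-\Int(N(B))$.  So $\Delta_X$ need not lie in $N(B)$ at all, and the ``standard device'' from Lemma~\ref{Lflare}, which relies on both surfaces being transverse to the $I$-fibers, does not apply.  You acknowledge the difficulty near $\beta_A$ as ``the main obstacle'' but then simply assert that the bookkeeping works out, without carrying it through.

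The paper's proof avoids the disk entirely and is much shorter.  It compares the signed sums at $\alpha$ and $\alpha'$ in two ways.  Along the \emph{vertical} annulus $A$: a $\partial$-parallel arc of $F\cap A$ has endpoints of opposite sign, and an essential arc (transverse to the $I$-fibers of the vertical $A$, as in Figure~\ref{cusp}(a)) also has endpoints of opposite sign when the orientations on $\alpha,\alpha'$ are chosen compatibly along $A$.  Hence if the signed sum at $\alpha$ is $m$, the signed sum at $\alpha'$ is $-m$.  Along the \emph{carried} annulus $A_T\subset T$, the argument of Lemma~\ref{LAsign} gives that the signed sums at $\alpha$ and $\alpha'$ are equal.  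So $m=-m$ and $m=0$.  This is the idea you were missing: you already have two annuli joining $\alpha$ to $\alpha'$, one vertical and one carried, and their sign behaviors differ; there is no need to cap anything off with a disk.
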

\begin{proof} The proof of this lemma is similar in spirit to that of Lemma~\ref{Lslope}. 
Since $A$ is properly embedded in $\hat{T}$ and with $\partial A$ essential in $T$, $A$ is $\partial$-parallel in $\hat{T}$.  Let $A_T$ be the annulus in $T$ parallel (in $\hat{T}$) to $A$ and with $\partial A_T=\partial A$.  So $A_T\cup A$ bounds a $mongogon\times S^1$ region.    Let $\alpha$ and $\alpha'$ be the two components of $\partial A$ and we choose orientations for $\alpha$ and $\alpha'$ so that they are compatible along the annuli $A_T$ and $A$ (see the proof of Lemma~\ref{LAsign}). 
  
Let $F$ be any closed surface carried by $N(B)$.  
For any arc $\gamma$ of $F\cap A$, if $\gamma$ is a $\partial$-parallel arc in $A$, then since $\gamma$ is transverse to the $I$-fibers and $A$ is vertical in $N(B)$, the signs at the two points $\partial\gamma$ are opposite in the corresponding component of $\partial A$.  If $\gamma$ is an essential arc in $A$, since the orientations of $\alpha$ and $\alpha'$ are compatible along $A$ and since $\gamma$ is transverse to the $I$-fibers (see Figure~\ref{cusp}(a)), the signs at the two endpoints $\partial\gamma$ in $\alpha$ and $\alpha'$ are also opposite.  Let $m$ be the sum of the signs of the intersection points in $\alpha\cap F$.  The argument above implies that the sum of the signs of the intersection points in $\alpha'\cap F$ must be $-m$.  However, by considering $\alpha$ and $\alpha'$ as boundary curves of $A_T$, the proof of Lemma~\ref{LAsign} says that the sum of the signs of the intersection points in $\alpha'\cap F$ is the same as the sum for $\alpha\cap F$ which is $m$.  So $m=-m$ and $m=0$.  Thus $\alpha$ is balanced with respective to $F$.
\end{proof}

\begin{lemma}\label{Lsheets}
Let $T$ and $F$ be closed surfaces carried by $N(B)$.  Let $\alpha\subset T$ be a balanced simple closed curve with respect to $F$.  Then there is a number $k$ depending on the intersection of $\alpha\cap F$ such that, after some $B$-isotopy, $F'=F+mT$ is disjoint from $\alpha$ if $m>k$.
\end{lemma}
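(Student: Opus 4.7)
The plan is to reduce everything to Lemma~\ref{Lbalanced} by working inside a thin vertical annular neighborhood of $\alpha$ in $N(B)$. First I would choose a sufficiently thin vertical annulus $A\subset N(B)$ containing $\alpha$ as a core curve so that $A\cap T=\alpha$; such an $A$ exists because $T$ is a closed embedded surface transverse to the $I$-fibers of $N(B)$ and $\alpha$ is an embedded simple closed curve in $T$, so a sufficiently small vertical tubular neighborhood of $\alpha$ avoids the rest of $T$. After a small $B$-isotopy removing $\partial$-parallel arcs that bound disks in $A$ cut off by $F$ (using the fact that $F$ is transverse to the $I$-fibers), $\Lambda=A\cap F$ may be taken to be a collection of essential arcs properly embedded in $A$ and transverse to the $I$-fibers. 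By the hypothesis that $\alpha$ is balanced with respect to $F$ and by Definitions~\ref{Dbalanced} and \ref{Dbalanced2}, $\Lambda$ is a balanced collection of arcs in $A$.

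Next, because canonical cutting and pasting is a local operation performed along double curves, and because $A\cap T=\alpha$ forces $A\cap(mT)$ to consist of $m$ parallel copies of $\alpha$ in $A$, the intersection of $A$ with the sum $F+mT$ satisfies $A\cap(F+mT)=\Lambda+m\alpha$ in $A$. I then apply Lemma~\ref{Lbalanced} to produce a number $k\le\tfrac{1}{2}|\Lambda|=\tfrac{1}{2}|\alpha\cap F|$, depending only on the intersection pattern of $\alpha$ with $F$, such that whenever $m\ge k$ the system $\Lambda+m\alpha$ consists of a collection of $\partial$-parallel arcs in $A$ together with $m-k$ simple closed curves parallel to $\alpha$.

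Finally, none of the $\partial$-parallel arcs in $A$ and none of the parallel simple closed curves needs to meet the core curve $\alpha$, so a small isotopy supported inside $A$ makes $A\cap(F+mT)$ disjoint from $\alpha$. Since $A$ is vertical in $N(B)$, any isotopy supported in $A$ that sends $I$-fibers of $A$ to themselves extends to a $B$-isotopy of $N(B)$ supported in $A$. This $B$-isotopy alters $F+mT$ only within $A$, so it introduces no new intersections with $\alpha$ elsewhere; combined with $\alpha\subset A$, we conclude $(F+mT)\cap\alpha=\emptyset$ after the $B$-isotopy, with the required bound $k$ provided by Lemma~\ref{Lbalanced}.

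The content of the argument is essentially packaged inside Lemma~\ref{Lbalanced}; the only mild technical point is arranging $A\cap T=\alpha$ so that one genuinely has $A\cap(F+mT)=\Lambda+m\alpha$, and this follows from embeddedness of $T$ and compactness. I do not anticipate a substantial obstacle beyond this bookkeeping.
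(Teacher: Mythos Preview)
Your proposal is correct and takes essentially the same approach as the paper: the paper's proof is a one-line reduction to Lemma~\ref{Lbalanced} via a vertical annulus $A$ containing $\alpha$, and you have simply fleshed out the bookkeeping (arranging $A\cap T=\alpha$, identifying $A\cap(F+mT)$ with $\Lambda+m\alpha$, and making the final $B$-isotopy explicit). The only minor remark is that your step of removing $\partial$-parallel arcs of $A\cap F$ by $B$-isotopy is unnecessary once $A$ is chosen thin enough, since then $A\cap F$ already consists of essential arcs; but this does no harm.
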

\begin{proof}
Let $A$ be a vertical annulus containing $\alpha$.  Then the lemma is an immediate corollary of Lemma~\ref{Lbalanced} (by considering the arcs $\Lambda=A\cap F$).
\end{proof}

Let $\{T_1,\dots, T_n\}$ ($n\ge 2$) be a regular set of tori carried by $B$ and let $\Gamma=\bigcup_{i=1}^nT_i$.  Suppose $\Gamma$ is a connected 2-complex.  The intersection curves of the $T_i$'s cut the $T_i$'s and $\Gamma$ into a collection of annuli. By Lemma~\ref{Lsolid}, the union of a subset of these annuli form a torus $T_\Gamma$ that bounds a solid torus containing $\Gamma$.  The torus $T_\Gamma$ is a subcomplex of $\Gamma$ and can be viewed as the frontier of the 2-complex $\Gamma$.  $T_\Gamma$ contains at least one double curve (of the intersection of the tori) in $\Gamma$.

\begin{lemma}\label{Lengulf}
Let $\mathcal{T}=\{T_1,\dots, T_n\}$ ($n\ge 2$), $\Gamma=\bigcup_{i=1}^nT_i$ and $T_\Gamma$ be as above.  In particular, $\{T_1,\dots, T_n\}$ is a regular set of tori, $\Gamma$ is connected, and $T_\Gamma$ is a torus bounding a solid torus containing $\Gamma$ as in Lemma~\ref{Lsolid}. 
 Suppose there is a good torus $T$ that non-trivially intersects at least one torus in $\mathcal{T}$ and $T\cup\mathcal{T}$ is a regular set.   Let $F$ be a surface carried by $B$. Then there is an number $K$ which can be algorithmically determined, such that for any $F'=F+\sum_{i=1}^nc_iT_i$ with each $c_i\ge K$, after $B$-isotopy,  $T_\Gamma\cap F'$ (if not empty) has the same slope in $T_\Gamma$ as the double curves in $\Gamma$.  In particular, $T_\Gamma\cap F'$ is either empty or a collection of essential and non-meridional curves in $T_\Gamma$.  
\end{lemma}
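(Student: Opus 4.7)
The plan is to locate on the frontier torus $T_\Gamma$ a system of essential simple closed curves of the common slope of the double curves of $\Gamma$ and to show that, for large $c_i$, the surface $F'$ can be $B$-isotoped disjoint from this system; the slope claim for $T_\Gamma\cap F'$ will then follow, after a cleanup of possible trivial curves.

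First I will establish that all double curves in $\Gamma$ share a common slope in each torus on which they lie; call this the \emph{double-curve slope}. The good torus $T$ carries a vertical annulus $A\subset\hat T$ with $\partial A$ essential and non-meridional, so by Lemma~\ref{Lslope} every curve of $T\cap T_i$ has the slope of $\partial A$ in $T$. Regularity of $T\cup\mathcal{T}$ forces $T\cap T_i$ and $T_j\cap T_i$ to be disjoint in $T_i$ (else a triple point), and since both are essential on the torus $T_i$ they are parallel there. Hence on every $T_i$, and therefore on every annular piece of $T_\Gamma$, the double curves are parallel to a single slope. Next, by Lemma~\ref{Lmosign} applied to the vertical annulus $A$, the curve $\partial A$ is balanced (Definition~\ref{Dbalanced2}) with respect to every closed surface carried by $B$; Lemma~\ref{LAsign} propagates this to every curve in $T$ of the double-curve slope. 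A double curve $c\subset T\cap T_i$ is balanced as a curve in $N(B)$, and applying Lemma~\ref{LAsign} inside $T_i$ to $c$ and any parallel copy gives balancedness of every curve in $T_i$ of the double-curve slope. Iterating this across the connected 2-complex $\Gamma$, every essential simple closed curve of $T_\Gamma$ of the double-curve slope is balanced with respect to $F$ and to each $T_i$.

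Next I choose one core curve $\alpha_l$ in each annular piece of $T_\Gamma$, lying in some $T_{j(l)}$, together with a thin vertical annulus $A_l\subset N(B)$ containing $\alpha_l$. Balancedness gives that $A_l\cap F$ and each $A_l\cap T_i$ are balanced arc systems in $A_l$, while $A_l\cap T_{j(l)}$ contains a closed curve isotopic to $\alpha_l$; hence $A_l\cap F'=A_l\cap F+\sum c_i(A_l\cap T_i)$ is a balanced essential arc system to which at least $c_{j(l)}$ parallel copies of the core of $A_l$ have been added. Lemma~\ref{Lbalanced} then yields a computable bound $k_l$ (depending on $A_l\cap F$ and on the intersection of the $T_i$'s with $A_l$) such that whenever $c_{j(l)}\ge k_l$ the curves of $A_l\cap F'$ become, after $B$-isotopy, $\partial$-parallel arcs together with circles parallel to $\alpha_l$; in particular $F'\cap\alpha_l=\emptyset$. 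Taking $K$ to be the maximum of the $k_l$, together with the terms needed for the trivial-curve cleanup below, once every $c_i\ge K$ the surface $F'$ is $B$-isotopic to one disjoint from the entire system $\bigsqcup_l\alpha_l$.

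A surface $F'$ disjoint from $\bigsqcup_l\alpha_l$ meets $T_\Gamma$ only inside annular neighborhoods of the double curves, so every essential curve of $F'\cap T_\Gamma$ has the double-curve slope. The remaining difficulty, which I expect to be the main obstacle, is the elimination of trivial curves of $F'\cap T_\Gamma$. I will handle them by an absorption argument in the spirit of Lemma~\ref{LbalancedD}: an innermost trivial curve in $T_\Gamma$ bounds a disk disjoint from $\bigsqcup_l\alpha_l$ and lying in a single annular piece contained in some $T_j$, and adding enough copies of $T_j$ cancels the disk after $B$-isotopy. Enlarging $K$ by the number of trivial curves of $F\cap T_\Gamma$ (a quantity computable from the initial data) absorbs them all. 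Since every count entering $K$ is read off from the intersection data of $F$ and the $T_i$'s with $T_\Gamma$ and with the vertical annuli $A_l$, $K$ is algorithmically determined.
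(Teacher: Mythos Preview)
Your argument is correct and follows essentially the same route as the paper's proof. The only cosmetic difference is that you choose core curves $\alpha_l$ of the annular pieces of $T_\Gamma$ as the system to make $F'$ disjoint from, whereas the paper uses the double curves themselves (the boundaries of those annular pieces); either choice works since both are essential curves of the double-curve slope, and the balancedness propagation via Lemma~\ref{Lmosign} and Lemma~\ref{LAsign} through the connected $\Gamma$ is identical in both arguments. Your trivial-curve cleanup via Lemma~\ref{LbalancedD} is also exactly what the paper does, at the same level of detail.
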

\begin{proof}
First note that since the $T_i$'s are all carried by $N(B)$, for any double curve $\gamma\subset T_p\cap T_q$, an annular neighborhood of $\gamma$ in $T_p$ is $B$-isotopic to an annular neighborhood of $\gamma$ in $T_q$, and this implies that if $\gamma$ is balanced in $T_p$ with respect to $F$, then $\gamma$ is also balanced in $T_q$ with respect to $F$.  

Now we consider the good torus $T$ in the hypotheses.  By Lemma~\ref{Lmosign}, there is a curve $\alpha$ in $T$ that is balanced with respect to $F$ and $\alpha$ corresponds to the cusp of a $monogon\times S^1$ region formed by $T$.  Moreover, by Lemma~\ref{Lslope}, the slope of $\alpha$ in $T$ is the same as the slope of the double curves in $T\cap T_i$. As $T\cup\mathcal{T}$ is a regular set, by Lemma~\ref{LAsign}, this means that a double curve $\alpha'$ in $\Gamma$ is balanced with respect to $F$.   Since the 2-complex $\Gamma$ is connected, the double curves in $\Gamma$ are connected by annuli in the $T_i$'s.  By Lemma~\ref{LAsign}, we can use the curve $\alpha'$ above to successively show that every double curve in $\Gamma$ is balanced with respect to $F$.

Note that $T_\Gamma$ is a subcomplex of $\Gamma$ and it is the union of some annuli  along the double curves of the $T_i$'s.  Since all the double curves are balanced with respect to $F$, by Lemma~\ref{Lsheets}, after some $B$-isotopy, we may assume $F'=F+\sum_{i=1}^nc_iT_i$ is disjoint from those double curves that lie in $T_\Gamma$ if the $c_i$'s are all large.  Moreover, by Lemma~\ref{LbalancedD}, if the $c_i$'s are large, we may assume the intersection of $F'$ with those annuli in $T_\Gamma$ (bounded by the double curves) contains no curve trivial in $T_\Gamma$.  This means that after $B$-isotopies, either $F'\cap T_\Gamma=\emptyset$ or curves in $F'\cap T_\Gamma$ are parallel in $T_\Gamma$ to the double curves.  The bound $K$ in the lemma depends on the intersection patterns of $F$ with the $T_i$'s and $K$ can be easily calculated using the proofs of Lemma~\ref{Lbalanced} and Lemma~\ref{LbalancedD}.

Since $\mathcal{T}$ is a regular set, a double curve in $\Gamma$ does not bound an embedded disk in $M$ (see the proof of Corollary~\ref{Cmeridion}).  By Lemma~\ref{Lsolid}, $T_\Gamma$ is isotopic to a normal torus.  Hence $F'\cap T_\Gamma$ and the double curves of $\Gamma$ that lie in $T_\Gamma$ are essential and non-meridional in $T_\Gamma$.
\end{proof}

We finish this section by quoting a theorem of Scharlemann \cite[Theorem 3.3]{S}.  The theorem is  the main motivation for Lemma~\ref{Lengulf}.

\begin{theorem}[Theorem 3.3 of \cite{S}]\label{Tsch}
Suppose $H_1\cup_S H_2$ is a strongly irreducible Heegaard splitting of a 3-manifold and $V\subset M$ is a solid torus such that $\partial V$ intersects $S$ in parallel essential non-meridional curves.  Then $S$ intersects $V$ in a collection of $\partial$-parallel annuli and possibly one other component, obtained from one or two annuli by attaching a tube along an arc parallel to a subarc of $\partial V$.
\end{theorem}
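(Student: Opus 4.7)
The plan is to analyze $F = S \cap V$ component by component, using strong irreducibility of $S$ together with the classification of essential surfaces in a solid torus. Since $\partial F = S \cap \partial V$ is a collection of parallel essential non-meridional curves on $\partial V$, these curves cut $\partial V$ into annuli that alternate between lying in $H_1$ and in $H_2$. No component of $F$ can be a disk, because an essential non-meridional curve on $\partial V$ does not bound a disk in $V$. An annular component of $F$ with both boundary circles on $\partial V$ is either compressible in $V$, in which case its core would bound a meridional disk of $V$---impossible since the slope is non-meridional---or it is incompressible, and hence $\partial$-parallel, since every incompressible annulus in a solid torus with boundary on $\partial V$ is $\partial$-parallel.

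For a component $F_0$ of $F$ that is neither a disk nor an annulus, I would pick a meridional disk $D$ of $V$ transverse to $S$ with $|D \cap S|$ minimal. Each arc of $D \cap F_0$ is essential in $F_0$ by this minimality, and an outermost such arc in $D$ cuts off a boundary-compressing disk $\Delta$ for $F_0$ with $\partial \Delta = \alpha \cup \beta$, $\alpha \subset F_0$ and $\beta \subset \partial V$. By Scharlemann's no-nesting lemma, performing the boundary compression along $\Delta$ and then pushing slightly produces simple closed curves on $S$ that bound embedded disks in $M$, hence compressing disks for $S$ into one of $H_1, H_2$; by strong irreducibility all such compressions must be forced to the same side. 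Iterating boundary compressions---none of which can produce a meridional disk of $V$, since the boundary slope on $\partial V$ is non-meridional---the surface $F_0$ is reduced to a union of $\partial$-parallel annuli joined by tubes whose cores are arcs in $V$.

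The main obstacle is to show that only one tube survives and that its core arc can be taken parallel to a subarc of $\partial V$. If two such tubes existed, or if a surviving tube's core arc $\gamma$ were essential in $V$ rel $\partial \gamma$, then careful bookkeeping of outermost arcs in $D \cap F_0$ on the two sides of $\gamma$ would provide boundary-compressing disks for $F_0$ on opposite sides of $S$; extending each such disk across the corresponding annular piece of $\partial V \smallsetminus S$ inside the appropriate $H_i \cap V$ would produce disjoint compressing disks for $S$ lying in $H_1$ and $H_2$ respectively, contradicting strong irreducibility of $H_1 \cup_S H_2$. Consequently at most one non-annular component survives, its core arc is $\partial$-parallel in $V$ and therefore isotopic rel endpoints to a subarc of $\partial V$, and $F_0$ is obtained from one or two $\partial$-parallel annuli by attaching a tube along such an arc, which is precisely the structure asserted by the theorem.
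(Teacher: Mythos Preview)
The paper does not prove this statement; Theorem~\ref{Tsch} is simply quoted from \cite[Theorem~3.3]{S} and invoked as a black box in Section~\ref{Sproof}. So there is no proof in the paper to compare your argument against.

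On the merits of your sketch: the first two paragraphs are fine and follow the natural line. The gap is the third paragraph, which is the whole content of the theorem. You assert that two tubes, or a single tube along an arc $\gamma$ not parallel into $\partial V$, would force ``boundary-compressing disks for $F_0$ on opposite sides of $S$'' via ``careful bookkeeping of outermost arcs in $D\cap F_0$ on the two sides of $\gamma$,'' but you do not carry this out, and it is not clear it can be carried out as stated. Which handlebody $H_i$ an outermost subdisk of $D$ lies in is determined by which annulus of $(\partial V)\setminus S$ contains the corresponding outer arc of $\partial D$; this has no direct relation to the isotopy type of $\gamma$ in $V$, so there is no evident mechanism by which a knotted $\gamma$ forces outermost disks on both sides. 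Moreover, once you perform the first $\partial$-compression the pattern $D\cap S$ changes, so the ``second'' outermost disk you want need not exist for the same $D$. What is actually needed is the argument in \cite{S}: strong irreducibility forces all compressions of $S\cap V$ inside $V$ to a single side; one compresses maximally on that side to obtain an incompressible, hence $\partial$-parallel annular, surface in $V$; and then one analyzes how many $1$-handles must be reattached and where their cores can sit relative to $\partial V$. The bound of a single tube and the $\partial$-parallelism of its core come from that analysis, not from a generic outermost-arc count.
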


\section{Proof of the main theorem}\label{Sproof}

In this section, we suppose our manifold $M$ is non-Haken and we discuss the Haken case in the next section.  Note that by \cite{JO}, there is an algorithm to determine whether or not $M$ is Haken.  The goal of this section is to algorithmically list all the Heegaard splittings of any fixed genus $g$ (with possible repetition) in $M$.

Since Heegaard splittings of lens spaces and small Seifert fiber spaces are classified \cite{BO, BCZ, M}, we may assume $M$ is not a Seifert fiber space.  By \cite{JR}, there is an algorithm to find a 0-efficient triangulation for $M$.

As in section~\ref{Spre}, we can algorithmically find a finite collection of branched surfaces such that:
\begin{enumerate}
\item every strongly irreducible Heegaard surface is fully carried by a branched surface in this collection,
\item no branched surface in this collection carries any normal or almost normal 2-sphere.
\end{enumerate}
For each surface, we are interested in the simplest branched surface carrying it, so we assume this collection contains all the sub-branched surfaces of every branched surface in it. 
Let $B$ be a branched surface in this collection.  To simplify notation, we view a Heegaard surface as an almost Heegaard surface with associated (almost vertical) arcs being empty, see Definition~\ref{Dai}. 

By  Lemma~\ref{Lal}, for any strongly irreducible Heegaard surface $S'$ of genus $g$ fully carried by $B$, there is a normal or an almost normal surface $S$ carried by $B$ such that 
\begin{enumerate}
\item $S$ is an almost strongly irreducible Heegaard surface and $S'$ can be derived from $S$,  
\item  the total length of the almost vertical arcs associated to $S'$ is bounded from above by a number $K'$ which depends only on $M$, $B$ and the genus $g$.  Moreover, we can compute an upper bound for $K'$.  
\item there is no non-trivial $D^2\times I$ region for $S$ and $B$.  
\end{enumerate}

Note that we may assume the surface $S'$ above is $K'$-minimal, see Definition~\ref{Dkmin}.  Next we try to find all possible such almost strongly irreducible Heegaard surfaces $S$.

We consider all the normal and almost normal surfaces of genus at most $g$ carried by a branched surface $B$ in this collection.  As explained in section~\ref{Sintro} and section~\ref{Spre}, after solving for the fundamental set of solutions to the branch equations as in the normal surface theory, we can express any normal or almost normal surface carried by $B$ as a linear combination of these fundamental solutions.  Since $B$ does not carry any normal or almost normal 2-sphere or projective plane (see Lemma~\ref{LnoS2}), we may assume the surface corresponding to each fundamental solution has non-positive Euler characteristic.  By Corollary~\ref{Cklein}, there is no Klein bottle in the fundamental solutions.  Since the canonical cutting and pasting preserves Euler characteristic and since the genus is bounded, the coefficient of each non-torus fundamental solution is bounded.  Thus we can find a finite collection  of surfaces $\mathcal{F}_1$ of genus at most $g$ and a finite collection of normal tori $T_1\dots, T_n$ carried by $B$ such that, for every normal or almost normal surface $S$ of genus at most $g$ and carried by $B$, we can express $S$ as $S=H+\sum_{i=1}^n c_i T_i$, where $H\in\mathcal{F}_1$.  Note that if the fundamental solutions contain an almost normal torus, since $S$ contains at most one almost normal piece, the coefficient at the almost normal torus is at most one.  Hence we can add the possible almost normal torus to $H$ and this is the reason that we can assume each torus $T_i$ is normal.

Our goal is to find all possible such surfaces $S$.  Next we show that except for finitely many possibilities, $S=H+\sum_{i=1}^n c_i T_i$ can be expressed into some nicer form.  

First note that we can compute an upper bound for the constant $k$ in Lemma~\ref{Lflare} for any $H\in\mathcal{F}_1$ and any subset of  $\{T_1,\dots, T_n\}$.  Let $\mathcal{F}_2$ be all the surfaces of the form $H+\sum_{i=1}^n c_iT_i$ with $H\in \mathcal{F}_1$ and each $c_i\le k$, where $k$ is the constant in Lemma~\ref{Lflare}.  Clearly $\mathcal{F}_2$ is a finite set.  So the sum of $H$ and all the summands $c_iT_i$ with coefficient $c_i\le k$ is a surface in $\mathcal{F}_2$, and the remaining summands have coefficients larger than $k$.  Let $\mathcal{T}_S$ be the collection of tori $T_i$ with coefficient $c_i>k$, so $\mathcal{T}_S$ is a subset of $\mathcal{T}$. 

For any almost strongly irreducible Heegaard surface $S=H+\sum_{i=1}^n c_i T_i$ as above, since there is no non-trivial $D^2\times I$ region for $S$ and $B$, it follows from Lemma~\ref{Lflare} that, we can express $S$ as $S=F'+\sum_{T_i\in\mathcal{T}_S}c_iT_i$ such that 
\begin{enumerate}
\item $F'\in\mathcal{F}_2$, 
\item $\mathcal{T}_S$ is a subset of $\{T_1\dots, T_n\}$, 
\item $c_i>k$ for each $i$, where $k$ is as in Lemma~\ref{Lflare}, and 
\item there is no flare in the sub-branched surface that fully carries the union of the tori in $\mathcal{T}_S$.
\end{enumerate}

Let $\mathcal{T}$ be any subset of $\{T_1\dots, T_n\}$ and let $B_T$ be the sub-branched surface of $B$ that fully carries the union of the tori in $\mathcal{T}$.  By Lemma~\ref{Linnermost} and its proof, $B_T$ contains a flare if and only if there are a component $A$ of $\partial_vN(B_T)$ and a component $\alpha$ of $\partial A$ such that (1) $\alpha$ bounds an embedded disk $D$ carried by $N(B_T)$ and (2) the normal direction of $\alpha$ induced from the branch direction points out of $D$ (i.e. $D$ is not a disk of contact).  By solving branch equations, similar to \cite{AL}, we can check each component of $\partial_vN(B_T)$ to see whether or not such a disk $D$ exists.  So we can algorithmically determine whether or not $B_{T}$ has a flare.  Next, we list and only consider all the subsets of $\{T_1\dots, T_n\}$ whose corresponding branched surface $B_T$ has no flare.

Let $\mathcal{T}$ and $B_T$ be as above and suppose $B_T$ has no flare.  By Lemma~\ref{Lset}, there is a finite set of tori $\hat{\mathcal{T}}$ such that any surface $S=F'+\sum_{T_i\in \mathcal{T}} n_iT_i$ can be expressed as $S=F'+\sum_{T_i\in \mathcal{T}'} c_iT_i$ for some regular subset of tori $\mathcal{T}'\subset \hat{\mathcal{T}}$.  By Lemma~\ref{Lset}, we can algorithmically find the set of tori $\hat{\mathcal{T}}$ and all possible regular subsets of tori $\mathcal{T}'$ in $\hat{\mathcal{T}}$.

Let $\mathcal{T}'\subset \hat{\mathcal{T}}$ be a regular set of tori.  We consider all the surfaces that can be expressed as $S=F+\sum_{T_i\in\mathcal{T}'}c_i T_i$, where $F\in\mathcal{F}_2$ is as above.  Let $\Gamma_1,\dots\Gamma_k$ be the connected components of $\bigcup_{T_i\in\mathcal{T}'}T_i$ and we  use $\mathcal{T}_i$ to denote the subset of tori whose union is $\Gamma_i$.  So $\mathcal{T}'=\bigcup_{i=1}^k\mathcal{T}_i$.

For each $\mathcal{T}_i$, by Lemma~\ref{Lgoodone}, we can either (1) find a good torus $T$ that intersects at least one torus in $\mathcal{T}_i$ and such that $T\cup\mathcal{T}_i$ is a regular set, or (2) conclude that $\mathcal{G}(\mathcal{T}_i)$ is finite and list all possible tori in $\mathcal{G}(\mathcal{T}_i)$. 
 If $\mathcal{G}(\mathcal{T}_i)$ is finite, then each surface in $\mathcal{S}(\mathcal{T}_i)$ is the union of some parallel copies of disjoint tori in $\mathcal{G}(\mathcal{T}_i)$.  Thus, in possibility (2) above, by adding the finite list of all possible tori in $\mathcal{G}(\mathcal{T}_i)$ to $\hat{\mathcal{T}}$, we may assume that for any connected component $\Gamma_i$ as above, either we have a good torus $T$ as in (1) above, or $\mathcal{T}_i$ is a single torus disjoint from all other tori in $\mathcal{T}'$.

If a torus $T\in\mathcal{T}'$ is disjoint from all other tori in $\mathcal{T}'$.  We consider the surface $F+cT$ where $c$ is a positive integer. 
By Lemma~\ref{LbalancedD}, there is a number $K_{T,F}$, which depends on $T$ and $F\cap T$, such that, if $c>K_{T,F}$, $(F+cT)\cap T$ (if not empty) consists of curves essential in the torus $T$.  Moreover, similar to the proof of Lemma~\ref{LbalancedD}, there is a number $K_{T,F}$ such that if $c>K_{T,F}$, then $(F+cT)+mT$ is the surface obtained by an $m$-fold Dehn twist on $F+cT$ along the torus $T$.  Note that if $(F+cT)\cap T=\emptyset$, then $F+(c+m)T$ has $m$ components that are parallel copies of $T$.  Since $T$ is disjoint from all other tori in $\mathcal{T}'$, this means that $S$ has $m$ components that are parallel copies of $T$.  However, $S$ is an almost strongly irreducible Heegaard surface with (total) genus at most $g$, so $m<g$ in this case.  Suppose $(F+cT)\cap T\ne\emptyset$.  Since the normal torus $T$ bounds a solid torus, a Dehn twist along $T$ is just an isotopy.  Hence $F+(c+m)T$ is isotopic to $F+cT$.   As $T$ is disjoint from all other tori in $\mathcal{T}'$, the argument above means that there is a number $K_{T,F}'$ which can be algorithmically determined, such that $F+\sum_{T_i\in\mathcal{T}'} c_iT_i$ is isotopic to a surface of the same form but with the coefficient of $T$ at most $K_{T,F}'$.

Let $\mathcal{F}_3$ be the set of surfaces of the form $F+\sum cT$, where $F\in\mathcal{F}_2$, $c\le K_{T,F}'$ and each $T$ in the sum is a torus disjoint from all other tori in $\mathcal{T}'$ as above.  Note that, similar to the discussion at the end of the proof of Lemma~\ref{Lfinite}, after isotopy and setting $K_{T,F}'$ to be sufficiently large, we may assume the almost vertical arcs associated to the almost Heegaard surface $S$ are disjoint from the Dehn twist along $T$.  Hence we may assume the Dehn twist along $T$ above does not affect the arcs associated to $S$.  In particular, the total length of the almost vertical arcs associated to $S$ is not changed by the Dehn twist.  Thus after the Dehn twists above, our almost strongly irreducible Heegaard surface $S$ above can be expressed as $S=F+\sum_{T_i\in\mathcal{T}'}c_i T_i$, where 
\begin{enumerate}
\item $F\in\mathcal{F}_3$, 
\item $\mathcal{T}'$ is a regular set of tori from $\hat{\mathcal{T}}$, 
\item for any connected component $\Gamma$ of $\bigcup_{T_i\in\mathcal{T}'}T_i$, we can algorithmically find a good torus $T$ (as in Lemma~\ref{Lgoodone}) such that $T$ non-trivially intersects $\Gamma$ and $T\cup\Gamma$ is a regular set.
\end{enumerate}

Let $S=F+\sum_{T_i\in\mathcal{T}'}c_i T_i$ be as above.  Let $\Gamma$ be a connected component of $\bigcup_{T_i\in\mathcal{T}'}T_i$ and let $\mathcal{T}_\Gamma$ be the set of tori in $\Gamma$.  By Lemma~\ref{Lsolid}, there is a torus $T_\Gamma$ bounding a solid torus that contains $\Gamma$.  By Lemma~\ref{Lengulf}, if each $c_i\ge K$ for some constant $K$ which depends on $F\cap T_\Gamma$, then $S\cap T_\Gamma$ is a union of essential non-meridional curves parallel to the double curves of $\Gamma$.  Now we algorithmically find the number $K$ in Lemma~\ref{Lengulf} and let $\mathcal{F}_4$ be the set of surfaces $F+\sum_{T_i\in\mathcal{T}'}c_i T_i$ where $F\in\mathcal{F}_3$ and each $c\le K$.  
This means that the surface $S$ above can be expressed as $S=F'+\sum_{T_i\in\mathcal{T}''}c_i T_i$, where $\mathcal{T}''$ is a subset of $\mathcal{T}'$ above and $c_i>K$ for each $c_i$.

Note that the subset $\mathcal{T}''$ above is also a regular set of tori.  So we can consider the connected components of $\bigcup_{T_i\in \mathcal{T}''}T_i$ and repeat the arguments above.  Eventually, we can find a finite set of surfaces $\mathcal{F}_5$ such that the almost strongly irreducible Heegaard surface $S$ above has the form $S=F+\sum_{T_i\in\mathcal{T}''}c_i T_i$, where $F$ and $\mathcal{T}''$ have the following properties:
\begin{enumerate}
\item $F\in\mathcal{F}_5$ and $\mathcal{T}''$ is a regular set of tori in $\hat{\mathcal{T}}$
\item for each connected component $\Gamma$ of $\bigcup_{T_i\in \mathcal{T}''}T_i$, let $T_\Gamma$ denote the torus as in Lemma~\ref{Lsolid} that bounds a solid torus containing $\Gamma$, then $F\cap T_\Gamma$ (if not empty) consists of curves parallel to the double curves of $\Gamma$ as in Lemma~\ref{Lengulf}, in particular $F\cap T_\Gamma$ are essential and non-meridional in $T_\Gamma$.
\end{enumerate}

Each connected component of $\bigcup_{T_i\in \mathcal{T}''}T_i$ lies in a solid torus as above.  We can find a collection of disjoint solid tori that contain $\bigcup_{T_i\in \mathcal{T}''}T_i$.  Let $W$ be the union of these solid tori. 
Our surface $S=F+\sum_{T_i\in\mathcal{T}''}c_i T_i$ is an almost strongly irreducible Heegaard surface.  Let $\Sigma_s$ be the almost vertical arcs associated to $S$ and by the argument above, we already know an upper bound $K'$ of the total length of the arcs in $\Sigma_s$.  Each arc in $\Sigma_s$ is an arc properly embedded in $\overline{M-S}$.  As $S=F+\sum_{T_i\in\mathcal{T}''}c_i T_i$, we may view $\Sigma_s$ as a set of arcs between $F$ and these $c_i$ copies of $T_i$'s ($T_i\in\mathcal{T}''$).  As the total number of components of $\Sigma_s$ is at most $g$ (where $g$ is the genus of the Heegaard surface), if $c_i\ge 2g$, at least one copy of $T_i$ in the sum above is not incident to $\Sigma_s$.   Thus after enlarging $\mathcal{F}_5$ to include surfaces of the form $F+\sum_{T_i\in\mathcal{T}''}n_i T_i$ with each $n_i<2g$, we may assume, for each summand $c_iT_i$ in our expression of $S$ above, we have $c_i\ge 2g$.  So we may assume that, after isotopy, each arc in $\Sigma_s$ lies either totally inside the solid tori $W$ or totally outside $W$.

Since $\mathcal{F}_5$ is a finite set and $\bigcup_{T_i\in \mathcal{T}''}T_i\subset W$, there are only finitely many possible configurations for $S-W$.  By listing all the surfaces in $\mathcal{F}_5$, we can algorithmically list all possible configurations for $S$ outside the solid tori $W$.  
  As the total length of the arcs in $\Sigma_s$ is bounded, using the finite possible configurations of $S$ outside $W$, we can use Lemma~\ref{Lal} to algorithmically list all possible almost vertical arcs outside $W$.  Thus we can list all possible configurations for the Heegaard surface $S'$ (derived from $S$) outside the solid tori $W$.

For any connected component $\Gamma$ of $\bigcup_{T_i\in \mathcal{T}''}T_i$, let $W_\Gamma$ be the solid torus containing $\Gamma$ and with $T_\Gamma=\partial W_\Gamma$.  We may assume $W_\Gamma$ is a component of $W$.  By our assumption, $F\cap T_\Gamma$ consists of essential and non-meridional curves in $T_\Gamma$.  So by a theorem of Scharlemann (i.e.~Theorem~\ref{Tsch} above), $S'\cap W_\Gamma$ is standard.  Since there are only finitely many possible configuration for a strongly irreducible Heegaard surface $S'$ outside the solid tori $W$,  we can use the curves $F\cap T_\Gamma$ to list all possible surface types in $S'\cap W$.  Thus, up to isotopy, $F+\sum_{T_i\in\mathcal{T}''}c_i T_i$ can produce only finitely many different strongly irreducible Heegaard surfaces and we can algorithmically produce a list of surfaces containing all of them.  As $\mathcal{F}_5$ and the set $\hat{\mathcal{T}}$ are finite, we can use all possible $F\in\mathcal{F}_5$ and all possible subsets $\mathcal{T}''$ as above to produce a final list of surfaces that contain all strongly irreducible Heegaard surfaces of genus at most $g$.

Using Haken's algorithm \cite{Ha} and the algorithm to recognize a 3-ball \cite{Th}, we can determine whether or not each side of a surface is a handlebody.  So we can determine which surfaces in our list are Heegaard surfaces.  Although some surfaces in our list may be isotopic, the list is a complete list of all possible strongly irreducible Heegaard surfaces of genus at most $g$.  In a non-Haken 3-manifold, every Heegaard splitting is either strongly irreducible or a stabilization of a strongly irreducible Heegaard splitting.  So after some stabilizations, we obtain a complete list of Heegaard splittings of genus at most $g$.

\section{The Haken case}\label{SHaken}

In this section, we discuss the case that our manifold $M$ is an atoroidal Haken 3-manifold.  We still assume $M$ is closed, orientable, irreducible and atoroidal.  By \cite{JR}, since $M$ is atoroidal, we can assume $M$ has a 0-efficient triangulation.  By the argument above, we can produce a finite list of Heegaard splittings of genus at most $g$ such that all the strongly irreducible Heegaard splittings of genus at most $g$ are in this list.  To prove Theorem~\ref{Tmain} for Haken atoroidal manifolds, we still need to find all weakly reducible Heegaard splittings of genus at most $g$. 

By \cite{SchT}, any unstabilized weakly reducible Heegaard splitting can be expressed as an amalgamation along a collection of incompressible surfaces in $M$ using strongly irreducible Heegaard splittings of the submanifolds of $M$ bounded by these incompressible surfaces.  In particular, the sum of the genera of the incompressible surfaces is at most the genus of the weakly reducible Heegaard splitting.

It follows from \cite{FO, O1} that, up to isotopy, an atoroidal 3-manifold has only finitely many incompressible surfaces of each genus.  In fact, in the algorithm above for strongly irreducible Heegaard surfaces, one can easily check that all the lemmas remains true if we replace strongly irreducible Heegaard surface by incompressible surface (most cases we considered do not exist for incompressible surfaces).  This means that we can algorithmically find all incompressible surfaces in $M$ of genus at most $g$.  Note that the algorithm for incompressible surfaces is much simpler, since we can assume the branched surface that fully carries an incompressible surface does not have any monogon, see \cite{FO}.

We can list all possible incompressible surfaces in $M$ with genus at most $g$ and consider the strongly irreducible Heegaard splittings of a submanifold $N$ bounded by these incompressible surfaces.  We may assume the incompressible surface $\partial N$ is a normal surface with respect to the 0-efficient triangulation of $M$.  So $N$ has an induced cell decomposition from the 0-efficient triangulation.  By \cite{St} (see Lemmas 4 and 5 of \cite{St}), we may assume a strongly irreducible Heegaard surface of $N$ is normal or almost normal with respect to this cell decomposition.  Since (1) each (induced) 3-cell in $N$ is a block in a tetrahedron bounded by normal disks, (2) the Heegaard surface of $N$ is disjoint from $\partial N$ and (3) the boundary curve of each almost normal piece consists of normal arcs in the 2-skeleton, it is easy to see that any normal disk or almost normal piece (that is disjoint from $\partial N$) of each 3-cell in $N$ remains a normal disk or an almost normal piece of the corresponding tetrahedron of $M$.  Thus any normal or almost normal surface in $\Int(N)$ is also a normal or an almost normal surface in $M$.  In particular, since $\partial N$ is incompressible in $N$, any normal torus in $\Int(N)$ must bound a solid torus in $N$.  Thus we can consider branched surfaces in $\Int(N)$ formed by normal disks and at most one almost normal piece and apply our algorithm in section~\ref{Sproof} to list all strongly irreducible Heegaard surfaces of $N$ with genus at most $g$.  

Therefore, we can list all possible disjoint incompressible surfaces of genus sum at most $g$ and list all possible strongly irreducible Heegaard surfaces (of genus at most $g$) in each submanifold bounded by the incompressible surfaces.  By amalgamating these Heegaard surfaces along incompressible surfaces, we obtain a list of Heegaard surfaces of $M$ which contains all weakly reducible Heegaard surfaces of $M$ with genus at most $g$.

\end{psfrags}

\end{document}